\documentclass[10pt]{amsart}  

\usepackage[left=1in,right=1in,top=1in,bottom=1.17in]{geometry}

\usepackage[utf8]{inputenc}

\usepackage{graphicx,amssymb,amsmath,amsthm,wrapfig}
\usepackage{caption}
\usepackage{subcaption}
\usepackage{indentfirst}
\usepackage{cancel}
\usepackage{lipsum}
\usepackage{epstopdf}
\usepackage{epsfig}
\usepackage{comment}
\usepackage{enumerate,color}
\usepackage{empheq}

\usepackage{hyperref}
\usepackage{xcolor}

\usepackage{soul}      
\sethlcolor{yellow}    

\everymath{\displaystyle}
\theoremstyle{plain}
\newtheorem{theorem}{Theorem}[section]

\newtheorem{lemma}[theorem]{Lemma}

\newtheorem{proposition}[theorem]{Proposition}
\theoremstyle{remark}
\newtheorem{remark}{Remark}

\numberwithin{equation}{section}

\newcommand{\UU}{\overline{W}}

\newcommand{\veps}{\varepsilon}
\newcommand{\ve}{\varepsilon}
\newcommand{\wt}{\widetilde}

\newcommand{\RNum}[1]{\uppercase\expandafter{\romannumeral #1\relax}}

\title[Blow-up in the regularized Saint--Venant equations]{Asymptotic self-similar blow-up for the regularized Saint--Venant equations}  
\author[Y. Kim, B. Kwon, and W. Shim]{Yunjoo Kim, Bongsuk Kwon, and Wanyong Shim}

\address{(Yunjoo Kim) Department of Mathematical Sciences, Ulsan National Institute of Science and Technology, Ulsan, 44919, Korea}
\email{gomuli3@unist.ac.kr}

\address{(Bongsuk Kwon) Department of Mathematical Sciences, Ulsan National Institute of Science and Technology, Ulsan, 44919, Korea}
\email{bkwon@unist.ac.kr}

\address{(Wanyong Shim) Department of Mathematical Sciences, Korea Advanced Institute of Science and Technology, Daejeon, 34141, Korea}
\email{wyshim25@kaist.ac.kr}
\subjclass{35Q35; 35A21; 35C06; 76B15}

\thanks{\textbf{Acknowledgment.}
B.K. was supported by the National Research Foundation of Korea(NRF) grant funded by the Korea government(MSIT)(00560003).}

\begin{document}

\begin{abstract}
We investigate singularity formation in the regularized Saint--Venant (rSV) equations, a conservative, non-dispersive shallow water system that is formally regarded as a Hamiltonian regularization of the isentropic Euler equations. While it is known that smooth solutions to the rSV system can develop gradient blow-up in finite time, the precise structure of such singularities has not been rigorously characterized. In this work, we establish stability of self-similar blow-up profiles of the Hunter--Saxton equation within the rSV framework, using a nonlinear bootstrap argument in dynamically rescaled coordinates. Our analysis captures the detailed space-time dynamics of solutions near the singularity, and proves their sharp $C^{3/5}$ H\"older regularity at the singular time. This regularity differs from the $C^{1/3}$ H\"older regularity of the cubic-root singularities found in the compressible Euler and inviscid Burgers equations. This contrast highlights the structural influence of the Hamiltonian regularization on singularity formation. To illuminate this effect, we also show that the same $C^{3/5}$ blow-up profile emerges in the regularized Burgers equation, a scalar analogue of the rSV system. ~\\

\noindent{\it Keywords}: Regularized Saint--Venant equations; Self-similar blow-up; Singularity formation; Regularized Burgers' equation 

\end{abstract}

\maketitle


\section{Introduction}
We consider the regularized Saint--Venant (rSV) equations:
\begin{equation} \label{rSV1}
	\begin{split}
		& h_t + (hu)_x =0, \\
		& (hu)_t + \left( hu^2 + \frac{1}{2} gh^2 + \gamma \mathcal{S} \right)_x = 0,  \quad \text{where}\\
		& \mathcal{S} := h^3 \left( -u_{tx} - uu_{xx} + u_x^2 \right) -gh^2 \left( hh_{xx} + \frac{1}{2}h_x^2 \right)
	\end{split}
\end{equation}
for $x \in \mathbb{R}$ and $t > -\varepsilon$ with $\varepsilon>0$. This system serves as a higher-order regularized model for shallow water flows. Here the unknowns $h=h(x,t)>0$ and $u=u(x,t)$ represent the fluid depth and the depth-averaged horizontal velocity, respectively, and the positive constants $g$ and $\gamma$ denote the gravitational acceleration and a dimensionless regularization parameter. For simplicity, we normalize the parameters by setting $g=\gamma=1$ unless stated otherwise.

In the formal limit $\gamma \to 0$, the system \eqref{rSV1} reduces to the classical Saint--Venant (cSV) equations, a hyperbolic system that may admit non-unique weak solutions and develop discontinuous shock waves. Motivated by these challenging features, several regularization methods for such hyperbolic systems have been proposed, typically involving dissipative or dispersive corrections \cite{KL, Le}. These approaches, however, may compromise physical fidelity, for instance by violating energy conservation, or introduce artificial oscillations \cite{CL, GH, LL}. In view of these considerations, the rSV system was introduced in \cite{CD} as a conservative, non-dispersive variant of the Green--Naghdi equations \cite{GLN, GN} with a Hamiltonian structure. It preserves the non-dissipativity of the cSV system while ensuring the conservation of mass, momentum, and energy.

Despite being a regularized model, the rSV system exhibits a range of singular behaviors that distinguish it from other regularized systems. Notably, smooth solutions may still develop finite-time gradient blow-up, i.e., loss of \( C^1 \) regularity, as shown in \cite{LPP}. In addition, the system admits weakly singular traveling wave solutions that are continuous but not differentiable, such as piecewise smooth shocks and cusped solitary waves \cite{PPDC}. The shock profiles satisfy the Rankine--Hugoniot condition and propagate at the same speed as hydraulic shocks in the cSV system. They may therefore be regarded as regularized counterparts of the cSV shocks. These distinct nonlinear phenomena have motivated further studies of the rSV system and other Hamiltonian regularized models; see \cite{CDM, Gu, Gu1, GCJ, GCJ1, GJCP}.

These singular behaviors place the rSV system within a broader class of nonlinear fluid models that develop singularities in finite time. Prototypical examples include the compressible Euler equations, the inviscid Burgers equation, and various water wave models. In such systems, recent advances have revealed that the singularity profile generically attains $C^{1/3}$ H\"older regularity at the blow-up time. See \cite{BSV, BSV1} for the multi-dimensional compressible Euler equations, \cite{CGIM, CGM} for the two-dimensional unsteady Prandtl system and the Burgers equation with transverse viscosity, respectively, \cite{BKK1, BKK2} for the Euler--Poisson system, and \cite{OP} for some perturbations of the Burgers equation, where the blow-up dynamics have been rigorously studied. More recently, a different type of singularity with $C^{3/5}$ regularity has been identified in the Hunter--Saxton (HS) and Camassa--Holm (CH) equations in \cite{KKY}. In this context, our results show that the rSV system provides a new example of the $C^{3/5}$-type singularity. Here we note that the cSV system is formally equivalent to the isentropic Euler equations with a specific pressure law, and hence gives rise to $C^{1/3}$ regularity at blow-up. The observed gap between the $\textstyle \frac13$ and $\textstyle  \frac35$ H\"older exponents indicates a structural change in blow-up dynamics resulting from the regularization.

In this paper, we establish the sharp $C^{3/5}$ H\"older regularity of gradient blow-up solutions for the rSV equations, along with the detailed spatial and temporal dynamics of the blow-up. This provides rigorous confirmation of the heuristic $x^{3/5}$-root singularity discussed in \cite{LPP}, near the blow-up point. To support a structural interpretation of the observed $C^{3/5}$ regularity, we also examine the regularized Burgers (rB) equation derived in \cite{GJCP},
\begin{equation} \label{rB'}
v_t + vv_x = v_{xxt} + 2v_xv_{xx} +vv_{xxx},
\end{equation}
a scalar analogue of the rSV system that preserves similar conservative and non-dispersive regularization properties. Following the analysis for rSV, we verify that the rB equation also exhibits the $C^{3/5}$-type singularity. In the Hamiltonian regularizations considered here, the non-dispersive corrections ensure conservation of $H^1$-like energy. This structural change modifies the leading-order dynamics underlying singularity formation and results in enhanced H\"older regularity. A detailed discussion of this observation is given in Section~\ref{Discussion}.

\subsection{Reformulation using Riemann functions}
We begin by introducing a nonlocal operator. For a smooth function $w : \mathbb{R} \to \mathbb{R}$, we define
\begin{equation} \label{I_h}
\mathcal{I}_h(w) := h w - \left( h^3 w_x \right)_x.
\end{equation}
Provided that $\textstyle \inf_{x \in \mathbb{R}} h >0$ and $h \in W^{1,\infty}(\mathbb{R})$, this operator defines an isomorphism from $H^2(\mathbb{R})$ to $L^2(\mathbb{R})$; see \cite[Lemma~3.3]{LPP}. Consequently, the inverse $\mathcal{I}_h^{-1}$ is well-defined as a nonlocal operator. With this notation, the rSV system \eqref{rSV1} can be rewritten as
\begin{equation}\label{eta_u}
	\begin{split}
		& h_t + (hu)_x = 0,\\
		& u_t + uu_x + h_x = - \mathcal{I}_h^{-1} \partial_x \left( 2 h^3 u_x^2 - \frac{1}{2}h^2 h_x^2 \right). 
	\end{split}
\end{equation}
Here we note that the right-hand side of the second equation can be expressed using the definition \eqref{I_h} as
\begin{equation}\label{G-q}
	\begin{split}
		-\mathcal{I}_h^{-1} \partial_x \left( 2 h^3 u_x^2 - \frac{1}{2}h^2 h_x^2 \right) = 2 G - 2 q,
	\end{split}
\end{equation}
where
\begin{equation}\label{G}
	G(x,t) := \int_{-\infty}^x (u + \sqrt{h})_x (u - \sqrt{h})_x \, dx', \qquad q(x,t) := \mathcal{I}_h^{-1} (hG).
\end{equation}
We next introduce the \emph{Riemann functions}
\begin{equation} \label{wz}
	w := u + 2\sqrt{h}, \quad z := u - 2\sqrt{h},
\end{equation}
associated to the characteristic speeds
\begin{equation*}
\lambda_+ := \frac{1}{4} (3w+z), \quad \lambda_- : = \frac{1}{4} (w+3z),
\end{equation*}
respectively. The functions $w$ and $z$ coincide with the Riemann invariants of the classical shallow water system, which corresponds to \eqref{rSV1} with $\gamma = 0$ and $g=1$. 

Applying the identity \eqref{G-q} and the temporal scaling \( \textstyle \tilde{t} = \frac{3}{4}t \), we rewrite the system \eqref{eta_u} in terms of the Riemann functions:
\begin{subequations} \label{rSV'}
\begin{align}
\label{rSV'1} w_{t} + \left( w + \frac{1}{3} z \right) w_x = \frac{8}{3}G - \frac{8}{3}q, \\
\label{rSV'2} z_{t} + \left( z + \frac{1}{3} w \right) z_x = \frac{8}{3}G - \frac{8}{3}q,
\end{align}
\end{subequations}
where, for notational simplicity, we continue to write $t$ in place of \( \tilde{t} \).

\newcommand{\tw}{v}

\subsection{Self-similar Hunter--Saxton profiles} \label{sec:HSpro}
Following \cite{KKY}, we consider the stable self-similar blow-up profile $\overline{W}(y)$ for the Hunter--Saxton equation, satisfying the following ODE in similarity variables:
\begin{equation} \label{Weq}
\left( 1 + \frac{1}{2} \overline{W}'(y) \right) \overline{W}'(y) + \left( \overline{W}(y) + \frac{5}{2}y \right) \overline{W}''(y) = 0, \quad y \in \mathbb{R},
\end{equation}
along with the asymptotic growth condition
\begin{equation} \label{decay-infty}
\lvert \overline{W} (y) \rvert \lesssim \lvert y \rvert^{a}, \quad \lvert y \rvert \geq 1
\end{equation}
for some constant $a \in [0,1]$. We recall below the existence and properties of such profiles.

\begin{proposition}[\cite{KKY}, Proposition~2.1]\label{Profile-construct}
The ODE problem \eqref{Weq}--\eqref{decay-infty} admits a one-parameter family of smooth solutions $\{ \overline{W}_\beta(y) : \beta>0 \}$ such that for each $\beta>0$, $\overline{W}_\beta(y)$ is monotonically decreasing on $\mathbb{R}$ and is odd, i.e., $\overline{W}_\beta(-y) = - \overline{W}_\beta(y)$ for all $y\in\mathbb{R}$. Furthermore, it holds that
\begin{equation}\label{W_taylor_small}
\overline{W}_\beta (y) = -2y+\frac{128 \beta}{3} y^3+O(y^5) \quad\text{for }|y|\ll 1
\end{equation}
and
\begin{equation} \label{W_taylor_far}
\overline{W}_\beta (y) = -\frac{5}{3(50\beta)^{1/5}}y^{3/5}+ o(y^{3/5}) \quad \text{for } |y|\gg 1.
\end{equation}
In particular,
\begin{equation}\label{WW-0}
\overline{W}_\beta(0) = 0, \quad \overline{W}_\beta'(0) =-2, \quad \overline{W}_\beta''(0) = 0, \quad \overline{W}_\beta^{(3)} (0) = 256 \beta, 
\end{equation} 
and, as $|y|\rightarrow \infty$,
\begin{equation}\label{asymp-y-infty}
|y|^{-3/5} | \overline{W}_\beta(y) | \to \frac{5}{3}(50\beta)^{-1/5}, \quad |y|^{2/5} | \overline{W}_\beta'(y) | \to (50\beta)^{-1/5}, \quad |y|^{7/5} | \overline{W}_\beta''(y) | \to \frac{2}{5}(50\beta)^{-1/5}. 
\end{equation}
\end{proposition}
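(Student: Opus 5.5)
The ODE \eqref{Weq} can be integrated in closed form, so the plan is to produce an implicit algebraic relation for $\overline{W}$ and read off every claimed property from it. Set $W=\overline{W}$ and look for a first integral. Differentiating $(W+\frac52 y)$ gives $W'+\frac52$, which suggests treating $y$ as a function of $W$, or equivalently using $p:=W'$ as a variable. Writing \eqref{Weq} as $(W+\frac52 y)p' = -p(1+\frac12 p)$ and noting $\frac{d}{dy}(W+\frac52y)=p+\frac52$, we get a separable relation:
\[
\frac{d(W+\tfrac52 y)}{W+\tfrac52 y}=-\frac{(p+\tfrac52)\,dp}{p(1+\tfrac12 p)}.
\]
Integrating with partial fractions, $\frac{p+5/2}{p(1+p/2)}=\frac{5/2}{p}-\frac{3}{2+p}$, gives
\[
(W+\tfrac52 y)\,|p|^{5/2}\,|2+p|^{-3}=C.
\]
Normalizing $p(0)=-2$ forces $C=0$ unless we instead parametrize near $p=-2$. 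Since $|2+p|^{-3}$ blows up there, with $W(0)=0$ the natural parametrization is
\[
W+\tfrac52 y=-\kappa\,(2+W')^{3}\,|W'|^{-5/2},
\]
where $\kappa$ is tied to $\beta$. Next I would reparametrize by $s=-W'\in(0,2]$: from $dW=-s\,dy$ and the relation above, both $y(s)$ and $W(s)$ come out explicitly as integrals in $s$. This gives a smooth odd solution, one for each sign branch, and monotonicity follows from $s>0$.

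For the local expansion I would match Taylor coefficients directly in \eqref{Weq}. Oddness kills the even terms. Plugging in $W=-2y+cy^3+\dots$, the $O(y)$ terms cancel identically because $(1-1)\cdot(-2)=0$, so $c$ is free; this free coefficient is the one-parameter family, and we label it $c=\frac{128\beta}{3}$, which yields \eqref{W_taylor_small} and \eqref{WW-0}. The first integral expanded at $s\to2$ relates $\kappa$ to $\beta$.

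For large $|y|$ we have $s\to0$. The first integral gives $W+\frac52y\approx -8\kappa s^{-5/2}$, so $s\sim|y|^{-2/5}$. Integrating $dW=-s\,dy$ then gives $W\sim -\frac53 A y^{3/5}$ with $s\sim A y^{-2/5}$, and the constant $A=(50\beta)^{-1/5}$ comes from tracking $\kappa(\beta)$. The asymptotics \eqref{asymp-y-infty} for $W'$ and $W''$ follow by differentiating the expansions, which is legitimate because of the explicit parametrization, and this also verifies \eqref{decay-infty} with $a=3/5$.

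The main obstacle is global existence, namely that the solution branch starting at $p=-2$ reaches $p\to0$ exactly as $y\to\infty$ without the coefficient $W+\frac52y$ vanishing for $y>0$ (a singular point of the ODE). I would handle this with the sign structure: for $y>0$ we have $W+\frac52 y>0$ as long as $s<\frac52$, and $s$ decreases monotonically because $p'=-p(1+p/2)/(W+\frac52y)$ with $p\in(-2,0)$ gives $p'>0$. Computing the constant exactly is routine bookkeeping.
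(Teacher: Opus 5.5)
\textbf{Gap: the first integral is wrong.} Your route is the right one: a first integral in $\Phi:=W+\tfrac52 y$ and $p:=W'$, then $y$ written explicitly as a function of $W'$. It is essentially the argument of \cite{KKY}, which the paper only cites; the paper's appendix records the outcome for $\beta=1$, namely $\overline{W}''=2(2+\overline{W}')^{1/2}(-\overline{W}')^{7/2}$, and the separated-variables formula for $y$ in terms of $\overline{W}'$.

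However, the key computation is wrong, and everything you defer to ``routine bookkeeping'' rests on it. The partial fractions are $\frac{p+5/2}{p(1+p/2)}=\frac{2p+5}{p(p+2)}=\frac{5/2}{p}-\frac{1/2}{p+2}$. Your $\frac{5/2}{p}-\frac{3}{2+p}$ recombines to $\frac{5-p/2}{p(p+2)}$. The correct first integral is $(W+\tfrac52 y)|W'|^{5/2}|2+W'|^{-1/2}=C$, i.e.
\[
W+\tfrac52 y=C\,\rho\,(-W')^{-5/2},\qquad \rho:=\mathrm{sgn}(y)\,(2+W')^{1/2},
\]
not $W+\tfrac52 y=-\kappa(2+W')^{3}|W'|^{-5/2}$. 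Your relation is not a first integral of \eqref{Weq}, and it cannot hold for the profiles you want. Near $y=0$ its left side is $\tfrac12 y+O(y^3)$, while its right side is $O(y^6)$ once $2+W'=128\beta y^2+\dots$. So no $\kappa(\beta)$ exists, and the factor $8=2^3$ in your far-field step should be $\sqrt2$. For $\kappa>0$ the formula also contradicts your own claim that $W+\tfrac52 y>0$ for $y>0$.

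\textbf{Second gap: existence and smoothness through $y=0$.} Formal Taylor matching only shows that the cubic coefficient is undetermined (the $y^2$-terms cancel as $-3c+3c$); it does not produce a solution. The parameter $s=-W'$ is degenerate at $y=0$, since $W'$ is even and $W''(0)=0$. The autonomous equation $W''=\frac{1}{2C}(2+W')^{1/2}(-W')^{7/2}$ obtained for $y>0$ is not Lipschitz at $W'=-2$, and $W=-2y$ solves it with the same data.

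\textbf{How the corrected integral closes both gaps.} Substituting the first integral into \eqref{Weq} turns it into the regular equation $\rho'=\frac{1}{4C}(2-\rho^2)^{7/2}$ with $\rho(0)=0$. Its solution is analytic, odd and increasing, with $\rho\to\pm\sqrt2$ as $y\to\pm\infty$.
\begin{enumerate}[(i)]
\item Hence $W'=-2+\rho^2\in[-2,0)$ is even and analytic, and $W$ is odd, decreasing and global.
\item One checks that $\Phi-C\rho(-W')^{-5/2}$ has zero derivative and vanishes at $0$, so $W$ really solves \eqref{Weq}.
\item $W'''(0)=2\rho'(0)^2=16/C^2$, so $W'''(0)=256\beta$ forces $C=\frac{1}{4\sqrt\beta}$. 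This reproduces the quoted equation at $\beta=1$.
\item As $W'\to0$, $\tfrac52 y\sim\sqrt2\,C|W'|^{-5/2}$ gives $|y|^{2/5}|W'|\to(50\beta)^{-1/5}$. Integrating yields the $y^{3/5}$ law, and the $W''$ limit follows directly from the second-order equation.
\end{enumerate}
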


Without loss of generality, we fix $\beta=1$, and denote the corresponding profile by $\overline{W}(y):=\overline{W}_1(y)$ throughout this paper.

\subsection{Main result}
We formulate an initial value problem for the rSV system \eqref{rSV'} and describe the assumptions on the initial data that lead to the formation of a finite-time singularity. Let us set the initial time $t=-\veps$, where $\veps$ is to be chosen sufficiently small. We consider the initial data $(w_0,z_0)(x) := (w,z)(x,-\veps)$ satisfying
\begin{equation} \label{init_h}
\inf_{x\in\mathbb{R}}(w_0 - z_0)(x) >0
\end{equation}
and
\begin{equation} \label{IC}
  (w_0-2\sqrt{h_*},z_0+2\sqrt{h_*})(x) \in H^5(\mathbb{R}) \times H^5(\mathbb{R})  \subset C^4(\mathbb{R}) \times C^4(\mathbb{R}),
\end{equation}
where $h_*>0$ is a fixed constant representing the background fluid depth. Note that the condition \eqref{init_h} is equivalent to $\textstyle \inf_{x\in\mathbb{R}}h_0(x)>0$, where $h_0(x) := h(x,-\varepsilon)$.

We now list further assumptions on $(w_0,z_0)$, which are satisfied on a non-empty open set of initial data in an appropriate topology; see Remark~\ref{rem1}. We first impose the conditions
\begin{equation}\label{init_w0}
	\partial_xw_0(0) = - 2\varepsilon^{-1}, \quad \partial_x^2w_0(0) = 0, \quad \partial_x^3w_0(0) = 256\veps^{-6}
\end{equation}
together with the following uniform and $L^2$ bounds:
\begin{equation}\label{init_wbound}
	\begin{split}
		& \lVert \partial_xw_0 \rVert_{L^\infty} \leq 2 \varepsilon^{-1},
		\quad \lVert \partial_x^4w_0 \rVert_{L^\infty} \leq \veps^{-17/2}, \quad \lVert \partial_x^jw_0 \rVert_{L^2} \leq C_j\varepsilon^{-(10j-11)/4},  \quad j=2,3,4,\\
		& \lVert z_0+2\sqrt{h_*} \rVert_{L^\infty} \leq 1, \quad \lVert \partial_x z_0 \rVert_{L^\infty} \leq \frac{1}{\sqrt{h_*}}, \quad \lVert \partial_x^j z_0 \rVert_{L^\infty} \leq 1, \quad j=2,3,4,
	\end{split}
\end{equation}
with $C_2=C_4=1$ and $C_3=2^{16}$. These assumptions ensure that $\partial_x w_0$ attains its global minimum at $x=0$, and notably the constraint $\textstyle \lVert \partial_x z_0 \rVert_{L^\infty} \leq \frac{1}{\sqrt{h_*}}$ is consistent with the setup employed in \cite[Section~5.2]{LPP}. We emphasize that no uniform bound is imposed on $\partial_x^2 w_0$ or $\partial_x^3 w_0$ in \eqref{init_wbound}. Indeed, the  $\varepsilon$-dependent $L^2$ bounds in \eqref{init_wbound} yield $L^\infty$ control of these derivatives via a one-dimensional Gagliardo--Nirenberg inequality; see Remark~\ref{init_rmk}. With our choice of $C_j$, the assumptions \eqref{init_w0} and \eqref{init_wbound} are compatible.

To obtain a sharp description of the blow-up regularity of $w$, we assume that
\begin{equation}\label{init_wx_weight}
	\left\lvert \varepsilon (\partial_xw_0)(x) - \overline{W}' \left( \frac{x}{\varepsilon^{5/2}} \right) \right\rvert \leq \min{\left\{ \frac{\left( \frac{x}{\varepsilon^{5/2}} \right)^2}{3000 \left( 1 +  \left( \frac{x}{\varepsilon^{5/2}} \right)^2 \right) }, \frac{\Theta}{1+\left( \frac{x}{\varepsilon^{5/2}} \right)^{2/5}} \right\} }
\end{equation}
for all $x \in \mathbb{R}$ and
\begin{equation}\label{init_wx_dec}
	\lim_{\lvert x \rvert \to \infty}{\lvert x^{2/5} \partial_xw_0(x)\rvert} \leq \frac{\theta}{2}, \quad \theta := \frac{1}{3} \left( \frac{6}{13} - \Theta \right).
\end{equation}
Here $\overline{W}$ is the smooth solution of \eqref{Weq} described in Proposition \ref{Profile-construct} and $\Theta >0$ is any number such that $\textstyle \Theta \in (50^{-1/5}, \frac{6}{13})$. As for $z_0$, we impose the following weighted condition on $\partial_x z_0$:
\begin{equation}\label{init_zx_weight}
	(\veps+x^{2/5})|\partial_xz_0(x)|\leq \frac{1}{\sqrt{h_*}}.
\end{equation}

We define the energy $E$ associated with \eqref{eta_u} as
\begin{equation} \label{def_E}
	E(t) := \int_{\mathbb{R}}  \frac{1}{2} h u^2 + \frac{1}{2}(h - h_*)^2 + \frac{1}{2} h^3 \left( u_x^2 + \frac{h_x^2}{h} \right) \, dx.
\end{equation}
Note that as long as the smooth solution persists, the energy $E$ is conserved for all $t \geq -\varepsilon$, that is,
\begin{equation} \label{Energy}
	E(t) = E(-\varepsilon) =: E_0.
\end{equation}
We assume that the initial energy satisfies
\begin{equation} \label{E0_bound}
	E_0\leq \frac{h_*^3}{6}.
\end{equation}
It will be shown in Lemma~\ref{h_max} that the condition \eqref{E0_bound} guarantees that the positivity condition \eqref{init_h} for $h$ holds, as long as the smooth solution exists.

In what follows, $C^\alpha(\Omega)$ denotes  the H\"older space with exponent $\alpha\in(0,1]$ for a bounded open set $\Omega \subset \mathbb{R}$, and $[w]_{C^{\alpha}(\Omega) }$ represents the corresponding H\"older semi-norm, defined as 
\begin{equation*}
	[w]_{C^{\alpha}(\Omega) } := \sup_{x,y\in \Omega, x\ne y} \frac{|w(x) - w(y) | }{|x-y|^{\alpha}}.
\end{equation*}
We use the convention that for any scalar-valued function space $X(\mathbb{R})$, the notation $(u,v)\in X(\mathbb{R})$ is understood componentwise, i.e., $u,v\in X(\mathbb{R})$.

The main theorem is stated as follows.

\begin{theorem}\label{mainthm}
There exists a constant $\ve_0 >0$ such that for each $\veps\in(0,\veps_0)$, if the initial data $(w_0-2\sqrt{h_*},z_0+2\sqrt{h_*})\in H^5(\mathbb{R})$ satisfies \eqref{init_h}--\eqref{init_zx_weight} and \eqref{E0_bound}, then the initial value problem for \eqref{rSV'} admits a unique smooth solution $(w,z)$ satisfying
\begin{equation} \label{sol_reg}
(w-2\sqrt{h_*}, z+2\sqrt{h_*}) \in C([-\varepsilon,T_*); H^5(\mathbb{R})) \cap C^1([-\varepsilon,T_*); H^4(\mathbb{R}))
\end{equation}
and
\begin{equation} \label{sol_reg1}
(w,z) \in C^4([-\varepsilon,T_*) \times \mathbb{R}),
\end{equation}
which blows up in the $C^1$-norm at $t=T_*$ for some $T_*<\infty$. Moreover, the following holds:
\begin{enumerate}[(i)]
\item For any bounded open set $\Omega\subset\mathbb{R}$,
\begin{equation} \label{no-blow}
\sup_{t<T_*} [w(\cdot,t)]_{C^{3/5}(\Omega)}<\infty.
\end{equation}
\item For any $\textstyle \alpha \in (\frac{3}{5},1]$ and any bounded open set $\Omega\subset\mathbb{R}$,
\begin{equation} \label{blow-up-result1}
\begin{cases}
\lim_{t\nearrow T_*} [w(\cdot,t)]_{C^\alpha(\Omega)} = \infty& \text{if } x_*\in\Omega, \\
\lim_{t\nearrow T_*} [w(\cdot,t)]_{C^\alpha(\Omega)} <\infty& \text{if } x_*\notin\overline{\Omega},
\end{cases}
\end{equation}
where $x_*$ is the blow-up location.\footnote{$x_*$ is defined in the proof of Theorem~\ref{mainthm}.}
\item For any bounded open set $\Omega$ containing $x_*$ and any $\textstyle \alpha \in (\frac{3}{5},1]$,
\begin{equation} \label{blow-up-rate1}
[w(\cdot,t)]_{C^\alpha(\Omega)}\sim (T_*-t)^{-(5\alpha-3)/2}
\end{equation}
for all $t$ sufficiently close to $T_*$.\footnote{Here, $A(t) \sim B(t) $ means that $C^{-1} B(t) \le A(t) \le C B(t)$ for some $C>0$ independent of $t$.}
\item For $z$, we have
\begin{equation} \label{z_result}
\sup_{t<T_*}[z(\cdot,t)]_{C^1(\mathbb{R})}<\infty.
\end{equation}
\end{enumerate}
\end{theorem}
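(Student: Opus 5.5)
The plan is to adapt the modulated self-similar framework of \cite{KKY} (and of \cite{BSV,BKK1}) to the system \eqref{rSV'}. We introduce dynamically rescaled variables
\[
s=-\log(\tau(t)-t),\qquad y=\frac{x-\xi(t)}{(\tau(t)-t)^{5/2}},\qquad w(x,t)=e^{-3s/2}W(y,s)+\kappa(t),
\]
where the scalings are chosen so that $\partial_x w\approx (\tau-t)^{-1}\overline{W}'(y)$; this is consistent with \eqref{init_w0} and \eqref{init_wx_weight}. The modulation functions $\tau,\xi,\kappa$ are fixed by imposing, for all $s$, the constraints $W(0,s)=0$, $\partial_yW(0,s)=-2$ and $\partial_y^2W(0,s)=0$, which match \eqref{WW-0}. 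In these variables the $w$-equation becomes
\[
\partial_s W-\tfrac32 W+\Bigl(\tfrac52 y+\beta_\tau e^{s/2}\bigl(\kappa+\tfrac13 z-\dot\xi\bigr)+\beta_\tau W\Bigr)\partial_yW=F_W ,
\]
where $\beta_\tau=1/(1-\dot\tau)$ and $F_W$ collects the forcing $\tfrac83(G-q)$. We then show that $\partial_yW$ is governed by a perturbation of the Hunter--Saxton-type equation whose stationary solution is $\overline{W}'$.

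The key observation is that $G$ in \eqref{G} is an integral of $w_xz_x$. It therefore contains the term $\tfrac{1}{4}\int w_x^2$-like contributions only through $(u+\sqrt h)_x(u-\sqrt h)_x$. Differentiating once, $\partial_x G$ produces the quadratic term $-\tfrac12 w_x^2$ up to lower-order terms, so that $\partial_x$ applied to \eqref{rSV'1} gives, to leading order, $(w_x)_t+w w_x^{\,}{}_x+\tfrac12 w_x^2\approx 0$. This is exactly the structure behind \eqref{Weq}: the $\tfrac12\overline W'^2$ coefficient replaces the Burgers $\overline W'^2$, and it is this replacement that yields the $3/5$ exponent. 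The nonlocal $q=\mathcal I_h^{-1}(hG)$ gains two derivatives. Hence $q_x$ stays bounded, and it enters only as an $e^{-s}$-small forcing in self-similar variables.

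The steps, in order, are as follows.
\begin{enumerate}[(1)]
\item Establish local well-posedness in the class \eqref{sol_reg}. We also prove a continuation criterion stating that the solution persists while $\|w_x,z_x\|_{L^\infty}$ stays finite and $h$ stays positive. Positivity and upper bounds on $h$ follow from \eqref{E0_bound}, through Lemma~\ref{h_max} and the energy conservation \eqref{Energy}.
\item Set up bootstrap assumptions on $W$, its derivatives and the modulation functions:
\begin{itemize}
\item $|\partial_yW-\overline W'|$ satisfies the weighted bound of \eqref{init_wx_weight}, namely $\lesssim y^2/(1+y^2)$ near $0$ and $\lesssim \Theta(1+y^{2/5})^{-1}$ globally;
\item $|\partial_y^3W(0,s)-256|$ is small;
\item $|\dot\tau|,|\dot\xi|,|\dot\kappa|$ satisfy suitable bounds;
\item $(\varepsilon+|x-\xi|^{2/5})|z_x|\lesssim h_*^{-1/2}$ holds;
\item the $L^2$ bounds of \eqref{init_wbound} are propagated through an $H^4$-type energy estimate in $y$.
\end{itemize}
\item Close the bootstrap. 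We integrate the transport equations for the weighted quantities along Lagrangian trajectories of $\tfrac52 y+\beta_\tau W+\dots$, which escape at the rate $e^{s}$-type. For $z_x$ we use that $z$ is transported at speed $z+\tfrac13 w$, which separates from $\lambda_+$ by $\approx \tfrac43\sqrt h$. Consequently $z$-characteristics cross the singular region in time $O((\tau-t)^{5/2})$, and together with the $x^{-2/5}$-integrable weight this keeps $z_x$ bounded; this is \eqref{z_result}. The forcing terms $G$ and $q$ are bounded using $\|w_x\|_{L^2}$, which is controlled by $E_0$, together with the isomorphism property of $\mathcal I_h$.
\item Deduce the blow-up. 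From the bootstrap, $\dot\tau\to0$ and $\tau(t)\to T_*$, while $\xi\to x_*$. Since $\|w_x\|_{L^\infty}\sim (T_*-t)^{-1}$, the solution leaves $C^1$ at $T_*$.
\end{enumerate}

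The H\"older statements (i)--(iii) follow from the profile bounds. Writing $w(x)-w(x')$ in terms of $W$, the bound $|\partial_yW|\lesssim (1+|y|)^{-2/5}$ gives $|w_x|\lesssim |x-\xi|^{-2/5}$ uniformly in $t$, which yields \eqref{no-blow}. For $\alpha>\tfrac35$, the scaling $y\sim x(\tau-t)^{-5/2}$ gives $[w]_{C^\alpha}\sim e^{-3s/2}e^{5\alpha s/2}=(T_*-t)^{-(5\alpha-3)/2}$. The lower bound comes from evaluating at $y=0$ and $y=1$, and this proves \eqref{blow-up-rate1} and \eqref{blow-up-result1}. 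Away from $x_*$, uniform $C^1$ bounds hold since the weights decay.

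The main obstacle is closing the weighted global estimate on $\partial_yW-\overline W'$ with the sharp constant $\Theta<\tfrac{6}{13}$. This requires analyzing the damping term $1+\tfrac12\partial_yW+\dots$ in the linearized operator, which, unlike the Burgers case, is only marginally positive at large $|y|$. My first attempt would be to reproduce the stability argument of \cite{KKY} for the Hunter--Saxton profile, treating $\tfrac13 z$ and the $G,q$ terms as $e^{-s/2}$-small perturbations. The threshold $\theta=\tfrac13(\tfrac6{13}-\Theta)$ in \eqref{init_wx_dec} suggests precisely the margin needed to absorb these perturbations in the far field.
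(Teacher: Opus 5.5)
Your architecture matches the paper's: modulated self-similar variables with the constraints \eqref{constraint}, a bootstrap around $\overline{W}'$, the Hunter--Saxton reduction coming from $\tfrac83G_x=\tfrac12w_x^2+\tfrac53w_xz_x+\tfrac12z_x^2$, the far-field argument of \cite{KKY}, and transversality of the $z$-characteristics (relative speed $\tfrac23(w-z)=\tfrac83\sqrt h$ in the rescaled time). For $z_x$ alone, your pointwise use of transversality with $w_x^2\lesssim|x-\xi|^{-4/5}$ is a reasonable substitute for the paper's energy-based argument. However, the paper identifies two real difficulties, and your plan treats both as routine; with your justifications the bootstrap does not close.

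\textbf{First gap: the nonlocal term.} Boundedness of $q_x$ is not enough. You single out the weighted quantity $(1+y^{2/5})(W_y-\overline{W}')$ as the main obstacle, and the same issue arises in the weighted $z_x$ bound. In both, the forcing contains $(1+y^{2/5})e^{s/2}Q_y=(e^{-2s}+e^{-s}|x-\xi|^{2/5})\,q_x$. This is unbounded as $|y|\to\infty$ unless $q_x$ decays in $x$ uniformly in $t$. The far-field maximum-principle arguments (Lemmas~\ref{max_2} and \ref{rmk2}) need this forcing to be $O(e^{-s})$ uniformly on $\{|y|\ge m_0\}$. The paper proves $|x-\xi|^{4/5}|q_x|\le C$ (Lemma~\ref{lem_Qdec}) by combining two facts:
\begin{enumerate}[(a)]
\item the decay $|G_x|\lesssim|x-\xi|^{-4/5}$, which is where the weighted bootstraps on $W_y$ and $Z_y$ are really used;
\item exponential decay of the Green function of $1-\partial_xh^{-1}\partial_xh^3$ (Appendix~\ref{App:K}).
\end{enumerate}
A two-derivative gain for $\mathcal{I}_h^{-1}$ gives no such spatial localization.

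\textbf{Second gap: higher derivatives of $z$.} Your analysis of $z$ stops at $z_x$, but the modulation bounds you need are not bounds on $|\dot\xi|$. The transport speed contains $e^{3s/2}\bigl(\kappa-\dot\xi+\tfrac13Z\bigr)/(1-\dot\tau)$; the factor is $e^{3s/2}$, not $e^{s/2}$. So $\tfrac13e^{3s/2}Z$ is huge, and every damping estimate rests on the cancellation \eqref{Z}.
\begin{itemize}
\item By \eqref{xi}, the cancellation \eqref{Z} needs $e^{3s/2}|Z_{yy}(0,s)|\lesssim e^{-s}$.
\item Closing $|\partial_y^3W(0,s)-256|\le C\varepsilon$ needs $e^{3s/2}|\partial_y^3Z(0,s)|\lesssim e^{-s}$.
\end{itemize}
Equivalently, $|\partial_x^nz(\xi(t),t)|\lesssim(\tau-t)^{-5(n-1)/2}$ for $n=2,3$. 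Integrating the $\partial_x^nz$ equations in time gives only a bound one power of $(\tau-t)^{-1}$ worse (Lemma~\ref{lem:ZQ}). That leaves an $O(1)$ drift $U^W(0,s)$ and an $O(1)$ forcing in the equation for $\partial_y^3W(0,s)$.

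So transversality must also be exploited for the higher derivatives of $z$, and your pointwise version does not extend. The bootstrap gives no spatial decay of $\partial_y^kW$ for $k\ge2$. For instance, the forcing $w_{xx}^2$ in the $\partial_x^3z$ equation carries no factor $w_x$ and is only bounded by $C(\tau-t)^{-7}$, which integrates to the crude rate.

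The paper instead transfers $\int e^{s'}(\partial_y^kW\circ\psi_Z)^2\,ds'$ to an integral over $\psi_W$-labels via \eqref{rel:psi_ZW}--\eqref{psiW'bd}. It then bounds this by the $\varepsilon$-weighted initial $L^2$ norms of $\partial_x^kw_0$ from \eqref{init_wbound} (Lemma~\ref{Lemma:Z_high}, Appendix~\ref{App:Z}); that is the actual role of those hypotheses. Your proposed $H^4$-type energy estimate in $y$ could conceivably supply this integrability. It would only do so if fed into such a flow-comparison argument for $\partial_x^nz$, which your plan does not contain.
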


In view of \eqref{wz} and the positivity of $h$ (cf. Lemma~\ref{h_max}), Theorem~\ref{mainthm} implies that both $h$ and $u$ exhibit $C^{3/5}$ regularity up to the blow-up time and satisfy statements corresponding to \textit{(i)--(iii)}. The proof of Theorem~\ref{mainthm} is given in Section~\ref{sec2}. Below, we provide some remarks regarding the assumptions imposed on the initial data.

\begin{remark} \label{rem1}
For fixed $\varepsilon>0$, let $\mathcal A_\varepsilon$ be the set of initial data $(w_0,z_0)$ satisfying \eqref{init_h}--\eqref{init_zx_weight} and \eqref{E0_bound}. Then $\mathcal A_\varepsilon$ contains a non-empty relatively open subset of $\{(w_0, z_0) : \eqref{IC}\ \text{and}\ \eqref{init_w0}\ \text{hold}\}$, endowed with the topology generated by the $H^5$ norm of $(w_0-2\sqrt{h_*},z_0+2\sqrt{h_*})$ and the quantities appearing in \eqref{init_wx_weight}--\eqref{init_zx_weight}. We verify this by addressing the following two points.

	\smallskip
	\textbf{(i) Non-empty intersection of admissible sets of \eqref{init_wx_weight} and \eqref{init_wx_dec}:}
	From the asymptotic behavior \eqref{asymp-y-infty} and since $\textstyle \Theta \in (50^{-1/5}, \frac{6}{13})$, we have
	\begin{equation*}
	\lim_{|y| \to \infty} |y|^{2/5} |\overline{W}'(y)| = (50 )^{-1/5} < \Theta.
	\end{equation*}
	This shows that the weighted condition \eqref{init_wx_weight}, which imposes a constraint on $\partial_x w_0$, is compatible with sufficiently fast decay of $\partial_x w_0$ as $|x| \to \infty$. In particular, one can construct admissible data with compact support. Consequently, the admissible sets defined by \eqref{init_wx_weight} and \eqref{init_wx_dec} have a non-empty (and in fact fairly large) intersection, while remaining consistent with the initial condition \eqref{IC}.
	
	\smallskip
	\textbf{(ii) Compatibility of the remaining assumptions with the energy bound \eqref{E0_bound}:}  
	By the definition of the energy $E(t)$ in \eqref{def_E}, we have
	\begin{equation*}
		E_0 \leq \frac{1}{2}\|h_0\|_{L^{\infty}}\|u_0\|_{L^2}^2+\frac{1}{2}\|h_0-h_*\|_{L^2}^2+\frac{1}{2}\|h_0\|_{L^\infty}^3\left(\|\partial_x u_{0}\|^2_{L^2}+\frac{\|\partial_x h_{0}\|^2_{L^2}}{\inf_{x} h_0(x)}\right),
	\end{equation*}
	where $u_0(x) := u(x,-\varepsilon)$. Using the relations $\textstyle h = \frac{(w - z)^2}{16}$ and $\textstyle u = \frac{w + z}{2}$ from \eqref{wz}, we may bound the right-hand side in terms of $w_0$ and $z_0$. Since \eqref{init_wbound} provides a uniform bound on $z_0$, and \eqref{init_h} ensures $\textstyle \inf_x h_0(x) > 0$, we obtain
	\begin{equation*}
		E_0 \leq C \big( \|w_0\|_{L^\infty}, \inf_{x\in\mathbb{R}}h_0(x) \big) \left( \|u_0\|_{H^1}^2 + \|h_0 - h_*\|_{H^1}^2 \right).
	\end{equation*}
	We then compute using \eqref{wz} that
	\begin{equation}\label{w0*}
		(w_0 - 2\sqrt{h_*})^2 = u_0^2 + 4u_0 \cdot \frac{h_0 - h_*}{\sqrt{h_0} + \sqrt{h_*}} + 4 \cdot \frac{(h_0 - h_*)^2}{(\sqrt{h_0} + \sqrt{h_*})^2}.
	\end{equation}
	This implies that the map $(h_0 - h_*, u_0) \mapsto w_0 - 2\sqrt{h_*}$ is continuous from $L^2 \times L^2$ to $L^2$, and similarly from $H^1 \times H^1$ to $H^1$. The same argument applies to $z_0 + 2\sqrt{h_*}$. Thus we have
	\begin{equation*}
		E_0 \leq C \big( \|w_0\|_{L^\infty}, \inf_{x\in\mathbb{R}}h_0(x) \big) \left( \|w_0 - 2\sqrt{h_*}\|_{H^1}^2 + \|z_0 + 2\sqrt{h_*}\|_{H^1}^2 \right).
	\end{equation*}
	From this, we deduce that the energy bound \eqref{E0_bound} holds for initial data $(w_0,z_0)$ sufficiently close to the state $(2\sqrt{h_*}, -2\sqrt{h_*})$ in $H^1$, and this additional constraint is compatible with the other initial conditions. In particular, by the one-dimensional Gagliardo--Nirenberg inequality, the bounds \eqref{E0_bound} and $\lVert \partial_x^2 w_0 \rVert_{L^2} \leq \varepsilon^{-9/4}$ in \eqref{init_wbound} imply the uniform bound
	\begin{equation*}
	\lVert \partial_x w_0 \rVert_{L^\infty} \leq \sqrt{2} \lVert \partial_x w_0 \rVert_{L^2}^{1/2} \lVert \partial_x^2 w_0\rVert_{L^2}^{1/2} \leq C \varepsilon^{-9/8},
	\end{equation*}
	which is consistent with the bound $\lVert \partial_x w_0 \rVert_{L^\infty} \leq 2 \varepsilon^{-1}$ in \eqref{init_wbound} for sufficiently small $\varepsilon>0$.
\end{remark}

\begin{remark} \label{rem_rel}
\textbf{The initial condition \eqref{init_w0} can be relaxed.}
The condition that $\partial_x w_0$ attains a sufficiently negative global minimum at $x=0$ can be ensured by imposing $\partial_x^2 w_0(0) = 0$ and $\partial_x^3 w_0(0) > 0$ together with $\varepsilon\ll1$. In view of this, the last condition in \eqref{init_w0} is rather restrictive, since it prescribes the third derivative at $x = 0$ to take the precise value $256\varepsilon^{-6}$. This constraint, however, can be relaxed: it suffices for $\partial_x^3 w_0(0)$ to lie within an open interval depending on the small parameter $\varepsilon$.

In our analysis, we compare the function $w$ with the reference (self-similar) profile $\overline{W}=\overline{W}_1$, defined in Proposition~\ref{Profile-construct}, and show that $w$ remains in a neighborhood of this profile in the self-similar regime. 
The reason for the specific choice of $\partial_x^3 w_0(0)$ is to ensure that the third derivative of the rescaled $w$ matches that of $\overline{W}$ at the origin, thereby keeping the global difference small.

As seen in \eqref{WW-0}, the third derivative of $\overline{W}$ at $y = 0$ can be freely adjusted by varying the parameter $\beta > 0$. Hence, for the new profile $\overline{W}_\beta$ with a modified $\beta$, the matching condition at the origin can be shifted accordingly. This allows us to relax the condition in \eqref{init_w0} while keeping the entire analytical framework of this paper intact. For more details, see the role of the parameter $k_3$ in Section~1.2 of \cite{KKY}.
\end{remark}

\subsection{Blow-up for the regularized Burgers equation}\label{sec1.5}

We now turn our attention to the regularized Burgers (rB) equation. While some notation may coincide with that used in the analysis of the rSV system, the meaning will be clear from the context.

Let $\textstyle p:= - v_{xt} - \frac{v_x^2}{2} -vv_{xx}$. Then, the rB equation \eqref{rB'} can be rewritten as
\begin{equation} \label{rB}
\begin{split}
& v_t + vv_x = -p_x, \\
& p-p_{xx} = \frac{1}{2}v_x^2.
\end{split}
\end{equation}
We consider the initial value problem for \eqref{rB} with the initial data
\begin{equation} \label{rBic}
v_0(x) := v(x,-\varepsilon) \in H^5(\mathbb{R}) \subset C^4(\mathbb{R}).
\end{equation}
We impose the following assumptions on the initial data:
\begin{equation} \label{rBIC}
\begin{split}
& \partial_x v_0 (0) = - 2 \varepsilon^{-1}, \quad \partial_x^2 v_0(0) = 0, \quad \partial_x^3 v_0(0) = 256\veps^{-6}, \\
& \lVert \partial_x v_0 \rVert_{L^\infty} \leq 2 \varepsilon^{-1}, \quad \lVert \partial_x^2 v_0 \rVert_{L^\infty} \leq \veps^{-7/2}, \quad \lVert \partial_x^3 v_0 \rVert_{L^\infty} \leq 257\veps^{-6}, \quad \lVert \partial_x^4 v_0 \rVert_{L^\infty} \leq \veps^{-17/2}, \\
& \left| \varepsilon (\partial_x v_0)(x) - \overline{W}'\left( \frac{x}{\varepsilon^{5/2}} \right) \right| \leq \min{\left\{ \frac{\left( \frac{x}{\varepsilon^{5/2}} \right)^2}{3000\left(1+\left( \frac{x}{\varepsilon^{5/2}} \right)^2 \right)} , \frac{\Theta}{1+ \left( \frac{x}{\varepsilon^{5/2}} \right)^{2/5}} \right\}}, \\
& \lim_{|x| \to \infty}{|x^{2/5}\partial_x v_0(x)|} \leq \frac{\theta}{2}, \quad \theta := \frac{1}{3} \left( \frac{6}{13} - \Theta \right)
\end{split}
\end{equation}
for some $\textstyle \Theta\in ( 50^{-1/5}, \frac{6}{13})$, where $\overline{W}$ is defined in Section~\ref{sec:HSpro}.

\begin{theorem}\label{mainthm2}
		There exists a constant 
		$\ve_0 >0$ such that for each $\veps\in(0,\veps_0)$, if the initial data $v_0$ satisfies \eqref{rBic}--\eqref{rBIC},
		then the initial value problem for \eqref{rB} admits a unique smooth solution $v$ satisfying
\begin{equation*}
v \in C([-\varepsilon,T_*); H^5(\mathbb{R})) \cap C^1([-\varepsilon,T_*); H^4(\mathbb{R}))
\end{equation*}
and
\begin{equation*}
v \in C^4([-\varepsilon,T_*) \times \mathbb{R}),
\end{equation*}
		which blows up in the $C^1$-norm at $t=T_\ast$ for some $T_\ast<\infty$. Moreover, the exact same assertions \eqref{no-blow}--\eqref{blow-up-rate1} as in Theorem~\ref{mainthm} also hold for $v$.
\end{theorem}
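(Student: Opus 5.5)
The plan is to follow the strategy used for the rSV system in Theorem~\ref{mainthm}, which is simpler here because the rB equation is scalar. The nonlocal forcing $-p_x$ with $p=(1-\partial_x^2)^{-1}(\frac12 v_x^2)$ plays the role of $\frac83(G-q)$. First, $v\in H^5$ gives local well-posedness with the stated regularity and a continuation criterion in terms of $\|v_x\|_{L^\infty}$. The conserved energy $\int v^2+v_x^2\,dx$ (the $H^1$-type energy from the Hamiltonian regularization) supplies uniform bounds on $\|v\|_{L^\infty}$ and on $\|p\|_{W^{1,\infty}}$. This is the key structural point: the forcing $p_x = K'*\frac12 v_x^2$, where $K=\frac12 e^{-|x|}$, is bounded by $C\|v_x\|_{L^2}^2$ uniformly up to blow-up. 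Hence the forcing is a lower-order perturbation at the self-similar scale.

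Next I introduce the dynamically rescaled variables adapted to the Hunter--Saxton profile:
\begin{equation*}
v(x,t)=e^{-3s/2}\,W(y,s)+\kappa(t),\qquad y=\frac{x-\xi(t)}{e^{-5s/2}},\qquad s=-\log(\tau(t)-t).
\end{equation*}
The modulation functions $\tau,\xi,\kappa$ are fixed by the constraints $W(0,s)=0$, $W_y(0,s)=-2$, $W_{yy}(0,s)=0$. This matches the scaling in \eqref{rBIC}: $\varepsilon v_x\approx \overline W'(x/\varepsilon^{5/2})$. The constraints yield ODEs for $\dot\tau,\dot\xi,\dot\kappa$ with small forcing. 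The equation for $W_y$ then takes the form
\begin{equation*}
\partial_s W_y + W_y + \tfrac{1}{2}(\ldots)W_y^2 + \Bigl(\tfrac52 y + \beta_\tau W + \mathcal G\Bigr)W_{yy} = \mathcal F,
\end{equation*}
where $\beta_\tau=(1-\dot\tau)^{-1}$. The point is that the $W_y^2$ term carries the Hunter--Saxton structure $(1+\frac12\overline W')\overline W'$ of \eqref{Weq}, rather than the Burgers structure. The forcing $\mathcal F$ is $O(e^{-s})$, coming from $e^{-s}p_x$ and the modulation errors.

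The core is a bootstrap on $[-\log\varepsilon,\infty)$. The assumptions are:
\begin{itemize}
\item the weighted bound $|W_y-\overline W'|\le \min\{y^2/(C(1+y^2)),\ \Theta(1+y^{2/5})^{-1}\}$;
\item the tail bound $|x|^{2/5}|v_x|\lesssim\theta$ far out;
\item bounds on $W_{yy}$, $W_{yyy}$, $W^{(4)}$, with local smallness of $W_y^{(3)}(0)-256$;
\item the modulation bounds $|\dot\tau|,|\dot\xi|,|\dot\kappa|\lesssim e^{-s/2}$.
\end{itemize}
These are closed along Lagrangian trajectories of the transport velocity $\frac52y+W+\mathcal G$. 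Trajectories starting away from $0$ escape exponentially at least like $e^{s}|y_0|$, since $\frac52 y+\overline W\sim\frac12 y$ near $0$ and $\sim\frac52 y$ at infinity. The damping along trajectories comes from the linearization $1+\overline W'$ and from the weights $y^{2/5}$. Near $y=0$ I use a Taylor expansion combined with the $W^{(4)}$ bound, following \cite{KKY}. The explicit constants $3000$, $\Theta<6/13$ and $\theta$ are inherited from there. I expect the main obstacle to be the far-field weighted estimate $(1+y^{2/5})|W_y|\le\Theta$, where the damping is only marginal. Closing it requires the exact numerology $\Theta<6/13$, together with the fact that $\mathcal F$ decays like $e^{-s}$ even when multiplied by the weight $y^{2/5}\lesssim e^{s}|x|^{2/5}$. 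To handle this, I will bound $p_x$ in physical variables using the energy, and use the initial tail assumption on $v_0$ for $|x|\gtrsim1$.

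Once the bootstrap closes, $\tau(t)\to T_*$ and $\xi\to x_*$, and $\|v_x\|_{L^\infty}=e^{s}\|W_y\|_{L^\infty}\sim (T_*-t)^{-1}$, which gives $C^1$ blow-up. The H\"older statements follow as in the proof of Theorem~\ref{mainthm}. The uniform bound $|v_x|\lesssim (e^{-5s/2}+|x-\xi|)^{-2/5}$ integrates to a uniform $C^{3/5}$ bound. For $\alpha>3/5$, evaluating the difference quotient at scale $|x-\xi|\sim e^{-5s/2}$ and using $W_y\approx\overline W'$ gives
\begin{equation*}
[v]_{C^\alpha}\sim e^{-3s/2}e^{5\alpha s/2}=(T_*-t)^{-(5\alpha-3)/2}.
\end{equation*}
Away from $x_*$, the escape of trajectories gives boundedness of $v_x$.
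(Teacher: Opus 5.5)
Your overall architecture matches the paper's. You use HS self-similar variables with modulation enforcing $V(0,s)=0$, $V_y(0,s)=-2$, $V_{yy}(0,s)=0$ (writing $V$ for your $W$, as in Section~\ref{sec4}), a bootstrap around $\overline{W}$ closed by the weighted maximum-principle arguments of Section~\ref{sec:3.3}, and then the H\"older conclusions exactly as in the proof of Theorem~\ref{mainthm}.

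\textbf{The key structural step is false.} The step you single out as the key structural point is exactly the one that makes $\overline{W}$ the right profile, and it is wrong. Boundedness of $p_x$ does \emph{not} make the nonlocal term lower order at the self-similar scale. In \eqref{Veq} it enters as $-e^{3s}P_y/(1-\dot\tau)=-e^{s/2}p_x/(1-\dot\tau)$. Its value at $y=0$ is absorbed by $\dot\kappa$, but its variation in $y$ is of order one, since \eqref{Peq} gives $e^{3s}P_{yy}=e^{-2s}P-\frac12 V_y^2$. If $p_x$ were negligible, the leading-order equation would be Burgers, $\partial_sV_y+V_y+V_y^2+U^VV_{yy}=0$. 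For that equation $\overline{W}$ (with $\overline{W}'(0)=-2$) is not an approximate steady state.

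The missing idea is the identity $p_{xx}=p-\frac12v_x^2$ from \eqref{rB}. Differentiating the $v$ equation then gives $v_{xt}+vv_{xx}+\frac12v_x^2=-p$. So the coefficient $\frac12$ of $V_y^2$ is produced by the nonlocal term itself. The genuine remainder in the $V_y$ equation is $-e^{-2s}P/(1-\dot\tau)$, which comes from $p$, not from ``$e^{-s}p_x$''.

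Accordingly, the far-field weighted estimate you flag needs $e^{-2s}\|(1+y^{2/5})P\|_{L^\infty}\lesssim e^{-s}$. In physical variables this is $|x-\xi|^{2/5}|p|\lesssim1$ for $|x-\xi|\gtrsim1$. Energy gives only $|p|\le C$. The paper gets the decay from the convolution formula for $P$ combined with the \emph{current} bootstrap decay \eqref{rBass2}, not from the initial tail.

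\textbf{The modulation bootstrap cannot be closed.} Your assumption $|\dot\tau|,|\dot\xi|,|\dot\kappa|\lesssim e^{-s/2}$ is false in general. By \eqref{rBmodul}, $\dot\xi=\kappa+p_x/\partial_x^3v$ and $\dot\kappa=-p_x-2e^{s}p_x/\partial_x^3v$, where $\kappa(t)=v(\xi(t),t)$. So $\dot\xi$ and $\dot\kappa$ are generically of order one up to $T_*$. Moreover, individual bounds of size $e^{-s/2}$ would leave the transport term $e^{3s/2}(\kappa-\dot\xi)$ in $U^V$ of size $e^{s}$, which would wreck every estimate.

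What is true, and what the argument actually uses, is $|\kappa|,|\dot\kappa|,|\dot\xi|\le C$, together with $|\dot\tau|=\frac12e^{-2s}|P(0,s)|\lesssim e^{-2s}$ and $|e^{3s/2}(\kappa-\dot\xi)|=e^{-2s}|P_y(0,s)|/|\partial_y^3V(0,s)|\lesssim e^{-9s/2}$. Both bounds read off directly from the modulation equations using $|\partial_y^3V(0,s)|\ge255$.
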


As in Remark~\ref{rem_rel}, the condition on \(\partial_x^3 v_0(0)\) in \eqref{rBIC} can be relaxed by modifying the parameter $\beta$ in $\overline{W}_\beta$. The proof of Theorem~\ref{mainthm2} follows a similar bootstrap framework to that in the analysis of the rSV system, but is considerably simpler due to its scalar structure. Moreover, as noted in Section~\ref{Discussion}, the CH equation can be seen as a perturbation of rB in the blow-up regime, and its singularity formation has already been treated in \cite{KKY}. Thus, we only provide an outline of the proof in Section~\ref{sec4}.

\subsection{Discussion} \label{Discussion}
In several nonlinear fluid models exhibiting finite-time singularities, blow-up profiles have been observed to attain \( C^{1/3} \) spatial H\"older regularity at the singular time. The regularity \( C^{1/3} \) arises in classical models of fluid dynamics such as the inviscid Burgers, compressible Euler, and Euler--Poisson (EP) equations \cite{BKK1, BKK2, BSV, BSV1}. In contrast, \( C^{3/5} \) regularity has been recently established for the Camassa--Holm (CH) equation \cite{KKY}, and is further confirmed in this work for the regularized Burgers (rB) and regularized Saint--Venant (rSV) equations. It is worth noting that these models differ in the form of conserved energy. The Burgers, Euler, and EP equations conserve an \( L^2 \)-type energy, whereas the CH, rB, and rSV equations, all of which are Hamiltonian systems, conserve an \( H^1 \)-type energy. This alignment between energy structure and observed regularity suggests that the form of conserved energy may constrain the regularity of blow-up profiles.

Indeed, the conservation of an \( H^1 \)-type energy imposes a strict lower bound on spatial regularity. As noted in \cite{KKY}, this follows from a direct application of Morrey’s inequality: any function in \( H^1(\mathbb{R}) \) must be at least locally \( C^{1/2} \). If a blow-up profile were less regular than \( C^{1/2} \), then its \( H^1 \)-norm would diverge near the singularity, contradicting the boundedness of the $H^1$-norm. However, while Morrey’s inequality rules out singularities less regular than \( C^{1/2} \) for the CH, rB, and rSV equations, it does not explain why the specific H\"older exponent \( \textstyle \frac{3}{5} \) appears. To better understand the emergence of this regularity, we first examine the rB equation.

From a structural perspective, the rB equation can be formally viewed as the inviscid Burgers equation modified by a correction involving the derivative of the Hunter--Saxton (HS) equation: 
\begin{equation*}
\underbrace{v_t + vv_x}_{\text{Burgers}} = \underbrace{v_{xxt} + 2v_xv_{xx} + vv_{xxx}}_{\text{Hunter--Saxton}} \tag*{(rB)}.
\end{equation*}
While the Burgers and HS equations conserve, respectively, the \( L^2 \) and \( \dot{H}^1 \) norms, the rB equation conserves the full \( H^1 \)-norm and thus serves as a simple model that captures both \( H^1 \)-conservation and finite-time blow-up. Upon differentiation, the leading-order structure of rB coincides with that of the HS equation, with the remaining term forming a bounded nonlocal correction; see \eqref{rB} together with Section~\ref{sec4}. Note that the HS equation itself exhibits a generic $C^{3/5}$ blow-up profile \cite{KKY}, which rB inherits while conserving the $H^1$-energy. The CH equation may then be viewed as a weakly dispersive perturbation of rB and exhibits the same critical H\"older regularity \( C^{3/5} \) at the blow-up time. We note that the correction terms in rB are motivated by the form of the regularization terms introduced in the rSV system \cite{GJCP}.

Recall that the rSV system is derived as a variational regularization of the classical Saint--Venant (cSV) equations \cite{CD}. In that derivation, the variational formulation determines the regularization ansatz up to two free parameters, and the non-dispersion constraint then identifies $\gamma\mathcal{S}$ in \eqref{rSV1}, thereby fixing the coefficients of the quadratic gradient terms in $\mathcal{S}$. Consequently, this specific choice fixes the leading-order structure in a gradient blow-up regime: differentiating \eqref{rSV'1} with respect to $x$, we obtain
\begin{equation*}
\underbrace{w_{xt} + \frac{1}{2} w_x^2  + w w_{xx}}_{\text{Hunter--Saxton}} = \mathcal{F}(w_x,w_{xx},z,z_x,q_x).
\end{equation*}
Here the right-hand side contributes only at lower order in the blow-up regime considered in this paper, so that the blow-up dynamics are governed by the HS equation and the $C^{3/5}$ blow-up profile emerges. By contrast, in an analogous blow-up regime, the cSV system  (equivalently, the isentropic Euler system) reduces at leading order to the inviscid Burgers equation.

These observations, together with the analysis carried out in this work, demonstrate that the Hamiltonian regularizations considered here for the Burgers and Euler equations alter the nature of singularity formation, giving rise to $C^{3/5}$-type singularities. However, it remains an open question whether other types of singular behavior or regularity exponents may arise within the rSV framework under different settings.
~\\

\emph{Plan of the paper}. In Section~\ref{sec2}, we introduce a bootstrap argument in self-similar variables and prove Theorem~\ref{mainthm}. Section~\ref{sec3} is devoted to closing the bootstrap assumptions. In Section~\ref{sec4}, we outline the proof of Theorem~\ref{mainthm2}. The Appendices provide several estimates related to the self-similar blow-up profile for the Hunter--Saxton equation, additional technical material used throughout the paper, and the deferred proof of Lemma~\ref{Lemma:Z_high}.

\section{Stability estimates and Proof of Theorem~\ref{mainthm}}\label{sec2}
In this section, we establish global stability estimates in self-similar time by employing a bootstrap procedure and prove Theorem~\ref{mainthm}.

\subsection{Self-similar variables and modulations} \label{sec2.2}
We define dynamic modulation functions $\tau,\kappa,\xi:[-\varepsilon,\infty) \to \mathbb{R}$ satisfying a system of ODEs:
\begin{equation} \label{modul_xt}
	\begin{split}
		\dot{\tau}(t) & = \frac{(\tau(t) - t)^2 z_x^2(\xi(t),t)}{4} -  \frac{4 (\tau(t) - t) z_x(\xi(t),t)}{3} - \frac{4 (\tau(t) - t)^2 q_x (\xi(t),t)}{3}, \\
		\dot{\kappa}(t) & = \frac{2(\tau(t) - t)^{-1}}{ \partial_x^3 w (\xi(t),t)} \left( z_x(\xi(t),t) z_{xx}(\xi(t),t) - \frac{8 q_{xx}(\xi(t),t)}{3} - \frac{8 (\tau(t) - t)^{-1} z_{xx}(\xi(t),t)}{3} \right) \\
		& \quad  + \frac{8 {G}(\xi(t),t)}{3} - \frac{8 q(\xi(t),t)}{3}, \\
		\dot{\xi}(t) & = - \frac{1}{\partial_x^3 w (\xi(t),t)} \left( z_x(\xi(t),t) z_{xx}(\xi(t),t) - \frac{8 q_{xx}(\xi(t),t)}{3} - \frac{8 (\tau(t) - t )^{-1} z_{xx}(\xi(t),t)}{3} \right) \\
		& \quad + \frac{z(\xi(t),t)}{3} + \kappa(t),
	\end{split}
\end{equation}
with the initial data
\begin{equation}\label{modul_init}
	\tau(-\varepsilon) = 0, \quad \kappa (-\varepsilon) = \kappa_0, \quad \xi(-\varepsilon) =0,
\end{equation}
where $\kappa_0 := w_0(0)$. By standard ODE theory, the initial value problem for \eqref{modul_xt} with the initial data \eqref{modul_init} admits a unique local $C^1$ solution $(\tau,\kappa,\xi)$, as long as a classical solution of \eqref{rSV'} exists. Define $T_*$ as the first time such that $\tau(t)=t$, i.e., 
\begin{equation}\label{T-star}
	T_* := \inf\{ t \in [-\ve,\infty) : \tau(t) = t\}.
\end{equation}
In the proof of Theorem~\ref{mainthm}, we show that $T_*$ is well-defined and is the blow-up time of $\partial_x w$.

We introduce the self-similar variables
\begin{equation}\label{ys}
	y(x,t) = \frac{x- \xi(t)}{(\tau(t)- t)^{5/2}}, \quad s(t) = - \log{(\tau(t) - t)},
\end{equation}
and define new functions $W(y,s)$, $Z(y,s)$, $Q(y,s)$, and $\tilde{G}(y,s)$ as
\begin{equation}\label{WZQG}
	w(x,t) - \kappa(t) = e^{-3s/2 }W(y,s), \quad z(x,t) = Z(y,s), \quad q(x,t) = Q(y,s), \quad G(x,t) = \tilde{G}(y,s).
\end{equation}
By a straightforward calculation, one has
\begin{equation}\label{y-x}
\partial_{t} = \left( - \dot{\xi} e^{5s/2} + \frac{5}{2} y (1-\dot{\tau}) e^s \right) \partial_y + (1-\dot{\tau})e^s \partial_s, \quad \partial_x = e^{5s/2} \partial_y.
\end{equation}
Using the change of variables \eqref{ys}--\eqref{WZQG} in \eqref{rSV'}, we obtain
\begin{subequations}
\begin{align}
\label{eq:W} & W_s - \frac{3}{2}W + U^W W_y = - \frac{e^{s/2}\dot{\kappa}}{1-\dot{\tau}} + \frac{8e^{s/2}(\tilde{G}-Q)}{3(1-\dot{\tau})}, \\
\label{eq:Z} & Z_s + U^Z Z_y = \frac{8 e^{-s} (\tilde{G}-Q)}{3(1-\dot{\tau})} ,
\end{align}
\end{subequations}
where
\begin{subequations}
\begin{align}
\label{UW} U^W (y,s) & :=   \frac{5}{2} y +  \frac{W}{1-\dot{\tau}} + \frac{e^{3s/2}Z}{3(1-\dot{\tau})} + \frac{ e^{3s/2}(\kappa - \dot{\xi})}{1-\dot{\tau}}, \\
\label{UZ} U^Z (y,s) & := \frac{5}{2} y + \frac{W}{3(1-\dot{\tau})} + \frac{e^{3s/2}Z}{1-\dot{\tau}} +  \frac{e^{3s/2} ( \kappa - 3 \dot{\xi})}{3(1-\dot{\tau})}.
\end{align}
\end{subequations}
Applying $\partial_y^n$ to \eqref{eq:W} for $n=1,2,3,4$, we have
\begin{subequations}
	\begin{align}
		& \label{Wy1} \left( \partial_s + 1 + \frac{W_y}{2(1-\dot{\tau})} - \frac{4e^{3s/2} Z_y }{3(1-\dot{\tau})} \right) W_y +  U^W W_{yy} = -\frac{8 e^{s/2} Q_y}{3(1-\dot{\tau})} + \frac{e^{3s} Z_y^2}{2(1-\dot{\tau})} , \\
		& \label{Wy2} \left( \partial_s + \frac{7}{2} + \frac{2W_y}{1-\dot{\tau}} - \frac{e^{3s/2} Z_y }{1-\dot{\tau}} \right) W_{yy} +  U^W  \partial_y^3 W = - \frac{8 e^{s/2} Q_{yy}}{3(1-\dot{\tau})} + \frac{e^{3s} Z_y Z_{yy}}{1-\dot{\tau}}  + \frac{4e^{3s/2} Z_{yy} W_y}{3(1-\dot{\tau})}, \\
		& \label{Wy3} \left( \partial_s + 6 + \frac{3W_y}{1-\dot{\tau}} - \frac{2e^{3s/2} Z_y }{3(1-\dot{\tau})} \right) \partial_y^3 W +  U^W  \partial_y^4 W = - \frac{8 e^{s/2} \partial_y^3 Q}{3(1-\dot{\tau})} - \frac{2 W_{yy}^2}{1-\dot{\tau}} \\
		& \nonumber \quad  + \frac{7e^{3s/2} W_{yy}Z_{yy} }{3(1-\dot{\tau})} + \frac{4e^{3s/2} W_y \partial_y^3 Z}{3(1-\dot{\tau})} + \frac{e^{3s} Z_{yy}^2}{1-\dot{\tau}} + \frac{e^{3s} Z_y \partial_y^3 Z}{1-\dot{\tau}}  , \\
		& \label{Wy4} \left( \partial_s + \frac{17}{2} + \frac{4W_y}{1-\dot{\tau}} - \frac{e^{3s/2} Z_y }{3(1-\dot{\tau})} \right) \partial_y^4 W +  U^W  \partial_y^5 W = - \frac{8 e^{s/2} \partial_y^4 Q}{3(1-\dot{\tau})}  - \frac{7 W_{yy} \partial_y^3 W}{1-\dot{\tau}} \\
		& \nonumber \quad   + \frac{3e^{3s/2}  \partial_y^3 W Z_{yy}}{1-\dot{\tau}} + \frac{11e^{3s/2} W_{yy} \partial_y^3 Z}{3(1-\dot{\tau})}  + \frac{4e^{3s/2} W_y \partial_y^4 Z}{3(1-\dot{\tau})} + \frac{3e^{3s} Z_{yy} \partial_y^3 Z}{1-\dot{\tau}} + \frac{e^{3s} Z_y \partial_y^4 Z}{1-\dot{\tau}}.
	\end{align}
\end{subequations}
Similarly, we apply $\partial_y^n$ to \eqref{eq:Z} for $n=1,2,3,4$ to obtain
\begin{equation} \label{eq:Z_high}
\partial_s \partial_y^n Z + D_n^Z \partial_y^n Z + U^Z \partial_y^{n+1} Z = F_n^Z, \quad n=1,2,3,4,
\end{equation}
where
\begin{equation} \label{D^Z}
\begin{aligned}
D_1^Z (y,s) &:= \frac{5}{2}, \\
D_3^Z(y,s) &:= \frac{15}{2} + \frac{3e^{3s/2} Z_y}{1-\dot{\tau}} - \frac{2W_y}{3(1-\dot{\tau})},
\end{aligned}
\qquad
\begin{aligned}
D_2^Z(y,s) &:= 5 + \frac{2e^{3s/2} Z_y}{1-\dot{\tau}} - \frac{W_y}{1-\dot{\tau}}, \\
D_4^Z(y,s) &:= 10 + \frac{4e^{3s/2} Z_y}{1-\dot{\tau}} - \frac{W_y}{3(1-\dot{\tau})},
\end{aligned}
\end{equation}
and
\begin{equation} \label{F^Z}
\begin{split}
F_1^Z(y,s) & := - \frac{8e^{-s}Q_y}{3(1-\dot{\tau})} + \frac{e^{-3s/2}W_y^2}{2(1-\dot{\tau})} + \frac{4W_yZ_y}{3(1-\dot{\tau})} - \frac{e^{3s/2}Z_y^2}{2(1-\dot{\tau})}   , \\
F_2^Z(y,s) & := - \frac{8 e^{-s} Q_{yy}}{3(1-\dot{\tau})} + \frac{e^{-3s/2}W_y W_{yy}}{1-\dot{\tau}} + \frac{4W_{yy}Z_y}{3(1-\dot{\tau})}, \\
F_3^Z(y,s) & := - \frac{8 e^{-s} \partial_y^3 Q}{3(1-\dot{\tau})}  + \frac{e^{-3s/2} W_{yy}^2}{1-\dot{\tau}} + \frac{e^{-3s/2}W_y \partial_y^3 W}{1-\dot{\tau}} + \frac{4 \partial_y^3 W Z_y }{3(1-\dot{\tau})} + \frac{7W_{yy}Z_{yy}}{3(1-\dot{\tau})} - \frac{2e^{3s/2}Z_{yy}^2}{1-\dot{\tau}}, \\
F_4^Z(y,s) & := - \frac{8 e^{-s} \partial_y^4 Q}{3(1-\dot{\tau})}  + \frac{3e^{-3s/2} W_{yy} \partial_y^3 W}{1-\dot{\tau}} + \frac{e^{-3s/2}W_y \partial_y^4 W}{1-\dot{\tau}}  \\
& \quad + \frac{11  \partial_y^3 W Z_{yy} }{3(1-\dot{\tau})} + \frac{3W_{yy} \partial_y^3 Z}{1-\dot{\tau}} + \frac{4  \partial_y^4 W Z_y}{3(1-\dot{\tau})} - \frac{7e^{3s/2}Z_{yy} \partial_y^3 Z}{1-\dot{\tau}}.
\end{split}
\end{equation}

We observe that, under the assumption \eqref{init_w0}, $(\tau,\kappa,\xi)(t)$ solves the ODE system \eqref{modul_xt}--\eqref{modul_init} if and only if the following constraints hold for all $s\ge s_0$, where $s_0:=s(-\varepsilon)$:
\begin{equation}\label{constraint}
	W(0,s)=0,\quad W_y(0,s)=-2,\quad W_{yy}(0,s)=0.
\end{equation}
By the change of variables \eqref{ys} and the definition \eqref{WZQG}, the system \eqref{modul_xt} can be rewritten as
\begin{subequations}\label{modul}
	\begin{align}
		\dot{\tau} & = - \frac{4 e^{s/2} Q_y(0,s)}{3} + \frac{e^{3s} Z_y^2(0,s)}{4} -  \frac{4e^{3s/2} Z_y(0,s) }{3} , \label{tau}\\
		\dot{\kappa} & = \frac{2 e^{-s/2}}{\partial_y^3 W (0,s)} \left( - \frac{8 e^{s/2} Q_{yy}(0,s)}{3}+e^{3s} Z_y(0,s) Z_{yy}(0,s)  - \frac{8e^{3s/2} Z_{yy}(0,s)}{3} \right) + \frac{8 (\tilde{G}-Q)(0,s)}{3}, \label{kap}\\
		\dot{\xi} & = - \frac{e^{-3s/2}}{\partial_y^3 W (0,s)} \left( - \frac{8 e^{s/2} Q_{yy}(0,s)}{3} + e^{3s} Z_y(0,s) Z_{yy}(0,s)  - \frac{8e^{3s/2} Z_{yy}(0,s)}{3} \right) + \frac{Z(0,s)}{3} + \kappa(t). \label{xi}
	\end{align}
\end{subequations}
Evaluating \eqref{eq:W}, \eqref{Wy1}, and \eqref{Wy2} at $y=0$ and using \eqref{modul}, we obtain a closed ODE system for $(W(0,s),W_y(0,s),W_{yy}(0,s))$, for which $(0,-2,0)$ is a solution. Since the initial condition \eqref{init_w0} yields \eqref{constraint} at $s=s_0$, uniqueness for this ODE system implies \eqref{constraint} for all $s\ge s_0$. Conversely, if \eqref{constraint} holds for all $s\ge s_0$, then $\partial_s W(0,s)=\partial_s W_y(0,s)=\partial_s W_{yy}(0,s)=0$. Substituting these relations into \eqref{eq:W}, \eqref{Wy1}, and \eqref{Wy2} evaluated at $y=0$ recovers \eqref{modul}, and hence \eqref{modul_xt}.

\subsection{Bootstrap argument} \label{sec:boot}

We first state the local existence of solutions to \eqref{rSV'}.

\begin{lemma}[Local existence and blow-up criterion] \label{existence}
Let the initial data $(w_0,z_0)$ satisfy \eqref{init_h}--\eqref{IC}. Then there exists a constant $T_0>0$ such that the initial value problem for \eqref{rSV'} has a unique solution $(w,z)(x,t)$ satisfying
\begin{equation*}
(w-2\sqrt{h_*},z+2\sqrt{h_*})\in C([-\veps,T_0];H^5(\mathbb{R}))\cap C^1([-\veps,T_0];H^4(\mathbb{R}))
\end{equation*}
and
\begin{equation*}
\inf_{x \in \mathbb{R}}{(w-z)(x,t)} >0 \quad \text{for all } t \in [-\varepsilon,T_0].
\end{equation*}
Let $[-\varepsilon,T_{\mathrm{max}})$ be the maximal interval of existence of the solution. If $(w_0,z_0)$ satisfies \eqref{E0_bound} and $T_{\mathrm{max}}< \infty$, then
\begin{equation*}
\limsup_{t \to T_{\mathrm{max}}^-} {\lVert (w_x,z_x)(\cdot,t) \rVert_{L^\infty}} = \infty.
\end{equation*}
\end{lemma}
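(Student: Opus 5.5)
The plan has two parts. Local existence comes from a standard symmetric-hyperbolic energy method in which the nonlocal terms are treated as bounded perturbations. The blow-up criterion comes from a continuation argument whose engine is a logarithmic (Beale--Kato--Majda type) $H^5$ energy estimate, combined with energy conservation to keep $h$ bounded away from $0$ and $\infty$.

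\textbf{Local existence.} Write $\tilde w=w-2\sqrt{h_*}$ and $\tilde z=z+2\sqrt{h_*}$. System \eqref{rSV'} is then a diagonal (hence symmetric) quasilinear hyperbolic system with speeds $w+\frac13 z$ and $z+\frac13 w$. The forcing is $\frac83(G-q)$.

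For $h=(w-z)^2/16$ bounded below, I would show that $(\tilde w,\tilde z)\mapsto G-q$ maps $H^5$ to $H^5$ and is locally Lipschitz on $H^4$. This rests on two observations:
\begin{itemize}
\item $G_x=w_xz_x$ is quadratic in derivatives, and by \eqref{G-q} we have $G-q=-\frac12\mathcal{I}_h^{-1}\partial_x(2h^3u_x^2-\frac12h^2h_x^2)$.
\item The operator $\mathcal{I}_h^{-1}\partial_x$ gains one derivative: it maps $H^{k}$ to $H^{k+1}$, since $\mathcal{I}_h$ is elliptic of order two by \cite[Lemma~3.3]{LPP}, with the estimate bootstrapped to higher $k$ by commutators.
\end{itemize}
The function $G$ itself may not decay, because $\int w_xz_x$ need not vanish. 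So I would always work with the combination $G-q$, which lies in $H^5$ by the representation above.

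With this in hand, standard Kato theory (Friedrichs mollification, uniform $H^5$ bounds, a contraction in $H^4$) gives a unique solution in $C([-\veps,T_0];H^5)\cap C^1([-\veps,T_0];H^4)$. By Sobolev embedding and continuity in time, $\inf(w-z)>0$ persists on a short interval.

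\textbf{Blow-up criterion.} Suppose $T_{\max}<\infty$ but $\|(w_x,z_x)\|_{L^\infty}\le M$ on $[-\veps,T_{\max})$. The argument has three steps.
\begin{enumerate}
\item \emph{Control of $h$.} Using \eqref{Energy} with \eqref{E0_bound}, I would show $0<c\le h\le C$ uniformly. Set $f=h^{3/2}$, so that $f_x=\frac32h^{1/2}h_x$ and $\int f_x^2\lesssim E_0$. Then $\frac12\int(h-h_*)^2+\frac12\int h^2h_x^2\le E_0$, and a Gagliardo--Nirenberg-type argument on $\phi(h)=\int_{h_*}^h|s-h_*|s\,ds$ bounds $\|h-h_*\|_{L^\infty}$ by a multiple of $E_0^{1/2}$-type quantities. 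The constant $h_*^3/6$ should be exactly what keeps $h\ge h_*/C>0$ (this is the forthcoming Lemma~\ref{h_max}). It also gives uniform $L^\infty$ bounds on $w,z$.
\item \emph{$H^5$ energy estimate.} Apply $\partial_x^k$, $k\le5$, to \eqref{rSV'}, take $L^2$ inner products, and use Kato--Ponce commutator estimates. The transport terms contribute $\lesssim(1+\|(w_x,z_x)\|_{L^\infty})\|(\tilde w,\tilde z)\|_{H^5}^2$. For the forcing, the order-one gain of $\mathcal{I}_h^{-1}\partial_x$ and Moser estimates give $\|\partial_x^5(G-q)\|_{L^2}\lesssim C(M,c)\|(\tilde w,\tilde z)\|_{H^5}$, since the quadratic terms contain at most one undifferentiated-in-$L^\infty$ factor.
\item \emph{Continuation.} Gronwall then yields a finite $H^5$ bound on $[-\veps,T_{\max})$. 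Restarting the local theory near $T_{\max}$ contradicts maximality.
\end{enumerate}

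\textbf{Main obstacle.} I expect the hardest step to be the high-order estimate for the nonlocal term, namely showing that $\mathcal{I}_h^{-1}\partial_x$ applied to the quadratic expression is tamely bounded in $H^5$. The bound must be linear in the top norm, with coefficients depending only on $\|(w_x,z_x)\|_{L^\infty}$, the bounds on $h$, and lower norms. I would handle this by commuting $\partial_x^k$ with $\mathcal{I}_h$ and using the coercivity $\langle\mathcal{I}_hf,f\rangle\ge c\|f\|_{H^1}^2$. Any appearance of $h_x$ is controlled through $h_x=(w_x-z_x)\sqrt h/2$, hence by $M$. Second derivatives such as $h_{xx}$ in the commutators would be interpolated against the $H^5$ norm.
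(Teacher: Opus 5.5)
Your proposal is correct in outline, but it takes a different route from the paper. The paper proves nothing here directly. It takes local well-posedness of \eqref{rSV1} in the variables $(h-h_*,u)$ from \cite[Theorem~3.1]{LPP} and the blow-up criterion from \cite[Corollary~4.2]{LPP}. It then transfers both to the Riemann variables using two facts: the equivalence $\inf h>0\iff\inf(w-z)>0$, and the continuity of $(h-h_*,u)\mapsto(w-2\sqrt{h_*},z+2\sqrt{h_*})$ on $H^1$ and $H^5$ (Remark~\ref{rem1}). The energy bound \eqref{E0_bound} enters only to keep $h$ away from zero, which is exactly your Step~1 and the paper's Lemma~\ref{h_max}.

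You instead rebuild the theory directly for \eqref{rSV'}. You use Kato's method for the diagonal quasilinear part, a tame $H^5$ estimate for $\mathcal{I}_h^{-1}\partial_x$ applied to $2h^3u_x^2-\frac12h^2h_x^2$, and Gronwall plus a restart for the continuation.

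\begin{itemize}
\item What your route buys: the argument is self-contained, and $h=(w-z)^2/16$ and $u=(w+z)/2$ are polynomial in the unknowns, so no change-of-variables regularity is needed. Your choice to work with the combination $G-q$ via \eqref{G-q} is also sound. $G$ alone need not decay, so $q=\mathcal{I}_h^{-1}(hG)$ is not literally defined through the $L^2$ isomorphism, while $G-q=-\frac12\mathcal{I}_h^{-1}\partial_x(\cdot)$ is.
\item What the paper's route buys: brevity. The step you flag as the main obstacle, the tame elliptic estimate for the nonlocal term, is precisely the work the paper outsources to \cite{LPP}. Your sketch of it is the standard way to do it and works: coercivity $\langle\mathcal{I}_h f,f\rangle\gtrsim\|f\|_{H^1}^2$, commutators, and Moser estimates with $h_x=\frac12\sqrt h\,(w_x-z_x)$ in $L^\infty$.
\end{itemize}

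Two small slips, neither affecting the argument:
\begin{itemize}
\item The correct formula is $G_x=u_x^2-\frac{h_x^2}{4h}=\frac{3}{16}w_x^2+\frac58w_xz_x+\frac{3}{16}z_x^2$, not $w_xz_x$. This is harmless, since you only use the quadratic structure.
\item Nothing logarithmic or Beale--Kato--Majda-like is needed. The hypothesis already controls $\|(w_x,z_x)\|_{L^\infty}$, so a linear Gronwall inequality with coefficient $C(M)$ suffices.
\end{itemize}
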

The first assertion in this lemma follows from the local existence result for \eqref{rSV1} in \cite[Theorem~3.1]{LPP}, together with the equivalence
\begin{equation*}
\inf_{x\in\mathbb{R}} h(x,t)>0 \iff \inf_{x\in\mathbb{R}} (w-z)(x,t)>0
\end{equation*}
and Remark~\ref{rem1}, which shows that the map $(h-h_*,u)\mapsto (w-2\sqrt{h_*},\,z+2\sqrt{h_*})$ is well-defined on $H^1$ (and on $H^5$). The blow-up criterion follows from \cite[Corollary~4.2]{LPP}.

We introduce the bootstrap assumptions on the self-similar time interval $[s_0,\sigma_1]$, where $\sigma_1>s_0$ will be specified in Remark~\ref{init_rmk}. For all $s\in[s_0,\sigma_1]$ (with the corresponding time $t\in[-\varepsilon,T_{\sigma_1}]$; see Remark~\ref{T_sigma} for the definition of $T_{\sigma_1}$) and all $y\in\mathbb{R}$, we assume that $(W,Z)$ and $\tau$ satisfy:
\begin{subequations}\label{boots_ass}
\begin{align}
&  \lvert \dot{\tau} (t)\rvert  \leq \frac{8}{\sqrt{h_*}} \varepsilon, \label{dottau} \\
& \lVert Z_{y} (\cdot,s) \rVert_{L^\infty} \leq \frac{5}{\sqrt{h_*}} e^{-5s/2}, \label{lem_Zy} \\
& |Z_y(y,s)| \leq \frac{10}{\sqrt{h_*}}\frac{e^{-3s/2}}{1+y^{2/5}},  \label{Zy_dec} \displaybreak[2] \\
& \lvert W_y(y,s) - \overline{W}'(y) \rvert \leq \frac{y^2}{1000(1+y^2)}, \label{Wy_bound} \\
& \lvert W_y(y,s) - \overline{W}'(y) \rvert \leq \frac{6}{13(1+y^{2/5})}, \label{Wy_dec}\\
& \lvert W_{yy}(y,s) \rvert \leq \frac{M^{1/8}\lvert y \rvert}{(1+y^2)^{1/2}}, \label{Wyy_bound}\\
& \lvert \partial_y^3 W(0,s) - 256 \rvert \leq 1, \label{Wy3_0}\\
& \lVert \partial_y^3 W(\cdot,s) \rVert_{L^\infty} \leq M^{3/4}, \label{Wy3_bound}\\
& \lVert \partial_y^4 W(\cdot,s) \rVert_{L^\infty} \leq M,\label{Wy4_bound}
\end{align}
\end{subequations}
where $M>0$ is a sufficiently large constant to be chosen later.

The bootstrap assumptions \eqref{Wy_bound}--\eqref{Wy4_bound} imply that $W_y$ is close to $\overline{W}'$ near $y=0$ in $C^2$-norm, consistent with the constraints \eqref{constraint} at the origin, and that $W$ remains uniformly bounded in $C^4$-norm. The asymptotic behavior of $W$ near $y=0$ plays an important role in resolving the degeneracy of the damping term there in the transport-type equations arising in the closure of the bootstrap argument.

\begin{remark} [Initial conditions for $(W,Z)$] \label{init_rmk}
We verify that, under the assumptions \eqref{init_w0}--\eqref{init_wx_weight} and \eqref{init_zx_weight} on $(w_0,z_0)$, the corresponding initial data $(W,Z)(\cdot,s_0)$ and $\tau(-\varepsilon)$ satisfy the bootstrap assumptions \eqref{dottau}--\eqref{Wy4_bound} at $s=s_0$. Combining this with Lemma~\ref{existence} and a continuity argument, we obtain some $\sigma_1>s_0$ such that $(W,Z)$ is well-defined on $[s_0,\sigma_1]$ and \eqref{dottau}--\eqref{Wy4_bound} hold for all $s\in[s_0,\sigma_1]$.

First, under \eqref{init_w0}--\eqref{init_wx_weight} and \eqref{init_zx_weight}, the functions $W$ and $Z$ satisfy the following bounds in the self-similar variables:
\begin{subequations}\label{EP-W-IC}
\begin{align}
		& \lVert Z_{y} (\cdot,s_0) \rVert_{L^\infty} \leq \frac{1}{\sqrt{h_*}} \veps^{5/2}, \label{lem_Zy_init} \\
& |Z_y(y,s_0)| \leq \frac{1}{\sqrt{h_*}}\frac{\veps^{3/2}}{1+y^{2/5}}, \label{Zy_weight_init} \\
& |W_y(y,s_0)-\overline W'(y)| \leq \frac{y^2}{3000(1+y^2)}, \label{W-y2} \\
&|W_y(y,s_0)-\overline{W}'(y)| \le \frac{\Theta}{1+y^{2/5}},\label{Utildey_init} \\
&|\partial_y ^3 W(0, s_0)-256| = 0,  \label{1D2} \\
&\|\partial_y ^4 W(\cdot, s_0) \|_{L^{\infty}} \leq 1. \label{1D4}
\end{align}
\end{subequations}
We then apply the 1D Gagliardo--Nirenberg inequality and use \eqref{init_wbound} to obtain
	\begin{equation*}
	\begin{split}
		& \|\partial_x^2w_0\|_{L^{\infty}}\leq \sqrt{2}\|\partial_x^2w_0\|_{L^2}^{1/2}\|\partial_x^3w_0\|_{L^2}^{1/2}\leq 2^{17/2}\veps^{-7/2}, \\
		& \|\partial_x^3w_0\|_{L^{\infty}}\leq \sqrt{2}\|\partial_x^3w_0\|_{L^2}^{1/2}\|\partial_x^4w_0\|_{L^2}^{1/2}\leq 2^{17/2}\veps^{-6},
	\end{split}
	\end{equation*}
or, equivalently,
\begin{subequations}
	\begin{align}
		&\|W_{yy}(\cdot, s_0)\|_{L^{\infty}} \leq 2^{17/2}, \label{1D3} \\
		&\|\partial_y ^3 W(\cdot, s_0) \|_{L^{\infty}} \leq 2^{17/2}. \label{1D5} 
	\end{align}
\end{subequations}
A Taylor expansion of $W_{yy}(y,s_0)$ about $y=0$, together with \eqref{1D2}--\eqref{1D4}, yields
\begin{equation*}
|W_{yy}(y,s_0)|\leq|y||\partial_y^3W(0,s_0)|+\frac{y^2}{2}\|\partial_y^4W(\cdot,s_0)\|_{L^{\infty}}\leq 256|y|+\frac{1}{2}y^2. 
\end{equation*}
Combining this with \eqref{1D3}, we obtain
\begin{equation}\label{1D3'}
|W_{yy}(y,s_0)|\leq \min\left\{ 256|y|+\frac{1}{2}y^2 , 2^{17/2} \right\}\leq \frac{M^{1/8}|y|}{4(1+y^2)^{1/2}},\quad y \in \mathbb{R}
\end{equation}
for sufficiently large $M>0$. Finally, we apply \eqref{GQ_bound} from Lemma~\ref{lem_Gq} and \eqref{lem_Zy_init} to \eqref{tau} to obtain
\begin{equation}\label{tau_init}
	|\dot{\tau}(-\veps)|\leq C e^{s_0/2}|Q_y(0,s_0)| + C e^{3s_0}|Z_y(0,s_0)|^2 + \frac{4e^{3s_0/2}}{3} |Z_y(0,s_0)|\leq C\veps^2 + \frac{4}{3\sqrt{h_*}}\veps \leq \frac{3}{2\sqrt{h_*}} \veps
\end{equation}
for sufficiently small $\varepsilon>0$. From \eqref{EP-W-IC}--\eqref{tau_init}, we conclude that the initial data $(W,Z)(y, s_0)$ and $\tau(-\varepsilon)$ satisfy \eqref{dottau}--\eqref{Wy4_bound}, provided $M>0$ is sufficiently large and $\varepsilon>0$ is sufficiently small.
\end{remark}

\begin{remark} \label{T_sigma}
We collect several bounds derived from \eqref{boots_ass} that will be used in the subsequent analysis. For $\varepsilon>0$ sufficiently small, \eqref{dottau} yields
	\begin{equation} \label{dottau1}
		\frac{1}{1-\dot{\tau}} = 1+ \frac{\dot{\tau}}{1-\dot{\tau}} \leq 1 + \varepsilon^{1/2}.
	\end{equation}
Moreover, \eqref{dottau} implies that $\tau(t)-t$ is strictly decreasing, and hence there exists a unique $T_{\sigma_1}$ such that $e^{-\sigma_1} = \tau(T_{\sigma_1}) - T_{\sigma_1}$. By \eqref{modul_init}, \eqref{ys}, and \eqref{dottau}, we obtain
\begin{equation} \label{Tsigma1}
T_{\sigma_1} = \tau(T_{\sigma_1}) - e^{-\sigma_1} \leq \left| \int_{-\varepsilon}^{T_{\sigma_1}} \dot{\tau}(t) \, dt \right| = \left| \int_{s_0}^{\sigma_1} \dot{\tau}(t) \frac{e^{-s}}{1-\dot{\tau}} \, ds \right| \leq C \varepsilon \int_{s_0}^{\sigma_1} e^{-s} \, ds \leq C \varepsilon.
\end{equation}	
Combining \eqref{4.1} with \eqref{Wy_bound}, we deduce a uniform bound for $W_y$:
	\begin{equation}\label{Uy1}
		|W_y(y,s)|\leq |\overline{W}'(y)|+|W_y(y,s)-\overline{W}'(y)|\leq 2-\frac{6y^2}{5(1+y^2)}+\frac{y^2}{1000(1+y^2)}\leq 2.
	\end{equation}
\end{remark}

We now close the bootstrap argument.

\begin{proposition}\label{Boot}
There exist constants $\varepsilon_0>0$, $M>0$, and $C>0$ such that for each $\varepsilon\in(0,\varepsilon_0)$ the following statement holds.

Let $T_0>0$. Suppose that $(w,z)$ is the unique solution to \eqref{rSV'} with initial data satisfying \eqref{init_h}--\eqref{init_zx_weight} and \eqref{E0_bound} on $[-\varepsilon,T_0]$, and that $(\tau,\kappa,\xi)$ is the unique solution to \eqref{modul_xt}--\eqref{modul_init} on $[-\varepsilon,T_0]$ with $\tau(T_0)>T_0$. Fix $\sigma_1\in(s_0,s(T_0)]$, where $s_0:=s(-\varepsilon)$, and let $T_{\sigma_1}$ be the corresponding time as in Remark~\ref{T_sigma}. If the functions $(W,Z)$ defined by \eqref{ys}--\eqref{WZQG} and the modulation $\tau$ satisfy the assumptions \eqref{boots_ass} for all $s\in[s_0,\sigma_1]$ and $y\in\mathbb{R}$, then we have
\begin{equation} \label{dottau_close}
|\dot{\tau}(t)| \leq \frac{7}{\sqrt{h_*}} e^{-s}
\end{equation}
for all $t\in[-\varepsilon,T_{\sigma_1}]$, and
\begin{subequations} \label{boots_close}
\begin{align}
& \| Z_{y}(\cdot, s) \|_{L^\infty} \leq \frac{4}{\sqrt{h_*}} e^{-5s/2}, \label{lem_Zy_close} \\
& |Z_y(y,s)| \leq \frac{8}{\sqrt{h_*}}\frac{ e^{-3s/2}}{1 + y^{2/5}}, \label{Zyweight_close} \displaybreak[2] \\
& |W_y(y,s) - \overline{W}'(y)| \leq \frac{y^2}{1500(1 + y^2)}, \label{Utildey_close} \\
& |W_y(y,s) - \overline{W}'(y)| \leq \frac{\frac{6}{13} - \theta}{1 + y^{2/5}}, \label{Utildey_M-str} \displaybreak[2] \\
& |W_{yy}(y,s)| \leq \frac{M^{1/8} |y|}{2(1 + y^2)^{1/2}}, \label{Wy2_close} \\
& |\partial_y^3 W(0,s) - 256| \leq C \varepsilon, \label{Wy30_close} \\
& \| \partial_y^3 W(\cdot, s) \|_{L^\infty} \leq \frac{M^{3/4}}{2}, \label{Wy3_close} \\
& \| \partial_y^4 W(\cdot, s) \|_{L^\infty} \leq \frac{M}{2}. \label{Wy4_close}
\end{align}
\end{subequations}
for all $s\in[s_0,\sigma_1]$ and $y\in\mathbb{R}$, where $\theta>0$ is defined in \eqref{init_wx_dec}.
\end{proposition}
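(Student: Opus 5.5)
The plan is the standard closure of a bootstrap in self-similar variables: derive pointwise bounds along Lagrangian trajectories of the transport operators $\partial_s + U^W\partial_y$ and $\partial_s+U^Z\partial_y$, using the damping terms and the smallness of all forcing. The first step is to collect preliminary estimates from \eqref{boots_ass}.
\begin{itemize}
\item Lemma~\ref{h_max} and \eqref{E0_bound} keep $h$ pinched near $h_*$.
\item Lemma~\ref{lem_Gq} (cited as \eqref{GQ_bound}) gives bounds on $\tilde G$, $Q$ and their $y$-derivatives. Since $\mathcal I_h^{-1}$ gains two derivatives and $hG$ is controlled by the conserved energy, $\partial_x^k q$ is bounded for $k\le 2$. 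For higher $k$ it is bounded by powers of $\|w_x\|_{L^\infty}\lesssim e^{s}$. Converting via $\partial_y=e^{-5s/2}\partial_x$, the forcing terms $e^{s/2}\partial_y^nQ$ are then exponentially small in $s$.
\item Integrating $\dot{\tau}$ and $\dot\xi$ and using \eqref{Wy3_0} to keep $\partial_y^3W(0,s)$ away from zero gives bounds on $\dot\kappa$, $\dot\xi$.
\item This yields the outgoing property $U^W(y,s)\,y\ge c|y|$. It holds because $U^W(0,s)=0$ by \eqref{constraint}, and $\partial_yU^W=\tfrac52+W_y/(1-\dot\tau)+O(e^{-s})\ge \tfrac12-O(\varepsilon^{1/2})$ by \eqref{Uy1}.
\item Similarly, $U^Z\ge \tfrac{2}{3}e^{3s/2}\cdot$(a positive multiple of $\sqrt{h_*}$) for large $s$, since $Z\approx-2\sqrt{h_*}$ and $W$ is small relative to $e^{3s/2}$. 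So $Z$-trajectories sweep leftward across the self-similar domain very rapidly, like $e^{3s/2}$.
\end{itemize}

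Next come \eqref{dottau_close}, \eqref{lem_Zy_close} and \eqref{Zyweight_close}. For $Z_y$ I use \eqref{eq:Z_high} with $n=1$: the damping is $D_1^Z=\tfrac52$, and the forcing $F_1^Z$ is $O(e^{-s}\cdot e^{-5s/2})$ after inserting the bootstrap bounds. Here $e^{-3s/2}W_y^2\lesssim e^{-3s/2}$ is the leading term, and it is itself $\ll e^{-5s/2}\cdot e^{s}$ only after care. I expect that the real mechanism is better seen in the original variables: $z_x$ obeys a Riccati equation along $\lambda_-$ characteristics of the form
\[
\partial_t z_x + \lambda_- \partial_x z_x = -z_x^2 - \tfrac13 w_x z_x + \tfrac83(G-q)_x ,
\]
and $w_x$ is integrable in time along $z$-characteristics because they cross the singular region in time $O(e^{-5s/2}\cdot e^{-3s/2})$.

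Concretely, I would integrate $w_x$ along a $\lambda_-$ characteristic using the change of variables $dt = dx/(\lambda_--\lambda_+)$, with $\lambda_+-\lambda_-=\tfrac12(w-z)\approx 2\sqrt{h_*}$. This turns $\int |w_x|\,dt$ into $\lesssim\frac{1}{\sqrt{h_*}}\int|w_x|\,dx$, which is bounded by the oscillation of $w$ and hence $O(1)$ by energy. This gives $|z_x|\le (1+o(1))\|\partial_xz_0\|_{L^\infty}$, which is \eqref{lem_Zy_close} with constant $4<5$. The weighted bound \eqref{Zyweight_close} follows the same way with the weight $\veps+x^{2/5}$ transported, using \eqref{init_zx_weight}. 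Plugging these into \eqref{tau} gives $|\dot\tau|\le \tfrac43\cdot\tfrac{4}{\sqrt{h_*}}e^{-s}+Ce^{-2s}\le \tfrac7{\sqrt{h_*}}e^{-s}$.

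For $W$, set $\widetilde W=W-\overline W$. Subtracting \eqref{Weq} from \eqref{Wy1} gives a transport equation for $\widetilde W_y$ with damping
\[
1+\tfrac12(W_y+\overline W')+O(e^{-s}) .
\]
This damping degenerates to $\approx -1$ at $y=0$ but is positive for $|y|$ large; the forcing is $O(\varepsilon)$ and of size $\widetilde W\,\overline W''$.

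\textbf{Near $y=0$.} I will not use the damping directly. Instead, Taylor expansion with \eqref{constraint}, \eqref{Wy30_close} and \eqref{Wy4_bound} gives $|\widetilde W_y|\le (C\varepsilon+CM e^{-?})y^2+\dots$. Precisely: $\widetilde W_y(0)=\widetilde W_{yy}(0)=0$ and $|\partial_y^3\widetilde W(0)|\le C\varepsilon$, so $|\widetilde W_y|\le C\varepsilon y^2+ M|y|^3/6$, which handles $|y|\le \ell$ for small $\ell\sim M^{-1}$.

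\textbf{Away from $y=0$.} Since trajectories are outgoing, they either start at $s_0$ or enter through $|y|=\ell$. I use the weighted quantities $\frac{1+y^2}{y^2}\widetilde W_y$ and $(1+y^{2/5})\widetilde W_y$ and integrate along trajectories, getting the improvements $1/1500$ and $\tfrac{6}{13}-\theta$ from the initial data \eqref{W-y2}, \eqref{Utildey_init}.

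\textbf{Decay bound at large $|y|$.} Here the weight $y^{2/5}$ exactly cancels the damping: $\tfrac25\cdot\tfrac52=1$. The check is that the net damping $1+W_y-\tfrac25\cdot U^W/y$ is nonnegative up to the bound $\tfrac{6}{13}$. This is where the specific constants $\Theta<\tfrac6{13}$ and \eqref{init_wx_dec} enter: the far-field limit is preserved, with the margin $\theta$. This weighted far-field estimate is the step I expect to be the main obstacle, because there is no strict damping and the $O(\varepsilon)$ forcing must be integrable in $s$.

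Finally, I treat \eqref{Wy2}--\eqref{Wy4} by maximum-principle arguments.
\begin{itemize}
\item \textbf{$\partial_y^4W$.} The damping is $\tfrac{17}{2}+4W_y\ge\tfrac12$ by \eqref{Uy1}, with forcing $7|W_{yy}\partial^3_yW|\le 7M^{1/8+3/4}$. This gives the bound $\le 14M^{7/8}+\text{initial}\le M/2$.
\item \textbf{$\partial_y^3W$.} The damping is $6+3W_y\ge 0$, so it is degenerate. Here I interpolate: $\|\partial_y^3W\|_{L^\infty}\lesssim \|W_{yy}\|^{1/2}\|\partial_y^4W\|^{1/2}\le M^{1/16+1/2}\ll M^{3/4}/2$.
\item \textbf{$W_{yy}$.} The damping is $\tfrac72+2W_y$, and it is weighted by $(1+y^2)^{1/2}/|y|$. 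Near zero, the Taylor bound uses $|\partial_y^3W|\le 257$. Away from zero, the damping is strictly positive, since $|W_y|<\tfrac74$ once $|y|\ge\ell$ by \eqref{4.1}.
\item \textbf{$\partial_y^3W(0,s)$.} I evaluate \eqref{Wy3} at $y=0$, where $U^W=0$, $W_y=-2$ and $W_{yy}=0$. There the damping vanishes and the forcing is $O(e^{-s})$. Integrating from $s_0$, where the value is exactly $256$, gives $C\varepsilon$.
\end{itemize}
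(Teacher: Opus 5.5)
There are two genuine gaps, both in the $Z$-estimates, which is where the paper does most of its work. First, your mechanism for \eqref{lem_Zy_close} does not control the dangerous term. In the Riccati equation for $z_x$, the leading forcing is not the linear term $w_xz_x$. It is the quadratic term inside $\tfrac{8}{3}G_x=\tfrac12 w_x^2+\tfrac53 w_xz_x+\tfrac12 z_x^2$; in self-similar form this is exactly the term $\frac{e^{-3s/2}W_y^2}{2(1-\dot\tau)}$ in $F_1^Z$ that you flagged as leading. Closing the bound therefore requires control of $\int w_x^2\,dt$ along $z$-characteristics, i.e.\ of $\int_{s_0}^s e^{s'}(W_y\circ\psi_Z)^2\,ds'$. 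Your argument only treats $\int |w_x|\,dt$, and even that fails as stated. The change of variables produces a total variation $\int|w_x|\,dx$, with $w_x$ sampled at different times; this is not the oscillation of $w$ and is not controlled by the $H^1$-type energy. Moreover, an $O(1)$ bound inside a Gronwall factor gives $e^{O(1)}$, not $1+o(1)$.

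The paper (Lemma~\ref{Lemma:Z_high}) reparametrizes the $\psi_Z$-integral by labels of $\psi_W$-trajectories. It uses the transversality $U^Z-U^W=-\tfrac{2}{3}e^{3s/2}(w-z)/(1-\dot\tau)$ with $w-z=4\sqrt{h}\ge 4\sqrt{h_{\mathrm{min}}}$, and shows that $e^{-s/2}W_y^2\,\partial_\xi\psi_W$ is almost conserved along $\psi_W$ because the cubic terms cancel. This gives $\int_{s_0}^s e^{s'}(W_y\circ\psi_Z)^2\,ds'\le \frac{3}{4\sqrt{h_{\mathrm{min}}}}\|\partial_x w_0\|_{L^2}^2+C\varepsilon\le \frac{4\sqrt2}{\sqrt{h_*}}+C\varepsilon$. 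The resulting contribution $\frac{2\sqrt2}{\sqrt{h_*}}$ to $e^{5s/2}|Z_y|$ is of order one, and this is where \eqref{E0_bound} enters quantitatively ($1+2\sqrt2<4$). Your fast-crossing heuristic could instead be made rigorous for this particular term. Use the decay bootstrap \eqref{Wy_dec}, i.e.\ $|w_x|\lesssim |x-\xi(t)|^{-2/5}$, together with the fact that $z$-characteristics cross $\xi(t)$ with relative speed about $\tfrac{8}{3}\sqrt{h}$; this yields $\int w_x^2\,dt\lesssim\varepsilon^{1/5}$. Some argument of this kind for $w_x^2$ is indispensable, and the proposal has none. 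Likewise, the weighted bound \eqref{Zyweight_close}, and the far-field part of \eqref{Utildey_M-str}, need the spatial decay $|q_x|\lesssim|x-\xi(t)|^{-4/5}$. The paper derives this from exponential decay of the Green function of $1-\partial_x\circ h^{-1}\partial_x\circ h^3$ (Lemma~\ref{lem_Qdec}); boundedness of $q_x$ is not enough once the weight $|x-\xi(t)|^{2/5}$ is large.

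Second, and more seriously, you never improve the decay of $\partial_y^nZ$ for $n=2,3,4$, although your $W$-estimates silently depend on it. $U^W(0,s)$ is not zero. It equals $\frac{e^{3s/2}}{1-\dot\tau}(\kappa-\dot\xi+\tfrac13 Z(0,s))$, which by \eqref{xi} is to leading order $-\frac{8e^{3s/2}Z_{yy}(0,s)}{3\,\partial_y^3W(0,s)}$. Also, \eqref{Wy3} at $y=0$ contains the forcing $-\frac{8e^{3s/2}\partial_y^3Z(0,s)}{3(1-\dot\tau)}$. Transport along $\psi_Z$ with the bootstrap bounds gives only $\|\partial_y^nZ\|_{L^\infty}\lesssim e^{-3s/2}$ (Lemma~\ref{lem:ZQ}), for which both quantities are $O(1)$. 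Your estimate of $\partial_y^3W(0,s)$ then only gives a bound growing linearly in $s$, not $C\varepsilon$. The bounds \eqref{Z} and \eqref{DV4}, which keep the modulation error $O(\varepsilon)$ on $|y|\ge \ell\sim M^{-1}$ and are needed for positivity of $D^V$ and $D^{\widetilde V}$, also fail.

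The paper's key step is $\|\partial_y^nZ\|_{L^\infty}\le Ce^{-5s/2}$ (Lemma~\ref{Lemma:Z_high} and Appendix~\ref{App:Z}). It is proved by the same $\psi_Z\to\psi_W$ transversality argument applied to $W_{yy}^2$, $(\partial_y^3W)^2$, $(\partial_y^4W)^2$, with smallness supplied by the $\varepsilon$-weighted $L^2$ bounds on $\partial_x^jw_0$ in \eqref{init_wbound}. Pointwise decay cannot replace it here, since \eqref{Wyy_bound}--\eqref{Wy4_bound} give no decay in $y$.

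Two smaller points. The term $\widetilde W\,\overline W''$ in the $\widetilde W_y$ equation is not small forcing but a nonlocal linear term, which must be dominated by the damping (the paper uses \eqref{num_6} and Lemma~\ref{max_2}). For $W_{yy}$, positivity of the weighted damping on $|y|\ge\ell$ comes from the weight via \eqref{num_3}, not from $|W_y|<\tfrac74$. Since $W_y=-2+O(y^2)$, the unweighted damping $\tfrac72+2W_y$ is negative for $\ell\le|y|\ll 1$. On the other hand, your Landau--Kolmogorov interpolation $\|\partial_y^3W\|_{L^\infty}\le (2M^{1/8}\cdot M)^{1/2}\le\tfrac12 M^{3/4}$ is valid and is a simpler replacement for the paper's Lemma~\ref{lem:Wy3}.
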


\begin{proof}
The proof is deferred to Section~\ref{sec3}, where it is decomposed into a sequence of lemmas. The desired estimates are established through Lemmas~\ref{lem:tau_decay}--\ref{mainprop_1-p}. For the reader’s convenience, we provide the following list: \eqref{dottau_close} is obtained in Lemma~\ref{lem:tau_decay}, \eqref{lem_Zy_close} in Lemma~\ref{lem:Zy}, \eqref{Zyweight_close} in Lemma~\ref{lem_Zyweight}, \eqref{Utildey_close} in Lemma~\ref{lem:Wtildey}, \eqref{Utildey_M-str} in Lemma~\ref{mainprop_1-p}, \eqref{Wy2_close} in Lemma~\ref{Wy2_lem}, \eqref{Wy30_close} in Lemma~\ref{lem:Wy30}, \eqref{Wy3_close} in Lemma~\ref{lem:Wy3}, and \eqref{Wy4_close} in Lemma~\ref{lem:Wy4}.
\end{proof}

\subsection{Proof of Theorem~\ref{mainthm}}\label{C13_subsec-p}
We split the proof into several steps.

	Step 1:
We first establish the existence and the bound \eqref{z_result}. Under the initial assumptions \eqref{init_h}--\eqref{init_zx_weight} and \eqref{E0_bound}, Lemma~\ref{existence} and Remarks~\ref{init_rmk}--\ref{T_sigma} imply that there exists $\sigma_1>s_0$ such that the rSV system \eqref{rSV'} has a unique solution $(w,z)$ on $[-\varepsilon,T_{\sigma_1}]$ satisfying
\begin{equation*}
(w-2\sqrt{h_*}, z+2\sqrt{h_*}) \in C([-\varepsilon,T_{\sigma_1}]; H^5(\mathbb{R})) \cap C^1([-\varepsilon,T_{\sigma_1}]; H^4(\mathbb{R})),
\end{equation*}
the modulation $(\tau,\kappa,\xi)$ is well-defined on $[-\varepsilon,T_{\sigma_1}]$ with $\tau$ satisfying \eqref{dottau}, and the corresponding functions $(W,Z)$ satisfy the bootstrap assumptions \eqref{lem_Zy}--\eqref{Wy4_bound} on $[s_0,\sigma_1]$. We then obtain the improved bootstrap bounds \eqref{dottau_close}--\eqref{boots_close} by Proposition~\ref{Boot}. Since $\tau(-\varepsilon) = 0$ and $\textstyle |\dot{\tau}(t)| < \frac{1}{2}$ for sufficiently small $\varepsilon>0$ by \eqref{modul_init} and \eqref{dottau_close}, the time $T_*$ in \eqref{T-star} is well-defined. Hence, by these bounds together with a standard continuation argument (cf. Lemma~\ref{existence}), the solution $(w,z)$ and the modulation $(\tau,\kappa,\xi)$ extend to $[-\varepsilon,T_*)$, while all the bootstrap assumptions remain valid for $t < T_*$. We therefore obtain \eqref{sol_reg}, from which \eqref{sol_reg1} follows by Sobolev embedding and the structure of the rSV system \eqref{rSV'}. Finally, from \eqref{lem_Zy_close} and \eqref{WZbound}, we deduce that
		\begin{equation*}
			\|z(\cdot,t)\|_{C^1} = \|z(\cdot,t)\|_{L^{\infty}}+e^{5s/2}\|Z_y(\cdot,s)\|_{L^{\infty}}\leq C.
		\end{equation*}
This proves $\textstyle \sup_{t<T_*} \left[ z(\cdot, t) \right]_{C^1(\mathbb{R})}<\infty$.

		Step 2: We prove that $\textstyle \sup_{t<T_*} \left[ w(\cdot, t) \right]_{C^{3/5}(\Omega)}<\infty$ for any bounded open set $\Omega\subset \mathbb{R}$. By Step~1, the estimate \eqref{Utildey_M-str} in Proposition~\ref{Boot} holds globally in $s$, i.e.,  
		\begin{equation*}
			\sup_{y \in\mathbb{R}}  \left( (y^{2/5}+1)|W_y(y,s)-\overline{W}'(y)| \right) \le \frac{6}{13}-\theta \quad  \text{ for all } s\ge s_0.
		\end{equation*}
		From this together with \eqref{4.0} and \eqref{4.0'}, it follows that
		\begin{equation}\label{Wydec_fin_2}
			|W_y (y,s) | \le | W_y (y, s) - \UU'(y) | + | \UU'(y) | \le \frac{C}{1+y^{2/5}}+|\overline{W}'(y) |\leq \frac{C}{1+y^{2/5}}
		\end{equation}
		for all $s\ge s_0$ and $y\in\mathbb{R}$, where $C>0$ is a generic constant.
		Using this, for any $y, \wt{y}\in\mathbb{R}$ and for all $s\ge s_0$, we have 
		\begin{equation}\label{not-zero}
			{\frac{|W(y,s)-W(\wt{y},s)|}{{|y-\wt{y}|}^{3/5}}} =  \frac{1}{{|y-\wt{y}|}^{3/5}} {\left|\int_{\wt{y}}^y W_y(\hat{y},s) d\hat{y} \right|} \le   \frac{C}{{|y-\wt{y}|}^{3/5}}  \left| \int_{\wt{y}}^{y}{(1+\hat{y}^2)^{-1/5}\,d\hat{y}} \right| \lesssim 1,
		\end{equation} 
		where the estimate is uniform in $y, \wt{y} \in \mathbb{R}$ and $s\ge s_0$. 
		Consider any two points $x\neq \wt{x} \in \Omega \subset \mathbb{R}$. 
		By the change of variables \eqref{ys}--\eqref{WZQG} together with \eqref{not-zero}, we have 
		\begin{equation*}\label{Eq_holder}
			\frac{|w(x,t) - w(\tilde x, t ) | }{ | x - \tilde x|^{3/5}} =   \frac{|W(y,s)-W(\wt{y},s)|}{{|y-\wt{y}|}^{3/5}} \lesssim 1
		\end{equation*}
		for all $t\in[-\ve, T_*)$. This yields the desired bound.

			Step 3:
			From Step~1, we find that $\xi(t)$ and $\kappa(t)$ converge to $\xi(T_*)\in\mathbb{R}$ and $\kappa(T_*)\in\mathbb{R}$, respectively, as $t\nearrow T_*$. We let $x_* : = \xi(T_*)$. In this step, we prove \eqref{blow-up-result1}, i.e., $\textstyle \lim_{t\nearrow T_*} \left[ w (\cdot, t) \right]_{C^{\alpha}(\Omega)} = \infty$ if $x_* \in \Omega$; and $ \textstyle \lim_{t\nearrow T_*} \left[ w (\cdot, t) \right]_{C^{\alpha}(\Omega)} < \infty$ if $x_* \notin \overline{\Omega}$ for any $\textstyle \alpha> \frac35$ and for any bounded open set $\Omega$.
	
			We note from  \eqref{ys} and \eqref{WZQG} that   
			\begin{equation*}
				\frac{|w(x,t)-w(\wt{x} ,t)|}{|x-\wt{x} |^\alpha}=  e^{(\frac{5}{2}\alpha-\frac{3}{2})s}\frac{|W(y,s)-W(\wt{y},s)|}{|y-\wt{y}|^{\alpha}}. 
			\end{equation*} 
			Let $\wt{y}=0$ and fix $y \in(-\eta_1,\eta_1)\setminus\{0\}$ for some $0<\eta_1 \ll1$. 
			By the mean value theorem,
			\begin{equation*}
				\frac{|W(y,s)-W(0,s)|}{|y|^{\alpha}}=|W_y(\overline{y},s)||y|^{1-\alpha} 
			\end{equation*}
			for some $\overline{y}\in (-\eta_1,\eta_1)$.
			From \eqref{constraint} and \eqref{Wyy_bound}, we see that $\textstyle |W_y(\overline{y},s)| \geq \frac12$ for $\eta_1\ll1$, which implies that $|W_y(\overline{y},s)||y|^{1-\alpha}\ge c_0$ for some $c_0>0$. 
			In view of this, for any bounded open set $\Omega$ containing $x_*$, 
			 we have 
			\begin{equation}\label{LB-blow}
				\begin{split} 
					[w(\cdot, t)]_{C^{\alpha}(\Omega)}
					& \geq    e^{(\frac{5}{2}\alpha-\frac{3}{2})s}  \frac{|W(y,s)-W(0,s)|}{|y|^{\alpha}} 
					\\
					&=  e^{(\frac{5}{2}\alpha-\frac{3}{2})s} |W_y(\overline{y},s)||y|^{1-\alpha}  \ge  c_0     e^{(\frac{5}{2}\alpha-\frac{3}{2})s}. 
				\end{split}
			\end{equation}
			Since $\textstyle \alpha>\frac35$, we see that $[w(\cdot, t)]_{C^{\alpha}(\Omega)}  \to \infty$ as $t\nearrow T_*$, i.e., $s\to \infty$. 
			From this, we conclude that $x_* =\xi(T_*)$ is the blow-up location.

				On the other hand, for any bounded open set $\Omega$ such that $\overline{\Omega} \not\owns x_*$, we prove that $\textstyle \lim_{t\nearrow T_*} \left[ w (\cdot, t) \right]_{C^{\alpha}(\Omega)} <\infty$.  
				Set  $\textstyle d_*:= d(x_*,\Omega) =\inf_{x\in\Omega } |x-x_*|>0$. One can check by \eqref{ys} that for any $x\in \Omega$, the corresponding $\textstyle y = \frac{ x - \xi(t)}{(\tau(t) - t)^{5/2}}$ satisfies $\textstyle |y|\geq \frac12 d_* e^{5s/2} = c_1 e^{5s/2}$ for $t$ sufficiently close to $T_*$.
				This together with \eqref{Wydec_fin_2} gives 
				\begin{equation*}
					|W_y(y,s)|\leq C|y|^{-2/5}=C|y|^{\alpha-1}|y|^{{3/5}-\alpha}\leq C|y|^{\alpha-1}e^{(\frac{3}{2}-\frac{5}{2}\alpha)s}
				\end{equation*}
				for any $\textstyle \alpha>\frac35$. 
				Hence, we obtain
				\begin{equation*}
					\begin{split}
						[w (\cdot,t)]_{C^\alpha(\Omega)} &: = \sup_{x, \tilde x \in\Omega, x\ne  \tilde x} \frac{|w(x,t) - w(\tilde x, t ) | }{ | x - \tilde x|^\alpha}
						\\
						& = C \sup_{y,\wt{y}\in Y_t(\Omega), y\neq \wt{y}}e^{(\frac{5}{2}\alpha-\frac{3}{2})s}\frac{\left|\int^y_{\wt{y}}W_y(\hat{y},s)\,d\hat{y}\right|}{|y-\wt{y}|^{\alpha}}
						\\
						&\leq C\sup_{y,\wt{y}\in Y_t(\Omega), y\neq \wt{y}}\frac{\left|\int^y_{\wt{y}}|\hat{y}|^{\alpha-1}\,d \hat{y}\right|}{|y-\wt{y}|^{\alpha}}
						\le C \sup_{y,\wt{y}\in Y_t(\Omega), y\neq \wt{y}}\frac{|y|^{\alpha}-|\wt{y}|^{\alpha}}{|y-\wt{y}|^{\alpha}}<\infty,
					\end{split}
				\end{equation*} 
				where $\textstyle Y_t:  x  \to  y : = \frac{ x - \xi(t)}{(\tau(t) - t)^{5/2}}$ is a map from $\mathbb{R}$ to $\mathbb{R}$, and in the last inequality, we used the $\alpha$-H\"older continuity of $f(y)=|y|^\alpha$ on $\mathbb{R}$ for $0<\alpha<1$, namely
$\big||y|^\alpha-|\tilde y|^\alpha\big|\le |y-\tilde y|^\alpha$.
				This proves that $\textstyle \lim_{t\nearrow T_*} \left[ w (\cdot, t) \right]_{C^{\alpha}(\Omega)} <\infty$ for any $\overline{\Omega} \not\owns x_*$ and $\textstyle \alpha> \frac35$.
			This also implies that the blow-up location $x_*$ is unique. 
			
			Step 4: We shall prove the temporal blow-up rate \eqref{blow-up-rate1}, i.e., 
			for $x_*\in \Omega$ and $\textstyle \alpha> \frac35$,
			\begin{equation*} \left[ w(\cdot, t) \right]_{C^\alpha(\Omega)}\sim (T_*-t)^{-\frac{5\alpha-3}{2}}
			\end{equation*} 
			for all $t$ sufficiently close to $T_*$.
			To this end, we first note that  
			\begin{equation*}
				|W(y,s)-W(\wt{y},s)|\leq \int^y_{\wt{y}}|W_y(\hat{y},s)|\,d\hat{y} \leq C\int^y_{\wt{y}}(1+\hat{y}^2)^{(\alpha-1)/2}\,d\hat{y} \leq C|y-\wt{y}|^{\alpha},
			\end{equation*}
			where in the second inequality, we used \eqref{Wydec_fin_2}, i.e.,  
			\begin{equation*}
				|W_y(y,s)|\leq C(1+y^2)^{-1/5}\leq C(1+y^2)^{(\alpha-1)/2}
			\end{equation*} 
			for $\textstyle \alpha> \frac35$.
			Then again from \eqref{ys} and \eqref{WZQG}, we have for any $x, \wt{x} \in \mathbb{R}$, 
			\begin{equation}\label{u_Calp2}
				\frac{|w(x,t)-w(\wt{x}, t )|}{|x-\wt{x} |^\alpha}=e^{(\frac{5}{2}\alpha-\frac{3}{2})s}\frac{|W(y,s)-W(\wt{y},s)|}{|y-\wt{y}|^{\alpha}}\leq Ce^{(\frac{5}{2}\alpha-\frac{3}{2})s}
			\end{equation}
			for some $C>0$. 
			Note from \eqref{dottau_close} that
			\begin{equation*}
				\frac{1}{2}(T_*-t)\leq \tau(t)-t=\int^{T_*}_t \left( 1-\dot{\tau}(t') \right)  \, dt'\leq \frac{3}{2}(T_*-t).
			\end{equation*}
			This, together with \eqref{LB-blow} and \eqref{u_Calp2}, yields the desired blow-up rate of $w$ as 
			\begin{equation}\label{pf-br}
				   [w(\cdot,t)]_{C^\alpha} \sim(T_*-t)^{-(\frac{5}{2}\alpha-\frac{3}{2})}.
			\end{equation} 
			In particular, from \eqref{pf-br}, we see that the blow-up rate of $\|\partial_xw(\cdot,t)\|_{L^{\infty}}$ is $(T_*-t)^{-1}$.
			This completes the proof of Theorem~\ref{mainthm}. 
			\qed

\section{Closure of bootstrap assumptions}\label{sec3}

We improve the bootstrap assumptions \eqref{dottau}--\eqref{Wy4_bound} and thereby prove Proposition~\ref{Boot}. In Section~\ref{sec:pre}, we first establish several preliminary estimates on $\tilde{G}$, $Q$, the modulation parameters, and higher-order derivatives of $Z$ and $Q$, which are used in the subsequent proof. The analysis involves two technical difficulties. One is to obtain decay estimates for the derivatives of $Z$ that are sharp enough to close the bootstrap, which requires a careful comparison of the characteristic flows associated with the $W$- and $Z$-equations; see Lemma~\ref{Lemma:Z_high}. The other is to control the nonlocal term $q_x=\partial_x \mathcal{I}_{h}^{-1}(hG)$, for which we derive exponential decay bounds on the associated Green function; see Lemma~\ref{lem_Qdec} and Appendix~\ref{App:K}. With these preliminaries, Section~\ref{sec:3.2} closes the bootstrap assumptions on $\dot\tau$ and $Z_y$, and Section~\ref{sec:3.3} establishes the remaining bounds for $W$ and its derivatives by weighted estimates and maximum-principle arguments.

\subsection{Preliminary estimates} \label{sec:pre}

We begin by recalling two lemmas from \cite{LPP}, omitting their proofs for brevity, which will be used in the proof of Lemma~\ref{lem_Gq}. 

\begin{lemma} [Landau--Kolmogorov inequality] \label{LK}
Let $\phi\in C^2(\mathbb{R})$ satisfy $\|\phi\|_{L^{\infty}}<\infty$ and $\|\phi''\|_{L^{\infty}}<\infty$. Then
\begin{equation*}
\|\phi'\|^2_{L^{\infty}}\leq 2\|\phi\|_{L^{\infty}}\|\phi''\|_{L^{\infty}}.
\end{equation*}
\end{lemma}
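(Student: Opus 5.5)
The plan is to prove the Landau--Kolmogorov inequality by a Taylor expansion with a free step length, followed by optimization. Fix $x_0\in\mathbb{R}$ and $h>0$. I would apply Taylor's theorem with Lagrange remainder at the two points $x_0\pm h$:
\begin{equation*}
\phi(x_0+h)=\phi(x_0)+h\phi'(x_0)+\frac{h^2}{2}\phi''(\zeta_+),\qquad \phi(x_0-h)=\phi(x_0)-h\phi'(x_0)+\frac{h^2}{2}\phi''(\zeta_-),
\end{equation*}
for some $\zeta_\pm$ between $x_0$ and $x_0\pm h$. Subtracting the two identities eliminates $\phi(x_0)$, which gives
\begin{equation*}
2h\,|\phi'(x_0)|\le |\phi(x_0+h)-\phi(x_0-h)|+\frac{h^2}{2}\bigl(|\phi''(\zeta_+)|+|\phi''(\zeta_-)|\bigr)\le 2\|\phi\|_{L^\infty}+h^2\|\phi''\|_{L^\infty}.
\end{equation*}
Dividing by $2h$, this becomes $|\phi'(x_0)|\le \|\phi\|_{L^\infty}/h+\frac{h}{2}\|\phi''\|_{L^\infty}$ for every $h>0$.

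Next I would minimize the right-hand side over $h$. If $\|\phi''\|_{L^\infty}=0$, then $\phi$ is affine and bounded, hence constant. In that case $\phi'\equiv0$ and there is nothing to prove. Otherwise, the choice $h=\sqrt{2\|\phi\|_{L^\infty}/\|\phi''\|_{L^\infty}}$ gives $|\phi'(x_0)|\le 2\sqrt{\|\phi\|_{L^\infty}\|\phi''\|_{L^\infty}/2}=\sqrt{2\|\phi\|_{L^\infty}\|\phi''\|_{L^\infty}}$. If $\|\phi\|_{L^\infty}=0$, I would instead let $h\to\infty$, which forces $\phi'(x_0)=0$. Taking the supremum over $x_0$ and squaring then gives the stated inequality.

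I expect no real obstacle in this argument. The only care needed is in handling the degenerate cases where one of the two norms vanishes. It is also worth noting that the constant $2$ produced by this symmetric difference is exactly the constant claimed. The sharper constant $1/2$, obtained through Kolmogorov's extremal functions, is not needed here.
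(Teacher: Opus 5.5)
Your argument is correct and complete: the symmetric Taylor expansion gives $|\phi'(x_0)|\le \|\phi\|_{L^\infty}/h+\tfrac h2\|\phi''\|_{L^\infty}$ for every $h>0$, and the choice $h=\sqrt{2\|\phi\|_{L^\infty}/\|\phi''\|_{L^\infty}}$ gives exactly $\sqrt{2\|\phi\|_{L^\infty}\|\phi''\|_{L^\infty}}$. The paper does not prove this lemma at all. It only recalls it from \cite{LPP}, so your write-up, which is the classical Hadamard argument, supplies a self-contained proof of something the paper takes as known.

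Two small corrections, neither of which affects the validity of the proof.

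\textbf{The degenerate case $\|\phi\|_{L^\infty}=0$.} Here your bound reads $|\phi'(x_0)|\le \tfrac h2\|\phi''\|_{L^\infty}$, so it is $h\to0^+$, not $h\to\infty$, that forces $\phi'(x_0)=0$. The case is trivial anyway, since then $\phi\equiv0$. The limit $h\to\infty$ is the right one for the other degenerate case $\|\phi''\|_{L^\infty}=0$, which you handled directly.

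\textbf{The closing remark about a sharper constant is false.} On the whole line the constant $2$ is already sharp (Hadamard), and it cannot be improved to $\tfrac12$. Consider the $4a$-periodic $C^1$ function equal to $\tfrac12(a^2-x^2)$ on $[-a,a]$ and to $\tfrac12\bigl((x-2a)^2-a^2\bigr)$ on $[a,3a]$. This is the degree-two Euler spline, i.e.\ Kolmogorov's extremal function. It has $\|\phi\|_{L^\infty}=a^2/2$, $\|\phi'\|_{L^\infty}=a$, and $|\phi''|=1$ a.e., so it gives equality. Mollifying it yields $C^2$ functions for which $\|\phi'\|_{L^\infty}^2/(\|\phi\|_{L^\infty}\|\phi''\|_{L^\infty})\to 2$. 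So your symmetric-difference argument actually produces the optimal constant.
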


\begin{lemma} [\cite{LPP}, Lemma~4.5]  \label{w_LPP}
Let $s>\tfrac{3}{2}$ and $h_*>0$. Suppose that $h-h_*\in H^s(\mathbb{R})$ with $0<h_{\mathrm{min}}\leq h\leq h_{\mathrm{max}}<\infty$ for some constants $h_{\mathrm{min}}$, $h_{\mathrm{max}}$. Then, for any $\psi\in L^1(\mathbb{R}) \cap C_{\mathrm{lim}}(\mathbb{R})$, we have
\begin{equation*}
\|\mathcal{I}_h^{-1}\partial_x\psi\|_{L^{\infty}}\leq C(h_{\mathrm{min}},h_{\mathrm{max}})\|\psi\|_{L^1}.
\end{equation*}
Here, $\mathcal{I}_h$ is defined as in \eqref{I_h}, and $C_{\mathrm{lim}}(\mathbb{R})$ denotes the Banach space of continuous functions $\phi:\mathbb{R}\rightarrow\mathbb{R}$ having finite limits as $x \to \pm\infty$.
\end{lemma}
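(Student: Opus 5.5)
The plan is a duality argument with the Green function of $\mathcal{I}_h$, so that the whole estimate reduces to a uniform bound on the derivative of that Green function. Since $s>\tfrac32$, we have $h-h_*\in H^s\subset W^{1,\infty}$, so $h\in W^{1,\infty}$ with $h_{\min}\le h\le h_{\max}$. The operator $\mathcal{I}_h w=hw-(h^3w_x)_x$ is formally self-adjoint, and by the isomorphism property quoted after \eqref{I_h} it is coercive on $H^1$.

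For fixed $x_0\in\mathbb{R}$, let $g=g_{x_0}\in H^1(\mathbb{R})$ be the weak solution of $\mathcal{I}_h g=\delta_{x_0}$, i.e.
\begin{equation*}
\int_{\mathbb{R}} \big( h g\varphi + h^3 g_x\varphi_x \big)\,dx=\varphi(x_0)\quad\text{for all }\varphi\in H^1(\mathbb{R}).
\end{equation*}
This solution exists by Lax--Milgram, because $\varphi\mapsto\varphi(x_0)$ is bounded on $H^1(\mathbb{R})$ in one dimension. For $v=\mathcal{I}_h^{-1}\partial_x\psi$, self-adjointness gives $v(x_0)=\langle \mathcal{I}_h g, v\rangle=\langle g,\partial_x\psi\rangle=-\int g_x\psi\,dx$. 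I would first prove this for $\psi\in C_c^\infty$ and then pass to the limit by density. This is where the hypothesis $\psi\in L^1\cap C_{\lim}$ enters: it makes $\partial_x\psi$ a well-defined distribution that can be approximated appropriately. It then suffices to show
\begin{equation*}
\sup_{x_0}\|\partial_x g_{x_0}\|_{L^\infty}\le C(h_{\min},h_{\max}),
\end{equation*}
since this yields $|v(x_0)|\le C\|\psi\|_{L^1}$.

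To get the derivative bound, I would use three properties of $g$.
\begin{enumerate}[(i)]
\item \emph{Positivity.} Testing the weak formulation with $\varphi=g^-$ gives $-\int (h(g^-)^2+h^3(g^-_x)^2)=g^-(x_0)\ge0$. Hence $g^-=0$, i.e. $g\ge0$.
\item \emph{Structure away from $x_0$.} On each of $(-\infty,x_0)$ and $(x_0,\infty)$, the flux $h^3g_x$ is $W^{1,1}_{loc}$ with $(h^3g_x)_x=hg$. Across $x_0$ the flux has a jump of $-1$.
\item \emph{Vanishing flux and mass identity.} Since $g\in H^1$ and $hg\in L^1$ on each half-line, the flux $h^3g_x$ has limits at $\pm\infty$, and these limits must be $0$ because $g_x\in L^2$. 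Integrating the equation then gives $\int_{\mathbb{R}} hg\,dx=1$. This can also be seen by testing with cutoffs $\varphi_R\to1$ whose derivatives are $O(R^{-1})$ on a set of length $R$, and using $g_x\in L^2$.
\end{enumerate}

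Combining these, for $x>x_0$ we have $h^3g_x(x)=-\int_x^\infty hg$, and for $x<x_0$ we have $h^3g_x(x)=\int_{-\infty}^x hg$. By positivity and the mass identity, $|h^3g_x|\le\int hg=1$, so $\|g_x\|_{L^\infty}\le h_{\min}^{-3}$. In this argument the constant in fact depends only on $h_{\min}$.

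The step I expect to need the most care is justifying the vanishing of the boundary flux and the integration by parts. These are needed to get $\int hg=1$ and the duality identity for $\psi$ that is merely in $L^1\cap C_{\lim}$ rather than compactly supported. If the cutoff argument turns out to be delicate, the fallback is to show exponential decay of $g$ and $g_x$ via the ODE: on each half-line, $g$ is a combination of the two homogeneous solutions of $hg-(h^3g_x)_x=0$, and the coefficients satisfy $h^3\ge h_{\min}^3$ and $h\ge h_{\min}$. With exponential decay, all boundary terms vanish trivially.
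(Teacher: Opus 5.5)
Your argument is correct and gives a complete proof. The paper itself does not prove this lemma; it imports it from \cite{LPP}, so there is no in-paper argument to match. The closest in-paper analogue is Appendix~\ref{App:K}. There the Green function of the related operator $1-\partial_x\circ h^{-1}\partial_x\circ h^3$ is built from the decaying solutions of the ODE, using Levinson-type asymptotics after the change of variables $\tilde x=\int_0^x h$. That route gives exponential decay of the kernel, but it needs the asymptotic ODE theory and $h-h_*\in L^2$. Your positivity/flux argument is more elementary. It uses only $h_{\min}\le h\le h_{\max}$ and yields the explicit constant $h_{\min}^{-3}$. It gives no decay of $\partial_z G$, but this lemma does not need decay.

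Two steps can be tightened, and doing so removes both your density argument and the exponential-decay fallback.

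\begin{enumerate}[(a)]
\item In (iii) you use $hg\in L^1$ before you have it. You do not need to assume it. On $(x_0,\infty)$ the flux $F=h^3g_x$ satisfies $F'=hg\ge 0$ by (i). So $F$ is monotone and has a limit in $(-\infty,\infty]$, and $F\in L^2$ forces that limit to be $0$. Hence $F(x)=-\int_x^\infty hg$ for $x>x_0$, and likewise $F(x)=\int_{-\infty}^x hg$ for $x<x_0$. The jump condition $F(x_0^+)-F(x_0^-)=-1$ then gives $\int_{\mathbb{R}} hg=1$ without any cutoff.

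\item The density step is unnecessary. Since $\psi\in L^1$ has finite limits at $\pm\infty$, those limits must be $0$. So $\psi\in L^1\cap L^\infty\subset L^2$ and $\partial_x\psi\in H^{-1}$. With $a(f,\varphi)=\int (hf\varphi+h^3 f_x\varphi_x)$, the Lax--Milgram solution $v=\mathcal{I}_h^{-1}\partial_x\psi\in H^1$ satisfies directly $v(x_0)=a(g,v)=a(v,g)=-\int \psi\, g_x$.
\end{enumerate}

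Also, coercivity of $a$, with constant $\min\{h_{\min},h_{\min}^3\}$, comes straight from the form, not from the $H^2\to L^2$ isomorphism you cite.
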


\begin{lemma}\label{h_max}
Let $(w,z)$ be the unique solution to \eqref{rSV'} with initial data satisfying \eqref{init_h}--\eqref{init_zx_weight} and \eqref{E0_bound}, as in Proposition~\ref{Boot}. Then it holds that
	\begin{equation} \label{hbd}
		0<h_{\mathrm{min}}\leq h(x,t) \leq h_{\mathrm{max}}
	\end{equation}
	for all $x\in\mathbb{R}$ and $t\in[-\veps,T_{\sigma_1}]$, where $\textstyle h_{\mathrm{min}}:= \frac{h_*}{2}$ and $\textstyle h_{\mathrm{max}}:= \frac{1+\sqrt{3}}{2}h_*$.
\end{lemma}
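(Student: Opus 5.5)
The bound \eqref{hbd} will follow from energy conservation \eqref{Energy} and the smallness assumption \eqref{E0_bound}, through a one-dimensional Sobolev-type argument applied to a suitable function of $h$. The key step is to control $\|h-h_*\|_{L^\infty}$ using only the energy density. The quantities available from $E(t)$ are $\int \tfrac12 (h-h_*)^2\,dx$ and $\int \tfrac12 h^2 h_x^2\,dx$. For $f\in H^1(\mathbb{R})$ we have $f(x)^2 = 2\int_{-\infty}^x f f_x\,dx'$. To produce exactly the weight $h^2 h_x^2$, I will take $f := \Phi(h)$, where $\Phi' (h) = h$ and $\Phi(h_*)=0$, so that
\begin{equation*}
\Phi(h) = \frac{h^2-h_*^2}{2} = \frac{(h-h_*)(h+h_*)}{2}.
\end{equation*}
Then, at each $x$ and $t$,
\begin{equation*}
\Phi(h)^2 \le \int_{\mathbb{R}} 2|\Phi(h)|\,h|h_x|\,dx' \le \int_{\mathbb{R}} \Big(\frac{(h+h_*)^2}{4}(h-h_*)^2 + h^2h_x^2\Big)dx'.
\end{equation*}
The right-hand side is bounded by a multiple of $E(t)=E_0$, with a factor depending on $\sup h$.

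The weight $(h+h_*)^2$ involves the unknown upper bound, so I will close the argument by continuity in time. At $t=-\varepsilon$ we have $h_0>0$ by \eqref{init_h}, and $h$ is continuous in $t$ with values in $L^\infty$ by Lemma~\ref{existence}. Suppose that on some interval $[-\varepsilon,t_1]$ we have $h_*/4\le h\le 2h_*$, which holds initially provided the data are close to $h_*$. This closeness is itself guaranteed by the same inequality at $t=-\varepsilon$, together with Remark~\ref{rem1}, and can be enforced within the bootstrap. Under this assumption the pointwise inequality becomes
\begin{equation*}
\frac{(h+h_*)^2(h-h_*)^2}{4} \le C_1 E_0 .
\end{equation*}
Since $h+h_*\ge h_*$, this gives
\begin{equation*}
|h-h_*|^2 \le \frac{4C_1E_0}{h_*^2}\le \frac{2C_1 h_*}{3},
\end{equation*}
using $E_0\le h_*^3/6$. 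The aim is to choose the splitting so that this yields the strict improvement $h\in(h_*/2,\tfrac{1+\sqrt3}{2}h_*)$, which is strictly inside the bootstrap window. The continuity argument then extends the bound to all of $[-\varepsilon,T_{\sigma_1}]$.

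The hardest part is matching the exact constants $h_*/2$ and $\tfrac{1+\sqrt3}{2}h_*$. The constant $\tfrac{1+\sqrt3}{2}$ suggests solving a quadratic inequality rather than using a crude bound. To get it, I will use the unsplit identity
\begin{equation*}
\Phi(h)^2\le 2\Big(\int (h-h_*)^2\tfrac{(h+h_*)^2}{4}\Big)^{1/2}\Big(\int h^2h_x^2\Big)^{1/2},
\end{equation*}
or alternatively a Young splitting with an optimized weight. I will bound the two integrals via $\tfrac12\int(h-h_*)^2\le E_0$ and $\tfrac12\int h^2h_x^2\le E_0$, and then solve the resulting inequality in $h$. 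If this choice does not reproduce the constants exactly, I would instead run the argument separately for the maximum and the minimum of $h$, taking $f=\Phi(h)$ with integration from $\pm\infty$. In that case the maximum bound comes from an inequality of the form $(h^2-h_*^2)^2\lesssim E_0\,h^2$, whose solution gives $h\le\tfrac{1+\sqrt3}{2}h_*$ at the threshold $E_0=h_*^3/6$, and the minimum bound follows similarly.
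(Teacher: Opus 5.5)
Your proposal has a genuine gap. It never produces the constants $h_*/2$ and $\tfrac{1+\sqrt3}{2}h_*$, and the mechanism you chose does not reach them.

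With $f=\Phi(h)=\tfrac12(h^2-h_*^2)$ you have $ff_x=\tfrac{h+h_*}{2}\,(h-h_*)\,hh_x$. So any Young or Cauchy--Schwarz step against the energy density $\tfrac12(h-h_*)^2+\tfrac12h^2h_x^2$ leaves the weight $\tfrac{h+h_*}{2}$, which you can only control by $\sup h$.

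With the one-sided identity you wrote, $f^2\le 2\int|ff_x|$, the resulting inequality gives no positive lower bound on $h$ at all. Even after localizing to the excursion of $h$ below $h_*$, you only get $(h_*^2-m^2)^2\le\tfrac43h_*^4$ at the minimum $m$. Positivity would then rest entirely on your bootstrap window, whose initialization is circular. $L^\infty$-closeness of $h_0$ to $h_*$ is not a hypothesis (Remark~\ref{rem1} only concerns the $H^1$ size of the data); it is what the lemma asserts at $t=-\varepsilon$. The continuity argument in time is in fact unnecessary, since at a fixed time the sup-weighted inequality evaluated at the maximum closes on itself.

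Optimizing your route does not fix this. Integrate from both ends so that $f(x)^2\le\int_{\mathbb R}|ff_x|$, and use $\int|h-h_*|\,h|h_x|\le E_0$ instead of bounding $\int(h-h_*)^2$ and $\int h^2h_x^2$ separately. At the maximum $M$ you still get only $(M-h_*)^2(M+h_*)\le 2E_0$, i.e.\ $M\le c\,h_*$ with $c\approx1.375>\tfrac{1+\sqrt3}{2}\approx1.366$. At the minimum you get only $m\ge c'h_*$ with $c'\approx0.43<\tfrac12$. Your fallback inequality $(h^2-h_*^2)^2\lesssim E_0h^2$ is asserted rather than derived, so the claimed recovery of $\tfrac{1+\sqrt3}{2}$ is unsupported.

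The missing idea is to pair $a=h-h_*$ with $b=hh_x$ directly, instead of squaring a primitive of $hh_x$. The cross term is itself an exact derivative: $ab=(h-h_*)hh_x=\partial_x\big[\tfrac16(h-h_*)^2(2h+h_*)\big]$. Use $\tfrac12(a^2+b^2)\ge ab$ on $(-\infty,x)$ and $\tfrac12(a^2+b^2)\ge -ab$ on $(x,\infty)$. This gives, pointwise and with no dependence on $\sup h$, $E_0\ge\tfrac13(h-h_*)^2(2h+h_*)$. By \eqref{E0_bound} this reads $(h-h_*)^2(2h+h_*)\le h_*^3/2$. The function $g(x)=(x-h_*)^2(2x+h_*)$ decreases on $(0,h_*)$ and increases on $(h_*,\infty)$, with $g(h_*/2)=g(\tfrac{1+\sqrt3}{2}h_*)=h_*^3/2$. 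Positivity of $h$ (or continuity with $h\to h_*$ at infinity) excludes the spurious branch where $h<0$. This is the paper's proof, and it needs no continuity argument in time.

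The sharp value $h_{\min}=h_*/2$ is not cosmetic. It gives $\|\partial_xw_0\|_{L^2}^2\le 16/3$ in \eqref{w0H1}, hence the constant $4\sqrt2/\sqrt{h_*}$ in \eqref{int_Wy2}. That is what lets Lemma~\ref{lem:Zy} close with $(1+2\sqrt2)/\sqrt{h_*}<4/\sqrt{h_*}$. With $h_{\min}\approx0.43\,h_*$ the same computation exceeds even the assumed bound $5/\sqrt{h_*}$.
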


\begin{proof}
		From \eqref{def_E} and the inequality $\textstyle \frac{1}{2}(a^2+b^2)\geq \pm ab$, it follows that for each $t\geq -\veps$,
		\begin{equation*}
			\begin{split}
				E_0=E(t)&\geq \int_{\mathbb{R}}\frac{1}{2}(h-h_*)^2+\frac{1}{2}h^2h_x^2\,dx
				\\
				&\geq \int^x_{-\infty}(h-h_*)hh_x\,dx-\int^{\infty}_x (h-h_*)hh_x\,dx = \frac{1}{3}(h-h_*)^2(2h+h_*).
			\end{split}
		\end{equation*}
		Observe that the function $(x-h_*)^2(2x+h_*)$ is strictly decreasing on the interval  $x\in(0,h_*)$, and strictly increasing on $x>h_*$. Therefore, combining \eqref{E0_bound} with $\textstyle \lim_{|x|\rightarrow\infty}h(x,t)=h_*$, we deduce that
	\begin{equation*}
		0<h_{\mathrm{min}}\leq h(x,t)\leq h_{\mathrm{max}}<\infty \quad \text{for all }(x,t)\in\mathbb{R}\times [-\veps,T_{\sigma_1}],
	\end{equation*} where $h_{\mathrm{min}}\in (0,h_*)$ and $h_{\mathrm{max}}\geq h_*$ are the solutions of 
	\begin{equation*}
		\frac{h_*^3}{6}=\frac{1}{3}(x-h_*)^2(2x+h_*),
	\end{equation*} and they are explicitly given by $\textstyle h_{\mathrm{min}}=\frac{h_*}{2}$, $\textstyle h_{\mathrm{max}}=\frac{1+\sqrt{3}}{2}h_*$.
\end{proof}

\begin{lemma}\label{lem_Gq}
	Let $(w,z)$ be as in Lemma~\ref{h_max}. There exists a constant $C>0$ such that
	\begin{equation} \label{Gq_bound}
		\lVert G (\cdot,t) \rVert_{L^\infty}, \, \lVert q(\cdot,t) \rVert_{L^\infty}, \, \|q_x(\cdot,t)\|_{L^{\infty}} \leq C
	\end{equation}
	for all $t \in [-\varepsilon,T_{\sigma_1}]$, or equivalently,
	\begin{equation}\label{GQ_bound}
		\lVert \tilde{G} (\cdot,s) \rVert_{L^\infty}, \, \lVert Q(\cdot,s) \rVert_{L^\infty}, \, \|e^{5s/2}Q_y(\cdot,s)\|_{L^{\infty}} \leq C
	\end{equation}
	for all $s \in [s_0,\sigma_1]$.
\end{lemma}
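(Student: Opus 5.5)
The plan is to bound $G$, $q$ and $q_x$ using only energy conservation \eqref{Energy}, the two-sided bound \eqref{hbd} from Lemma~\ref{h_max}, and the two cited lemmas. No bootstrap information on the blow-up structure should be needed. The equivalence with \eqref{GQ_bound} is then immediate from \eqref{WZQG} and $\partial_x = e^{5s/2}\partial_y$.

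\textbf{Bound on $G$.} From \eqref{wz} we have $(u+\sqrt h)_x(u-\sqrt h)_x = u_x^2 - \frac{h_x^2}{4h}$. Hence
\begin{equation*}
\|G(\cdot,t)\|_{L^\infty}\le \int_{\mathbb{R}} u_x^2+\frac{h_x^2}{4h}\,dx .
\end{equation*}
Using $h\ge h_{\min}=h_*/2$, the energy \eqref{def_E} controls $\int h^3(u_x^2+h_x^2/h)\,dx \le 2E_0$. This gives $\|G\|_{L^\infty}\le C(h_*)E_0\le C$, and also $\|G_x\|_{L^1}\le C$.

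\textbf{Bound on $q_x$.} Write $q=\mathcal{I}_h^{-1}(hG)$. The identity \eqref{G-q} says
\begin{equation*}
2G-2q=-\mathcal{I}_h^{-1}\partial_x\psi,\qquad \psi:=2h^3u_x^2-\tfrac12 h^2h_x^2 .
\end{equation*}
Here $\psi\in L^1$ with $\|\psi\|_{L^1}\le C(h_{\max})E_0$, and $\psi$ is continuous and decays at infinity since $(h-h_*,u)\in H^5$. So Lemma~\ref{w_LPP} gives $\|\mathcal{I}_h^{-1}\partial_x\psi\|_{L^\infty}\le C$, and therefore $\|q\|_{L^\infty}\le \|G\|_{L^\infty}+C\le C$.

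For $q_x$, I will use the equation $hq-(h^3q_x)_x=hG$, i.e. $(h^3q_x)_x=h(q-G)$. The right-hand side is bounded in $L^\infty$ but not obviously integrable, so one cannot simply integrate from $-\infty$. Instead I apply Lemma~\ref{LK} to $\phi=q$. Expanding gives
\begin{equation*}
h^3 q_{xx} = h(q-G) - 3h^2h_xq_x ,
\end{equation*}
so $\|q_{xx}\|_{L^\infty}\le C(1+\|h_x\|_{L^\infty}\|q_x\|_{L^\infty})$. This would be circular, because $h_x$ is not uniformly bounded near blow-up, so this route is probably flawed.

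A better route is an energy-type argument. Multiply $hq-(h^3q_x)_x=hG$ by $q$ and integrate; the boundary terms vanish since $q\in H^2$. This gives
\begin{equation*}
\int hq^2+h^3q_x^2\,dx=\int hGq\,dx .
\end{equation*}
However, $G$ need not lie in $L^2$, since $G\to\int G_x$ at $+\infty$. To handle this, decompose $G=(G-c\chi)+c\chi$, where $\chi$ is a smooth step function with $\chi_x\in L^1$ and $c=G(+\infty)$. Since $\mathcal{I}_h$ maps the constant $c$ to $ch$, the piece $\mathcal{I}_h^{-1}(hc\chi)$ has bounded derivative. Alternatively, bound $q_x$ pointwise through the Green function of $\mathcal{I}_h$, which decays exponentially, as the paper indicates via Appendix~\ref{App:K}.

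Concretely, let $K(x,x')$ denote the Green function. Using $h^3\ge h_{\min}^3$ and the uniform ellipticity $h_{\min}\le h\le h_{\max}$, a comparison or maximum-principle argument gives $|K|+|\partial_xK|\le Ce^{-c|x-x'|}$, with $C$ and $c$ depending only on $h_{\min}$ and $h_{\max}$. The point is that only $h^3q_x$ needs to be differentiated, not $h$ itself. Then
\begin{equation*}
|q_x(x)|\le \int |\partial_xK(x,x')|\,h\,|G|(x')\,dx'\le C\|G\|_{L^\infty}.
\end{equation*}

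\textbf{Main obstacle.} The hard step is this derivative bound on the Green function that is independent of $\|h_x\|_{L^\infty}$. I would obtain it by working with the flux $h^3K_x$, which solves a first-order system with bounded coefficients $h$ and $h^{-3}$. Gronwall or ODE comparison on the two exponential branches then yields exponential decay with rates between $(h_{\min}/h_{\max}^3)^{1/2}$ and $(h_{\max}/h_{\min}^3)^{1/2}$.
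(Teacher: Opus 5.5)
Your bounds on $G$ and $q$ are the paper's (energy conservation, then Lemma~\ref{w_LPP} applied to \eqref{G-q}). For $q_x$ you take a genuinely different route: a pointwise bound on $\partial_x K$, where $K$ is the Green function of $\mathcal{I}_h$, with constants depending only on $h_{\mathrm{min}},h_{\mathrm{max}}$.

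That bound is true, but the tools you name need sharpening. Gronwall alone gives only growth bounds, not decay. An exponential barrier in the $x$ variable also fails: applying $\mathcal{I}_h$ to $e^{-c\lvert x-x'\rvert}$ produces the term $3ch^2h_x e^{-c\lvert x-x'\rvert}$, which is unbounded near blow-up. What works is comparison for the Riccati variable of the flux. For a positive solution of $hy=(h^3y')'$, the quantity $m:=h^3y'/y$ satisfies $m'=h-h^{-3}m^2$. The branch $y_+$ decaying at $+\infty$ has $m_+\in[-h_{\mathrm{max}}^2,-h_{\mathrm{min}}^2]$, since this interval is invariant under backward integration. The branch $y_-$ decaying at $-\infty$ has $m_-\in[h_{\mathrm{min}}^2,h_{\mathrm{max}}^2]$, which is forward invariant. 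These give:
\begin{itemize}
\item $y_+(x)/y_+(x')\le e^{-h_{\mathrm{min}}^2h_{\mathrm{max}}^{-3}(x-x')}$ for $x>x'$;
\item the lower bound $h^3\lvert y_+'y_--y_+y_-'\rvert\ge 2h_{\mathrm{min}}^2y_+y_-$ for the constant Wronskian;
\item $\lvert\partial_xK\rvert=\lvert m_\pm\rvert h^{-3}K\le h_{\mathrm{max}}^2h_{\mathrm{min}}^{-3}K$.
\end{itemize}
Together these are exactly the kernel estimate you need.

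The paper avoids all of this by using the observation you made and then set aside: the flux has bounded derivative, $(h^3q_x)_x=h(q-G)\in L^\infty$. With the change of variables $dx'=h^{-3}dx$, one has $q_{x'}=h^3q_x$ and $q_{x'x'}=-h^4(G-q)$. The latter is bounded by the already proved bound on $G-q$. Lemma~\ref{LK} applied in $x'$ then gives $\lVert q_{x'}\rVert_{L^\infty}\le(2\lVert q\rVert_{L^\infty}\lVert q_{x'x'}\rVert_{L^\infty})^{1/2}$, and hence $\lvert q_x\rvert\le h_{\mathrm{min}}^{-3}\lVert q_{x'}\rVert_{L^\infty}$. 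So the circularity you found with Landau--Kolmogorov disappears once you interpolate in $x'$ instead of $x$.

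The paper's argument takes a few lines and needs only the $L^\infty$ bounds on $q$ and $G-q$. Yours requires an ODE-dichotomy construction but yields a stronger by-product. It gives an exponentially decaying kernel for $\mathcal{I}_h^{-1}$ and its $x$-derivative, with explicit, time-uniform constants. That in turn gives $\lVert q\rVert_{L^\infty}+\lVert q_x\rVert_{L^\infty}\le C\lVert G\rVert_{L^\infty}$ directly, without invoking Lemma~\ref{w_LPP} at all.
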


\begin{proof}
	From the definition \eqref{G} of $G$ and the conservation of energy \eqref{def_E}--\eqref{Energy}, we obtain the estimate
	\begin{equation} \label{Gest}
		\begin{split}
			|G(x,t)|&=\left|\int^x_{-\infty}u_x^2-\frac{h_x^2}{4h}\,dx'\right|
			\leq \int_{\mathbb{R}}u_x^2 +\frac{h_x^2}{h}\,dx' \leq \frac{2}{ h_{\mathrm{min}}^3}E_0\leq \frac{8}{3},
		\end{split}
	\end{equation}
	which implies the bound for $G$ stated in \eqref{Gq_bound}. Note that the last inequality holds by \eqref{E0_bound} and Lemma~\ref{h_max}.
	 To estimate $q(x,t)$, we recall the identity \eqref{G-q}:
	\begin{equation*}
		G-q=-\frac{1}{2} \mathcal{I}_h^{-1} \partial_x \left( 2 h^3 u_x^2-\frac{1}{2}h^2h_x^2 \right).
	\end{equation*}
	Note that $(h-h_*,u)(\cdot,t) \in H^5(\mathbb{R}) \cap C^4(\mathbb{R})$ for $t \in [-\varepsilon,T_{\sigma_1}]$ by Lemma~\ref{existence}. Moreover, by a similar calculation to \eqref{Gest}, we have
	\begin{equation*}
		\left\|2h^3u_x^2-\frac{1}{2}h^2h_x^2\right\|_{L^1} \leq 2\int_{\mathbb{R}}h^3\left(u_x^2+\frac{h_x^2}{h}\right)\,dx \leq 4E_0.
	\end{equation*}
	It then follows from Lemma~\ref{w_LPP} that
	\begin{equation}\label{G-q_temp}
		\|G-q\|_{L^{\infty}}\leq C.
	\end{equation}
	Combining this with the bound \eqref{Gest}, we obtain
	\begin{equation*}
		\lVert q(\cdot,t) \rVert_{L^\infty} \leq \lVert G \rVert_{L^\infty} + \lVert G-q \rVert_{L^\infty} \leq C.
	\end{equation*}
	
	By the definition \eqref{G} of $q$ with \eqref{I_h}, we find that $q$ satisfies
	\begin{equation} \label{qeq}
		q- h^{-1}(h^3q_x)_x=G.
	\end{equation}
	We introduce a change of variables by setting $\textstyle x' := \int^x_{0} \frac{1}{h^3(z,t)}\,dz$, i.e., 
	\begin{equation*}
		\frac{d}{dx'} = h^3 \frac{d}{dx}
	\end{equation*}
	and, after rearranging the resulting equation, we obtain
	\begin{equation*}
		q_{x'x'} = -  h^4 ( G-q ).
	\end{equation*}
	Incorporating the bound \eqref{G-q_temp} into the equation above, we deduce
	\begin{equation*}
		|q_{x'x'}(x,t)| \leq  h_{\mathrm{max}}^4 \lVert G-q \rVert_{L^\infty} \leq C(  h_{\mathrm{max}}, E_0).
	\end{equation*} 
	This, together with Lemma~\ref{LK}, yields
	\begin{equation*}
		\begin{split}
			(h_{\mathrm{min}})^3\|q_x(\cdot,t)\|_{L^{\infty}}\leq \|q_{x'}(\cdot,t)\|_{L^{\infty}} \leq (2\|q\|_{L^{\infty}}\|q_{x'x'}\|_{L^{\infty}})^{1/2} \leq C,
		\end{split}
	\end{equation*}
	which completes the estimate for $q_x$. The bounds \eqref{GQ_bound} follow immediately from \eqref{Gq_bound} and \eqref{WZQG}.
\end{proof}

\begin{lemma}\label{wz_lem}
Under the same assumptions in Proposition~\ref{Boot}, there exists a constant $C>0$ such that
\begin{equation}\label{WZbound}
\|w(\cdot,t)\|_{L^{\infty}} \leq C, \quad \|z(\cdot,t)\|_{L^{\infty}}\leq C
\end{equation}
for all $t \in [-\varepsilon, T_{\sigma_1}]$.
\end{lemma}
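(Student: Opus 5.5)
The idea is to combine the pointwise bounds on $h$ from Lemma~\ref{h_max} with an $L^\infty$ bound on $u$, and then recover $w,z$ through \eqref{wz}. Since $w = u + 2\sqrt{h}$ and $z = u - 2\sqrt{h}$, we have
\begin{equation*}
\|w\|_{L^\infty} + \|z\|_{L^\infty} \leq 2\|u\|_{L^\infty} + 4\sqrt{h_{\mathrm{max}}}.
\end{equation*}
The $h$-part is therefore already controlled for all $t\in[-\varepsilon,T_{\sigma_1}]$, and everything reduces to a uniform bound on $u$.

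For $u$, I would use energy conservation \eqref{Energy} together with the lower bound $h\geq h_{\mathrm{min}}=h_*/2$, in the same way the proof of Lemma~\ref{h_max} handles $h$. From \eqref{def_E},
\begin{equation*}
E_0 \geq \frac{1}{2}h_{\mathrm{min}}\int_{\mathbb{R}} u^2\,dx + \frac{1}{2}h_{\mathrm{min}}^3\int_{\mathbb{R}} u_x^2\,dx,
\end{equation*}
so $\|u(\cdot,t)\|_{H^1}^2 \leq C(h_*)E_0$. The one-dimensional Sobolev/Agmon inequality then gives
\begin{equation*}
\|u\|_{L^\infty}^2 \leq \|u\|_{L^2}\|u_x\|_{L^2} \leq C(h_*)E_0 .
\end{equation*}
Alternatively, one can write $u^2(x) = 2\int_{-\infty}^x u u_x$ and apply $2|uu_x|\leq u^2+u_x^2$. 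This argument needs $u(\cdot,t)\to 0$ at infinity. That holds because $(w-2\sqrt{h_*},\, z+2\sqrt{h_*})\in H^5$ by Lemma~\ref{existence}, and $u=(w+z)/2 = \tfrac12\bigl[(w-2\sqrt{h_*})+(z+2\sqrt{h_*})\bigr]$, so $u\in H^5$ as well.

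Combining the two steps, with $E_0\leq h_*^3/6$ from \eqref{E0_bound}, gives \eqref{WZbound} with $C$ depending only on $h_*$. The bound is uniform in $t\in[-\varepsilon,T_{\sigma_1}]$ and uses none of the bootstrap assumptions beyond what Lemma~\ref{h_max} needs.

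No step is a serious obstacle. The only point needing care is justifying the boundary terms at $\pm\infty$ in the integration, which follows from the $H^5$ regularity of the solution.
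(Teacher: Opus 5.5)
Your argument is correct, and it takes a different route from the paper's.

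\textbf{The paper's route.} The paper works dynamically. It integrates \eqref{rSV'1}--\eqref{rSV'2} along the characteristics $\psi_w,\psi_z$, using $\|G-q\|_{L^\infty}\le C$ from Lemma~\ref{lem_Gq}. It then uses the short length of the time interval, $T_{\sigma_1}+\varepsilon\le C\varepsilon$ from \eqref{Tsigma1}, which itself depends on the bootstrap bound \eqref{dottau}. This yields $\|w(\cdot,t)\|_{L^\infty}\le \|w_0\|_{L^\infty}+C\varepsilon$ and $\|z(\cdot,t)\|_{L^\infty}\le \|z_0\|_{L^\infty}+C\varepsilon$.

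\textbf{Your route.} Your argument is static. Energy conservation, Lemma~\ref{h_max} and the Agmon inequality give $\|u\|_{L^\infty}^2\le 2E_0/h_{\mathrm{min}}^2\le \tfrac{4}{3}h_*$. Hence $|w|,|z|\le 2\sqrt{h_*/3}+2\sqrt{h_{\mathrm{max}}}$. This never touches the nonlocal terms $G,q$, the modulation parameters, or the length of the time interval.

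\textbf{What each buys.} Your bound holds on the entire interval of existence, with a constant depending only on $h_*$, and in particular uniform in $\varepsilon$. The paper's constant instead contains $\|w_0\|_{L^\infty}$, which the hypotheses only make finite through $w_0-2\sqrt{h_*}\in H^5$. Showing that it is uniform in $\varepsilon$ requires exactly your energy argument at $t=-\varepsilon$. In return, the characteristic argument gives finer information that the lemma does not need: $w$ and $z$ change by only $O(\varepsilon)$ along their characteristics.
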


\begin{proof}
Let $\psi_w$ be the characteristic curve satisfying
\begin{equation*}
\partial_t\psi_w(x,t)=w(\psi_w(x,t),t)+\frac{1}{3}z(\psi_w(x,t),t), \quad \psi_w(x,-\veps)=x.
\end{equation*}
Then, \eqref{rSV'1} can be rewritten as
	\begin{equation*}
		\frac{d}{dt}w(\psi_w(x,t),t)=\frac{8}{3} \left( G(\psi_w(x,t),t) - q(\psi_w(x,t),t) \right).
	\end{equation*}
	Integrating it along $\psi_w$ and applying Lemma~\ref{lem_Gq}, we obtain
	\begin{equation} \label{winf}
		\|w(\cdot,t)\|_{L^{\infty}}\leq \lVert w_0 \rVert_{L^\infty} + C(T_{\sigma_1} + \varepsilon).
	\end{equation}
Combining \eqref{winf} and \eqref{Tsigma1}, we have the desired bound on $\lVert w(\cdot,t) \rVert_{L^\infty}$. Similarly, we define $\psi_z$ to satisfy 
\begin{equation*}
\partial_t\psi_z(x,t)=z(\psi_z(x,t),t)+\frac{1}{3}w(\psi_z(x,t),t), \quad \psi_z(x,-\varepsilon) = x.
\end{equation*}
Then, by rewriting the equation \eqref{rSV'2} along the characteristic curve $\psi_z$ and integrating, we obtain
	\begin{equation*}
		\|z(\cdot,t)\|_{L^{\infty}}\leq \|z_0\|_{L^{\infty}}+C(T_{\sigma_1}+\veps) \leq C,
	\end{equation*}
	where we used $\lVert z_0 \rVert_{L^\infty} \leq 1 + 2\sqrt{h_*}$ from \eqref{init_wbound}.
\end{proof}

We derive decay bounds for the higher-order derivatives of \( Z \) and \( Q \), which arise in the forcing terms in~\eqref{Wy2}--\eqref{Wy4}. The estimates for \( \partial_y^n Z \) will be improved later in this section.

\begin{lemma} \label{lem:ZQ}
Under the same assumptions in Proposition~\ref{Boot}, there exists a constant $C>0$ such that
\begin{equation}\label{Z_high}
	\lVert \partial_y^n Z (\cdot,s) \rVert_{L^\infty} \leq C e^{-3s/2}, \quad n=2,3,4
\end{equation}
and
\begin{equation} \label{Q_high}
\lVert \partial_y^n Q(\cdot,s) \rVert_{L^\infty} \leq C e^{-4s}, \quad n=2,3,4
\end{equation}
for all $s \in [s_0,\sigma_1]$.
\end{lemma}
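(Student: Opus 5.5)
We carry out the estimates in original variables for $q$ and in self-similar variables for $Z$, alternating between the two in the order
\[
Q_{yy}\ \to\ Z_{yy}\ \to\ \partial_y^3Q\ \to\ \partial_y^3Z\ \to\ \partial_y^4Q\ \to\ \partial_y^4Z .
\]
The point of this order is that the bound for $\partial_y^nQ$ only needs $\partial_y^{n-2}Z$, while the bound for $\partial_y^nZ$ needs $\partial_y^nQ$ through the term $e^{-s}\partial_y^nQ$ in $F_n^Z$. By \eqref{y-x} and \eqref{WZQG}, the claim \eqref{Q_high} is equivalent to $\|\partial_x^n q\|_{L^\infty}\le Ce^{(5n/2-4)s}$, that is, to the rates $e^{s}$, $e^{7s/2}$, $e^{6s}$ for $n=2,3,4$. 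We use throughout the following consequences of \eqref{Uy1}, \eqref{Wyy_bound}, \eqref{Wy3_bound}, \eqref{Wy4_bound}, \eqref{lem_Zy} and Lemma~\ref{wz_lem}:
\[
|w_x|\le 2e^{s},\quad |w_{xx}|\le Ce^{7s/2},\quad |\partial_x^3 w|\le Ce^{6s},\quad |z_x|\le C .
\]
The same rates hold for the derivatives of $h=(w-z)^2/16$ and $G_x=u_x^2-h_x^2/(4h)$, once the corresponding $z$-derivatives are available. For example, $|h_x|\le Ce^{s}$ and $|G_x|\le Ce^{2s}$.

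\textbf{Step 1: bounds on $q$.} From \eqref{qeq} we get
\[
h^3q_{xx}=h(q-G)-3h^2h_xq_x .
\]
Together with Lemma~\ref{h_max} and Lemma~\ref{lem_Gq}, this gives $|q_{xx}|\le C(1+|h_x|)\le Ce^{s}$.

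Differentiating this identity once, each term on the right is at most $Ce^{7s/2}$. The worst term is $h_{xx}q_x$, with $h_{xx}\sim w_{xx}$. The others, such as $G_x$ and $h_xq_{xx}$, are at most $Ce^{2s}$. Hence $|\partial_x^3q|\le Ce^{7s/2}$.

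Differentiating twice, the worst term is $h_{xxx}q_x\lesssim e^{6s}$. The remaining terms are $h_{xx}q_{xx}\lesssim e^{9s/2}$, $h_x\partial_x^3q\lesssim e^{9s/2}$, and $G_{xx}$. The last one contains $w_xz_{xx}$, and $|z_{xx}|\le Ce^{7s/2}$ once the $n=2$ case of \eqref{Z_high} is proved. All of these are $\le Ce^{6s}$, which gives the $n=4$ rate.

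\textbf{Step 2: bounds on $Z$ by characteristics.} Set $\Phi_n:=e^{3s/2}\partial_y^nZ$. By \eqref{eq:Z_high},
\[
\partial_s\Phi_n+\bigl(D_n^Z-\tfrac32\bigr)\Phi_n+U^Z\partial_y\Phi_n=e^{3s/2}F_n^Z .
\]
We estimate the damping from below using \eqref{Uy1}, \eqref{dottau1} and \eqref{lem_Zy}:
\[
D_2^Z\ge 5-2(1+\varepsilon^{1/2})-C\varepsilon\ge \tfrac52,\qquad D_3^Z\ge 6,\qquad D_4^Z\ge 9 .
\]
So $D_n^Z-\tfrac32\ge 1$ for $n=2,3,4$.

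Next we check that $|e^{3s/2}F_n^Z|\le C$ term by term. For $n=2$, this follows from Step~1 and \eqref{Uy1}, \eqref{Wyy_bound}, \eqref{lem_Zy}:
\[
e^{-s}|Q_{yy}|\lesssim e^{-5s},\quad e^{-3s/2}|W_yW_{yy}|\lesssim M^{1/8}e^{-3s/2},\quad |W_{yy}Z_y|\lesssim M^{1/8}e^{-5s/2}.
\]
For $n=3$ we use the $n=2$ bound just obtained, e.g. $|W_{yy}Z_{yy}|\lesssim M^{1/8}e^{-3s/2}$ and $e^{3s/2}Z_{yy}^2\lesssim e^{-3s/2}$. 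For $n=4$ we argue the same way with \eqref{Wy4_bound} and the $n=2,3$ bounds.

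The initial data are small:
\[
|\partial_y^nZ(\cdot,s_0)|=\varepsilon^{5n/2}|\partial_x^nz_0|\le\varepsilon^{5n/2}
\]
by \eqref{init_wbound}. Integrating along the flow of $U^Z$, which is globally defined since $U^Z$ is Lipschitz in $y$ for fixed $s$, gives
\[
|\Phi_n|\le \|\Phi_n(\cdot,s_0)\|_{L^\infty}+C\int_{s_0}^{s}e^{-(s-s')}\,ds'\le C .
\]
This proves \eqref{Z_high}.

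\textbf{Expected obstacle.} The only delicate point is the ordering and bookkeeping of the coupling, so that no circularity arises: $Q$ at order $n$ only needs $Z$ at order at most $n-2$. A second point is that the constants $C$ depend on $M$ through \eqref{Wyy_bound}--\eqref{Wy4_bound}. This is harmless here, since these bounds are only used as lower-order forcing and are improved later. One must also check that the damping margin $D_n^Z-\tfrac32$ stays positive using only $W_y\ge-2$; the computation above shows that it does, with room to spare.
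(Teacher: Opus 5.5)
Your proof is correct and follows the paper's argument essentially step for step. You use the same alternating order $Q_{yy}\to Z_{yy}\to\partial_y^3Q\to\partial_y^3Z\to\partial_y^4Q\to\partial_y^4Z$. You bound $\partial_x^n q$ by differentiating the elliptic equation for $q$; working directly with $h^3q_{xx}=h(q-G)-3h^2h_xq_x$ instead of the paper's auxiliary variable $x'$ is only a cosmetic difference. You then integrate \eqref{eq:Z_high} along $\psi_Z$ with the same damping bounds $D_2^Z\ge 5/2$, $D_3^Z\ge 6$, $D_4^Z\ge 9$.

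One correction to your bookkeeping remark: the bound on $\partial_y^nQ$ actually requires $\partial_y^{n-1}Z$, not $\partial_y^{n-2}Z$, since $h_{xx}$ and $h_{xxx}$ contain $z_{xx}$ and $z_{xxx}$. Your ordering already supplies these, so the argument is unaffected.
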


\begin{proof}
To establish \eqref{Z_high} and \eqref{Q_high}, we sequentially estimate higher-order derivatives of $Z$ and $Q$, alternating between them as their estimates are mutually dependent.

For estimates on the derivatives of $q$, we introduce the change of variables $\textstyle x':= \int^x_{0} \frac{1}{h^3(z,t)}\,dz$, i.e., 
\begin{equation} \label{chvar}
\frac{d}{dx'} = h^3 \frac{d}{dx}
\end{equation}
as in the proof of Lemma~\ref{lem_Gq}. Then, by \eqref{G}, $q$ satisfies 
\begin{equation} \label{eq:qx'x'}
q_{x'x'} = - h^4 (G-q).
\end{equation}
Moreover, the relation 
\begin{equation} \label{rel:qx'x'} 
q_{x'x'} = 3h^5 h_x q_x + h^6 q_{xx} 
\end{equation} 
holds by \eqref{chvar}. Applying the bound \eqref{Gq_bound} to \eqref{eq:qx'x'} and \eqref{rel:qx'x'}, we obtain
\begin{equation*}
|q_{x'x'}| \leq  h_{\mathrm{max}}^4 \lVert G-q \rVert_{L^\infty} \leq C
\end{equation*}
and
\begin{equation} \label{qxxbd} 
|q_{xx}| \leq h_{\mathrm{min}}^{-6} |q_{x'x'}| + 3h_{\mathrm{min}}^{-1} |h_x| |q_x| \leq C(1 + |h_x|),
\end{equation}
respectively. To estimate $|h_x|$, we observe from \eqref{wz}, \eqref{WZbound}, \eqref{Uy1}, and \eqref{lem_Zy} that
\begin{equation} \label{hxbd}
	|h_x(x,t)|=\frac{|w-z||w_x-z_x|}{8}\leq C\left|e^sW_y-e^{5s/2}Z_y\right| \leq Ce^s.
\end{equation}
Combining the above estimates, we arrive at the desired bound
\begin{equation} \label{Qyybd}
\lVert Q_{yy} \rVert_{L^\infty} = \lVert e^{-5s} q_{xx} \rVert_{L^\infty} \leq  C e^{-4s}.
\end{equation}

To estimate $Z_{yy}$, we consider \eqref{eq:Z_high} with \eqref{D^Z} and \eqref{F^Z} for $n=2$. Thanks to \eqref{dottau1}, \eqref{lem_Zy} and \eqref{Uy1}, we obtain the lower bound
\begin{equation*}
D_2^Z \geq 5 - \left( 2 e^{3s/2} \lvert Z_y \rvert + \lvert W_y \rvert \right) (1+\varepsilon^{1/2}) \geq \frac{5}{2}
\end{equation*}
for sufficiently small $\varepsilon>0$. Using \eqref{dottau1}, \eqref{Uy1}, \eqref{Wyy_bound}, \eqref{Qyybd} and \eqref{lem_Zy}, we also find
\begin{equation*}
\lvert F_2^Z \rvert \leq C \left( e^{-5s} + e^{-3s/2}  + e^{-5s/2} \right) \leq C e^{-3s/2}.
\end{equation*}
We integrate the equation along the characteristic curve \(\psi_Z\) defined by \(\partial_s \psi_Z(y,s) = U^Z(\psi_Z,s)\) with \(\psi_Z(y,s_0) = y\), obtaining
\begin{equation*}
Z_{yy}(\psi_Z(y,s),s) = Z_{yy}(y,s_0) e^{-\int_{s_0}^s (D_2^Z \circ \psi_Z) \, ds'} + \int_{s_0}^s e^{-\int_{s'}^s (D_2^Z \circ \psi_Z) \, ds''} (F_2^Z \circ \psi_Z) \, ds'.
\end{equation*}
Using the lower bound on \(D_2^Z\) and the estimate for \(F_2^Z\), we deduce
\begin{equation} \label{pr_Zyy}
\begin{aligned}
\lVert Z_{yy}(\cdot,s) \rVert_{L^\infty} 
&\leq \lVert Z_{yy}(\cdot,s_0) \rVert_{L^\infty} e^{-5(s-s_0)/2} + C \int_{s_0}^s e^{-5(s-s')/2} e^{-3s'/2} \, ds' \\
&\leq \varepsilon^{5/2} e^{-5s/2} + C e^{-3s/2} \\
&\leq C e^{-3s/2},
\end{aligned}
\end{equation}
where in the second inequality we used \eqref{init_wbound}.

Differentiating \eqref{eq:qx'x'} and \eqref{rel:qx'x'} with respect to $x'$ and using \eqref{chvar}, we have
\begin{equation} \label{eq:qx'x'x'}
\partial_{x'}^3 q = - 4  h^6 h_x (G-q) - h^7 (G_x - q_x)
\end{equation}
and
\begin{equation} \label{rel:qx'x'x'}
\partial_{x'}^3 q = (15h^7 h_x^2 + 3h^8 h_{xx})q_x + 9 h^8 h_x q_{xx} + h^9 \partial_x^3 q,
\end{equation}
respectively. It then follows from \eqref{eq:qx'x'x'}, together with the bounds \eqref{Gq_bound}, \eqref{hxbd} and \eqref{hbd}, that
\begin{equation*}
|\partial_{x'}^3 q | \leq C \left( e^{s} + |G_x| \right).
\end{equation*}
Substituting this bound into \eqref{rel:qx'x'x'}, and using \eqref{Gq_bound}, \eqref{hxbd}, \eqref{qxxbd} and \eqref{hbd}, we deduce
\begin{equation} \label{qxxxbd}
\begin{split}
|\partial_x^3 q | & = | h^{-9} \partial_{x'}^3 q - (15h^{-2} h_x^2 + 3 h^{-1} h_{xx})q_x - 9 h^{-1} h_x q_{xx} | \leq C \left( e^{2s} + |G_x| + |h_{xx}| \right).
\end{split}
\end{equation}
Using \eqref{WZbound}, \eqref{Uy1}, \eqref{lem_Zy}, \eqref{Wyy_bound} and \eqref{pr_Zyy}, we obtain the following estimate for $h_{xx}$:
\begin{equation} \label{hxxbd}
\begin{split}
|h_{xx}| & = \left| \frac{(w_x-z_x)^2}{8} + \frac{(w-z)(w_{xx}-z_{xx})}{8} \right| \\
& \leq C \left( e^{2s} |W_y|^2 + e^{7s/2}|W_y||Z_y| + e^{5s} |Z_y|^2 + e^{7s/2}|W_{yy}| + e^{5s} |Z_{yy}| \right) \\
& \leq C e^{7s/2}.
\end{split}
\end{equation}
Similarly, we have
\begin{equation} \label{Gxbd}
\begin{split}
|G_x| & = \left| \frac{3}{16} w_x^2 + \frac{5}{8} w_xz_x + \frac{3}{16} z_x^2 \right| \\
& \leq C \left( e^{2s} |W_y|^2 + e^{7s/2}|W_y||Z_y| + e^{5s}|Z_y|^2 \right) \\
& \leq C e^{2s},
\end{split}
\end{equation}
where the first identity is given by differentiating \eqref{G} in $x$. Recalling $\eqref{y-x}$, the relation between the variables $y$ and $x$, we conclude
\begin{equation} \label{Qyyybd}
\| \partial_{y}^3 Q\|_{L^\infty} = \| e^{-15s/2}  \partial_x^3 q \|_{L^\infty} \leq C e^{-4s}.
\end{equation}

As before, applying \eqref{dottau1}, \eqref{lem_Zy} and \eqref{Uy1} to $D_3^Z$ in \eqref{D^Z}, we obtain the lower bound
\begin{equation*}
D_3^Z \geq \frac{15}{2} - \left( 3 e^{3s/2} \lvert Z_y \rvert + \frac{2\lvert W_y \rvert}{3} \right) (1+\varepsilon^{1/2}) \geq 6
\end{equation*}
for sufficiently small $\varepsilon>0$. Furthermore, using \eqref{lem_Zy}, \eqref{Wyy_bound}, \eqref{Wy3_bound}, \eqref{dottau1}, \eqref{Uy1}, \eqref{pr_Zyy} and \eqref{Qyyybd} in $F_3^Z$ from \eqref{F^Z}, we obtain the estimate
\begin{equation*}
\lvert F_3^Z \rvert \leq C e^{-3s/2}.
\end{equation*}
Integrating the equation for $\partial_y^3Z$ in \eqref{eq:Z_high} along the characteristic curve $\psi_Z(y,s)$, we obtain
\begin{equation} \label{pr_Zy3}
\begin{split}
\lVert \partial_y^3 Z(\cdot,s) \rVert_{L^\infty} & \leq \lVert \partial_y^3 Z (\cdot,s_0) \rVert_{L^\infty} e^{-6(s-s_0)} + C \int_{s_0}^s e^{-6(s-s')} e^{-3s'/2} \, ds' \\
& \leq \varepsilon^{3/2}e^{-6s} + C e^{-3s/2} \leq C e^{-3s/2}.
\end{split}
\end{equation}

Differentiating \eqref{eq:qx'x'x'} with respect to $x'$ and using \eqref{chvar}, we have
\begin{equation*}
\partial_{x'}^4 q = \left( - 24 h^8 h_x^2 - 4 h^9h_{xx} \right) (G-q) - 11 h^9 h_x (G_x - q_x) - h^{10} (G_{xx} - q_{xx}),
\end{equation*}
which, combined with the bounds \eqref{hbd}, \eqref{Gq_bound}, \eqref{hxbd}, \eqref{Qyybd}, \eqref{hxxbd} and \eqref{Gxbd}, yields
\begin{equation} \label{qx'4b}
| \partial_{x'}^4 q | \leq C ( e^{7s/2} + |G_{xx}| ).
\end{equation}
We differentiate \eqref{rel:qx'x'x'} with respect to $x'$ and use \eqref{qx'4b} to obtain a bound for $\partial_x^4q$:
\begin{equation*}
\begin{split}
|\partial_x^4 q| & = \big| h^{-12}\partial_{x'}^4 q - 18 h^{-1}h_x \partial_x^3 q - (87 h^{-2}h_x^2 + 12 h^{-1}h_{xx})q_{xx} \\
& \qquad - (105h^{-3}h_x^3 + 54 h^{-2}h_xh_{xx} + 3 h^{-1}\partial_x^3 h)q_x \big| \\
& \leq C ( e^{9s/2} + |G_{xx}| + |\partial_x^3 h| ),
\end{split}
\end{equation*}
where we have also used the bounds \eqref{hbd}, \eqref{hxbd}, \eqref{hxxbd}, \eqref{Gq_bound}, \eqref{Qyybd}, and \eqref{Qyyybd}. To estimate $\partial_x^3h$, we use \eqref{WZbound}, \eqref{Uy1}, \eqref{Wyy_bound}, \eqref{Wy3_bound}, \eqref{lem_Zy}, \eqref{pr_Zyy}, and \eqref{pr_Zy3}, leading to
\begin{equation*}
\begin{split}
|\partial_{x}^3 h| & =  \left| \frac{3 \left(w_x-z_x \right) \left(w_{xx}-z_{xx} \right)}{8} + \frac{\left(w-z \right)\left(\partial_x^3 w - \partial_x^3 z \right)}{8} \right| \\
& \leq C \left( e^{9s/2} |W_y||W_{yy}| + e^{6s} |W_y||Z_{yy}| + e^{6s} |Z_y| |W_{yy}| + e^{15s/2}|Z_y| |Z_{yy}| + e^{6s} |\partial_y^3 W| + e^{15s/2} |\partial_y^3 Z| \right)  \\
& \leq C e^{6s}.
\end{split}
\end{equation*}
Similarly, using the identity \eqref{G}, we obtain
\begin{equation*}
\begin{split}
| G_{xx} | & = \left| \frac{3}{8}w_xw_{xx} + \frac{5}{8}w_{xx}z_x + \frac{5}{8} w_xz_{xx} + \frac{3}{8} z_xz_{xx}  \right| \\
& \leq C \left( e^{9s/2}|W_y||W_{yy}| + e^{6s} |W_{yy}||Z_y| + e^{6s}|W_y||Z_{yy}| + e^{15s/2}|Z_y||Z_{yy}| \right) \\
& \leq C e^{9s/2}.
\end{split}
\end{equation*}
Combining the above estimates, we have
\begin{equation} \label{Qy4bd}
\lVert \partial_y^4 Q \rVert_{L^\infty} = \lVert e^{-10s} \partial_x^4 q \rVert_{L^\infty} \leq C e^{-4s}.
\end{equation}

Finally, we estimate $\partial_y^4 Z$. Using \eqref{dottau1}, \eqref{Uy1}, and \eqref{lem_Zy}, we observe that $D_4^Z$, defined in \eqref{D^Z}, satisfies the lower bound
\begin{equation*}
D_4^Z \geq 10 - \left( 4 e^{3s/2} \lvert Z_y \rvert + \frac{\lvert W_y \rvert}{3} \right) (1+\varepsilon^{1/2}) \geq 9
\end{equation*}
for sufficiently small $\varepsilon>0$, and the forcing term $F_4^Z$ in \eqref{F^Z} admits the upper bound
\begin{equation*}
\lvert F_4^Z \rvert \leq C \left( e^{-3s/2} + e^{-5s} + e^{-5s/2} + e^{-7s/2} \right) \leq C e^{-3s/2}.
\end{equation*}
In obtaining the estimate for $F_4^Z$, we have also used \eqref{dottau1}, \eqref{Uy1}, \eqref{lem_Zy}, \eqref{Wyy_bound}, \eqref{Wy3_bound}, \eqref{Wy4_bound}, \eqref{Qy4bd}, \eqref{pr_Zyy}, and \eqref{pr_Zy3}. Integrating the equation for $\partial_y^4 Z$ in \eqref{eq:Z_high} along the characteristic curve $\psi_Z(y,s)$ defined previously, we conclude that
\begin{equation*}
\begin{split}
\lVert \partial_y^4 Z(\cdot,s) \rVert_{L^\infty} & \leq \lVert \partial_y^4 Z (\cdot,s_0) \rVert_{L^\infty} e^{-9(s-s_0)} + C \int_{s_0}^s e^{-9(s-s')} e^{-3s'/2} \, ds' \\
& \leq \varepsilon e^{-9s} + C e^{-3s/2} \leq C e^{-3s/2},
\end{split}
\end{equation*}
as desired.
This completes the proof. 
\end{proof}

\begin{lemma}\label{xilem}
Under the same assumptions in Proposition~\ref{Boot}, it holds for all $s\in[s_0,\sigma_1]$ (equivalently, $t \in [-\varepsilon,T_{\sigma_1}]$) that
	\begin{equation}\label{kap_xi}
		\lvert \dot{\kappa}(t) \rvert \leq C, \quad \lvert \kappa(t) \rvert \leq C, \quad \lvert \dot{\xi}(t) \rvert \leq C, \quad \lvert \xi(t) \rvert \leq C
	\end{equation}
	for some constant $C>0$.
\end{lemma}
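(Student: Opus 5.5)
The plan is to bound the right-hand sides of \eqref{kap} and \eqref{xi} directly, using the bootstrap assumptions together with Lemmas~\ref{lem_Gq}, \ref{wz_lem} and \ref{lem:ZQ}. The bounds on $\kappa$ and $\xi$ then follow by integration in $t$ over an interval of length at most $\varepsilon+T_{\sigma_1}\le C\varepsilon$, by \eqref{Tsigma1}.

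\textbf{The common denominator.} By \eqref{Wy3_0} we have $\partial_y^3W(0,s)\ge 255$, so $1/\partial_y^3W(0,s)$ is bounded by $1/255$.

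\textbf{The bracket.} The bracket
\begin{equation*}
B(s):=-\frac{8}{3}e^{s/2}Q_{yy}(0,s)+e^{3s}Z_y(0,s)Z_{yy}(0,s)-\frac{8}{3}e^{3s/2}Z_{yy}(0,s)
\end{equation*}
is estimated term by term.
\begin{itemize}
\item By \eqref{Q_high}, $|e^{s/2}Q_{yy}|\le Ce^{-7s/2}$.
\item By \eqref{lem_Zy} and \eqref{Z_high}, $|e^{3s}Z_yZ_{yy}|\le Ce^{3s}e^{-5s/2}e^{-3s/2}=Ce^{-s}$.
\item By \eqref{Z_high}, $|e^{3s/2}Z_{yy}|\le C$.
\end{itemize}
Hence $|B(s)|\le C$.

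This last term is the weakest: it is only $O(1)$. That is harmless here, because in \eqref{kap} the bracket is multiplied by $e^{-s/2}\le \varepsilon^{1/2}$, and in \eqref{xi} by $e^{-3s/2}$.

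\textbf{The remaining terms.} In \eqref{kap}, the term $\frac83(\tilde G-Q)(0,s)$ is bounded by \eqref{GQ_bound}, which gives $|\dot\kappa|\le C$. Since $\kappa(-\varepsilon)=w_0(0)$ is bounded by \eqref{WZbound}, integration yields $|\kappa(t)|\le |w_0(0)|+C(T_{\sigma_1}+\varepsilon)\le C$.

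In \eqref{xi}, the term $Z(0,s)/3=z(\xi(t),t)/3$ is bounded by \eqref{WZbound}. Combined with the bound on $\kappa$ just obtained, this gives $|\dot\xi|\le C$. Since $\xi(-\varepsilon)=0$, integration gives $|\xi(t)|\le C\varepsilon\le C$.

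The only mild subtlety is the order of the argument. The $\kappa$-estimate must come before the $\dot\xi$-estimate, because $\kappa(t)$ appears explicitly in \eqref{xi}. Beyond that, the proof just requires checking that every exponential weight is nonpositive, which follows from the decay rates in Lemma~\ref{lem:ZQ} and \eqref{lem_Zy}.
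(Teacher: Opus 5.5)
Your proposal is correct and follows essentially the same route as the paper: you bound the bracket in \eqref{kap} and \eqref{xi} using \eqref{Wy3_0}, \eqref{lem_Zy}, \eqref{Z_high} and \eqref{Q_high}, control $\tilde G-Q$ and $Z(0,s)$ by \eqref{GQ_bound} and \eqref{WZbound}, and integrate in time, estimating $\kappa$ before $\dot\xi$. The only cosmetic difference is that you integrate over $[-\varepsilon,T_{\sigma_1}]$ using \eqref{Tsigma1}, whereas the paper changes variables to $s$ and uses \eqref{dottau1} to obtain $\int_{s_0}^s Ce^{-s'}\,ds'$; the two are equivalent.
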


\begin{proof}
Recalling \eqref{kap}, we deduce from \eqref{Wy3_0}, \eqref{lem_Zy}, \eqref{Z_high}, \eqref{Q_high}, and \eqref{GQ_bound} that
	\begin{equation*}
		\begin{split}
			|\dot{\kappa}|&\leq \frac{2 e^{-s/2}}{|\partial_y^3 W (0,s)|} \left( \frac{8 e^{s/2} |Q_{yy}(0,s)|}{3} + e^{3s} |Z_y(0,s)| |Z_{yy}(0,s)| + \frac{8e^{3s/2} |Z_{yy}(0,s)|}{3} \right) \\
			& \quad + \frac{8 |\tilde{G}(0,s)-Q(0,s)|}{3}
			\\
			&\leq  C e^{-s/2} \left( e^{-7s/2} + e^{-s} + 1 \right) + C
			\leq C.
		\end{split}
	\end{equation*}
Hence, using \eqref{dottau1}, we obtain
	\begin{equation*}
		|\kappa(t)|\leq |\kappa_0|+\int^t_{-\varepsilon}|\dot{\kappa}|\,dt' \leq  |\kappa_0|+\int^s_{s_0}|\dot{\kappa}|\frac{e^{-s'}}{1-\dot{\tau}}\,ds' \leq |\kappa_0|+\int^s_{s_0}Ce^{-s'}\,ds' \leq C(\kappa_0).
	\end{equation*}
Similarly, thanks to \eqref{xi}, \eqref{lem_Zy}, \eqref{Z_high}, \eqref{Q_high}, \eqref{WZbound}, and \eqref{Wy3_0}, we find
	\begin{equation*}
	\begin{split}
		|\dot{\xi}| & \leq  Ce^{-3s/2} \left( e^{-7s/2} + e^{-s} + 1 \right)  + \frac{|Z(0,s)|}{3} + |\kappa| \leq C.
	\end{split}
	\end{equation*}
Integrating $\dot{\xi}$ along $t$, we obtain $|\xi(t)|\leq C$. 
\end{proof}

As mentioned earlier, we revisit the decay estimates on \( \lVert \partial_y^n Z(\cdot,s) \rVert_{L^\infty} \) for \( n = 2,3,4 \), established in Lemma~\ref{lem:ZQ}. The bounds on $\partial_y^n Z$, obtained via integration along the characteristic flow of the equations for $Z$, are not sufficient to close the bootstrap argument. To refine these bounds, we rewrite some forcing terms in the equations for $\partial_y^n Z$ along the characteristic flow associated with the equation for $W$, which allows us to obtain sharper decay estimates. This approach follows the idea introduced in \cite[Section~5.2]{LPP}, adapted to the present self-similar framework. For this purpose, we introduce the characteristic curves $\psi_W$ and $\psi_Z$, defined by
\begin{equation} \label{curves}
\begin{split}
& \partial_s \psi_W(y,s) = U^W(\psi_W(y,s),s) = \left( \frac{5}{2}y + \frac{W}{1-\dot{\tau}} + \frac{e^{3s/2}Z}{3(1-\dot{\tau})} + \frac{e^{3s/2}(\kappa - \dot{\xi})}{1-\dot{\tau}} \right) (\psi_W(y,s),s), \\
& \partial_s \psi_Z(y,s) = U^Z(\psi_Z(y,s),s) = \left( \frac{5}{2}y + \frac{W}{3(1-\dot{\tau})} + \frac{e^{3s/2}Z}{1-\dot{\tau}} + \frac{e^{3s/2}(\kappa - 3 \dot{\xi})}{3(1-\dot{\tau})} \right) (\psi_Z(y,s),s)
\end{split}
\end{equation}
with $\psi_W(y,s_0) = \psi_Z(y,s_0) = y$ for all $y \in \mathbb{R}$ and $s \in [s_0,\sigma_1]$.

The following lemma provides improved decay rates of $\lVert \partial_y^n Z(\cdot,s) \rVert_{L^\infty}$ for $n=2,3,4$, together with an integral estimate that will be used later:

\begin{lemma} \label{Lemma:Z_high}
Under the same assumptions in Proposition~\ref{Boot}, there exists a constant $C>0$ such that
\begin{equation}\label{Z_high'}
	\lVert \partial_y^n Z (\cdot,s) \rVert_{L^\infty} \leq C e^{-5s/2}, \quad n=2,3,4
\end{equation}
and
\begin{equation} \label{int_Wy2}
\int_{s_0}^s e^{s'} ( W_y \circ \psi_Z )^2 \, ds' \leq \frac{4\sqrt{2}}{\sqrt{h_*}} + C \varepsilon
\end{equation}
for all $s \in [s_0,\sigma_1]$.
\end{lemma}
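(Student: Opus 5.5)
The plan is to redo the Duhamel estimates for $\partial_y^n Z$ from Lemma~\ref{lem:ZQ}, but to treat the forcing terms that are only $O(e^{-3s/2})$ differently. Inspecting \eqref{F^Z}, the culprits are $e^{-3s/2}W_yW_{yy}$ in $F_2^Z$, the terms $e^{-3s/2}W_{yy}^2$ and $e^{-3s/2}W_y\partial_y^3W$ in $F_3^Z$, and the analogous terms in $F_4^Z$. All remaining terms are already $O(e^{-5s/2})$ or better, using \eqref{lem_Zy}, \eqref{Q_high}, \eqref{Z_high} and \eqref{Wyy_bound}--\eqref{Wy4_bound}.

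The key structural fact is that the two characteristic speeds are uniformly separated. From \eqref{UW}--\eqref{UZ},
\begin{equation*}
U^W-U^Z=\frac{2}{3(1-\dot\tau)}\Big(W-e^{3s/2}Z+e^{3s/2}(\kappa-\dot\xi)+e^{3s/2}\dot\xi\Big)=\frac{2e^{3s/2}(w-z)}{3(1-\dot\tau)},
\end{equation*}
and $w-z=4\sqrt h\ge 4\sqrt{h_{\min}}$ by Lemma~\ref{h_max}. Hence $U^W-U^Z\ge c\,e^{3s/2}$. Following \cite[Section~5.2]{LPP}, I would differentiate $\partial_y^{n-1}W$ along $\psi_Z$. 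Using \eqref{Wy1}--\eqref{Wy3} this gives
\begin{equation*}
\frac{d}{ds}\big(\partial_y^{n-1}W\circ\psi_Z\big)=-(U^W-U^Z)\,\partial_y^{n}W-(\text{damping})\,\partial_y^{n-1}W+(\text{forcing}),
\end{equation*}
so that $\partial_y^nW\circ\psi_Z$ equals $-(U^W-U^Z)^{-1}$ times a total $s$-derivative plus bounded terms. Substituting this into the bad terms gains a factor $e^{-3s/2}$, except for the total-derivative piece. For example, $e^{-3s/2}W_yW_{yy}$ becomes $-\tfrac12 e^{-3s/2}(U^W-U^Z)^{-1}\frac{d}{ds}(W_y\circ\psi_Z)^2+O(e^{-3s})$. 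In the Duhamel formula with weight $e^{-\int D_n^Z}$, I would integrate this total derivative by parts in $s'$. The boundary terms are $O(e^{-3s})$, and the derivative falling on the weight $e^{-3s'/2}(U^W-U^Z)^{-1}e^{\int_{s'}^s D_n^Z}$ produces only bounded multiples of the same size. One needs $\partial_s(w-z)$ along $\psi_Z$ here, which is controlled by the $w$-equation and Lemma~\ref{lem_Gq}, at a cost of at most $e^{s}$ compared with the $e^{-3s}$ gain. Together with $D_n^Z\ge 5/2$ (computed in Lemma~\ref{lem:ZQ}), this gives \eqref{Z_high'} successively for $n=2,3,4$. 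Quadratic terms such as $W_{yy}^2$ are handled by writing one factor via the identity above.

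For \eqref{int_Wy2}, I would work in the original variables, where $\int_{s_0}^s e^{s'}(W_y\circ\psi_Z)^2\,ds'=\int w_x^2\,dt$ along a $z$-characteristic. Differentiating \eqref{rSV'2} in $x$ and using $G_x=\tfrac3{16}w_x^2+\tfrac58w_xz_x+\tfrac3{16}z_x^2$ from \eqref{Gxbd}, the $z$-characteristic derivative of $z_x$ equals $\tfrac12w_x^2$ plus terms linear in $w_x z_x$, quadratic in $z_x$, and $q_x$. Integrating in time, $\tfrac12\int w_x^2\,dt$ is bounded by the oscillation of $z_x$ along the curve, plus $\int|w_x z_x|\,dt$ and $O(\varepsilon)$ (using $T_{\sigma_1}\le C\varepsilon$ and Lemma~\ref{lem_Gq}). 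By \eqref{lem_Zy} and the initial bound $\|\partial_xz_0\|_{L^\infty}\le 1/\sqrt{h_*}$, the oscillation of $z_x$ is at most about $2\sqrt2/\sqrt{h_*}$ once the bootstrap constant is used sharply. The cross term $\int|w_xz_x|\,dt$ is small by Cauchy--Schwarz: the $W$-integral absorbs into the left-hand side, and $\int z_x^2\,dt$ is $O(\varepsilon)$. Multiplying by $2$ gives the stated constant.

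The main obstacle is the integration-by-parts step. One has to check that differentiating the weight, in particular $(w-z)\circ\psi_Z$ and $\int D_n^Z$, does not lose more than the $e^{-3s/2}$ gained, and that the terms in $F_4^Z$ involving $\partial_y^4W$ can be rewritten without needing $\partial_y^5W$. For the latter I would use the $n=3$ identity, so that only $\partial_y^3W$ appears under the total derivative.
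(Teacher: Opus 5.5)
\textbf{Gap: the proof of \eqref{int_Wy2} is circular.} This is the half of the lemma that Lemma~\ref{lem:Zy} actually depends on, and it does not go through as you argue it.

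The relation you integrate, $\frac{d}{dt}z_x=\tfrac12 w_x^2+\tfrac43 w_xz_x-\tfrac12 z_x^2-\tfrac83 q_x$ along a $z$-characteristic, is exactly the $Z_y$-equation \eqref{Zyeq} written in physical variables. So bounding $\int w_x^2\,dt$ by the oscillation of $z_x$, and then feeding that bound into Lemma~\ref{lem:Zy}, returns no information.

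Concretely, the only control you have on $z_x(t)$ is the bootstrap assumption \eqref{lem_Zy}, $|z_x|\le 5/\sqrt{h_*}$. Combined with $|\partial_x z_0|\le 1/\sqrt{h_*}$ and your Cauchy--Schwarz absorption, this gives only
$$\int_{s_0}^s e^{s'}(W_y\circ\psi_Z)^2\,ds'\le 12/\sqrt{h_*}+C\varepsilon^{1/2}.$$
Lemma~\ref{lem:Zy} then yields $e^{5s/2}|Z_y|\le 7/\sqrt{h_*}$, which does not improve on $5/\sqrt{h_*}$. No ``sharp use of the bootstrap constant'' produces $2\sqrt2/\sqrt{h_*}$; that value is simply half the target.

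\textbf{Missing idea: an energy-based bound via transversality.} You need a bound on the time integral of $w_x^2$ along a $z$-characteristic that is independent of $z_x$ and comes from the conserved energy.
\begin{itemize}
\item Your own identity gives $U^W-U^Z=\frac{2e^{3s/2}(w-z)}{3(1-\dot\tau)}\ge\frac43 e^{3s/2}\sqrt{h_{\min}}$.
\item So the curve $\psi_Z(y_z,\cdot)$ can be reparametrized by the label $\xi$ of the $\psi_W$-characteristic it crosses, with $ds'\le\frac{3}{4\sqrt{h_{\min}}}e^{-3s'/2}\partial_\xi\psi_W\,d\xi$.
\item This turns $e^{s'}W_y^2\,ds'$ into at most $\frac{3}{4\sqrt{h_{\min}}}e^{-s'/2}W_y^2\,\partial_\xi\psi_W\,d\xi$, which is the density $w_x^2\,dx$ transported along $\psi_W$.
\item That density is conserved up to $O(\varepsilon)$. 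In its $s$-derivative the scaling terms and the cubic $W_y^3$ terms cancel; this is the Hunter--Saxton $\dot H^1$ structure.
\item Hence the integral is at most $\frac{3}{4\sqrt{h_{\min}}}\|\partial_x w_0\|_{L^2}^2+C\varepsilon$.
\item Energy conservation gives $\|\partial_xw_0\|_{L^2}^2\le 4E_0/h_{\min}^3\le 16/3$, using \eqref{E0_bound} and $h_{\min}=h_*/2$. This produces the constant $4\sqrt2/\sqrt{h_*}$.
\end{itemize}
Your argument never uses $E_0$, so it cannot produce a constant below the bootstrap threshold.

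\textbf{Your route to \eqref{Z_high'}.} This genuinely differs from the paper's. The paper applies Young's inequality and bounds $\int e^{s'}(\partial_y^kW\circ\psi_Z)^2\,ds'$ for $k\le 4$ by the same flux argument, using the $L^2$ bounds on $\partial_x^j w_0$ in \eqref{init_wbound}. Your integration-by-parts route can be made to work, and does not need those $L^2$ bounds at this step. Three steps need correcting.

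First, substituting for one factor of $W_{yy}^2$ (or of $W_y\partial_y^3W$) on its own does not give a total derivative. After integrating by parts, the $s'$-derivative falls on $W_{yy}\circ\psi_Z$ (resp.\ $W_y\circ\psi_Z$). Its derivative along $\psi_Z$ contains $(U^W-U^Z)\partial_y^3W$ (resp.\ $(U^W-U^Z)W_{yy}$), which is of size $e^{3s/2}$, not bounded. The argument closes only because the bad part of $F_n^Z$ is exactly $\frac{e^{-3s/2}}{1-\dot\tau}\partial_y^{n-1}(\tfrac12 W_y^2)$. So you should apply
$$f_y=(U^W-U^Z)^{-1}\big[(\partial_s+U^W\partial_y)f-\tfrac{d}{ds}(f\circ\psi_Z)\big]$$
to $f=\partial_y^{n-2}(\tfrac12W_y^2)$. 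Its $\psi_W$-material derivative is bounded by \eqref{Wy1}--\eqref{Wy3} and Lemma~\ref{lem:ZQ}.

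Second, differentiating the weight must cost $O(1)$, not $e^{s}$; an $e^{s}$ loss would leave only $e^{-2s}$, which is too weak. It does cost $O(1)$, for two reasons:
\begin{itemize}
\item $\frac{d}{ds}[(w-z)\circ\psi_Z]=-\frac{2(w-z)W_y}{3(1-\dot\tau)}$.
\item The factor $1-\dot\tau$ in $(U^W-U^Z)^{-1}$ cancels the one in $F_n^Z$, so no bound on $\ddot\tau$ is needed.
\end{itemize}

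Third, terms such as $W_{yy}Z_{yy}$ in $F_3^Z$ are $O(e^{-5s/2})$ only after you use the already improved lower-order bound, not \eqref{Z_high}.
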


\begin{proof}

We provide the proof of \eqref{Z_high'} for $n=2$, together with \eqref{int_Wy2}. Since the estimates for the third- and fourth-order derivatives are lengthy and somewhat repetitive, they are deferred to Appendix~\ref{App:Z}.

Recall the definition of $D_2^Z$ and $F_2^Z$ in \eqref{D^Z} and \eqref{F^Z}. It follows from \eqref{dottau1}, \eqref{lem_Zy}, and \eqref{Uy1} that
\begin{equation*}
D_2^Z \geq 5 - \left( 2 e^{3s/2} \lvert Z_y \rvert + \lvert W_y \rvert \right) (1+\varepsilon^{1/2}) \geq \frac{8}{3}
\end{equation*}
for sufficiently small $\varepsilon>0$. Moreover, by \eqref{dottau1}, \eqref{Wyy_bound}, \eqref{Q_high}, \eqref{lem_Zy},  and Young's inequality, it holds that
\begin{equation*}
\lvert F_2^Z \rvert \leq C \left( e^{-3s/2}W_y^2 + e^{-3s/2}W_{yy}^2 + e^{-5s/2} \right).
\end{equation*}
Integrating the equation for $Z_{yy}$ in \eqref{eq:Z_high} along the characteristic curve \(\psi_Z\) in \eqref{curves}, and using the above estimates, we obtain
\begin{equation} \label{pr_Zyy'}
\begin{aligned}
\lVert Z_{yy}(\cdot,s) \rVert_{L^\infty} 
&\leq \lVert Z_{yy}(\cdot,s_0) \rVert_{L^\infty} e^{-8(s-s_0)/3} + C \int_{s_0}^s e^{-8(s-s')/3} e^{-5s'/2} \, ds' \\
& \quad + C \int_{s_0}^s e^{-8(s-s')/3} e^{-3s'/2} ( W_y^2 + W_{yy}^2)(\psi_Z(y,s'),s') \, ds'  \\
&\leq C e^{-5s/2} + C e^{-5s/2} \int_{s_0}^s e^{s'} (W_y^2 + W_{yy}^2)(\psi_Z(y,s'),s') \, ds'.
\end{aligned}
\end{equation}
In the second inequality, we used \eqref{init_wbound} and
\begin{equation*}
e^{-8s/3} \int_{s_0}^s e^{7s'/6} (W_y^2 + W_{yy}^2) \, ds' \leq e^{-5s/2} \int_{s_0}^s e^{s'} (W_y^2 + W_{yy}^2) \, ds'.
\end{equation*}
In order to complete the proof of \eqref{Z_high'} for $n=2$, we now establish \eqref{int_Wy2} and
\begin{equation} \label{int_Wyy2}
\int_{s_0}^s e^{s'} ( W_{yy} \circ \psi_Z )^2 \, ds' \leq C \varepsilon^{1/2}
\end{equation}
for sufficiently small $\varepsilon>0$, which together with \eqref{pr_Zyy'} yields the desired bound
\begin{equation} \label{Zyy_proof}
\lVert Z_{yy}(\cdot,s) \rVert_{L^\infty} \leq C e^{-5s/2}.
\end{equation}

We first make a change of variables so that the integral in the last inequality of \eqref{pr_Zyy'}, originally taken along $\psi_Z$, is expressed in terms of the curve $\psi_W$ defined in \eqref{curves}. For this, we fix $y_z \in \mathbb{R}$ and, for each $s \in [s_0,\sigma_1]$, define $y_w(s)$ such that
\begin{equation} \label{rel:psi_ZW}
\psi_Z(y_z,s) = \psi_W(y_w(s),s).
\end{equation}
This change of variables is justified by showing that $\partial_y \psi_W (y,s) >0$ for all $y \in \mathbb{R}$ and $s \in [s_0,\sigma_1]$. Differentiating the equation for $\psi_W$ in \eqref{curves} with respect to $y$ yields
\begin{equation} \label{psiWy}
\partial_s  (\psi_W)_y = \left( \frac{5}{2} + \frac{W_y}{1-\dot{\tau}} + \frac{e^{3s/2}Z_y}{3(1-\dot{\tau})} \right) (\psi_W(y,s),s) \cdot (\psi_W)_y.
\end{equation}
Solving this ODE, we obtain for each $y$ that
\begin{equation*}
(\psi_W)_y(y,s) = (\psi_W)_y(y,s_0) \exp \left( \int_{s_0}^s \left( \frac{5}{2} + \frac{W_y}{1-\dot{\tau}} + \frac{e^{3s/2}Z_y}{3(1-\dot{\tau})} \right) (\psi_W(y,s'),s') \, ds' \right).
\end{equation*}
Since $\partial_y \psi_W(y,s_0) = 1 >0$ and, by the bounds \eqref{Uy1}, \eqref{lem_Zy}, and \eqref{dottau1},
\begin{equation*}
\frac{5}{2} - |W_y| (1+\varepsilon^{1/2}) - \frac{e^{3s/2}|Z_y|}{3} (1+\varepsilon^{1/2}) \geq \frac{1}{4}
\end{equation*}
for sufficiently small $\varepsilon>0$, we conclude that $\partial_y \psi_W(y,s) >0$ for all $s \in [s_0,\sigma_1]$.

We denote $\overline{y}(s) := \psi_Z(y_z,s)$. From the relation \eqref{rel:psi_ZW} and the equations \eqref{curves} for $\psi_Z$ and $\psi_W$, we compute
\begin{equation*}
\begin{split}
\frac{\partial \psi_W}{\partial y}(y_w(s),s) \frac{d y_w(s)}{ds} & = \left( \frac{5}{2}y + \frac{W}{3(1-\dot{\tau})} + \frac{e^{3s/2}Z}{1-\dot{\tau}} + \frac{e^{3s/2}(\kappa - 3 \dot{\xi})}{3(1-\dot{\tau})} \right) (\psi_Z(y_z,s),s) \\
& \qquad - \left( \frac{5}{2}y + \frac{W}{1-\dot{\tau}} + \frac{e^{3s/2}Z}{3(1-\dot{\tau})} + \frac{e^{3s/2}(\kappa - \dot{\xi})}{1-\dot{\tau}} \right) (\psi_W(y_w(s),s),s) \\
& = - \frac{2}{3} e^{3s/2} \left( \frac{e^{-3s/2} W}{1-\dot{\tau}} - \frac{Z}{1-\dot{\tau}} + \frac{\kappa}{1-\dot{\tau}} \right) (\overline{y}(s),s).
\end{split}
\end{equation*}
Since $e^{-3s/2}W + \kappa - Z = w-z = 4\sqrt{h} >0$ by \eqref{wz} and \eqref{hbd}, the right-hand side is strictly negative. Thus, using \eqref{dottau}, we obtain
\begin{equation} \label{psiW'bd}
\left| \frac{\partial \psi_W}{\partial y}(y_w(s),s) \frac{d y_w(s)}{ds} \right| \geq \frac{8}{3} e^{3s/2} \frac{\sqrt{h_{\mathrm{min}}}}{1+|\dot{\tau}|} \geq \frac{4}{3} e^{3s/2} \sqrt{h_{\mathrm{min}}}
\end{equation}
for sufficiently small $\varepsilon>0$. Moreover, we find that $\tfrac{dy_w}{ds} < 0$ and therefore $y_w(s)\le y_z$ for all $s\in[s_0,\sigma_1]$. Let $\overline{s}(y)$ denote the inverse function of $y_w(s)$, so that $y_w(\overline{s}(\xi)) = \xi$ for all $\xi \in [y_w(s),y_z]$. This, together with \eqref{psiW'bd}, allows us to estimate the integral in \eqref{int_Wy2} as
\begin{equation} \label{Wy2bd}
\begin{split}
& \int_{s_0}^s e^{s'} W_y^2(\psi_Z(y_z,s'),s') \, ds'  = \int_{s_0}^s e^{s'} W_y^2(\psi_W(y_w(s'),s'),s') \, ds' \\
& \quad \leq \frac{3}{4\sqrt{h_{\mathrm{min}}}} \int_{y_w(s)}^{y_z} e^{-\overline{s}(\xi)/2} W_y^2 (\psi_W(\xi,\overline{s}(\xi)), \overline{s}(\xi)) \frac{\partial \psi_W}{\partial \xi} (\xi, \overline{s}(\xi)) \, d\xi \\
& \quad = \frac{3}{4\sqrt{h_{\mathrm{min}}}} \int_{y_w(s)}^{y_z} \left(e^{-s_0/2}  W_y^2(\xi,s_0) + \int_{s_0}^{\overline{s}(\xi)} \frac{d}{ds'} \left( e^{-s'/2} (W_y \circ \psi_W)^2 \frac{ \partial \psi_W}{\partial \xi} \right)(\xi,s') \, ds' \right) \, d\xi \\
& \quad \leq \frac{3}{4\sqrt{h_{\mathrm{min}}}} \left( \lVert \partial_x w_0 \rVert_{L^2(\mathbb{R})}^2 + J_1 \right)\leq \frac{4\sqrt{2}}{\sqrt{h_*}} + \frac{3\sqrt{2}}{4\sqrt{h_*}} J_1,
\end{split}
\end{equation}
where we used $\textstyle \int e^{-s_0/2} W_y^2(y,s_0) \, dy = \int (\partial_x w_0)^2 \, dx$, and set
\begin{equation*}
J_1 \coloneqq \int_{y_w(s)}^{y_z} \int_{s_0}^{\overline{s}(\xi)} \left| \frac{d}{ds'}  \left( e^{-s'/2} (W_y \circ \psi_W)^2 \frac{ \partial \psi_W}{\partial \xi} \right)(\xi,s') \right| \, ds' d\xi.
\end{equation*}
In the last inequality, we have also used $\textstyle h_{\mathrm{min}} = \frac{h_*}{2}$ from Lemma~\ref{h_max} and
	\begin{equation}\label{w0H1}
		\|\partial_xw_0\|_{L^2(\mathbb{R})}^2=\left\|\partial_x u_0 + \frac{\partial_x h_0}{\sqrt{h_0}}\right\|^2_{L^2(\mathbb{R})} \leq 2 \int_\mathbb{R} \left( ( \partial_x u_0 )^2 + \frac{(\partial_x h_0)^2}{h_0} \right) \, dx \leq \frac{4}{h^3_{\mathrm{min}}}E_0 \leq \frac{16}{3},
	\end{equation}
which follows from \eqref{def_E}--\eqref{E0_bound} and \eqref{hbd}. Similarly, using \eqref{init_wbound}, we obtain the corresponding bound for $W_{yy}^2$:
\begin{equation} \label{Wyy2bd}
\begin{split}
\int_{s_0}^s e^{s'} W_{yy}^2(\psi_Z(y_z,s'),s') \, ds' \leq \frac{3}{4\sqrt{h_{\mathrm{min}}}} \left( \varepsilon^{5} \lVert \partial_x^2w_{0} \rVert_{L^2(\mathbb{R})}^2 + J_2 \right) \leq \frac{3}{4\sqrt{h_{\mathrm{min}}}} \left( C \varepsilon^{1/2} + J_2 \right),
\end{split}
\end{equation}
where
\begin{equation*}
J_2 \coloneqq \int_{y_w(s)}^{y_z} \int_{s_0}^{\overline{s}(\xi)} \left| \frac{d}{ds'}  \left( e^{-s'/2} (W_{yy} \circ \psi_W)^2 \frac{ \partial \psi_W}{\partial \xi} \right)(\xi,s') \right| \, ds' d\xi.
\end{equation*}

For notational convenience, we henceforth write $f \circ \psi_W$ simply as $f$, for any sufficiently regular function $f$. Substituting from \eqref{Wy1} and \eqref{psiWy}, and applying the bounds \eqref{lem_Zy}, \eqref{dottau1}, and \eqref{GQ_bound}, we estimate the integrands of $J_1$ and $J_2$ as follows: for $J_1$,
\begin{equation} \label{J1_1}
\begin{split}
\left| \frac{d}{ds'} \left( e^{-s'/2} W_y^2 \frac{\partial \psi_W}{\partial \xi} \right) \right| & = \left| -\frac{1}{2} e^{-s'/2} W_y^2 \frac{\partial \psi_W}{\partial \xi}  + 2 e^{-s'/2} W_y \frac{d W_y}{ds'} \frac{\partial \psi_W}{\partial \xi} + e^{-s'/2} W_y^2 \frac{\partial^2 \psi_W}{\partial s' \partial \xi} \right| \\
& = \left| e^{-s'/2} \frac{\partial \psi_W}{\partial \xi} W_y \left( \frac{3 e^{3s'/2}Z_y W_y}{1-\dot{\tau}} - \frac{16 e^{s'/2} Q_y}{3(1-\dot{\tau})} + \frac{e^{3s'}Z_y^2}{1-\dot{\tau}} \right)  \right| \\
& \leq C e^{-3s'/2} \frac{\partial \psi_W}{\partial \xi} W_y^2  + C e^{-5s'/2} \left| \frac{\partial \psi_W}{\partial \xi} W_y \right|
\end{split}
\end{equation}
and for $J_2$, using in addition \eqref{Uy1}, \eqref{Wyy_bound}, \eqref{Z_high}, and \eqref{Q_high},
\begin{equation} \label{J2_1}
\begin{split}
\left| \frac{d}{ds'} \left( e^{-s'/2} W_{yy}^2 \frac{\partial \psi_W}{\partial \xi} \right) \right| & = \left| -\frac{1}{2} e^{-s'/2} W_{yy}^2 \frac{\partial \psi_W}{\partial \xi} + 2 e^{-s'/2} W_{yy} \frac{d W_{yy}}{ds'} \frac{\partial \psi_W}{\partial \xi} + e^{-s'/2} W_{yy}^2 \frac{\partial^2 \psi_W}{\partial s' \partial \xi} \right| \\
& = \bigg| e^{-s'/2} \frac{\partial \psi_W}{\partial \xi} W_{yy} \bigg( - 5 W_{yy} - \frac{3W_yW_{yy}}{1-\dot{\tau}} + \frac{7 e^{3s'/2}Z_yW_{yy}}{3(1-\dot{\tau})} \\
& \qquad \qquad \qquad \qquad \quad - \frac{16 e^{s'/2} Q_{yy}}{3(1-\dot{\tau})} + \frac{2e^{3s'}Z_y Z_{yy}}{1-\dot{\tau}} + \frac{8e^{3s'/2}Z_{yy}W_y}{3(1-\dot{\tau})} \bigg)  \bigg| \\
& \leq C e^{-s'/2} \left| \frac{\partial \psi_W}{\partial \xi} W_{yy} \right|.
\end{split}
\end{equation}
Now we estimate the integral of the terms appearing in the last inequality of \eqref{J1_1} and \eqref{J2_1}. Applying Fubini's theorem and the change of variables $y = \psi_W(\xi,s')$, and then using \eqref{def_E}, we obtain
\begin{equation*}
\begin{split}
\int_{y_w(s)}^{y_z} \int_{s_0}^{\overline{s}(\xi)} \left( e^{-3s'/2} \frac{\partial \psi_W}{\partial \xi} W_{y}^2 \right) \, ds' d\xi & = \int_{s_0}^s e^{-3s'/2} \bigg( \int_{\psi_W(y_w(s'),s')}^{\psi_W(y_z,s')} W_y^2 (y,s') \, dy \bigg) \, ds' \\
& \leq \int_{s_0}^s e^{-s'} \bigg( \int_\mathbb{R} e^{-s'/2} W_y^2 (y,s') \, dy \bigg) \, ds' \\
& \leq C E_0 \int_{s_0}^\infty e^{-s'} \, ds' \leq C \varepsilon.
\end{split}
\end{equation*}
Next, we divide the interval $[s_0,s]$ into countably many subintervals $\{ \RNum{1}_{1i} \}_{i \in \mathbb{N}}$, on each of which $\textstyle \frac{\partial \psi_W}{\partial \xi} W_y$ does not change its sign. Applying Fubini's theorem, we obtain
\begin{equation*}
\begin{split}
\int_{y_w(s)}^{y_z} \int_{s_0}^{\overline{s}(\xi)} \left| e^{-5s'/2} \frac{\partial \psi_W}{\partial \xi} W_{y} \right| \, ds' d\xi & \leq \sum_{i} \left| \int_{y_w(s)}^{y_z} \int_{\RNum{1}_{1i}} e^{-5s'/2} \frac{\partial \psi_W}{\partial \xi} W_y  \, ds' d\xi \right| \\
& \leq \sum_{i} \left| \int_{\RNum{1}_{1i}}  e^{-5s'/2} \left( \int_{y_w(s)}^{y_z} \frac{\partial}{\partial \xi} \left( W \circ \psi_W \right)  d\xi  \right) \, ds' \right| \\
& \leq \sum_{i} \int_{\RNum{1}_{1i}} e^{-5s'/2} \left| W(\psi_W(y_z,s'),s') - W(\psi_W(y_w(s),s'),s') \right| \, ds' \\
& \leq C \int_{s_0}^{\infty} e^{-s'} \, ds' \leq C \varepsilon.
\end{split}
\end{equation*}
Here we used \eqref{WZbound} and \eqref{kap_xi} to bound $|W| = |e^{3s/2} (w-\kappa)| \leq C e^{3s/2}$. We proceed analogously by partitioning $[s_0,s]$ into subintervals $\{ \RNum{1}_{2i} \}_{i \in \mathbb{N}}$, each on which $\textstyle \frac{\partial \psi_W}{\partial \xi} W_{yy}$ is sign-definite. Using \eqref{Uy1}, a similar computation gives
\begin{equation*}
\begin{split}
\int_{y_w(s)}^{y_z} \int_{s_0}^{\overline{s}(\xi)} \left| e^{-s'/2} \frac{\partial \psi_W}{\partial \xi} W_{yy} \right| \, ds' d\xi & \leq \sum_{i} \int_{\RNum{1}_{2i}} e^{-s'/2} \left| W_y(\psi_W(y_z,s'),s') - W_y(\psi_W(y_w(s),s'),s') \right| \, ds' \\
& \leq C \int_{s_0}^{\infty} e^{-s'/2} \, ds' \leq C \varepsilon^{1/2}.
\end{split}
\end{equation*}
Combining the above estimates, we have
\begin{equation*}
J_1 \leq C \varepsilon \quad \text{and} \quad J_2 \leq C \varepsilon^{1/2},
\end{equation*}
which, together with \eqref{Wy2bd} and \eqref{Wyy2bd}, provide the desired estimates \eqref{int_Wy2} and \eqref{int_Wyy2}.

\end{proof}

\begin{lemma}
	Under the same assumptions in Proposition~\ref{Boot}, it holds for all $s\in[s_0,\sigma_1]$ (equivalently, $t \in [-\varepsilon,T_{\sigma_1}]$) that
	\begin{equation}\label{Z}
		\left\lvert e^{3s/2}(\kappa - \dot{\xi}) + \frac{e^{3s/2}Z(y,s)}{3} \right\rvert \leq C \left( 1 + \lvert y \rvert  \right) e^{-s}
	\end{equation}
	for some constant $C>0$. 	Furthermore, we have
	\begin{equation}\label{UW_far}
		\inf_{|y|>1, s\in[s_0, \sigma_1]} U^W(y,s) \frac{y}{|y|}  \geq \frac{1}{8},
	\end{equation}
	where $U^W(y,s)$ is defined in \eqref{UW}. 
\end{lemma}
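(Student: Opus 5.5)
The plan is to evaluate the expression at $y=0$ using the modulation equation \eqref{xi}, and then control the $y$-dependence through $Z_y$. By \eqref{xi},
\begin{equation*}
\kappa-\dot\xi+\frac{Z(0,s)}{3} = \frac{e^{-3s/2}}{\partial_y^3W(0,s)}\left(-\frac{8e^{s/2}Q_{yy}(0,s)}{3}+e^{3s}Z_yZ_{yy}(0,s)-\frac{8e^{3s/2}Z_{yy}(0,s)}{3}\right).
\end{equation*}
Multiplying by $e^{3s/2}$, we use $|\partial_y^3W(0,s)|\ge 255$ from \eqref{Wy3_0}, $|Q_{yy}|\le Ce^{-4s}$ from \eqref{Q_high}, $|Z_y|\le Ce^{-5s/2}$ from \eqref{lem_Zy}, and the improved bound $|Z_{yy}|\le Ce^{-5s/2}$ from \eqref{Z_high'} in Lemma~\ref{Lemma:Z_high}. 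The three terms are then of size $e^{-7s/2}$, $e^{-2s}$ and $e^{-s}$ respectively. This gives
\begin{equation*}
\left|e^{3s/2}(\kappa-\dot\xi)+\tfrac{1}{3}e^{3s/2}Z(0,s)\right|\le Ce^{-s}.
\end{equation*}
We note that the cruder bound \eqref{Z_high} would only give $O(1)$ here. This is precisely why the refined Lemma~\ref{Lemma:Z_high} is needed, and it is the delicate input to the whole argument.

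Next, for general $y$ we write $Z(y,s)-Z(0,s)=\int_0^y Z_y$. By \eqref{lem_Zy} this gives $e^{3s/2}|Z(y,s)-Z(0,s)|\le Ce^{-s}|y|$. Combining the two estimates yields \eqref{Z}.

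For \eqref{UW_far}, we treat the case $y>1$; the case $y<-1$ follows by symmetry. By \eqref{UW} and \eqref{dottau1},
\begin{equation*}
U^W=\frac52y+\frac{W}{1-\dot\tau}+\frac{1}{1-\dot\tau}\left(e^{3s/2}(\kappa-\dot\xi)+\tfrac13e^{3s/2}Z\right).
\end{equation*}
By \eqref{Z}, the last term is at most $C(1+|y|)e^{-s}\le C\varepsilon(1+|y|)$, since $e^{-s}\le e^{-s_0}=\varepsilon$.

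The main step is a lower bound for $\frac52y+W/(1-\dot\tau)$. Using $W(0,s)=0$ from \eqref{constraint}, we have $W(y,s)=\int_0^yW_y$. Here $W_y\ge\overline W'-|W_y-\overline W'|$, and \eqref{Uy1} gives $|W_y|\le2$. A sharper bound comes from combining the profile estimate in Appendix \eqref{4.1}, namely $|\overline W'(y)|\le 2-\frac{6y^2}{5(1+y^2)}$, with \eqref{Wy_bound}. Hence
\begin{equation*}
W(y,s)\ge -\int_0^y\left(2-\frac{6\hat y^2}{5(1+\hat y^2)}+\frac{\hat y^2}{1000(1+\hat y^2)}\right)d\hat y\ge -2y+\frac{6}{5}(y-\tfrac{\pi}{2})-\frac{y}{1000}.
\end{equation*}
Using also $(1-\dot\tau)^{-1}\le1+\varepsilon^{1/2}$, this gives $\frac52y+W/(1-\dot\tau)\ge y(\tfrac12+\tfrac65-\tfrac1{1000})-\tfrac{6\pi}{10}-C\varepsilon^{1/2}y$.

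That constant is not enough for $y$ near $1$, so the main obstacle is the region $1<y<2$, where the integrated bound is weak. There I will use instead the pointwise bound $|W_y|\le2-\frac{6\hat y^2}{5(1+\hat y^2)}+\frac{\hat y^2}{1000(1+\hat y^2)}$ directly. Integrating it over $[0,y]$, with $\int_0^1\frac{\hat y^2}{1+\hat y^2}\,d\hat y=1-\pi/4$, gives at $y=1$
\begin{equation*}
\tfrac52-2+\tfrac65(1-\tfrac\pi4)-\varepsilon\text{-terms}\approx0.5+0.257>\tfrac18.
\end{equation*}
The derivative in $y$ of the lower bound is at least $\frac52-2(1+\varepsilon^{1/2})-C\varepsilon>0$, so the bound increases for $y>1$. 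Absorbing the $C\varepsilon(1+|y|)$ error for small $\varepsilon$ then yields $U^W\ge\frac18$ for $y>1$.
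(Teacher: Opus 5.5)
Your proof is correct and follows the paper's argument. For \eqref{Z} you evaluate \eqref{xi} at $y=0$ with the refined bound \eqref{Z_high'} and add $e^{3s/2}|Z(y,s)-Z(0,s)|\le C|y|e^{-s}$; for \eqref{UW_far} you integrate a pointwise bound on $W_y$ starting from $W(0,s)=0$ and absorb the modulation error via \eqref{Z}.

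The only difference is that the paper uses just $|W_y|\le 2$ from \eqref{Uy1}, which already gives $U^W\ge(\tfrac12-2\varepsilon^{1/2})y-C\varepsilon(1+y)\ge \tfrac{y}{4}-C\varepsilon$ for $y\ge 0$. So the refinement via \eqref{4.1} and the separate treatment of $1<y<2$ are unnecessary; the apparent obstacle near $y=1$ arose only because you replaced $\arctan y$ by $\pi/2$.
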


\begin{proof}
	By the fundamental theorem of calculus with \eqref{lem_Zy}, we have
	\begin{equation*}
	\lvert Z(y,s) - Z(0,s) \rvert \leq C \lvert y \rvert e^{-5s/2}.
	\end{equation*}
	Also, by \eqref{xi} with the bounds \eqref{Wy3_0}, \eqref{lem_Zy}, \eqref{Z_high'}, and \eqref{Q_high}, we deduce that
	\begin{equation*}
	\begin{split}
		\left\lvert \kappa - \dot{\xi} + \frac{Z(0,s)}{3} \right\rvert &  = \left\lvert \frac{e^{-3s/2}}{\partial_y^3 W (0,s)} \left( - \frac{8 e^{s/2} Q_{yy}(0,s)}{3} + e^{3s} Z_y(0,s) Z_{yy}(0,s) - \frac{8e^{3s/2} Z_{yy}(0,s)}{3} \right) \right\rvert \leq C e^{-5s/2}.
	\end{split}
	\end{equation*}
	Combining them with \eqref{dottau1}, we obtain
	\begin{equation*}
	\begin{split}
		\left\lvert \frac{e^{3s/2}(\kappa - \dot{\xi})}{1-\dot{\tau}} + \frac{e^{3s/2}Z}{3(1-\dot{\tau})} \right\rvert & \leq 2 e^{3s/2} \left( \left\lvert \kappa - \dot{\xi} + \frac{Z(0,s)}{3} \right\rvert + \left\lvert \frac{Z(y,s) - Z(0,s)}{3} \right\rvert \right) \leq C \left( 1 + \lvert y \rvert  \right) e^{-s} .
	\end{split}
	\end{equation*}
	
	Lastly, using \eqref{dottau1}, \eqref{constraint}, \eqref{Uy1} and \eqref{Z} for \eqref{UW}, one can check that  
	\begin{equation*}
		\begin{split}
			U^W(y,s) &\geq \frac{5}{2}y-\frac{2y}{1-\dot{\tau}}-\frac{C(1+y)e^{-s}}{1-\dot{\tau}}
			\ge  \frac{y}{4}-C\veps \quad \text{ for }y\geq 0,
		\end{split}
	\end{equation*}
	\begin{equation*}
		\begin{split}
			U^W(y,s) &\leq \frac{5}{2}y-\frac{2y}{1-\dot{\tau}}+\frac{C(1-y)e^{-s}}{1-\dot{\tau}}
			\leq \frac{y}{4}+C\veps \quad \text{ for }y<0.
		\end{split}
	\end{equation*}
	This proves the assertion \eqref{UW_far}.
\end{proof}

\begin{lemma}\label{lem_Qdec}
Under the same assumptions in Proposition~\ref{Boot}, there exists a constant $C>0$ such that
\begin{equation} \label{Qy_weight}
y^{4/5}|Q_y|\leq C e^{-s/2}
\end{equation}
for all $y\in\mathbb{R}$ and $s\in[s_0,\sigma_1]$.
\end{lemma}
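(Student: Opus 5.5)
The estimate \eqref{Qy_weight} is a weighted decay statement for $q_x$ in the physical variables. By \eqref{ys} and \eqref{WZQG} we have $Q_y=e^{-5s/2}q_x$ and $|y|^{4/5}=e^{2s}|x-\xi(t)|^{4/5}$. Hence \eqref{Qy_weight} is equivalent to
\begin{equation*}
|x-\xi(t)|^{4/5}\,|q_x(x,t)|\leq C \qquad \text{for all } x\in\mathbb{R},\ t\in[-\varepsilon,T_{\sigma_1}].
\end{equation*}
For $|x-\xi|\le 1$ this follows from \eqref{Gq_bound}, so only the region $|x-\xi|\geq 1$ needs work. There I will show $|q_x|\lesssim |x-\xi|^{-4/5}$ in two steps: (a) derive pointwise decay of $G_x$ from the bootstrap assumptions; (b) transfer it to $q_x$ through the Green function of $\mathcal{I}_h$.

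\emph{Step 1: decay of $G_x$.} By \eqref{Wydec_fin_2}-type reasoning, \eqref{Wy_dec} together with the decay of $\overline{W}'$ in \eqref{asymp-y-infty} gives $|W_y|\le C(1+|y|^{2/5})^{-1}$. Then
\begin{equation*}
|w_x| = e^{s}|W_y| \le \frac{Ce^{s}}{1+e^{s}|x-\xi|^{2/5}} \le C|x-\xi|^{-2/5}.
\end{equation*}
Likewise \eqref{Zy_dec} gives $|z_x|=e^{5s/2}|Z_y|\le Ce^{s}(1+e^{s}|x-\xi|^{2/5})^{-1}\le C|x-\xi|^{-2/5}$. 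Since $G_x$ is a quadratic form in $(w_x,z_x)$, as in \eqref{Gxbd}, we get
\begin{equation*}
|G_x(x,t)|\le C|x-\xi(t)|^{-4/5}.
\end{equation*}
We also have $\|G\|_{L^\infty}\le C$ from Lemma~\ref{lem_Gq}.

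\emph{Step 2: kernel representation.} Write $\mathcal{I}_h^{-1}f(x)=\int K(x,x')f(x')\,dx'$. Using the Green function bounds of Appendix~\ref{App:K}, I will take $|\partial_xK(x,x')|\le Ce^{-c|x-x'|}$ with $c,C$ depending only on $h_{\min},h_{\max}$ from Lemma~\ref{h_max}. These bounds should come from the $x'$-coordinates \eqref{chvar}, in which the operator becomes a constant-type Sturm--Liouville operator with bounded coefficients. The key observation is that $\mathcal{I}_h(c)=hc$ for constants $c$, so $\mathcal{I}_h^{-1}(hc)=c$. Therefore $\int\partial_xK(x,x')h(x')\,dx'=0$, and for each fixed $x$,
\begin{equation*}
q_x(x)=\int\partial_xK(x,x')\,h(x')\bigl(G(x')-G(x)\bigr)\,dx'.
\end{equation*}

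\emph{Step 3: splitting the integral.} Split at $|x'-x|<\tfrac12|x-\xi|$.
\begin{itemize}
\item On the near part, every point of the segment between $x$ and $x'$ satisfies $|\cdot-\xi|\ge\tfrac12|x-\xi|$. The mean value theorem and Step 1 give $|G(x')-G(x)|\le C|x-x'|\,|x-\xi|^{-4/5}$, and $\int|x-x'|e^{-c|x-x'|}\,dx'<\infty$. This part is $\lesssim|x-\xi|^{-4/5}$.
\item On the far part, use $|G|\le C$. The integral is bounded by $C\int_{|x'-x|\ge|x-\xi|/2}e^{-c|x-x'|}\,dx'\lesssim e^{-c|x-\xi|/2}\lesssim|x-\xi|^{-4/5}$.
\end{itemize}
Combining the two parts gives the claim.

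The main obstacle is justifying the uniform exponential bound on $\partial_xK$ and the cancellation identity. The cancellation requires care because $hG\notin L^2$ ($G$ tends to a nonzero constant at $+\infty$), so I would apply the identity to $h(G-G(x))$ via a limiting argument. If the derivative kernel bound is awkward to obtain, the fallback is to work with $q_{x'}$ in the $x'$ variable. From \eqref{eq:qx'x'}, $q_{x'}$ solves a second-order equation whose right-hand side involves $h^4(G-q)$ and its derivative. Its derivative is controlled by $G_x$, $h_x$ and $q_x$, and $h_x$ decays like $|x-\xi|^{-2/5}$ by the same argument as in Step 1. A comparison/maximum-principle argument on exponentially weighted intervals would then produce the same $|x-\xi|^{-4/5}$ decay.
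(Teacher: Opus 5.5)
Your argument is correct. It differs from the paper's mainly in which kernel you use and in how the far region is controlled.

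The paper differentiates $q-h^{-1}(h^3q_x)_x=G$ to obtain $(1-\mathcal{L})q_x=G_x$, with $\mathcal{L}=\partial_x\circ h^{-1}\partial_x\circ h^3$. It then writes $q_x(x)=\int K(x,z)G_x(z)\,dz$, where $K$ is the Green function of $1-\mathcal{L}$, bounded by $Ce^{-\theta_0|x-z|}$ in Appendix~\ref{App:K}. It splits at $|x-z|<\frac12|x-\xi|$ exactly as you do. The near part uses the pointwise decay of $G_x$ and $\int|K(x,z)|\,dz\le C$. On the far part the weight is moved onto the kernel through $|x-\xi|\le 2|x-z|$, and one uses $\sup|K(x,z)|\,|x-z|^{4/5}<\infty$ together with the energy bound $\|G_x\|_{L^1}\le C$.

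The two routes are linked by integrating by parts: $\int K(x,z)G_x(z)\,dz=-\int\partial_zK(x,z)\bigl(G(z)-G(x)\bigr)\,dz$. Write $K_{\mathcal{I}}$ for your kernel of $\mathcal{I}_h^{-1}$. Then $h(x')\partial_xK_{\mathcal{I}}(x,x')$ plays the role of $-\partial_zK$, and your cancellation becomes the trivial identity $\int\partial_zK(x,z)\,dz=0$. Both proofs consume the same energy information, since $|G|\le\|G_x\|_{L^1}$.

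The paper's version needs only a bound on $K$ itself, and never has to deal with the non-decaying $hG$. Yours uses only the already established bound $\|G\|_{L^\infty}\le C$ and first moments of the kernel. If you derive the kernel bounds in the coordinate \eqref{chvar}, the homogeneous equation reads $\phi_{x'x'}=h^4\phi$ with $h^4\in[h_{\mathrm{min}}^4,h_{\mathrm{max}}^4]$. A Riccati comparison then gives exponential bounds on $K_{\mathcal{I}}$ and $\partial_xK_{\mathcal{I}}$ with constants depending only on $h_{\mathrm{min}},h_{\mathrm{max}}$. This makes uniformity in $t$ more transparent than the asymptotic-integration argument of Appendix~\ref{App:K}.

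Finally, you do not need a limiting argument for the cancellation. $q$ is bounded by Lemma~\ref{lem_Gq}, and $\mathcal{I}_hv=0$ has no nonzero bounded solutions. Hence both $q=\int K_{\mathcal{I}}(\cdot,x')h(x')G(x')\,dx'$ and $\int K_{\mathcal{I}}(x,x')h(x')\,dx'=1$ follow from uniqueness among bounded solutions. Your fallback argument is unnecessary.
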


\begin{proof}

By \eqref{ys}, it is enough to show that
\begin{equation}\label{qx_weight}
|x-\xi(t)|^{4/5}|q_x(x,t)|\leq C
\end{equation}
for some positive constant $C>0$. From \eqref{G} and \eqref{I_h}, the function $q$ satisfies
\begin{equation}\label{eq_q}
q- h^{-1}(h^3q_x)_x = G.
\end{equation}
Differentiating both sides of \eqref{eq_q} with respect to $x$, we obtain
\begin{equation*}
(1-\mathcal{L})q_x=G_x,
\end{equation*}
where $\mathcal{L} \coloneq  \partial_x \circ h^{-1} \partial_x \circ h^3 $. Using \eqref{G}, \eqref{WZQG}, \eqref{Wy_dec}, and \eqref{Zy_dec}, we estimate
\begin{equation}\label{Gx_temp}
\begin{split}
|G_x(x,t)|&\leq \frac{3}{16}e^{2s}W_y^2+\frac{5}{8}e^{7s/2}|W_y||Z_y|+\frac{3}{16}e^{5s}Z_y^2 \\
& \leq Ce^{2s}y^{-4/5}=C|x-\xi(t)|^{-4/5}.
\end{split}
\end{equation}

Let $K(x,z)$ be the Green function associated with the operator $(1-\mathcal{L})$, so that
\begin{equation} \label{repqx}
q_x(x,t)=\int_{\mathbb{R}}K(x,z)G_x(z,t)\,dz,
\end{equation}
see Appendix~\ref{App:K}. To estimate the quantity in \eqref{qx_weight}, we use the representation \eqref{repqx} and split the integral into two regions. Set $ \textstyle A(t):=\{z\in\mathbb{R}: |x-z|< \frac{|x-\xi(t)|}{2} \}$. This decomposition allows us to bound the left-hand side of \eqref{qx_weight} by
\begin{equation}\label{claim1}
\underbrace{|x-\xi(t)|^{4/5}\left| \int_AK(x,z)G_x(z,t)\,dz\right|}_{=:I_A}+\underbrace{|x-\xi(t)|^{4/5}\left| \int_{A^c}K(x,z)G_x(z,t)\,dz\right|}_{=:I_{A^c}}\leq C,
\end{equation}
which suffices to establish \eqref{qx_weight}. Since $\textstyle \frac{|x-\xi(t)|}{2} \leq |z-\xi(t)|$ for $z\in A$, we estimate $I_A$ as
\begin{equation}\label{IA}
\begin{split}
I_A & \leq \left|\int_A K(x,z)|x-\xi(t)|^{4/5}G_x(z,t)\,d z\right| \\
& \leq C\int_A |K(x,z)|\frac{|x-\xi(t)|^{4/5}}{|z-\xi(t)|^{4/5}}\,dz \leq C\int_\mathbb{R} | K(x,z) | \,dz \leq C,
\end{split}
\end{equation}
where we used \eqref{Gx_temp} and \eqref{Kbound}. Next, for $z\in A^c$, we observe that $2|x-z|\geq |x-\xi(t)|$. Applying \eqref{Kbound}, we obtain
\begin{equation}\label{IAC}
\begin{split}
I_{A^c} & \leq C\left|\int_{A^c} K(x,z)|x-z|^{4/5} G_x(z,t)\,dz\right| \\
& \leq C \sup_{(x,z)\in\mathbb{R}^2}\left( |K(x,z)| |x-z|^{4/5} \right) \int_\mathbb{R} |G_x(z,t)| \, dz \leq C \int_\mathbb{R} |G_x(z,t)| \, dz. 
\end{split}
\end{equation}
We recall from \eqref{G} that $\textstyle G_x=u_x^2-\frac{h_x^2}{4h}$. Together with \eqref{def_E} and \eqref{Energy}, this gives
\begin{equation}
\int_{\mathbb{R}} \left| G_x(z,t) \right| \,dz \leq \frac{1}{h_{\mathrm{min}}^3} \int_{\mathbb{R}} \left| h^3\left(u_x^2-\frac{h_x^2}{4h}\right)(z,t) \right| \,dz \leq C.
\end{equation}
Substituting this into \eqref{IAC}, we obtain
\begin{equation}\label{IAC'}
I_{A^c}\leq C.
\end{equation}
Therefore, \eqref{claim1} follows from \eqref{IA} and \eqref{IAC'}.
	
\end{proof}

\subsection{Proof of (\ref{dottau_close}) and (\ref{lem_Zy_close})--(\ref{Zyweight_close}) in Proposition~\ref{Boot}} \label{sec:3.2}

We are now ready to prove Proposition~\ref{Boot}. We first close the bootstrap assumption for $\dot{\tau}$.

\begin{lemma} \label{lem:tau_decay}
Under the same assumptions in Proposition~\ref{Boot}, we have
\begin{equation} \label{tau_decay}
\lvert \dot{\tau}(t) \rvert \leq \frac{7}{\sqrt{h_*}} e^{-s}
\end{equation}
for all $t \in [-\varepsilon, T_{\sigma_1}]$.
\end{lemma}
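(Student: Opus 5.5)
We start from the modulation identity \eqref{tau},
\begin{equation*}
\dot{\tau} = - \frac{4 e^{s/2} Q_y(0,s)}{3} + \frac{e^{3s} Z_y^2(0,s)}{4} - \frac{4e^{3s/2} Z_y(0,s)}{3},
\end{equation*}
and bound each of the three terms by a multiple of $e^{-s}$. We only use bounds already available under the bootstrap hypotheses of Proposition~\ref{Boot}, namely \eqref{lem_Zy}, \eqref{GQ_bound} and Lemma~\ref{lem_Qdec}. The constant $7/\sqrt{h_*}$ leaves room for two large contributions (bounded by explicit constants of size $\frac{1}{\sqrt{h_*}}$) and one small one.

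\textbf{Step 1: the two $Z_y$ terms.} By \eqref{lem_Zy}, $|Z_y(0,s)|\le \frac{5}{\sqrt{h_*}}e^{-5s/2}$. Hence
\begin{equation*}
\frac{4}{3}e^{3s/2}|Z_y(0,s)|\le \frac{20}{3\sqrt{h_*}}e^{-s}.
\end{equation*}
This is already close to $7/\sqrt{h_*}$, so the other two terms must be $o(1)e^{-s}$. The quadratic term satisfies
\begin{equation*}
\frac{1}{4}e^{3s}Z_y^2(0,s)\le \frac{25}{4h_*}e^{-2s}\le \frac{25}{4h_*}\,\varepsilon\, e^{-s},
\end{equation*}
using $e^{-s}\le e^{-s_0}=\varepsilon$. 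This is small compared with $e^{-s}/\sqrt{h_*}$ for $\varepsilon$ small. If $20/3$ is judged too tight, I would instead use the sharper weighted bound \eqref{Zy_dec} at $y=0$ together with a characteristic argument. However, $20/3<7$ suffices provided the remaining terms contribute at most $\frac{1}{3\sqrt{h_*}}e^{-s}$.

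\textbf{Step 2: the nonlocal term $e^{s/2}Q_y(0,s)$ (main obstacle).} The crude bound \eqref{GQ_bound} gives $|e^{s/2}Q_y(0,s)|\le Ce^{-2s}$, since $Q_y=e^{-5s/2}q_x$ and $|q_x|\le C$. This is already $\le C\varepsilon e^{-s}$, hence smaller than $\frac{1}{6\sqrt{h_*}}e^{-s}$ once $\varepsilon$ is small. (Lemma~\ref{lem_Qdec} is not even needed at $y=0$.) The only subtlety is that the constant $C$ in Lemma~\ref{lem_Gq} must be independent of $M$ and $\varepsilon$. This holds because it depends only on $E_0\le h_*^3/6$, on $h_{\min},h_{\max}$ from Lemma~\ref{h_max}, and on Lemma~\ref{w_LPP}. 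I would check this dependence carefully, since the closure hinges on the $\varepsilon$-smallness absorbing it.

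\textbf{Step 3: conclusion.} Summing the three contributions gives
\begin{equation*}
|\dot\tau(t)|\le \Big(\frac{20}{3\sqrt{h_*}}+C\varepsilon+\frac{25}{4h_*}\varepsilon\Big)e^{-s}\le \frac{7}{\sqrt{h_*}}e^{-s}
\end{equation*}
for $\varepsilon<\varepsilon_0$ sufficiently small, which is \eqref{tau_decay}. Note also that $e^{-s}\le\varepsilon$, so this improves \eqref{dottau} from $8$ to $7$.
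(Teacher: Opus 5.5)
Your proof is correct and is essentially the paper's argument: you insert \eqref{lem_Zy} and \eqref{GQ_bound} into \eqref{tau}. The linear $Z_y$ term then gives $\frac{20}{3\sqrt{h_*}}e^{-s}$, and the quadratic $Z_y$ term and the $Q_y$ term are $O(e^{-2s})\le C\varepsilon e^{-s}$, so the bound $\frac{7}{\sqrt{h_*}}e^{-s}$ follows for $\varepsilon$ small. One aside is mistaken but unused: the fallback via \eqref{Zy_dec} would not help, since at $y=0$ it only gives $e^{3s/2}|Z_y(0,s)|\le 10/\sqrt{h_*}$, with no decay.
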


\begin{proof}
Applying the bounds \eqref{GQ_bound} and \eqref{lem_Zy} to \eqref{tau}, we obtain
\begin{equation*}
\begin{split}
|\dot{\tau}| & \leq \frac{4}{3} \left| e^{s/2} Q_y(0,s)\right| + \frac{1}{4} \left| e^{3s} Z_y^2(0,s)\right| + \frac{4}{3} \left| e^{3s/2} Z_y(0,s) \right| \\
& \leq C e^{-2s} + \frac{20}{3\sqrt{h_*}} e^{-s} \leq \left( \frac{20}{3\sqrt{h_*}} + C \varepsilon \right) e^{-s} \leq \frac{7}{\sqrt{h_*}}e^{-s}
\end{split}
\end{equation*}
for sufficiently small $\varepsilon>0$. 
\end{proof}

We next close \eqref{lem_Zy}.

\begin{lemma} \label{lem:Zy}
Under the same assumptions in Proposition~\ref{Boot}, we have
\begin{equation} \label{lemm_Zy}
\begin{split}
& \lVert Z_{y} (\cdot,s) \rVert_{L^\infty} \leq \frac{4}{\sqrt{h_*}} e^{-5s/2}
\end{split}
\end{equation}
for all $s \in [s_0,\sigma_1]$.
\end{lemma}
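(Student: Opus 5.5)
We will integrate the $n=1$ case of \eqref{eq:Z_high} along the characteristic curve $\psi_Z$ of \eqref{curves}. Here the damping is $D_1^Z=\tfrac52$ exactly, so
\begin{equation*}
Z_y(\psi_Z(y,s),s)=Z_y(y,s_0)e^{-5(s-s_0)/2}+\int_{s_0}^s e^{-5(s-s')/2}\,(F_1^Z\circ\psi_Z)(y,s')\,ds'.
\end{equation*}
Since $\psi_Z(\cdot,s)$ is a diffeomorphism of $\mathbb{R}$, it suffices to bound the right-hand side uniformly in $y$. By \eqref{lem_Zy_init}, the initial term is at most $\tfrac{1}{\sqrt{h_*}}\varepsilon^{5/2}e^{-5(s-s_0)/2}=\tfrac{1}{\sqrt{h_*}}e^{-5s/2}$, because $e^{-s_0}=\varepsilon$. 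This term carries exactly the weight $e^{-5s/2}$, and it already consumes the constant $1$ out of the allowed $4$. We therefore aim to show that the Duhamel term is at most $(\tfrac{4\sqrt2}{\sqrt{h_*}}\cdot\tfrac12+C\varepsilon)e^{-5s/2}$ or a similar bound, so that the total stays below $\tfrac{4}{\sqrt{h_*}}e^{-5s/2}$.

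We estimate the four terms of $F_1^Z$ in \eqref{F^Z} separately, using \eqref{dottau1} to replace $(1-\dot\tau)^{-1}$ by $1+\varepsilon^{1/2}$.
\begin{itemize}
\item For $e^{-s}Q_y$: \eqref{GQ_bound} gives $|Q_y|\le Ce^{-5s/2}$, so this term is $O(e^{-7s/2})$ and contributes $C\varepsilon e^{-5s/2}$ after integration.
\item For $e^{3s/2}Z_y^2$: by \eqref{lem_Zy} it is $O(e^{-7s/2})$, which is likewise harmless.
\item For $W_yZ_y$: by \eqref{Uy1} and \eqref{lem_Zy} it is $O(e^{-5s/2})$, but integrating it naively gives $(s-s_0)e^{-5s/2}$, which is not acceptable. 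Instead we use Young's inequality in the form $\tfrac43|W_yZ_y|\le \delta e^{-3s/2}W_y^2+C_\delta e^{3s/2}Z_y^2$. The second piece is again $O(e^{-7s/2})$.
\item The term $\tfrac12 e^{-3s/2}W_y^2$ is the one we keep as it stands.
\end{itemize}

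After multiplying by $e^{-5(s-s')/2}$ and integrating, the leading contribution is
\begin{equation*}
\frac{(1+\varepsilon^{1/2})(\tfrac12+\delta)}{1}\,e^{-5s/2}\int_{s_0}^s e^{s'}(W_y\circ\psi_Z)^2\,ds'.
\end{equation*}
This is exactly the quantity controlled by \eqref{int_Wy2} in Lemma~\ref{Lemma:Z_high}, which gives the bound $\tfrac{4\sqrt2}{\sqrt{h_*}}+C\varepsilon$.

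Collecting everything, we get $\|Z_y(\cdot,s)\|_{L^\infty}\le \big(\tfrac{1}{\sqrt{h_*}}+(\tfrac12+\delta)\tfrac{4\sqrt2}{\sqrt{h_*}}+C\varepsilon^{1/2}\big)e^{-5s/2}$. Since $2\sqrt2\approx2.83$, the total is about $3.83/\sqrt{h_*}$ for small $\delta$ and $\varepsilon$, which is below $4/\sqrt{h_*}$. The main obstacle is this constant bookkeeping: $\delta$ must be taken small (for instance $\delta=10^{-2}$), and the cross term $W_yZ_y$ must be handled through $W_y^2$ rather than by a crude $L^\infty$ bound. Otherwise a factor $(s-s_0)$ or an excessive constant would break the closure. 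If the margin proves too tight, the fallback is to estimate the cross term more sharply using \eqref{Zy_dec}.
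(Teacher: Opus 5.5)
Your proposal is correct and follows the paper's proof: integrate the $Z_y$ equation along $\psi_Z$ with the exact damping $\tfrac52$, handle the cross term $W_yZ_y$ by Young's inequality so that it feeds into $e^{-3s/2}W_y^2$, and close with \eqref{int_Wy2} to get the constant $1+2\sqrt2<4$. The only difference is cosmetic: the paper uses the fractional split $|Z_y||W_y|\le \tfrac{h_*^{1/3}}{2}Z_y^{2/3}W_y^2+(2h_*^{1/3})^{-1}Z_y^{4/3}$, which makes the extra coefficient on $e^{-3s/2}W_y^2$ of size $O(\varepsilon^{1/6})$ rather than a fixed $\delta$, but your choice $\delta=10^{-2}$ fits comfortably within the available margin.
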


\begin{proof}
We recall the equation \eqref{eq:Z_high} with \eqref{D^Z}--\eqref{F^Z} for $n=1$:
\begin{equation} \label{Zyeq}
\partial_s Z_y + D_1^Z Z_y + U^Z Z_{yy} = F_1^Z,
\end{equation}
where
\begin{equation*}
D_1^Z(y,s) = \frac{5}{2}, \quad F_1^Z(y,s) = - \frac{8e^{-s}Q_y}{3(1-\dot{\tau})}+ \frac{e^{-3s/2}W_y^2}{2(1-\dot{\tau})}+ \frac{4W_y Z_y}{3(1-\dot{\tau})} - \frac{e^{3s/2}Z_y^2}{2(1-\dot{\tau})}.
\end{equation*}
To estimate the second term in $F_1^Z$, we apply Young's inequality together with the bound \eqref{lem_Zy}:
\begin{equation*}
|Z_y||W_y| \leq \frac{h_*^{1/3}}{2} Z_y^{2/3}W_y^2 + (2 h_*^{1/3} )^{-1} Z_y^{4/3} \leq \frac{5^{2/3}}{2}  e^{-5s/3} W_y^2 + \frac{5^{4/3}}{2  h_*} e^{-10s/3}.
\end{equation*}
This yields
\begin{equation*}
\begin{split}
|F_1^Z|  \leq C e^{-10s/3} + \left( \frac{1}{2} + C \veps^{1/6}  \right) e^{-3s/2} W_y^2,
\end{split}
\end{equation*}
where we also used \eqref{dottau}, \eqref{lem_Zy}, \eqref{GQ_bound}, and \eqref{dottau1}. Integrating the equation \eqref{Zyeq} in \eqref{eq:Z_high} along the characteristic curve $\psi_Z$ defined in \eqref{curves}, we obtain
\begin{equation*}
\begin{split}
| Z_y(\psi_Z(y,s),s) | & \leq | Z_{y}(y,s_0) | e^{-5(s-s_0)/2} + C \int_{s_0}^s e^{ -5(s-s')/2} e^{-10s'/3} \, ds' \\
& \quad + \left( \frac{1}{2} + C \varepsilon^{1/6} \right) \int_{s_0}^s e^{-5(s-s')/2} e^{-3s'/2} W_y^2(\psi_Z(y,s'),s') \, ds' \\
& \leq \frac{1}{\sqrt{h_*}} e^{-5s/2} + C \varepsilon^{5/6} e^{-5s/2} + \left( \frac{1}{2}  + C \varepsilon^{1/6} \right) e^{-5s/2} \int_{s_0}^s e^{s'} W_y^2(\psi_Z(y,s'),s') \, ds',
\end{split}
\end{equation*}
using the initial bound \eqref{lem_Zy_init} in the last inequality. Finally, applying \eqref{int_Wy2} from Lemma~\ref{Lemma:Z_high}, we obtain the desired bound \eqref{lemm_Zy}:
\begin{equation*}
\begin{split}
| Z_y(\psi_Z(y,s),s) | & \leq \left( \frac{1}{\sqrt{h_*}} + \frac{2\sqrt{2}}{\sqrt{h_*}} + C \varepsilon^{5/6} \right) e^{-5s/2}  \leq \frac{4}{\sqrt{h_*}} e^{-5s/2}
\end{split}
\end{equation*}
for sufficiently small $\varepsilon >0$.
\end{proof}

To prove \eqref{Zyweight_close}, we let $\zeta(y,s):=(y^{2/5}+1)e^{3s/2}Z_y(y,s)$. Then, the equation for $Z_y$ in \eqref{eq:Z_high} can be rewritten in terms of $\zeta$ as
\begin{equation}\label{zeta_eq}
	\partial_s\zeta + D^\zeta \zeta + U^Z \zeta_y = F^\zeta_1+F^\zeta_2,
\end{equation}	
where $U^Z$ is defined in \eqref{UZ}, and 
\begin{equation*}
	\begin{split}
		&D^\zeta(y,s):= 1 -\frac{y^{2/5}}{y^{2/5}+1},
		\\
		&F^\zeta_1(y,s):= - \zeta \left(\frac{e^{3s/2} Z_y}{2(1-\dot{\tau})} - \frac{4 W_y}{3(1-\dot{\tau})} - \frac{2}{5y^{3/5}(y^{2/5}+1)} \left( \frac{e^{3s/2} ( \kappa - 3 \dot{\xi})}{3(1-\dot{\tau})} +\frac{e^{3s/2}Z}{1-\dot{\tau}} +  \frac{W}{3(1-\dot{\tau})}\right) \right),
		\\
		&F^\zeta_2(y,s):= (y^{2/5}+1)\frac{W_y^2}{2(1-\dot{\tau})} - (y^{2/5}+1)\frac{8 e^{s/2}Q_y}{3(1-\dot{\tau})}.
	\end{split}
\end{equation*}
We first derive far-field estimates for $\zeta$.

\begin{lemma}\label{lem_Zyfar}
	Under the same assumptions in Proposition~\ref{Boot}, it holds that
	\begin{equation*}
		\limsup_{|y|\rightarrow \infty} |(y^{2/5}+1)Z_y (y, s) | < \frac{6}{\sqrt{h_*}}e^{-3s/2}
	\end{equation*}
	for all  $s \in [s_0,\sigma_1]$.
\end{lemma}

\begin{proof}
By the definition of $\zeta$, it suffices to show that
\begin{equation*}
\limsup_{|y| \to \infty} |\zeta(y,s)| < \frac{6}{\sqrt{h_*}}.
\end{equation*}
Since
\begin{equation}\label{D_zeta}
D^\zeta\geq 0,
\end{equation} 
we estimate the uniform bound of $\|F^\zeta_1\|_{L^{\infty}(|y|\geq e^{5s/2})} + \|F^\zeta_2\|_{L^{\infty}(|y|\geq e^{5s/2})}$. We focus on the region $|y| \geq e^{5s/2}$, which corresponds to points far from the blow-up location in physical variables. In the following, all inequalities are understood to hold in this region, even when the domain is not explicitly stated.
	
	By \eqref{Wy_dec} and \eqref{4.0'}, we have
	\begin{equation}\label{Wy_dec'}
		|W_y|\leq |\wt{W}_y|+|\overline{W}'|\leq \frac{C}{y^{2/5}}\leq Ce^{-s}.
	\end{equation}
	Hence, using $W(0,s)=0$, we obtain
	\begin{equation}\label{Wy_temp}
		\begin{split}
			\left|\frac{W}{y}\right| \leq  \frac{1}{|y|}\int^{|y|}_0|W_y(y',s)|\,dy' \leq \frac{C}{|y|}\int^{|y|}_0\frac{1}{y'^{2/5}}\,dy' = C|y|^{-2/5}\leq Ce^{-s}.
		\end{split}
	\end{equation}
	We also observe that, by \eqref{WZbound} and \eqref{kap_xi},
	\begin{equation}\label{modul_temp}
		\left|\frac{e^{3s/2}}{y^{3/5}(y^{2/5}+1)}\left(\frac{\kappa-3\dot{\xi}}{3}+Z\right)\right| \leq Ce^{3s/2}|y|^{-1}\leq Ce^{-s}.
	\end{equation}
	It follows from \eqref{Wy_dec'}--\eqref{modul_temp} and \eqref{dottau}--\eqref{Zy_dec} that $\|F^\zeta_1\|_{L^{\infty}(|y|\geq e^{5s/2})} \leq  Ce^{-s}$.
	In addition, by \eqref{GQ_bound} and \eqref{Qy_weight}, we have
	\begin{equation*}
		|(y^{2/5}+1)e^{s/2}Q_y|\leq y^{2/5}e^{s/2}|Q_y|+e^{s/2}|Q_y|\leq Cy^{-2/5}+Ce^{-2s} \leq Ce^{-s}.
	\end{equation*}
	Hence, using \eqref{Uy1} and \eqref{Wy_dec'}, we obtain
	$\|F^\zeta_2\|_{L^{\infty}(|y|\geq e^{5s/2})}\leq Ce^{-s}$. Combining the above estimates yields
	\begin{equation}\label{F_zeta}
		\begin{split}
			\|F^\zeta_1\|_{L^{\infty}(|y|\geq e^{5s/2})}+\|F^\zeta_2\|_{L^{\infty}(|y|\geq e^{5s/2})}&\leq Ce^{-s}.
		\end{split}
	\end{equation}
	Next, we show that
	\begin{equation}\label{UZ-profile}
		\inf_{|y|\geq e^{5s/2}, s\in[s_0,\infty)}U^Z(y,s)\frac{y}{|y|}>0.
	\end{equation}
	By the definition \eqref{UZ} of $U^Z$, it is equivalent to
	\begin{equation*}
		\inf_{|y|\geq e^{5s/2}, s\in[s_0,\infty)} \left(\frac{5}{2}|y|+\frac{e^{3s/2}}{y}\left(\frac{\kappa-3\dot{\xi}}{3}+Z\right)\frac{|y|}{1-\dot{\tau}}+\frac{W}{3y}\frac{|y|}{1-\dot{\tau}}\right) > 0.
	\end{equation*}
	As in \eqref{Wy_temp} and \eqref{modul_temp}, we estimate
	\begin{equation*}
		\left|\frac{e^{3s/2}}{y}\left(\frac{\kappa-3\dot{\xi}}{3}+Z\right)+\frac{W}{3y}\right| \leq Ce^{-s}. 
	\end{equation*}
	Hence, using \eqref{dottau1}, we obtain
	\begin{equation*}
		\inf_{|y|\geq e^{5s/2}, s\in[s_0,\infty)}U^Z(y,s)\frac{y}{|y|}\geq \inf_{|y|\geq e^{5s/2}, s\in[s_0,\infty)} \left(\frac{5}{2}|y|-Ce^{-s}|y|\right)\geq \inf_{|y|\geq e^{5s/2}, s\in[s_0,\infty)} |y|>0.
	\end{equation*}
	
	Applying Lemma~\ref{rmk2} together with \eqref{D_zeta}, \eqref{F_zeta}, and \eqref{UZ-profile}, we conclude that
	\begin{equation}
		\limsup_{|y|\rightarrow \infty}|\zeta(y,s)|\leq \limsup_{|y|\rightarrow\infty}|\zeta(y,s_0)|+C\veps \leq \frac{1}{\sqrt{h_*}} +C\veps < \frac{6}{\sqrt{h_*}}
	\end{equation}
	for sufficiently small $\veps>0$, where in the second inequality we used \eqref{Zy_weight_init}.
\end{proof}

We now use the above lemma to close the assumption \eqref{Zy_dec} in the next lemma.

\begin{lemma}\label{lem_Zyweight}
	Under the same assumptions in Proposition~\ref{Boot}, it holds that
	\begin{equation*}
		|Z_y(y,s)|\leq \frac{8}{\sqrt{h_*}}\frac{e^{-3s/2}}{1+y^{2/5}}
	\end{equation*}
	for all $y \in \mathbb{R}$ and $s \in [s_0,\sigma_1]$.
\end{lemma}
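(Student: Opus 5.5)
We prove the bound $\sup_y|\zeta(y,s)|\le 8/\sqrt{h_*}$ for the weighted quantity $\zeta=(y^{2/5}+1)e^{3s/2}Z_y$, which is equivalent to the claim. We work with the transport equation \eqref{zeta_eq} along the characteristics $\psi_Z$ of \eqref{curves} and split the line into a near region $|y|\le R$ and a far region. In the near region $|y|\le R$ with $R$ fixed (for instance $R=1$, or any $R$ with $R^{2/5}$ harmless), \eqref{lemm_Zy} from Lemma~\ref{lem:Zy} already gives
\[
|\zeta|\le (R^{2/5}+1)e^{3s/2}\frac{4}{\sqrt{h_*}}e^{-5s/2}\le C\varepsilon ,
\]
which is far below the target. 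The real content is therefore the intermediate and far regions. At $|y|\to\infty$, Lemma~\ref{lem_Zyfar} provides the boundary information $\limsup|\zeta|<6/\sqrt{h_*}$, and at $s=s_0$ we have $|\zeta|\le 1/\sqrt{h_*}$ by \eqref{Zy_weight_init}.

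Since $D^\zeta\ge0$ by \eqref{D_zeta}, we apply a maximum-principle argument, namely Lemma~\ref{rmk2} in the same way it was used in Lemma~\ref{lem_Zyfar}, or a direct Duhamel bound along $\psi_Z$:
\[
|\zeta(\psi_Z(y,s),s)|\le|\zeta(y,s_0)|+\int_{s_0}^s\big(|F_1^\zeta|+|F_2^\zeta|\big)\circ\psi_Z\,ds'.
\]
For this to work the forcing must be small globally, not just for $|y|\ge e^{5s/2}$. The $F_1^\zeta$ term is linear in $\zeta$ and has a bounded coefficient. The pieces $e^{3s/2}Z_y$ and the coefficient involving $W_y$ are handled by \eqref{lem_Zy}. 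The transport-speed piece $y^{-3/5}(y^{2/5}+1)^{-1}(\cdots)$ we control by combining \eqref{Z} with $|W|\le C|y|$. One must check that the resulting coefficient is integrable in $s$ or has a favorable sign. The $W_y/(1-\dot\tau)$ coefficient is only bounded, not decaying, so we plan to absorb it by using $D^\zeta$ together with the fact that $U^Z$ pushes trajectories outward. Alternatively, we would estimate $Z_y$ directly and convert afterward.

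For $F_2^\zeta$ the key term is $(y^{2/5}+1)W_y^2/2$, which by \eqref{Wydec_fin_2}-type bounds from \eqref{Wy_dec} is at most $C(1+y^{2/5})^{-1}$. We would instead handle this along characteristics exactly as in Lemma~\ref{lem:Zy}: multiply back by $e^{3s/2}$ and use the integral estimate \eqref{int_Wy2} of Lemma~\ref{Lemma:Z_high}, combined with the weight $(y^{2/5}+1)$ evaluated along $\psi_Z$. The $Q_y$ term is handled by \eqref{Qy_weight} and \eqref{GQ_bound}, which together give a contribution $\le Ce^{-s/2}$, hence an integrated contribution $\le C\varepsilon^{1/2}$.

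\textbf{Main obstacle.} The main difficulty is the bounded, non-decaying forcing $(y^{2/5}+1)W_y^2$. A naive Duhamel bound gives $O(s)$ growth. The plan is to exploit that along $\psi_Z$ the point $|y|$ grows like $e^{5s/2}$ once away from the origin, by \eqref{UZ-profile}-type lower bounds on $U^Z$. Then $W_y^2(y^{2/5}+1)\lesssim |y|^{-2/5}\lesssim e^{-s}$ along trajectories, which is integrable. Near the origin we use the smallness $C\varepsilon$ from the near-region estimate. Gluing these gives
\[
|\zeta|\le \frac{6}{\sqrt{h_*}}+C\varepsilon^{1/2}\le \frac{8}{\sqrt{h_*}}
\]
for $\varepsilon$ small.
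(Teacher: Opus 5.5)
There is a genuine gap. The intermediate region $R\le |y|\le e^{5s/2}$ is never controlled, and the mechanisms you invoke there do not hold.

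Using $\kappa-Z=4\sqrt{h}-e^{-3s/2}W$, one finds
\[
(1-\dot\tau)\bigl(U^Z-\tfrac52 y\bigr)=e^{3s/2}\bigl(\kappa-\dot\xi+\tfrac{Z}{3}\bigr)+W-\tfrac83\sqrt{h}\,e^{3s/2}.
\]
The bound \eqref{Z} together with $|W|\le 2|y|$ controls only the first two terms, because \eqref{Z} concerns the $U^W$-combination, not the $U^Z$ one. The last term is the relative speed of the $z$- and $w$-characteristics, and its size is at least $\tfrac83\sqrt{h_{\mathrm{min}}}\,e^{3s/2}$. This has two consequences.
\begin{enumerate}[(i)]
\item $U^Z$ does not push trajectories outward for $|y|\lesssim e^{3s/2}$: $Z$-characteristics are swept toward and across the origin, and \eqref{UZ-profile} is only available for $|y|\ge e^{5s/2}$. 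So there is no reason for $|\psi_Z|$ to be of order $e^{5s'/2}$, and the forcing $(1+|y|^{2/5})W_y^2\sim|y|^{-2/5}$ is not $O(e^{-s})$ in this region.
\item The coefficient of $\zeta$ in $F_1^\zeta$ is not bounded. For $|y|\lesssim e^{3s/2}$ its transport-speed part has size $e^{3s/2}|y|^{-3/5}(1+|y|^{2/5})^{-1}$. It is singular at $y=0$, and it has anti-damping sign for $y<0$, where characteristics move outward and the weight $1+|y|^{2/5}$ grows along them.
\end{enumerate}
So the Duhamel bound you write for $\zeta$ cannot be closed in this region, and the final gluing step has nothing to rest on there.

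The missing observation is that your near-region estimate already covers all of $|y|\le e^{5s/2}$. There $(1+|y|^{2/5})e^{-s}\le 1+\varepsilon$, so \eqref{lemm_Zy} gives $|\zeta|\le (1+\varepsilon)\tfrac{4}{\sqrt{h_*}}<\tfrac{6}{\sqrt{h_*}}$. This is not $C\varepsilon$, but it is sufficient, and the intermediate region disappears. In $|y|\ge e^{5s/2}$, every term of $F_1^\zeta+F_2^\zeta$ is $O(e^{-s})$, using:
\begin{enumerate}[(a)]
\item the bootstrap bound \eqref{Zy_dec} for the terms linear in $\zeta$;
\item $|W_y|\lesssim |y|^{-2/5}\le e^{-s}$;
\item $e^{3s/2}|y|^{-1}\le e^{-s}$;
\item \eqref{Qy_weight}.
\end{enumerate}
Then argue at a maximum point rather than along characteristics, which cross the moving boundary $|y|=e^{5s/2}$. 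Once $\|\zeta(\cdot,s)\|_{L^\infty}\ge 6/\sqrt{h_*}$, the near-region bound and Lemma~\ref{lem_Zyfar} force the supremum of $|\zeta|$ to be attained at some $|y_*(s)|\ge e^{5s/2}$. There $\partial_y\zeta=0$ removes the transport term and $D^\zeta\ge 0$, so $\frac{d}{ds}\|\zeta\|_{L^\infty}\le Ce^{-s}$ a.e. A first-hitting-time argument then yields $\|\zeta\|_{L^\infty}\le 6/\sqrt{h_*}+C\varepsilon<8/\sqrt{h_*}$. This is exactly the paper's proof.
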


\begin{proof}
By \eqref{lem_Zy}, we have
	\begin{equation}\label{EP2_1D1-p'}
		|\zeta(y,s)|\leq (y^{2/5}e^{-s}+e^{-s})\|e^{5s/2}Z_y(\cdot,s)\|_{L^{\infty}} < \frac{6}{\sqrt{h_*}} \quad \text{for }|y|\leq e^{5s/2}.
	\end{equation}
Hence, it suffices to consider the case $|y| \geq e^{5s/2}$. As observed in the proof of Lemma~\ref{lem_Zyfar}, we have
	\begin{equation}\label{D_zeta'}
		D^\zeta\geq 0 \quad \text{for all }(y,s)\in\mathbb{R}\times [s_0,\sigma_1],
	\end{equation} 
	and
	\begin{equation}\label{F_zeta'}
		\begin{split}
			\|F^\zeta_1\|_{L^{\infty}(|y|\geq e^{5s/2})}+\|F^\zeta_2\|_{L^{\infty}(|y|\geq e^{5s/2})}&\leq Ce^{-s}.
		\end{split}
	\end{equation}
	
To complete the proof, we claim that
\begin{equation}\label{claim_reg-p}
	\|\zeta(\cdot,s)\|_{L^{\infty}}< \frac{8}{\sqrt{h_*}}, \qquad s\in[s_0,\sigma_1].
\end{equation}
Suppose for contradiction that \eqref{claim_reg-p} does not hold. Note from \eqref{init_zx_weight} that
\begin{equation*}
	\|\zeta(\cdot,s_0)\|_{L^{\infty}}\leq \frac{1}{\sqrt{h_*}}.
\end{equation*}
Since $\zeta \in C([s_0, \sigma_1]; L^\infty(\mathbb{R}))$ and \eqref{claim_reg-p} is assumed to fail, the time
\begin{equation*}
s_2 := \min \left\{ s \in [s_0, \sigma_1] : \|\zeta(\cdot,s)\|_{L^{\infty}} = \frac{8}{\sqrt{h_*}} \right\}
\end{equation*}
is well-defined. Then there exists $s_1 \in [s_0, s_2)$ such that
\begin{equation}\label{34-p}
	\frac{6}{\sqrt{h_*}} = \|\zeta(\cdot,s_1)\|_{L^{\infty}}  \le \|\zeta(\cdot,s)\|_{L^{\infty}} < \frac{8}{\sqrt{h_*}}, \qquad s \in (s_1, s_2).
\end{equation}
For each $s\in[s_1,s_2]$, thanks to the continuity of $\zeta$ and its far-field bound $\textstyle \limsup_{|y| \to \infty} |\zeta(y,s)| < \frac{6}{\sqrt{h_*}}$ from Lemma~\ref{lem_Zyfar}, one can choose a point $y_*(s)\in \mathbb{R}$ such that 
\begin{equation*}
	\|\zeta(\cdot, s) \|_{L^{\infty}} = \max_{y \in \mathbb{R}}{|\zeta(y,s)|} = |\zeta(y_*(s), s)|.
\end{equation*}
Therefore, it holds that $\partial_y\zeta(y_*(s),s)=0$, and by \eqref{34-p}, $\textstyle |\zeta(y_*(s),s)|\geq \frac{6}{\sqrt{h_*}}$ for all $s\in[s_1,s_2]$. 
On the other hand, it follows from \eqref{EP2_1D1-p'} that
\begin{equation*}
	\|\zeta(\cdot,s)\|_{L^{\infty}(|y|\leq e^{5s/2})}< \frac{6}{\sqrt{h_*}}.
\end{equation*}
Combining this with \eqref{34-p}, we find
\begin{equation}\label{y*'}
	|y_*(s)|\geq e^{5s/2}.
\end{equation} 

Consider the case $\zeta(y_*(s),s)\geq 0$. By \eqref{D_zeta'}, we have
\begin{equation}
	\begin{split}\label{zeta_DK-p}
		D^\zeta(y_*(s),s)\zeta(y_*(s),s)&\geq 0.
	\end{split}
\end{equation}
Evaluating \eqref{zeta_eq} at $y=y_*(s)$ and applying the bounds \eqref{F_zeta'} and \eqref{zeta_DK-p}, we obtain
\begin{equation}\label{zeta_temp-p}
	\begin{split}
		\partial_s \zeta(y_*(s) ,s) & \leq  Ce^{-s}-D^\zeta(y_*(s),s)\zeta(y_*(s),s)
		\leq Ce^{-s}.
	\end{split}
\end{equation}
In the case $\zeta(y_*(s),s) \leq 0$, we similarly obtain
\begin{equation}\label{zeta_temp--p}
	\begin{split}
		\partial_s \zeta(y_*(s) ,s) & \ge - Ce^{-s}.
	\end{split}
\end{equation} 

For fixed $s \in (s_1,s_2)$, by the definition of $y_*$, it follows that 
\begin{equation*}
\|\zeta(\cdot,s-\delta)\|_{L^\infty} = |\zeta(y_*(s-\delta),s-\delta)| \geq |\zeta(y_*(s) - \delta U^Z (y_*(s),s), s - \delta)| 
\end{equation*}
for any sufficiently small $\delta>0$. We use this bound to obtain
\begin{equation*}
\frac{\lVert \zeta(\cdot,s-\delta) \rVert_{L^\infty} - \lVert \zeta(\cdot,s) \rVert_{L^\infty}}{-\delta} \leq \frac{|\zeta(y_*(s) - \delta U^Z(y_*(s),s),s-\delta)| - | \zeta(y_*(s),s)| }{-\delta}.
\end{equation*}
Applying Taylor's expansion to the right-hand side, and taking the limit $\delta \to 0^+$ on both sides, we arrive at
\begin{equation}\label{AP_R1} 
	\begin{split}
		\lim_{\delta \to 0^+} \frac{\|\zeta(\cdot,s-\delta)\|_{L^\infty} - \|\zeta(\cdot,s)\|_{L^\infty}}{-\delta} 
		& \leq (\partial_s + U^Z (y_*(s),s)\partial_y)|\zeta (y,s)| \big|_{y=y_*(s)}, 
	\end{split}
\end{equation}
provided that the limit on the left-hand side of \eqref{AP_R1} exists.

Note that $ \| \zeta(\cdot, s)\|_{L^{\infty}}$ is Lipschitz continuous in $s$.\footnote{This fact can be verified using the regularity of $\zeta(y,s)$ away from $y=0$ and Lemma~\ref{lem_Zyfar}; see Remark~3.12 of \cite{BKK1} for a detailed argument.} By Rademacher's theorem, $ \| \zeta(\cdot, s)\|_{L^{\infty}}$ is differentiable for almost every $s\in[s_1, s_2]$. Thus, the limit on the left-hand side of \eqref{AP_R1} exists almost everywhere. Since  $\partial_y \zeta(y_*(s), s) =0$, we deduce from \eqref{AP_R1} that 
	\begin{equation} \label{L-thm-p}
		\begin{split}
			\frac{d}{ds} \| \zeta(\cdot, s)\|_{L^{\infty}} 
			& \leq  \left\{ \begin{array}{l l}
				\partial_s \zeta(y, s)|_{y=y_*(s)} & \text{if } \zeta(y_*(s),s)>0, \\
				-\partial_s \zeta(y, s)|_{y=y_*(s)} & \text{if } \zeta(y_*(s),s)<0
			\end{array}
			\right.  
		\end{split}
	\end{equation} 
for almost every $s\in[s_1, s_2]$. By \eqref{L-thm-p} with \eqref{zeta_temp-p} and \eqref{zeta_temp--p}, we obtain
	\begin{equation*}
		\begin{split} 
			\| \zeta(\cdot, s_2) \|_{L^{\infty}} 
			&=  \| \zeta(\cdot, s_1) \|_{L^{\infty}} +  \int_{s_1}^{s_2} \frac{d}{ds} \| \zeta(\cdot, s)\|_{L^{\infty}} \, ds  \\ 
			& \le \| \zeta(\cdot, s_1) \|_{L^{\infty}} + \int_{s_1}^{s_2} C e^{-s} \, ds \\
			&  \leq  \| \zeta(\cdot, s_1) \|_{L^{\infty}} + C \veps.
		\end{split} 
	\end{equation*}
	This, together with \eqref{34-p}, implies that $\textstyle |\zeta(\cdot,s_2)|_{L^\infty}<\frac{8}{\sqrt{h_*}}$, which contradicts the choice of $s_2$. This contradiction proves \eqref{claim_reg-p} and completes the proof.

\end{proof}

\subsection{Proof of (\ref{Utildey_close})--(\ref{Wy4_close}) in Proposition~\ref{Boot}} \label{sec:3.3}

We close the remaining bootstrap assumptions \eqref{Wy_bound}--\eqref{Wy4_bound} to complete the proof of Proposition~\ref{Boot}. In the proofs of the subsequent lemmas, the constant $M>0$ is chosen sufficiently large first. All smallness conditions on $\varepsilon$ are then understood to possibly depend on this choice of $M$.

\begin{lemma} \label{lem:Wy30}
Under the same assumptions in Proposition~\ref{Boot}, there exists a constant $C>0$ such that
\begin{equation}\label{str_U3-p}
	\lvert \partial_y^3 W(0,s) - 256 \rvert \leq C \varepsilon
\end{equation}
for all $s \in [s_0,\sigma_1]$.
\end{lemma}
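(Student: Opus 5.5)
We evaluate the equation \eqref{Wy3} for $\partial_y^3W$ at $y=0$. The plan is to show that the result is an ODE in $s$ for $\partial_y^3W(0,s)$ whose right-hand side is $O(e^{-s})$ (or at least integrable with total mass $O(\varepsilon)$). Integrating from $s_0$, where $\partial_y^3W(0,s_0)=256$ by \eqref{1D2}, then gives \eqref{str_U3-p}.

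At $y=0$ we use the constraints \eqref{constraint}: $W(0,s)=0$, $W_y(0,s)=-2$, $W_{yy}(0,s)=0$.

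\begin{itemize}
\item \textbf{Damping term.} The damping coefficient becomes $6+\frac{3W_y}{1-\dot\tau}-\frac{2e^{3s/2}Z_y}{3(1-\dot\tau)}$. Since $\frac{1}{1-\dot\tau}=1+O(\dot\tau)$ and $|\dot\tau|\le Ce^{-s}$ by Lemma~\ref{lem:tau_decay}, this equals $6-6+O(e^{-s})=O(e^{-s})$. Here we also use $e^{3s/2}|Z_y|\le Ce^{-s}$ from Lemma~\ref{lem:Zy}. So the linear term is $O(e^{-s})\,|\partial_y^3W(0,s)|\le CM^{3/4}e^{-s}$, using \eqref{Wy3_0} (or \eqref{Wy3_bound}).

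\item \textbf{Transport term.} By \eqref{Z}, at $y=0$ we have $U^W(0,s)=\frac{W(0,s)}{1-\dot\tau}+\frac{e^{3s/2}(Z(0,s)/3+\kappa-\dot\xi)}{1-\dot\tau}=O(e^{-s})$. Hence $|U^W\partial_y^4W|\le CMe^{-s}$ by \eqref{Wy4_bound}.

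\item \textbf{Forcing terms.} The term $W_{yy}^2$ vanishes, and so does the term with $W_{yy}Z_{yy}$. The remaining terms are:
\begin{itemize}
\item $e^{s/2}\partial_y^3Q=O(e^{-7s/2})$ by \eqref{Q_high};
\item $e^{3s/2}W_y\partial_y^3Z=O(e^{-s})$ by \eqref{Z_high'};
\item $e^{3s}Z_{yy}^2$ and $e^{3s}Z_y\partial_y^3Z$, both $O(e^{-2s})$ by \eqref{lem_Zy} and \eqref{Z_high'}.
\end{itemize}
\end{itemize}

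Collecting these, we get
\[
\Big|\frac{d}{ds}\partial_y^3W(0,s)\Big|\le C(M)e^{-s}.
\]
Integrating over $[s_0,s]$ and using $e^{-s_0}=\varepsilon$ gives $|\partial_y^3W(0,s)-256|\le C\varepsilon$. The constant may depend on $M$, which is allowed since $M$ is fixed first.

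The main point to check is the exact cancellation in the damping coefficient, namely $6+3W_y(0,s)=0$. This relies on the constraint $W_y(0,s)=-2$ from \eqref{constraint}. Without this cancellation the ODE would have an $O(1)$ linear term, and $\partial_y^3W(0,s)$ would not remain near $256$. Everything else is bookkeeping of decay rates. The one delicate rate is $e^{3s/2}\partial_y^3Z$, which needs the improved bound \eqref{Z_high'} rather than \eqref{Z_high}: with \eqref{Z_high} this term is only $O(1)$.
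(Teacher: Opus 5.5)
Your proposal is correct and follows the paper's proof essentially line for line. You evaluate \eqref{Wy3} at $y=0$ and use \eqref{constraint} to cancel the $O(1)$ damping, leaving $\frac{6\dot\tau}{1-\dot\tau}+\frac{2e^{3s/2}Z_y}{3(1-\dot\tau)}$. You bound $U^W(0,s)$ via \eqref{Z}, where the paper uses the equivalent estimate \eqref{kxiZ}, and you bound the forcing with \eqref{Q_high}, \eqref{lem_Zy} and the improved \eqref{Z_high'}, so that integrating $Ce^{-s}$ from $s_0$ gives $C\varepsilon$.
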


\begin{proof}
Plugging $y=0$ into \eqref{Wy3}, we have
\begin{equation*}
\begin{split}
\partial_s \partial_y^3 W(0,s) &= \left(\frac{6\dot{\tau}}{1-\dot{\tau}} + \frac{2e^{3s/2} Z_y (0,s)}{3(1-\dot{\tau})}\right) \partial_y^3 W(0,s) -\left(\frac{e^{3s/2}Z(0,s)}{3(1-\dot{\tau})}+\frac{e^{3s/2}(\kappa-\dot{\xi})}{1-\dot{\tau}}\right) \partial_y^4W(0,s) + F_3^W(0,s),
\end{split}
\end{equation*}
where
\begin{equation*}
F_3^W(0,s) := - \frac{8 e^{s/2} \partial_y^3 Q(0,s)}{3(1-\dot{\tau})} + \frac{e^{3s} Z_{yy}^2(0,s)}{1-\dot{\tau}} + \frac{e^{3s} Z_y(0,s) \partial_y^3 Z(0,s)}{1-\dot{\tau}}  - \frac{8e^{3s/2} \partial_y^3 Z(0,s)}{3(1-\dot{\tau})}.
\end{equation*}
First, by \eqref{tau_decay}, \eqref{dottau1}, \eqref{lem_Zy}, and \eqref{Wy3_0}, the following bound holds:
\begin{equation*}
\left| \frac{6\dot{\tau}}{1-\dot{\tau}} + \frac{2e^{3s/2} Z_y (0,s)}{3(1-\dot{\tau})}\right| \left|\partial_y^3 W(0,s) \right| \leq C e^{-s}.
\end{equation*}
Also, applying the bounds \eqref{Wy3_0}, \eqref{lem_Zy}, \eqref{Z_high'}, and \eqref{Q_high} to the equation \eqref{xi}, we obtain
\begin{equation} \label{kxiZ}
\begin{split}
	\left\lvert \kappa - \dot{\xi} + \frac{Z(0,s)}{3} \right\rvert &  = \left\lvert \frac{e^{-3s/2}}{\partial_y^3 W (0,s)} \left( e^{3s} Z_y(0,s) Z_{yy}(0,s) - \frac{8 e^{s/2} Q_{yy}(0,s)}{3} - \frac{8e^{3s/2} Z_{yy}(0,s)}{3} \right) \right\rvert \\
	& \leq C e^{-5s/2}.
\end{split}
\end{equation}
Using \eqref{kxiZ}, \eqref{Wy4_bound}, and \eqref{dottau1}, we get
\begin{equation*}
	\left|\left(\frac{e^{3s/2}Z(0,s)}{3(1-\dot{\tau})}+\frac{e^{3s/2}(\kappa-\dot{\xi})}{1-\dot{\tau}}\right) \partial_y^4W(0,s)\right| \leq Ce^{-s}.
\end{equation*}
We further obtain
\begin{equation*}
\lvert F_3^W(0,s) \rvert \leq C \left( e^{-2s} + e^{-s}  \right) \leq C e^{-s}
\end{equation*}
by using the bounds \eqref{Q_high}, \eqref{Z_high'}, \eqref{lem_Zy}, and \eqref{dottau1}. Combining these estimates, we obtain
\begin{equation*}
\lvert \partial_y^3W(0,s) - 256 \rvert = \lvert \partial_y^3 W(0,s) - \partial_y^3 W(0,s_0) \rvert \leq \int_{s_0}^s \lvert \partial_s \partial_y^3 W(0,s') \rvert \, ds' \leq C \int_{s_0}^s e^{-s'} \, ds' \leq C \varepsilon.
\end{equation*}

\end{proof}

\begin{lemma} \label{lem:Wtildey}
Under the same assumptions in Proposition~\ref{Boot}, it holds that
\begin{equation*}
\lvert W_y(y,s) - \overline{W}'(y) \rvert \leq \frac{y^2}{1500(1+y^2)}
\end{equation*}
for all $y \in \mathbb{R}$ and $s \in [s_0,\sigma_1]$.
\end{lemma}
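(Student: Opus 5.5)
I would set $\widetilde W := W - \overline W$ and write the equation for $\widetilde W_y$ by subtracting the profile equation \eqref{Weq}, differentiated once in $y$, from \eqref{Wy1}. Differentiating \eqref{Weq} gives
\[
\overline W'+\tfrac12(\overline W')^2+(\overline W+\tfrac52 y)\overline W''=0 .
\]
Subtracting this from \eqref{Wy1} produces a transport equation
\[
\partial_s \widetilde W_y + D\,\widetilde W_y + U^W \partial_y\widetilde W_y = F,
\]
with damping $D = 1 + \frac{W_y+\overline W'}{2(1-\dot\tau)} - \frac{4e^{3s/2}Z_y}{3(1-\dot\tau)}$. The forcing $F$ collects three kinds of terms:
\begin{itemize}
\item the $Q_y$ and $Z_y^2$ terms;
\item the mismatch $\big(U^W-\tfrac52y-\overline W\big)\overline W''$, which involves $\widetilde W$, the $\dot\tau$-corrections, and $e^{3s/2}(\kappa-\dot\xi+Z/3)$;
\item the $\dot\tau$-errors in the quadratic term.
\end{itemize}
Rather than $\widetilde W_y$ itself, I would work with the weighted quantity $\mathcal V := \frac{1+y^2}{y^2}\widetilde W_y$. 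This converts the target bound into $|\mathcal V|\le 1/1500$. The initial data give $|\mathcal V(\cdot,s_0)|\le 1/3000$ by \eqref{W-y2}, so there is a gap to exploit.

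The argument splits into a near region $|y|\le \ell$ and a far region $|y|\ge \ell$, with $\ell$ a small fixed constant.

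In the near region I would use Taylor expansion at $y=0$ rather than transport. The constraints \eqref{constraint} give $\widetilde W_y(0)=\widetilde W_{yy}(0)=0$. Lemma~\ref{lem:Wy30} gives $|\partial_y^3\widetilde W(0,s)|\le C\varepsilon$, since $\overline W'''(0)=256$. The bootstrap bound \eqref{Wy4_bound}, together with the smoothness of $\overline W$, then gives
\[
|\widetilde W_y(y,s)|\le C\varepsilon y^2 + C M |y|^3 .
\]
Choosing $\ell \ll M^{-1}$ and then $\varepsilon$ small makes this at most $y^2/3000$ there.

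In the far region I would integrate along the trajectories $\psi_W$. By \eqref{UW_far}, trajectories starting with $|y|\ge \ell$ move outward, and a trajectory entering this region does so from the boundary $|y|=\ell$, where the near-region bound already holds. The weight $\frac{1+y^2}{y^2}$ contributes an extra damping term $-\,U^W\,\partial_y\log\frac{y^2}{1+y^2}$ to the equation for $\mathcal V$. For $|y|\ge \ell$ the equation for $\mathcal V$ therefore has a damping of the form
\[
D_{\mathcal V} = 1+\frac{W_y+\overline W'}{2}-\frac{2U^W}{y(1+y^2)} + O(\varepsilon).
\]
Since $U^W\approx \tfrac52 y+\overline W$ and $W_y+\overline W'\approx 2\overline W'$, this is, up to small errors, the linearized damping of the Hunter--Saxton profile.

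The main obstacle is that $D_{\mathcal V}$ need not be positive: near $y=0$ it degenerates, because $\overline W'(0)=-2$ gives $1+\overline W'(0)=-1$, and the weight term only partly compensates. Pointwise positivity of $D_{\mathcal V}$ is not needed, however. It suffices to show that the integral $\int D_{\mathcal V}$ along any outgoing trajectory is bounded below by a fixed constant, uniformly in the starting point. I would establish this by changing variables from $s'$ to $y=\psi_W$, using $ds = dy/U^W$. Since $U^W\gtrsim |y|$ for $|y|\ge\ell$, the resulting integral is controlled by explicit profile integrals of $\overline W'$ and $\overline W''$, which the appendix estimates on $\overline W$ (\eqref{4.0}, \eqref{4.0'}) provide.

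The forcing is small for the following reasons:
\begin{itemize}
\item the $Q_y$ term is $O(e^{-2s})$ by \eqref{GQ_bound};
\item the $Z_y$ terms are $O(e^{-s})$ by \eqref{lem_Zy};
\item the modulation terms are $O\big((1+|y|)e^{-s}\big)|\overline W''|$ by \eqref{Z}, and this is small after multiplying by $\frac{1+y^2}{y^2}$ because $|\overline W''|\lesssim |y|^{-7/5}$;
\item the $\widetilde W\,\overline W''$ term is controlled via \eqref{Wy_dec} and absorbed as a small perturbation of the damping.
\end{itemize}
Integrating in $s$ then gives a total forcing contribution of $C\varepsilon + C\ell^{-1}e^{-s_0}$, which is bounded by $C\varepsilon$. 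Combining this with the initial bound $1/3000$, the lower bound on $\int D_{\mathcal V}$, and the near-region bound on the boundary yields $|\mathcal V|\le 1/1500$ for $\varepsilon$ small.

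If the profile-integral control of $\int D_{\mathcal V}$ turns out to be too weak, I would instead replace the weight by $\frac{1+y^2}{y^2}$ times a factor built from $|\overline W'|$, chosen so that the degenerate part of the damping cancels exactly.
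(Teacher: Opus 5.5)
\textbf{There is a genuine gap in your far-region argument.} It sits in the term you dispose of in one line: $-\frac{1+y^2}{y^2}\frac{\widetilde W}{1-\dot\tau}\overline W''$.

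Since $\widetilde W(y,s)=\int_0^y \frac{y'^2}{1+y'^2}\mathcal V(y',s)\,dy'$, this term is a \emph{nonlocal linear operator applied to the unknown $\mathcal V$}, and its coefficient is $O(1)$ and independent of $\varepsilon$. It is therefore neither a small forcing nor something that can be absorbed into a local damping coefficient.

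\begin{itemize}
\item Bounding it via \eqref{Wy_dec} is useless. That bound only gives $|\widetilde W|\lesssim \frac{6}{13}|y|$ for $|y|\lesssim 1$. Near $|y|=\ell$, where $\overline W''(y)\approx 256\,y$, the resulting coefficient is about $\frac{6}{13}\cdot 256\approx 118$.
\item Even with \eqref{Wy_bound}, the term is only bounded by $\frac{1}{1000}\frac{1+y^2}{y^2}|\overline W''(y)|\int_0^{|y|}\frac{y'^2}{1+y'^2}\,dy'$. This has the same size as the quantity being estimated.
\end{itemize}

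Closing therefore requires a quantitative comparison of this kernel mass with the damping. The paper does this as follows. It writes the term as $\int_{\mathbb R}\mathcal V(y')K^V(y,s;y')\,dy'$ with $K^V=-\frac{1+y^2}{y^2}\overline W''(y)\mathbb{I}_{[0,y]}(y')\frac{y'^2}{1+y'^2}$. It then invokes the profile inequality \eqref{num_6}, which gives $\int|K^V|\,dy'\le\delta D^V$ on $|y|\ge l$ with $\delta\in(0,1)$. Finally it closes with the nonlocal maximum principle, Lemma~\ref{max_2}.

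The maximum principle uses the running supremum, so any $\delta<1$ suffices. If instead you integrate along $\psi_W$ with the fixed bootstrap constant $1/1000$, you get at best $\max\{\frac{1}{3000},\frac{\delta}{1000}\}+C\varepsilon$. That reaches $\frac{1}{1500}$ only if $\delta\le \frac23$, which \eqref{num_6} does not provide.

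\textbf{Your damping also has the wrong sign, which is why you misidentified the obstacle.} The weight is $\omega=\frac{1+y^2}{y^2}$, with $\partial_y\log\omega=-\frac{2}{y(1+y^2)}$. The extra damping is $-U^W\partial_y\log\omega=+\frac{2U^W}{y(1+y^2)}$, which is the term appearing in the paper's $D^V$.

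With the correct sign, $U^W/y\approx\frac52+\overline W/y\to\frac12$ as $y\to0$. The weight term therefore exactly cancels $1+\overline W'(0)=-1$. Moreover, \eqref{num_2} gives the \emph{pointwise} lower bound $D^V\ge\frac{y^2}{5(1+y^2)}-\frac{y^2}{2000(1+y^2)}-\frac{y^2}{1500(1+y^2)}-O(\varepsilon)\ge\frac{l^2}{10(1+l^2)}$ for $|y|\ge l$. No trajectory-averaged positivity is needed.

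Your formula as written would actually make your fallback plan fail. It gives $D_{\mathcal V}\approx-2$ for small $|y|\ge\ell$, and with $U^W\approx y/2$ one gets $\int D_{\mathcal V}\,ds\approx-4\log(y_1/\ell)$. This is unbounded below as $\ell=1/(10M)\to0$.

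The near-region Taylor argument, the initial bound $1/3000$, and your estimates of the $Q_y$, $Z_y$, and modulation forcings do match the paper. The missing ingredients are the two profile inequalities \eqref{num_2} and \eqref{num_6} and a maximum-principle closure that handles the nonlocal term.
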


\begin{proof}
Let $\widetilde{W} := W - \overline{W}$. Then, from \eqref{Wy1} and \eqref{Weq}, we have the equation for $\wt{W}_y(y,s)$:
\begin{multline}\label{Wytilde}
	 \partial_s \widetilde{W}_y + \left( 1 + \frac{\widetilde{W}_y + 2 \overline{W}'}{2(1-\dot{\tau})} - \frac{4e^{3s/2} Z_y }{3(1-\dot{\tau})} \right) \widetilde{W}_y + U^W \widetilde{W}_{yy} 
	= - \left( \frac{\dot{\tau} \overline{W}'}{2(1-\dot{\tau})} - \frac{4e^{3s/2} Z_y }{3(1-\dot{\tau})} \right) \overline{W}' 
	\\
	- \left( \frac{\widetilde{W} + \dot{\tau}\overline{W}}{1-\dot{\tau}} + \frac{e^{3s/2}(\kappa - \dot{\xi})}{1-\dot{\tau}} + \frac{e^{3s/2}Z}{3(1-\dot{\tau})} \right) \overline{W}'' + \frac{e^{3s} Z_y^2}{2(1-\dot{\tau})} - \frac{8 e^{s/2} Q_y}{3(1-\dot{\tau})},
\end{multline}
where $U^W$ is defined in \eqref{UW}. Defining a new weighted function
\begin{equation*}
V(y,s) := \frac{y^2 + 1}{y^2} \widetilde{W}_y(y,s),
\end{equation*}
we obtain from \eqref{Wytilde} that
\begin{equation*}
\partial_s V + D^V V + U^W V_y = F^V + \int_\mathbb{R} V(y',s) K^V(y,s;y') \, dy',
\end{equation*}
where
\begin{equation*}
\begin{split}
D^V(y,s) & := 1 + \frac{\widetilde{W}_y + 2 \overline{W}'}{2(1-\dot{\tau})} - \frac{4e^{3s/2} Z_y }{3(1-\dot{\tau})} + \frac{2}{y(1+y^2)} U^W, \\
F^V(y,s) & := - \frac{y^2 + 1}{y^2} \left( \frac{\dot{\tau} \overline{W}'}{2(1-\dot{\tau})} - \frac{4e^{3s/2} Z_y }{3(1-\dot{\tau})} \right) \overline{W}' \\
& \quad - \frac{y^2 + 1}{y^2} \left( \frac{ \dot{\tau}W}{1-\dot{\tau}} + \frac{e^{3s/2}(\kappa - \dot{\xi})}{1-\dot{\tau}} + \frac{e^{3s/2}Z}{3(1-\dot{\tau})} \right) \overline{W}'' + \frac{y^2+1}{y^2} \left( \frac{e^{3s} Z_y^2}{2(1-\dot{\tau})} - \frac{8 e^{s/2} Q_y}{3(1-\dot{\tau})} \right), \\
K^V(y,s;y') & := - \frac{y^2+1}{y^2} \overline{W}''(y) \mathbb{I}_{[0,y]} (y') \frac{y'^2}{1+y'^2},
\end{split}
\end{equation*}
where $\mathbb{I}_{[0,y]}$ denotes the indicator function between $0$ and $y$.

First, let us estimate $V(y,s)$ near $y=0$. By the Taylor expansion, we write
\begin{equation*}
	W_y(y,s) = - 2 + \frac{y^2}{2} \partial_y^3 W(0,s) + \frac{y^3}{6} \partial_y^4 W(y',s) \quad \text{ for some } |y'|<|y|<l,
\end{equation*} 
where $l :=1/(10M)>0$ for sufficiently large $M>0$. Using \eqref{Wy4_bound} and \eqref{str_U3-p}, we obtain
\begin{equation*}
	\begin{split}
		| W_y(y, s) + 2 - 128 y^2 | 
		& = \left| \frac{y^2}{2}\left(\partial_y^3W( 0, s) - 256 \right)+\frac{y^3}{6}\partial_y^4W(y',s)\right|
		\leq C\veps  y^2 + \frac{M}{6}  |y|^3  \leq y^2\left( C \veps+ \frac{ M | l | }{ 6  } \right)
	\end{split}
\end{equation*}
for all $| y | \le l$. By this together with \eqref{W_taylor_small}, we deduce that
\begin{equation*}
\begin{split}
|W_y(y, s) -\overline{W}'(y) |&\leq y^2\left(C\veps+\frac{M|l|}{6}\right)+C\cdot y^4 \\
& \leq y^2\left(C\veps+\frac{M|l|}{6}+C\cdot l^2\right)\leq \frac{ y^2}{3000(1+y^2)} \quad \text{for}\quad |y|\leq l,
	\end{split}
\end{equation*}
which yields 
\begin{equation}\label{V-l-p}
	|V(y,s)|\leq \frac{1}{3000}  \quad \text{for} \quad |y|\leq l.
\end{equation}

Next, we derive a positive lower bound for $D^V(y,s)$ in $|y|\geq l$. For this, we use the estimate \eqref{num_2}: 
\begin{equation}\label{DV0}
	1+\overline{W}'+\frac{2}{1+y^2}\left(\frac{5}{2}+\frac{\overline{W}}{y}\right)\geq \frac{y^2}{5(1+y^2)}, \quad y\in\mathbb{R}.
\end{equation}
To apply the above inequality, we split $D^V(y,s)$ as follows: 
\begin{equation}\label{DV}
	\begin{split}
		D^V(y,s) & = 1 + \frac{\widetilde{W}_y + 2 \overline{W}'}{2(1-\dot{\tau})} - \frac{4 e^{3s/2}Z_y}{3(1-\dot{\tau})} + \frac{2}{y(1+y^2)} \left( \frac{e^{3s/2}(\kappa - \dot{\xi})}{1-\dot{\tau}} + \frac{5}{2}y + \frac{ \widetilde{W} + \overline{W}}{1-\dot{\tau}} + \frac{e^{3s/2}Z}{3(1-\dot{\tau})} \right) \\
		& \geq \left(1 + \overline{W}' + \frac{2}{1+y^2} \left( \frac{5}{2} + \frac{\overline{W}}{y} \right)\right) - \left\lvert \frac{\widetilde{W}_y + 2\dot{\tau} \overline{W}'}{2(1-\dot{\tau})}\right\lvert -\left\lvert \frac{4e^{3s/2}Z_y}{3(1-\dot{\tau})} \right\lvert \\
		& \quad - \left\lvert \frac{2}{y(1+y^2)}\frac{\wt{W}}{1-\dot{\tau}}\right\lvert-\left\lvert \frac{2}{y(1+y^2)} \left( \frac{\dot{\tau} \overline{W}}{1-\dot{\tau}} + \frac{e^{3s/2}(\kappa - \dot{\xi})}{1-\dot{\tau}}  + \frac{e^{3s/2}Z}{3(1-\dot{\tau})} \right) \right\rvert.
	\end{split}
\end{equation}
We estimate each term in the right-hand side of the inequality. By \eqref{tau_decay}, \eqref{dottau1}, \eqref{Wy_bound}, and the bound $|\overline{W}'|\leq 2$ from \eqref{4.0}, we obtain
\begin{equation}\label{DV1}
	\bigg\lvert \frac{\widetilde{W}_y + 2 \dot{\tau} \overline{W}'}{2(1-\dot{\tau})} \bigg\rvert \leq \frac{1+\varepsilon^{1/2}}{2} \left( \frac{y^2}{1000(1+y^2)} + O(\varepsilon) \right).
\end{equation}
Using \eqref{dottau1}, \eqref{lem_Zy}, we have 
\begin{equation}\label{DV2}
	\left|\frac{4e^{3s/2}Z_y}{3(1-\dot{\tau})}\right|\leq Ce^{-s}.
\end{equation}
From \eqref{Wy_bound} and the fact $\wt{W}(0,s)=0$, it holds that
\begin{equation*}
\lvert \widetilde{W} (y,s) \rvert \leq \int_0^{|y|} \lvert \widetilde{W}_y(y',s) \rvert \, dy' \leq \frac{1}{1000} \int_0^{|y|} \frac{y'^2}{1+y'^2} \, dy'.
\end{equation*}
Thus, combining this with \eqref{dottau1}, we obtain
\begin{equation}\label{DV3}
	\begin{split}
		\left\lvert \frac{2}{y(1+y^2)} \frac{\widetilde{W}}{1-\dot{\tau}} \right\rvert &\leq  (1+\varepsilon^{1/2}) \left\lvert \frac{2}{y(1+y^2)} \right\rvert \left\lvert \frac{1}{1000} \int_0^{|y|} \frac{y'^2}{1+y'^2} \, dy' \right\rvert 
		\\
		&\leq \left\lvert \frac{1+\veps^{1/2}}{500y(1+y^2)} \right\rvert \left\lvert \int_0^{|y|} \frac{y'^2}{1+y'^2} \, dy' \right\rvert \leq  \frac{(1+\veps^{1/2})y^2}{1500(1+y^2)}, \quad \lvert y \rvert \geq l.
	\end{split}
\end{equation}
For the last term, we use \eqref{4.00}, \eqref{dottau1}, \eqref{tau_decay}, and \eqref{Z} to obtain
	\begin{equation}\label{DV4}
		\begin{split}
			\left\lvert \frac{2}{y(1+y^2)} \left( \frac{\dot{\tau} \overline{W}}{1-\dot{\tau}} + \frac{e^{3s/2} (\kappa - \dot{\xi})}{1-\dot{\tau}}+\frac{e^{3s/2}Z}{3(1-\dot{\tau})} \right) \right\rvert 
			& \quad \leq C\left( \frac{\lvert \dot{\tau} \overline{W} \rvert}{|y|} + \left| \frac{e^{3s/2} (\kappa - \dot{\xi})}{y} +\frac{e^{3s/2}Z}{3y}\right| \right) \\
			& \quad \leq  O(\varepsilon), \quad \lvert y \rvert \geq l.
		\end{split}
	\end{equation}
Applying \eqref{DV0} and \eqref{DV1}--\eqref{DV4} to \eqref{DV}, we conclude that
\begin{equation}\label{damp-p}
	\begin{split}
		D^V(y,s) \geq \frac{y^2}{5(1+y^2)} - \frac{y^2}{2000(1+y^2)} - \frac{y^2}{1500(1+y^2)} - O(\varepsilon) \geq \frac{l^2}{10(1+l^2)}, \quad \lvert y \rvert \geq l.
	\end{split}
\end{equation}

We proceed to compare $K^V$ with $D^V$. From \eqref{num_6}, we have 
\begin{equation}\label{KVDV}
	\begin{split}
		\int_\mathbb{R} \lvert K^V (y,s;y') \rvert \, dy' & \leq \frac{y^2+1}{y^2} \lvert \overline{W}''(y) \rvert \int_0^{\lvert y \rvert} \frac{y'^2}{1+y'^2} \, dy' \\
		& \leq \delta \left( 1+ \overline{W}'(y) + \frac{2}{1+y^2} \left( \frac{5}{2} + \frac{\overline{W}(y)}{y} \right) - \frac{y^2}{500(1+y^2)} \right) \leq \delta D^V(y,s), \quad \lvert y \rvert \geq l
	\end{split}
\end{equation}
for some constant $\delta \in (0,1)$. Here, we used \eqref{DV}--\eqref{DV4} in the last inequality.

We now estimate the forcing term $F^V$. Using \eqref{dottau1}, we have
\begin{equation*}
\begin{split}
& \lVert F^V(\cdot,s) \rVert_{L^\infty(\lvert y \rvert \geq l)} \\
& \quad \leq \left\lVert \frac{y^2 + 1}{y^2} \left( \frac{\dot{\tau} \overline{W}'}{2(1-\dot{\tau})} - \frac{4e^{3s/2} Z_y }{3(1-\dot{\tau})} \right) \overline{W}' \right\rVert_{L^\infty(\lvert y \rvert \geq l)} + \dot{\tau} \left\lVert \frac{y^2 + 1}{y^2}  \frac{ W \overline{W}''}{1-\dot{\tau}}   \right\rVert_{L^\infty(\lvert y \rvert \geq l)} \\
& \qquad + \left\lVert \frac{y^2 + 1}{y^2} \left(  \frac{e^{3s/2}(\kappa - \dot{\xi})}{1-\dot{\tau}} + \frac{e^{3s/2}Z}{3(1-\dot{\tau})} \right) \overline{W}'' \right\rVert_{L^\infty(\lvert y \rvert \geq l)} + \left\lVert \frac{y^2+1}{y^2} \left( \frac{e^{3s} Z_y^2}{2(1-\dot{\tau})} - \frac{8 e^{s/2} Q_y}{3(1-\dot{\tau})} \right) \right\rVert_{L^\infty(\lvert y \rvert \geq l)} \\
& \quad \leq C  \left(  \lvert \dot{\tau} \rvert \lVert \overline{W}' \rVert_{L^\infty(\mathbb{R})} + e^{3s/2} \lVert Z_y \rVert_{L^\infty(\mathbb{R})} \right) \lVert \overline{W}' \rVert_{L^\infty(\mathbb{R})} + C  \lvert \dot{\tau} \rvert \left\|\frac{W}{y} \right\|_{L^\infty(|y|\geq l)}\lVert y\overline{W}'' \rVert_{L^\infty(\mathbb{R})} 
\\
& \qquad  + C\left\|\frac{e^{3s/2}(\kappa - \dot{\xi}) + \frac{1}{3}e^{3s/2}Z}{y}\right\|_{L^{\infty}(|y|\geq l)}\lVert y\overline{W}'' \rVert_{L^\infty(\mathbb{R})} + C \left( e^{3s} \lVert Z_y \rVert_{L^\infty(\mathbb{R})}^2 + e^{s/2} \lVert Q_y \rVert_{L^\infty(\mathbb{R})} \right).
\end{split}
\end{equation*}
Each term on the right-hand side is estimated separately below.

By \eqref{tau_decay}, \eqref{4.0}, and \eqref{lem_Zy}, we obtain
\begin{equation}\label{FV1}
	C  \left(  \lvert \dot{\tau} \rvert \lVert \overline{W}' \rVert_{L^\infty(\mathbb{R})} + e^{3s/2} \lVert Z_y \rVert_{L^\infty(\mathbb{R})} \right) \lVert \overline{W}' \rVert_{L^\infty(\mathbb{R})}\leq Ce^{-s}.
\end{equation}
Using \eqref{tau_decay}, \eqref{4.0''}, and the bound
\begin{equation*}
	\left\|\frac{W}{y} \right\|_{L^\infty(|y|\geq l)} \leq 2,
\end{equation*}
coming from \eqref{Uy1} and \eqref{constraint}, we deduce
\begin{equation}\label{FV2}
	C  \lvert \dot{\tau} \rvert \left\|\frac{W}{y} \right\|_{L^\infty(|y|\geq l)}\lVert y\overline{W}'' \rVert_{L^\infty(\mathbb{R})} \leq Ce^{-s}.
\end{equation}
From \eqref{Z} and \eqref{4.0''}, it follows that
\begin{equation}\label{FV3}
	C\left\|\frac{e^{3s/2}(\kappa - \dot{\xi}) + \frac{1}{3}e^{3s/2}Z}{y}\right\|_{L^{\infty}(|y|\geq l)}\lVert y\overline{W}'' \rVert_{L^\infty(\mathbb{R})} \leq Ce^{-s}.
\end{equation}
Applying \eqref{lem_Zy} and \eqref{GQ_bound}, we have
\begin{equation}\label{FV4}
	C \left( e^{3s} \lVert Z_y \rVert_{L^\infty(\mathbb{R})}^2 + e^{s/2} \lVert Q_y \rVert_{L^\infty(\mathbb{R})} \right)\leq Ce^{-s}.
\end{equation}
Combining \eqref{FV1}--\eqref{FV4}, we conclude that
\begin{equation}\label{FV}
	\lVert F^V(\cdot,s) \rVert_{L^\infty(\lvert y \rvert \geq l)} \leq Ce^{-s}\leq C\veps
\end{equation}
for sufficiently small $\veps>0$.

Lastly, from \eqref{W-y2} and \eqref{Wy_dec}, we have
\begin{equation}\label{Wy_claim}
	|V(y,s_0)|\leq \frac{1}{3000}, \qquad \limsup_{|y|\rightarrow \infty}|V (y,s)|=0.
\end{equation}
Applying Lemma~\ref{max_2} with \eqref{V-l-p}, \eqref{damp-p}, \eqref{KVDV}, \eqref{FV}, and \eqref{Wy_claim}, we deduce that
\begin{equation*}
	\|V (\cdot,s)\|_{L^{\infty}(\mathbb{R})}\leq \frac{1}{1500}.
\end{equation*}
We remark that the condition \eqref{D-cond} in Lemma~\ref{max_2} holds for sufficiently small $\veps>0$.
\end{proof}

\begin{lemma}\label{Wy2_lem}
	Under the same assumptions in Proposition~\ref{Boot}, it holds that 
	\begin{equation}\label{Uyy-bd}
		|W_{yy} (y, s) |\leq  \frac{M^{1/8} |y|}{2(1+y^2)^{1/2}} 
	\end{equation}
	for sufficiently large $M>0$ and for all $y\in\mathbb{R}$ and $s\in[s_0,\sigma_1]$.
\end{lemma}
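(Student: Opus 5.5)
We will split the line into a near region $|y|\le l$ and a far region $|y|\ge l$, where $l>0$ is a small constant depending on $M$. The near region is handled by Taylor expansion at the origin. The far region is handled by a weighted maximum principle for the $W_{yy}$ equation \eqref{Wy2}, in the same spirit as the proof of Lemma~\ref{lem:Wtildey}.

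\emph{Near region.} By \eqref{constraint}, $W_{yy}(0,s)=0$. Taylor's theorem together with \eqref{str_U3-p} and \eqref{Wy4_bound} then gives
$|W_{yy}(y,s)|\le |y|\,|\partial_y^3W(0,s)|+\tfrac12 My^2\le (257+\tfrac12 M|y|)|y|$.
Choose $l=M^{-1}$. Since $(1+y^2)^{-1/2}\ge 1/2$ on this region, we obtain $|W_{yy}|\le 258|y|\le \tfrac14 M^{1/8}|y|(1+y^2)^{-1/2}$ for $|y|\le l$, provided $M$ is large. This leaves a factor-of-two margin on the boundary of the far region.

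\emph{Far region.} Set $\Phi:=\frac{(1+y^2)^{1/2}}{y}W_{yy}$. From \eqref{Wy2}, $\Phi$ satisfies
\begin{equation*}
\partial_s\Phi+D^\Phi\Phi+U^W\Phi_y=F^\Phi,\qquad D^\Phi=\frac72+\frac{2W_y}{1-\dot\tau}-\frac{e^{3s/2}Z_y}{1-\dot\tau}+\frac{U^W}{y(1+y^2)},
\end{equation*}
where $F^\Phi$ is the right-hand side of \eqref{Wy2} multiplied by the weight. The key point is the size of the damping. By \eqref{Uy1}, \eqref{UW_far}, \eqref{Z} and \eqref{dottau1},
\begin{equation*}
D^\Phi\ge \frac72-2(1+\varepsilon^{1/2})\cdot 2-O(\varepsilon)+\frac{U^W}{y(1+y^2)},
\end{equation*}
which is negative, so we cannot use the crude bound $|W_y|\le 2$. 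Instead we use \eqref{Utildey_close} and the profile: $W_y\approx\overline W'$, and $\overline W'\ge -2+c\,\frac{y^2}{1+y^2}$ away from $0$. Moreover $\frac{U^W}{y}\approx\frac52+\frac{\overline W}{y}\ge\frac12$, so the combination $\frac72+2\overline W'+\frac{1}{1+y^2}\bigl(\frac52+\frac{\overline W}{y}\bigr)$ should be bounded below by a positive multiple of $\frac{y^2}{1+y^2}$. This is the analogue of \eqref{DV0}; I expect to use an appendix estimate of the same type, or to derive it from the ODE \eqref{Weq} together with the monotonicity of $\overline W'$ on $y>0$. On $|y|\ge l$ this gives $D^\Phi\ge c\,l^2$. \textbf{This positivity of the damping is the main obstacle.} If a fully explicit lower bound fails near intermediate $y$, I would first try a weight $(1+y^2)^{\mu}/y$ with a tuned exponent $\mu$, so as to exploit the extra $-\mu\,U^W y/(1+y^2)$ contribution.

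\emph{Forcing and conclusion.} We bound $F^\Phi$ on $|y|\ge l$. The term $e^{s/2}Q_{yy}$ is $O(e^{-7s/2})$ by \eqref{Q_high}. The terms $e^{3s}Z_yZ_{yy}$ and $e^{3s/2}Z_{yy}W_y$ are $O(e^{-s})$ by \eqref{lem_Zy}, \eqref{Z_high'} and \eqref{Uy1}. After multiplying by the weight $(1+y^2)^{1/2}/|y|\le C(1+l^{-1})$, the whole forcing satisfies $\|F^\Phi\|_{L^\infty(|y|\ge l)}\le C_M e^{-s}\le C_M\varepsilon$. For the initial data, \eqref{1D3'} gives $|\Phi(\cdot,s_0)|\le M^{1/8}/4$. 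At infinity, we use the decay of $W_{yy}$ (or run the argument with the far-field Lemma~\ref{rmk2}, as in Lemma~\ref{lem_Zyfar}) to see that $\limsup_{|y|\to\infty}|\Phi|$ stays below $M^{1/8}/4+C\varepsilon$. Since $U^W$ points outward for $|y|>1$ by \eqref{UW_far}, the maximum principle (Lemma~\ref{max_2}, applied without the nonlocal term) yields $\|\Phi(\cdot,s)\|_{L^\infty}\le \max\{M^{1/8}/4,\ C_M\varepsilon/(cl^2)\}+C_M\varepsilon\le M^{1/8}/2$ for $\varepsilon$ small depending on $M$. This is exactly \eqref{Uyy-bd}.
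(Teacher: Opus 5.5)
Your proposal is correct and follows the paper's proof essentially step by step. It uses the same weight $\frac{(1+y^2)^{1/2}}{y}W_{yy}$, Taylor expansion on $|y|\le M^{-1}$, the forcing controlled via \eqref{Q_high}, \eqref{lem_Zy} and the improved bound \eqref{Z_high'}, and closure by Lemma~\ref{max_2}. The damping bound you flag as the main obstacle is exactly the appendix inequality \eqref{num_3}, and your two ingredients already prove it: \eqref{4.1} together with $\overline{W}/y\ge -2$ gives $\frac72+2\overline{W}'+\frac{1}{1+y^2}\bigl(\frac52+\frac{\overline{W}}{y}\bigr)\ge -\frac12+\frac{12y^2}{5(1+y^2)}+\frac{1}{2(1+y^2)}=\frac{19y^2}{10(1+y^2)}$.
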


\begin{proof}
	Setting $\wt{V}(y,s) := (y^2+1)^{1/2} y^{-1} W_{yy}(y,s)$, we obtain from \eqref{Wy2} that 
	\begin{equation*}
		\partial_s\wt{V} + D^{\wt{V}} \wt{V} + U^W \wt{V}_y=F^{\wt{V}},
	\end{equation*}
	where $U^W$ is given in \eqref{UW}, and
	\begin{subequations}
		\begin{align*}
				D^{\wt{V}} (y, s) &:= \frac{7}{2}+\frac{2W_y}{1-\dot{\tau}}-\frac{e^{3s/2}Z_y}{1-\dot{\tau}}+\frac{1}{y(1+y^2)} U^W, \\
				F^{\wt{V}} (y, s) &:= \frac{(y^2+1)^{1/2}}{y}\left(-\frac{8e^{s/2}Q_{yy}}{3(1-\dot{\tau})}+\frac{e^{3s}Z_yZ_{yy}}{1-\dot{\tau}}+\frac{4e^{3s/2}Z_{yy}W_y}{3(1-\dot{\tau})}\right).
			\end{align*}
		\end{subequations}
	
First, we consider the region $|y| \leq l$, where $\textstyle l := \frac{1}{M}$ for sufficiently large $M>0$. By the Taylor expansion, together with \eqref{Wy3_0} and \eqref{Wy4_bound}, we have
	\begin{equation*}
		|W_{yy}(y,s)|\leq |y||\partial_y^3W(0,s)|+\frac{y^2}{2}|\partial_y^4W(y',s)|\leq 257|y|+\frac{M}{2}y^2\leq \frac{M^{1/8} |y|}{4(1+y^2)^{1/2}}
	\end{equation*}
for $|y|\leq l $ and some $y'\in (-| y |, | y |)$. This gives 
	\begin{equation}\label{Uyy_local}
		\|\wt{V}(\cdot,s)\|_{L^{\infty}(|y|\leq l)}\leq  \frac{M^{1/8}}{4}.
	\end{equation}
Next, we obtain a lower bound for $D^{\wt{V}}$. From  \eqref{num_3}, we observe  that 
	\begin{equation*}
		\frac{7}{2}+2\overline{W}'+\frac{1}{1+y^2}\left(\frac{5}{2}+\frac{\overline{W}}{y}\right) \geq \frac{19y^2}{10(1+y^2)}.
	\end{equation*}
	In view of this estimate, we rewrite $D^{\wt{V}}$ as
	\begin{multline*}
		D^{\wt{V}}=\left(\frac{7}{2}+2\overline{W}'+\frac{1}{1+y^2}\left(\frac{5}{2}+\frac{\overline{W}}{y}\right)\right)
		\\
		+\frac{2(\wt{W}_y+\dot{\tau}\overline{W}')}{1-\dot{\tau}}-\frac{e^{3s/2}Z_y}{1-\dot{\tau}}+\frac{1}{y(1+y^2)}\left(\frac{\wt{W}+\dot{\tau}\overline{W}}{1-\dot{\tau}}+\frac{e^{3s/2}Z}{3(1-\dot{\tau})}+\frac{e^{3s/2}(\kappa-\dot{\xi})}{1-\dot{\tau}}\right). 
	\end{multline*}
Thanks to \eqref{dottau}, \eqref{dottau1}, \eqref{Wy_bound}, and the bound $|\overline{W}'|\leq 2$ from \eqref{4.0}, we obtain
\begin{equation*}
\left\lvert \frac{2(\widetilde{W}_y + \dot{\tau} \overline{W}')}{1-\dot{\tau}} \right\rvert \leq 2(1+\varepsilon^{1/2}) \left( \frac{y^2}{1000(1+y^2)} + C \varepsilon \right).
\end{equation*}
Also, using \eqref{dottau1} and \eqref{lem_Zy}, we have
\begin{equation*}
\left\lvert \frac{e^{3s/2} Z_y}{1-\dot{\tau}} \right\rvert \leq C e^{-s}.
\end{equation*}
From \eqref{Wy_bound}, \eqref{dottau1}, and the fact $\wt{W}(0,s)=0$, it follows that
\begin{equation*}
	\begin{split}
		\left\lvert \frac{1}{y(1+y^2)} \frac{\widetilde{W}}{1-\dot{\tau}} \right\rvert &\leq  (1+\varepsilon^{1/2}) \left\lvert \frac{1}{y(1+y^2)} \right\rvert \left\lvert \frac{1}{1000} \int_0^y \frac{y'^2}{1+y'^2} \, dy' \right\rvert 
		\\
		&\leq \left\lvert \frac{1}{1000y(1+y^2)} \right\rvert \left\lvert \int_0^y \frac{y'^2}{1+y'^2} \, dy' \right\rvert \leq  \frac{y^2}{3000(1+y^2)}, \quad \lvert y \rvert \geq l.
	\end{split}
\end{equation*}
In addition, from \eqref{4.00}, \eqref{dottau}, \eqref{tau_decay}, and \eqref{Z}, we obtain
\begin{equation*}
	\begin{split}
		\left\lvert \frac{1}{y(1+y^2)} \left( \frac{\dot{\tau} \overline{W}}{1-\dot{\tau}} + \frac{e^{3s/2} (\kappa - \dot{\xi})}{1-\dot{\tau}}+\frac{e^{3s/2}Z}{3(1-\dot{\tau})} \right) \right\rvert 
		& \quad \leq C\left( \frac{\lvert \dot{\tau} \overline{W} \rvert}{|y|} + \left| \frac{e^{3s/2} (\kappa - \dot{\xi})}{y} +\frac{e^{3s/2}Z}{3y}\right| \right) \\
		& \quad \leq C \varepsilon, \quad \lvert y \rvert \geq l.
	\end{split}
\end{equation*}
Hence, we have 
	\begin{equation}\label{Uyy_D}
		\begin{split}
			D^{\wt{V}}(y, s)	&\geq \frac{19y^2}{10(1+y^2)}- \frac{ 2 y^2}{1000(1+y^2)}-\frac{y^2}{3000(1+y^2)}- C  \ve \geq \frac{y^2}{1+y^2}\quad \text{for}\quad |y|\geq l,
		\end{split}
	\end{equation}
where the last inequality holds for $\ve\ll l^3$. 

Using \eqref{dottau1}, \eqref{Q_high}, \eqref{lem_Zy}, \eqref{Z_high'}, and \eqref{Uy1}, we estimate $F^{\wt{V}}$ as
	\begin{equation}\label{Uyy_F}
		\begin{split}
			|F^{\wt{V}} (y, s) | 
			&\leq C\left(e^{s/2}|Q_{yy}|+e^{3s}|Z_y||Z_{yy}|+e^{3s/2}|Z_{yy}||W_y|\right)
			\\
			& \leq Ce^{-s} \quad \text{for}\quad |y|\geq l. 
		\end{split}
	\end{equation}

We now show that $\textstyle\limsup_{|y|\rightarrow \infty}|\wt{V} (y,s ) |< \frac12 M^{1/8}$ for sufficiently large $M>0$. Thanks to \eqref{Uyy_D} and \eqref{Uyy_F}, we have  
\begin{equation} \label{DtV}
	\inf_{|y|\geq 10}D^{\wt{V}}(y,s)\geq \frac{ 10^2}{1+ 10^2}=:\lambda_D
\end{equation}
and
\begin{equation} \label{FtV}
	\|F^{\wt{V}}(y,s)\|_{L^{\infty}(|y|\geq 10)}\leq Ce^{-s}. 
\end{equation}
Applying Lemma~\ref{rmk2} together with \eqref{DtV}, \eqref{FtV}, and \eqref{UW_far}, we obtain
\begin{equation}\label{Uyy_far}
	\limsup_{|y|\rightarrow \infty}{|\wt{V}(y,s)|} \leq \limsup_{|y|\rightarrow \infty}{|\wt{V}(y,s_0)|e^{-\lambda_D(s-s_0)}} +  Ce^{-\lambda_D s} < \frac{M^{1/8}}{2}
\end{equation}
for sufficiently large $M>0$, where in the second inequality we used the definition of $\widetilde{V}$ with the bound \eqref{1D3'}.

Finally we apply Lemma~\ref{max_2} with \eqref{Uyy_local}--\eqref{Uyy_F} and \eqref{Uyy_far} to conclude that $\textstyle \|\wt{V} (\cdot, s) \|_{L^{\infty}}\leq \frac12 M^{1/8}$ for all $s\in[s_0,\sigma_1]$, which yields the desired estimate \eqref{Uyy-bd}.
\end{proof}

\begin{lemma} \label{lem:Wy4}
	Under the same assumptions in Proposition~\ref{Boot}, it holds that
	\begin{equation*}
	\lVert \partial_y^4 W(\cdot,s) \rVert_{L^\infty} \leq \frac{M}{2}
	\end{equation*}
	for sufficiently large $M>0$ and for all $s \in [s_0,\sigma_1]$.
\end{lemma}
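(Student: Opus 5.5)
The plan is to treat \eqref{Wy4} as a damped transport equation for $\partial_y^4 W$ and run a maximum-principle argument, exactly as was done for $\wt V$ in Lemma~\ref{Wy2_lem}, but without a weight. Write \eqref{Wy4} as
\begin{equation*}
\partial_s \partial_y^4 W + D^{(4)} \partial_y^4 W + U^W \partial_y^5 W = F^{(4)},
\qquad D^{(4)} := \frac{17}{2} + \frac{4W_y}{1-\dot{\tau}} - \frac{e^{3s/2}Z_y}{3(1-\dot{\tau})},
\end{equation*}
where $F^{(4)}$ collects the right-hand side of \eqref{Wy4}.

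\textbf{Damping.} By \eqref{Uy1}, \eqref{dottau1} and \eqref{lem_Zy}, we have
\begin{equation*}
D^{(4)} \geq \frac{17}{2} - 8(1+\varepsilon^{1/2}) - C e^{-s} \geq \frac{1}{4}
\end{equation*}
for $\varepsilon$ small. This is only a small margin, since $|W_y|$ can be as large as $2$ near $y=0$. I will try to improve it using $|\overline{W}'|<2$ away from the origin, but a uniform constant $\lambda>0$ is all that is needed.

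\textbf{Forcing.} The only term not small in $\varepsilon$ is $-\frac{7 W_{yy}\partial_y^3 W}{1-\dot\tau}$. By \eqref{Wyy_bound} and \eqref{Wy3_bound} it is bounded by $C M^{1/8} M^{3/4} = C M^{7/8}$. Every other term is $O(e^{-s})$:
\begin{itemize}
\item $e^{s/2}\partial_y^4 Q$ is $O(e^{-7s/2})$ by \eqref{Q_high};
\item $e^{3s/2}\partial_y^3 W Z_{yy}$, $e^{3s/2}W_{yy}\partial_y^3 Z$ and $e^{3s/2}W_y\partial_y^4 Z$ are $O(M e^{-s})$ by \eqref{Z_high'};
\item the $e^{3s}ZZ$ products are $O(e^{-2s})$ by \eqref{lem_Zy} and \eqref{Z_high'}.
\end{itemize}
Hence $|F^{(4)}| \leq C M^{7/8} + C M e^{-s} \leq C M^{7/8}$ once $\varepsilon$ is chosen after $M$.

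\textbf{Conclusion.} Integrating along $\psi_W$ from \eqref{curves} gives
\begin{equation*}
|\partial_y^4 W(\psi_W(y,s),s)| \leq \|\partial_y^4 W(\cdot,s_0)\|_{L^\infty} e^{-\lambda(s-s_0)} + \frac{C M^{7/8}}{\lambda}.
\end{equation*}
By \eqref{1D4} the first term is at most $1$, so the right-hand side is $\leq 1 + C M^{7/8}/\lambda \leq M/2$ for $M$ large. Surjectivity of $y \mapsto \psi_W(y,s)$ follows from $\partial_y\psi_W>0$ (shown in Lemma~\ref{Lemma:Z_high}) together with the outward velocity \eqref{UW_far}. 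Alternatively, one can argue as in Lemma~\ref{lem_Zyweight} at a maximum point, using Lemma~\ref{rmk2} for the far field.

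\textbf{Main obstacle.} The main obstacle is the bilinear term $W_{yy}\partial_y^3 W$. It is handled by the exponent gap $\frac18+\frac34<1$ in the bootstrap hierarchy, which is why $M^{1/8}$ and $M^{3/4}$ were chosen. If the damping near $y=0$ proves too weak to absorb a constant factor, I will use that near the origin $|W_{yy}|\lesssim |y|$ by \eqref{Wyy_bound}, so the forcing is itself small there.
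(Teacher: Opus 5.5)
Your proposal is correct and matches the paper's proof essentially step for step. The paper likewise shows $D_4^W \ge \tfrac14$ via \eqref{Uy1}, bounds the forcing by $7(1+\varepsilon^{1/2})M^{7/8}$ from the $W_{yy}\partial_y^3 W$ term, with all $Q$- and $Z$-contributions being $O(e^{-s})$, and integrates along $\psi_W$ to get $1+32M^{7/8}\le M/2$. The fallback ideas in your last paragraph are not needed, since the uniform damping $\tfrac14$ already closes the estimate.
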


\begin{proof}
	We rewrite \eqref{Wy4} in the form
	\begin{equation} \label{Wy4eq}
		\partial_s \partial_y^4 W + D^W_4 \partial_y^4 W + U^W \partial_y^5 W = F^W_4,
	\end{equation}
	where $U^W$ is defined in \eqref{UW}, and 
	\begin{equation*}
	\begin{split}
		D^W_4(y,s) & := \frac{17}{2} + \frac{4W_y}{1-\dot{\tau}} - \frac{e^{3s/2} Z_y }{3(1-\dot{\tau})}, \\
		F^W_4(y,s) & := - \frac{8 e^{s/2} \partial_y^4 Q}{3(1-\dot{\tau})}  - \frac{7 W_{yy} \partial_y^3 W}{1-\dot{\tau}} \\
		& \nonumber \quad   + \frac{3e^{3s/2}  \partial_y^3 W Z_{yy}}{1-\dot{\tau}} + \frac{11e^{3s/2} W_{yy} \partial_y^3 Z}{3(1-\dot{\tau})}  + \frac{4e^{3s/2} W_y \partial_y^4 Z}{3(1-\dot{\tau})} + \frac{3e^{3s} Z_{yy} \partial_y^3 Z}{1-\dot{\tau}} + \frac{e^{3s} Z_y \partial_y^4 Z}{1-\dot{\tau}}.
	\end{split}
	\end{equation*}
	It then follows from \eqref{Uy1}, \eqref{lem_Zy}, and \eqref{dottau1} that
	\begin{equation*}
	D^W_4 \geq \frac{17}{2} - \left( \frac{4 \lvert W_y \rvert}{1-\dot{\tau}} + \frac{e^{3s/2}\lvert Z_y \rvert}{3(1-\dot{\tau})} \right) \geq \frac{17}{2} - \left( 8 + C \varepsilon \right) (1+\varepsilon^{1/2}) \geq \frac{1}{4},
	\end{equation*}
	for sufficiently small $\veps>0$. 
	Next, using \eqref{dottau1}, \eqref{Q_high}, \eqref{Wyy_bound}, \eqref{Wy3_bound}, \eqref{Uy1}, \eqref{Z_high'}, and \eqref{lem_Zy}, we estimate $F^W_4$ as follows:
	\begin{equation*}
	\begin{split}
		| F^W_4(y,s) | & \leq Ce^{s/2} \|\partial_y^4 Q\|_{L^{\infty}}  + 7(1+\veps^{1/2})\|W_{yy}\|_{L^{\infty}}\|\partial_y^3 W\|_{L^{\infty}}\\
		& \quad +Ce^{3s/2}\left(\sum_{i=1}^3 \|\partial_y^i W\|_{L^{\infty}} \|\partial_y^{(5-i)}Z\|_{L^{\infty}}  +  e^{3s/2} \sum_{i=1}^2\|\partial_y^iZ\|_{L^{\infty}} \|\partial_y^{5-i} Z\|_{L^{\infty}}  \right)
		\\
		&\leq Ce^{-7s/2} + 7 (1+\varepsilon^{1/2})M^{7/8} +Ce^{-s} \\
		& \leq C \varepsilon +  7 (1+\varepsilon^{1/2})M^{7/8} \leq 8 M^{7/8}
	\end{split}
	\end{equation*}
	again for sufficiently small $\varepsilon>0$. Integrating \eqref{Wy4eq} along the curve $\psi_W$ defined in \eqref{curves}, and using the bounds on $D_4^W$ and $F_4^W$ derived above, together with the initial assumption $\|\partial_y^4W(\cdot,s_0)\|_{L^{\infty}}\leq 1$ from \eqref{1D4}, we obtain
	\begin{equation*}
	\begin{split}
		\lVert \partial_y^4 W(\cdot,s) \rVert_{L^\infty} & \leq \lVert \partial_y^4 W(\cdot,s_0) \rVert_{L^\infty} e^{-(s-s_0)/4} + 8M^{7/8} \int_{s_0}^s e^{-(s-s')/4} \, ds' \leq 1 + 32 M^{7/8}. 
	\end{split}
	\end{equation*}
	By choosing $M > 0$ sufficiently large so that $\textstyle 1 + 32 M^{7/8} \leq M/2$, we conclude
	\begin{equation*}
		\lVert \partial_y^4 W(\cdot,s) \rVert_{L^\infty}  \leq  \frac{M}{2},
	\end{equation*}
	as desired.
\end{proof}

\begin{lemma} \label{lem:Wy3}
Under the same assumptions in Proposition~\ref{Boot}, it holds that
	\begin{equation} \label{inlemWy3}
	\lVert \partial_y^3 W(\cdot,s) \rVert_{L^\infty} \leq \frac{M^{3/4}}{2}
	\end{equation}
	for sufficiently large $M>0$ and for all $s \in [s_0,\sigma_1]$.
\end{lemma}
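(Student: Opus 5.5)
The plan is to close the bound on $\partial_y^3 W$ by the same characteristic argument used for $\partial_y^4 W$ in Lemma~\ref{lem:Wy4}. The damping in \eqref{Wy3} is weaker, so a crude uniform bound will not work. We therefore split into a near region $|y|\le l$ and a far region $|y|\ge l$, where $l>0$ is a small fixed constant independent of $M$.

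\textbf{Near region.} Near the origin I would use a Taylor expansion, as in the proofs of Lemmas~\ref{lem:Wtildey} and \ref{Wy2_lem}. From \eqref{str_U3-p} and \eqref{Wy4_bound},
\[
|\partial_y^3W(y,s)|\le 256+C\varepsilon+M|y|\le 258 \quad\text{for } |y|\le 1/M .
\]
This is well below $M^{3/4}/2$ for large $M$.

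\textbf{Far region.} Write \eqref{Wy3} as $\partial_s\partial_y^3W+D_3^W\partial_y^3W+U^W\partial_y^4W=F_3^W$, with
\[
D_3^W=6+\frac{3W_y}{1-\dot\tau}-\frac{2e^{3s/2}Z_y}{3(1-\dot\tau)} .
\]
The bound \eqref{Uy1} only gives $D_3^W\ge -C\varepsilon$, so the damping degenerates exactly where $W_y\approx-2$, i.e.\ near $y=0$. I would repair this with the sharper information from \eqref{Utildey_close} and the profile. Since $\overline W'$ is monotone in $|y|$ with $\overline W'(0)=-2$ and $\overline W'(y)\ge -2+\frac{6y^2}{5(1+y^2)}$ (the bound behind \eqref{Uy1}), we get
\[
D_3^W\ge \frac{18y^2}{5(1+y^2)}-\frac{3y^2}{1500(1+y^2)}-C\varepsilon\ge c_l>0 \quad\text{for } |y|\ge l .
\]

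For the forcing, use \eqref{Wyy_bound}, \eqref{Z_high'}, \eqref{Q_high}, \eqref{lem_Zy} and \eqref{Uy1}:
\[
|F_3^W|\le \frac{2W_{yy}^2}{1-\dot\tau}+Ce^{-s}\le 3M^{1/4}+C\varepsilon .
\]
The key point is that the quadratic term $W_{yy}^2$ is only of size $M^{1/4}$, not $M^{3/4}$.

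\textbf{Combining the regions.} Trajectories of $\psi_W$ starting in $|y|\ge l$ stay there, since $U^W\,y/|y|>0$ away from the origin by the computation giving \eqref{UW_far}; a trajectory that starts near $0$ and leaves enters the far region with data bounded by $258$. Integrating along $\psi_W$ then gives
\[
|\partial_y^3W|\le \max\{2^{17/2},258\}+\frac{3M^{1/4}+C\varepsilon}{c_l}\le \frac{M^{3/4}}{2}
\]
for $M$ large, where \eqref{1D5} bounds the initial data. Since $c_l$ depends on $l$, I would fix $l$ first, independent of $M$. To avoid the choice $l=1/M$, the near region should instead be handled by a maximum-principle argument on $\partial_y^3W$ itself, as via Lemma~\ref{max_2}, or by using continuity of $\partial_y^3 W$ near $0$ with the bound $256+C\varepsilon+Ml$.

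\textbf{Main obstacle.} The delicate step is reconciling $l$ with $M$. If $l$ is fixed, the Taylor bound in the near region is $256+Ml$, which exceeds $M^{3/4}/2$. If instead $l=1/M$, then $c_l\sim M^{-2}$ and the far-region bound degrades to $M^{9/4}$. My first attempt at resolving this is to use an intermediate scale $l=M^{-1/4}$. Then the near region gives $256+M^{3/4}/10$ and the far region gives $c_l\sim M^{-1/2}$, hence a bound $CM^{1/4}M^{1/2}=CM^{3/4}$. Whether the constant fits depends on the precise numbers. Alternatively, I would refine the forcing bound for $|y|\le 1$ by using $|W_{yy}|\le M^{1/8}|y|$, so that $W_{yy}^2/D_3^W\lesssim M^{1/4}$ uniformly in that region. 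That ratio is bounded, and it should close the estimate with a margin.
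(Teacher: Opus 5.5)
\textbf{The argument you commit to does not close.} The idea that does close it is the one you mention only at the very end.

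With a fixed cutoff $l$, the Taylor bound $256+Ml$ is too large, as you note. The intermediate scale $l=aM^{-1/4}$ does not rescue it as long as you combine the damping bound $D_3^W\ge \frac{3y^2}{1+y^2}-C\veps$ (coming from \eqref{4.1}) with the uniform forcing bound $|F_3^W|\le 3M^{1/4}$. The near region needs $257+aM^{3/4}\le M^{3/4}/2$, i.e.\ $a<\frac12$. The far region only gives $F_0/\lambda_D\approx 3M^{1/4}/(3a^2M^{-1/2})=M^{3/4}/a^2$, which is below $M^{3/4}/2$ only for $a>\sqrt2$. Running Lemma~\ref{max_2} instead of integrating along characteristics does not remove the conflict.

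This intermediate-scale route is exactly the paper's. It cuts at $|y|=(8M^{1/4})^{-1}$ and applies Lemma~\ref{max_2} with $d_0=M^{3/4}/4$, $\lambda_D=(25M^{1/2})^{-1}$, $F_0=3M^{1/4}$. With these constants, \eqref{D-cond} would require $M^{1/4}/100>3M^{1/4}/2$, which is false. So your worry about the constants is justified, and it applies to the paper's proof as written. That scale could instead be saved by using the sharp local damping $6+3W_y\approx 384y^2$ near $y=0$, which follows from \eqref{W_taylor_small} and \eqref{Wy_bound}. Your weighted-forcing idea is simpler.

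\textbf{How to make the weighted forcing the proof.}
\begin{itemize}
\item By \eqref{Wyy_bound} and \eqref{dottau1}, $\frac{2W_{yy}^2}{1-\dot\tau}\le 2(1+\veps^{1/2})M^{1/4}\frac{y^2}{1+y^2}$.
\item By \eqref{Q_high}, \eqref{Z_high'} and \eqref{lem_Zy}, every other term of $F_3^W$ is at most $CM^{1/8}e^{-s}\le CM^{1/8}\veps$. So the forcing degenerates at $y=0$ at the same rate $y^2$ as the damping.
\item In the near region $|y|\le 1/M$, Taylor expansion with \eqref{Wy3_0} and \eqref{Wy4_bound} gives $|\partial_y^3W|\le 258$.
\item In the far region $|y|\ge 1/M$ we have $\frac{y^2}{1+y^2}\ge \frac{1}{2M^2}$. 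Choosing $\veps\ll M^{-2}$ is legitimate, since $\veps$ is fixed after $M$. Then on this region $D_3^W\ge \frac{2y^2}{1+y^2}$, $|F_3^W|\le 2M^{1/4}D_3^W$, and $U^W\,y/|y|\ge |y|/4-C\veps>0$ by the computation behind \eqref{UW_far}.
\end{itemize}

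Since the flow is outgoing on the far region, every backward $\psi_W$-characteristic starting there reaches either $s=s_0$ (where \eqref{1D5} gives the bound $2^{17/2}$) or the points $|y|=1/M$ (where the bound is $258$). Along such a characteristic, $\frac{d}{ds}|\partial_y^3W|\le -D_3^W\bigl(|\partial_y^3W|-2M^{1/4}\bigr)$. Hence $\|\partial_y^3W(\cdot,s)\|_{L^\infty}\le \max\{2^{17/2},258,2M^{1/4}\}\le M^{3/4}/2$ for large $M$, with a wide margin.

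With the weighted forcing, the choice $l=1/M$ you rejected is exactly the right one. Equivalently, you can rerun the proof of Lemma~\ref{max_2} with the pointwise condition $|F|<2d_0D$ on $\Omega^c$ in place of \eqref{D-cond}, together with Lemma~\ref{rmk2} for the far field.
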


\begin{proof}
	We rewrite \eqref{Wy3} in the form
	\begin{equation*}
	\partial_s \partial_y^3 W + D^W_3 \partial_y^3 W + U^W \partial_y^4 W = F^W_3,
	\end{equation*}
	where $U^W$ is defined in \eqref{UW}, and
	\begin{equation*}
	\begin{split}
		& D^W_3(y,s) := 6 + \frac{3W_y}{1-\dot{\tau}} - \frac{2e^{3s/2} Z_y }{3(1-\dot{\tau})}, \\
		& F^W_3(y,s) := - \frac{8 e^{s/2} \partial_y^3 Q}{3(1-\dot{\tau})} - \frac{2 W_{yy}^2}{1-\dot{\tau}} + \frac{7e^{3s/2} W_{yy}Z_{yy} }{3(1-\dot{\tau})} + \frac{4e^{3s/2} W_y \partial_y^3 Z}{3(1-\dot{\tau})} + \frac{e^{3s} Z_{yy}^2}{1-\dot{\tau}} + \frac{e^{3s} Z_y \partial_y^3 Z}{1-\dot{\tau}}.  
	\end{split}
	\end{equation*}
	
	We first consider the region  $|y|\leq (8M^{1/4})^{-1}$. In this case, a pointwise bound can be obtained via Taylor expansion. Using \eqref{Wy3_0} and \eqref{Wy4_bound}, we obtain 
	\begin{equation} \label{Uy3loc}
		|\partial_y^3W(y,s)|\leq 		|\partial_y^3W(0,s)|+|y|\|\partial_y^4W(\cdot,s)\|_{L^{\infty}} \leq 257+|y|M\leq \frac{M^{3/4}}{4} 
	\end{equation}
	for all  $|y|\leq (8M^{1/4})^{-1}$ and sufficiently large $M>0$. 
	
	Next, we turn to the case $|y|\geq (8M^{1/4})^{-1}$. We begin by estimating $D^W_3$. Thanks to \eqref{4.1} and \eqref{Wy_bound}, we have 
	\begin{equation*}
		6+3\overline{W}'+3\wt{W}_y \geq \frac{18 y^2}{5(1+y^2)}-\frac{3 y^2}{1000(1+y^2)} \geq
		\frac{3y^2}{1+y^2}.
	\end{equation*}
	Using this together with \eqref{dottau}, \eqref{lem_Zy}, \eqref{dottau1}, and \eqref{Uy1}, we have 
	\begin{equation}\label{Uy3D'}
		\begin{split}
			D^W_3&= (6+3\overline{W}'+3\wt{W}_y)+\frac{3\dot{\tau}W_y}{1-\dot{\tau}}-\frac{2e^{3s/2}Z_y}{3(1-\dot{\tau})}
			\geq
			\frac{3y^2}{1+y^2}-C\veps.
		\end{split}
	\end{equation} 
	In particular, for $|y|\geq ( 8M^{1/4})^{-1}$, we obtain 
	\begin{equation}\label{Uy3D''}
		D^W_3	\ge \frac{3}{64M^{1/2}+1}  -C\veps \geq \frac{1}{25M^{1/2}}
	\end{equation}
	for sufficiently small $\varepsilon>0$. 
	We now estimate the forcing term $F^W_3$.  By \eqref{lem_Zy}, \eqref{Wyy_bound}, \eqref{dottau1}, \eqref{Uy1}, \eqref{Q_high}, and \eqref{Z_high'}, it holds that
	\begin{equation}\label{Uy3F}
		\begin{split}
			|F^W_3(y,s)|&\leq e^{s/2} \|\partial_y^3 Q\|_{L^{\infty}} + 2(1+\veps^{1/2})\|W_{yy}\|_{L^{\infty}}^2 
			\\
			&\quad + Ce^{3s/2} \sum_{i=1}^2(\|\partial_y^iW\|_{L^{\infty}}\|\partial_y^{4-i}Z\|_{L^{\infty}}+e^{3s/2}\|\partial_y^iZ\|_{L^{\infty}}\|\partial_y^{4-i}Z\|_{L^{\infty}}) 
			\\
			&\leq Ce^{-7s/2}+2(1+\veps^{1/2})M^{1/4}+Ce^{-s} <3M^{1/4}.
		\end{split}
	\end{equation}
	
	Next, we show that
	\begin{equation*}
		\limsup_{|y|\rightarrow \infty}|\partial_y^3W(y,s)|\leq \frac14 M^{3/4}.
	\end{equation*}
	By \eqref{Uy3D'} and \eqref{Uy3F}, we have
	\begin{equation*}
		\begin{split}
			&\inf_{|y|\geq 10}D^W_3(y,s)\geq \frac{3\cdot 10^2}{1+10^2}-C\veps =:\lambda_{D_3^W} >0, 
			\\
			&\|F^W_3(y,s)\|_{L^{\infty}(|y|\geq 10)}\leq 3M^{1/4}
		\end{split}
	\end{equation*}
	for sufficiently small $\varepsilon>0$.
	With these bounds and \eqref{UW_far}, we apply Lemma~\ref{rmk2} to obtain
	\begin{equation}\label{Uy3dec} 
		\limsup_{| y |\rightarrow \infty}|\partial_y^3W( y,s)|  \leq
		\limsup_{|y|\rightarrow \infty}|\partial_y^3W(y,s_0)|e^{-  \lambda_{ D_3^W} (s-s_0)}+ 3M^{1/4} ( \lambda_{ D_3^W})^{-1} \leq \frac{M^{3/4}}{4}
	\end{equation}
	for sufficiently large $M>0$. Here, we used the initial condition \eqref{1D5}. 
	
	Applying Lemma~\ref{max_2} with  \eqref{1D5}, \eqref{Uy3loc}, and \eqref{Uy3D''}--\eqref{Uy3dec},  we arrive at \eqref{inlemWy3}.
\end{proof}

\begin{lemma}\label{dec-lem} 
	Under the same assumptions in Proposition~\ref{Boot}, it holds that  
	\begin{equation}\label{Ut_dec_p}
		\limsup_{|y|\rightarrow \infty} |(y^{2/5}+1)(W_y (y, s) -\overline{W}' (y) ) | <  \frac{6}{13} -  2  \theta,
	\end{equation}
	for all $s \in [s_0,\sigma_1]$, where $\theta$ is as in \eqref{init_wx_dec}.
\end{lemma}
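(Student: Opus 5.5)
We plan to reduce the claim to the behavior of $W_y$ and $\overline{W}'$ separately in the far field $|y|\to\infty$, exploiting the fact that large $|y|$ corresponds to points $x$ far from the blow-up location, where we already control $w_x$ in physical variables. The key observation is the triangle bound
\begin{equation*}
|(y^{2/5}+1)(W_y-\overline{W}')| \le (y^{2/5}+1)|W_y| + (y^{2/5}+1)|\overline{W}'|,
\end{equation*}
and by \eqref{asymp-y-infty} the second term tends to $(50)^{-1/5}$ as $|y|\to\infty$. Since $\Theta>50^{-1/5}$ and $\frac{6}{13}-2\theta = \Theta + \theta > 50^{-1/5}+\theta$, it suffices to show $\limsup_{|y|\to\infty} y^{2/5}|W_y(y,s)| \le \theta$ (or even strictly less than $\theta+\Theta-50^{-1/5}$). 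Since $1/\theta$-type slack is available, we can afford a factor of two loss relative to the initial condition \eqref{init_wx_dec}.

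To bound $\limsup y^{2/5}|W_y|$, we would pass to physical variables: $W_y(y,s)=e^{-s}w_x(x,t)$ and $y^{2/5}=e^{s}|x-\xi(t)|^{2/5}$, so $y^{2/5}|W_y| = |x-\xi(t)|^{2/5}|w_x(x,t)|$. As $|y|\to\infty$ at fixed $s$, $|x|\to\infty$, and since $\xi$ is bounded by Lemma~\ref{xilem}, the quantity equals $\limsup_{|x|\to\infty}|x|^{2/5}|w_x(x,t)|$. Thus we must show this far-field weighted quantity grows at most by $C\varepsilon$ in time from its initial value $\le\theta/2$. We would work with $\omega:=(y^{2/5}+1)W_y$ (or directly $x^{2/5}w_x$) and derive its transport equation from \eqref{Wy1}: the damping is $1+\frac{W_y}{2(1-\dot\tau)}-\frac{4e^{3s/2}Z_y}{3(1-\dot\tau)} - \frac{2U^W}{5y^{3/5}(y^{2/5}+1)}$, which in the region $|y|\ge e^{5s/2}$ equals $1-\frac{y^{2/5}}{y^{2/5}+1}+O(e^{-s})\ge -Ce^{-s}$, using \eqref{Wy_dec'}-type bounds $|W_y|\le Ce^{-s}$, \eqref{Wy_temp}, \eqref{modul_temp}. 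The forcing is $(y^{2/5}+1)\big(\frac{e^{3s}Z_y^2}{2(1-\dot\tau)}-\frac{8e^{s/2}Q_y}{3(1-\dot\tau)}\big)$, which by \eqref{Zy_dec} and Lemma~\ref{lem_Qdec} is $O(e^{-s})$ there, exactly as in the proof of Lemma~\ref{lem_Zyfar}. The transport velocity $U^W$ points outward for $|y|$ large by \eqref{UW_far}.

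Then we apply Lemma~\ref{rmk2} (the far-field maximum principle used in Lemma~\ref{lem_Zyfar}) with damping bounded below by $-Ce^{-s}$ (integrable in $s$, giving a factor $e^{C\varepsilon}$) and forcing $O(e^{-s})$, to get $\limsup_{|y|\to\infty}|\omega(y,s)|\le e^{C\varepsilon}\limsup|\omega(y,s_0)| + C\varepsilon \le \theta/2 + C\varepsilon<\theta$, where the initial value uses \eqref{init_wx_dec} via $\omega(y,s_0)\sim |x|^{2/5}|\partial_xw_0|$. Combined with the first paragraph this yields $\limsup(y^{2/5}+1)|W_y-\overline W'|\le \theta+50^{-1/5}<\Theta+\theta=\frac{6}{13}-2\theta$.

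The main obstacle is verifying that Lemma~\ref{rmk2} applies with a damping that is only nonnegative up to an $O(e^{-s})$ error (rather than $\ge0$ as in \eqref{D_zeta}); if the lemma requires $D\ge0$, we would absorb the negative part by considering $e^{-C\int e^{-s'}ds'}\omega$, whose damping becomes nonnegative at the cost of a harmless factor $1+C\varepsilon$. A secondary check is that the bound $|W_y|\le Ce^{-s}$ in the far region, obtained from \eqref{Wy_dec} and \eqref{4.0'}, is enough to make $\frac{W_y}{2}$ and the $U^W/y$ corrections $O(e^{-s})$.
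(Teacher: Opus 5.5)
Your proposal is correct and shares the paper's skeleton: the same triangle-inequality reduction (via \eqref{asymp-y-infty} and $50^{-1/5}<\Theta$) to proving $\limsup_{|y|\to\infty}|(y^{2/5}+1)W_y|<\theta$. It also uses the same weighted unknown $\mu=(y^{2/5}+1)W_y$ from \eqref{Wy1}, the same $O(e^{-s})$ far-field forcing bounds from \eqref{lem_Zy}, \eqref{Zy_dec}, \eqref{GQ_bound} and \eqref{Qy_weight}, and the same application of Lemma~\ref{rmk2} with the outgoing velocity \eqref{UW_far}.

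The genuine difference is how you handle the damping. The paper works on the fixed set $\{|y|\ge 2\cdot 10^6\}$. There $\frac{W_y}{2}$ and $\frac{2y^{2/5}}{5(1+y^{2/5})}\frac{W}{y}$ are of the same size $|y|^{-2/5}$ as the main term $\frac{1}{1+y^{2/5}}$. So the paper invokes the profile inequality \eqref{num_1-lem} together with \eqref{Wy_dec} to get $D^\mu\ge 0$. It also shifts the modulation and $Z_y$ contributions into $F^\mu$, controlling them through $\|\mu\|_{L^\infty}\le C$.

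You instead push out to $|y|\ge e^{5s/2}$, where all these terms are $O(e^{-s})$, so $D\ge -Ce^{-s}$. On $[s_0,\sigma_1]$ you can take the fixed threshold $N=e^{5\sigma_1/2}$, which makes Lemma~\ref{rmk2} literally applicable. The integrating factor $e^{-C\int_{s_0}^s e^{-s'}\,ds'}$ restores nonnegative damping at the cost of a factor $e^{C\varepsilon}$. Lemma~\ref{rmk2} with $\lambda_D=0$, $\lambda_F=1$ then gives $\limsup_{|y|\to\infty}|\mu(y,s)|\le e^{C\varepsilon}(\theta/2+C\varepsilon)<\theta$.

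Your route is more elementary for this lemma: it needs nothing about $\overline W$ beyond \eqref{4.0'} and \eqref{asymp-y-infty}. The paper's choice pays off in Lemma~\ref{mainprop_1-p}. There the supremum of $|\nu|$ may be attained anywhere in $\{|y|\ge 2\cdot 10^6\}$, including $|y|\ll e^{5s/2}$ where $W_y$ and $W/y$ are not small. So \eqref{num_1-lem} is indispensable there, and the same lower bound $D^\nu_-$ is reused.
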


\begin{proof}
	From \eqref{asymp-y-infty} with $\beta=1$, we have
	\begin{equation*}
		\begin{split}
			\limsup_{|y|\rightarrow \infty}|(y^{{2/5}}+1)(W_y-\overline{W}')|&\leq \limsup_{|y|\rightarrow \infty}|(y^{2/5}+1)W_y|+\lim_{|y|\rightarrow \infty}|(y^{2/5}+1)\overline{W}'|
			\\
			&= \limsup_{|y|\rightarrow \infty}|(y^{2/5}+1)W_y|+\frac{1}{50^{1/5}}.
		\end{split}
	\end{equation*}
	By \eqref{init_wx_dec}, we know that ${50^{-1/5}} < {6/13} - 3\theta$. Hence, to prove \eqref{Ut_dec_p}, it suffices to show that
	\begin{equation*}\label{3.00}
		\limsup_{|y|\rightarrow \infty} |(y^{2/5}+1)W_y(y,s)|< \theta. 
	\end{equation*}
	
	Let $\mu(y,s):=(y^{2/5}+1)W_y(y,s)$ and $\wt{W}(y,s):=W(y,s)-\overline{W}(y)$. Then we have from \eqref{Wy1} that
	\begin{equation*}
		\partial_s \mu+D^{\mu}\mu+U^W \partial_y \mu =F^{\mu},
	\end{equation*}
	where $U^W$ is defined in \eqref{UW} and
		\begin{subequations}
			\begin{align*}
				D^{\mu} (y,s) &:= 1+\frac{\wt{W}_y+\overline{W}'}{2}-\frac{2y^{2/5}}{5(1+y^{2/5})}\left(\frac{5}{2}+\frac{\wt{W}+\overline{W}}{y}\right),
				\\
				F^{\mu} (y,s) &:= (y^{2/5}+1)\left(-\frac{8e^{s/2}Q_y}{3(1-\dot{\tau})}+\frac{e^{3s}Z_y^2}{2(1-\dot{\tau})}\right)
				\\
				&\qquad +\left(\frac{4e^{3s/2}Z_y}{3}+\frac{2y^{2/5}}{5(1+y^{2/5})}\left(\frac{e^{3s/2}(\kappa-\dot{\xi})+\frac{1}{3}e^{3s/2}Z}{y}+\frac{\dot{\tau}W}{y}\right)\right)\frac{\mu}{1-\dot{\tau}}.
			\end{align*}
		\end{subequations}	
	Using \eqref{Wy_dec} and $\wt{W}(0,s)=0$, we have
	\begin{equation*}
		\begin{split}
			D^{\mu}(y,s)&\geq 1-\frac{3}{13(1+y^{2/5})}+\frac{\overline{W}'}{2}-\frac{2y^{2/5}}{5(1+y^{2/5})}\left(\frac{5}{2}+\frac{\overline W}{y}+\frac{6}{13y}\int_{0}^{y}{\frac{dy'}{1+{y'}^{2/5}}}\right)
			\\
			&= \frac{10}{13(1+y^{2/5})}+\frac{\overline{W}'}{2}-\frac{2y^{2/5}}{5(1+y^{2/5})}\left(\frac{\overline W}{y}+\frac{6}{13y}\int_{0}^{y}{\frac{dy'}{1+{y'}^{2/5}}}\right). 
		\end{split}
	\end{equation*}
	Let us set $\Omega:=\{y\in\mathbb{R} : |y|< 2\cdot 10^6\}$.
	Thanks to \eqref{num_1-lem} and \eqref{4.0}, we see that $D^\mu$ is non-negative for $y\in \Omega^c$, i.e.,  
	\begin{equation}\label{Dmu}
		\inf_{y\in \Omega^c}D^{\mu} (y,s) \geq 0.
	\end{equation} 
	Moreover, from \eqref{4.0'} and \eqref{Wy_dec}, we deduce the following bound:   
	\begin{equation*}
		|W_y(y,s)| \leq |\overline{W}'(y)|+|W_y(y,s)-\overline{W}'(y)| \leq \frac{C}{y^{2/5}},
	\end{equation*} 
	which implies that $\|\mu (\cdot, s) \|_{L^{\infty}}\leq C$ for some constant $C>0$. 
	Next, using \eqref{GQ_bound} and \eqref{Qy_weight}, we estimate $Q_y$ as follows:
	\begin{align*}
		y^{2/5} |Q_y| &\leq Ce^{-3s/2} \quad \text{for } |y| \leq e^{5s/2}, \\
		y^{2/5} |Q_y| &= \frac{y^{4/5} |Q_y|}{y^{2/5}} \leq Ce^{-3s/2} \quad \text{for } |y| \geq e^{5s/2}.
	\end{align*}
	Combining these, we obtain the global bound:
	\begin{equation*}
		\|(y^{2/5}+1)Q_y\|_{L^{\infty}}\leq Ce^{-3s/2}.
	\end{equation*} 
		Using the above estimates together with \eqref{dottau1}, \eqref{Uy1}, \eqref{tau_decay}, \eqref{lem_Zy}, \eqref{Zy_dec}, and \eqref{Z}, we obtain the following bound on $F^\mu$ in $ \Omega^c$:
		\begin{equation}\label{Fmu}
			\begin{split}
				&\|F^{\mu}(y,s)\|_{L^{\infty}(\Omega^c)} \\
				&\quad \leq C\left(e^{s/2}\|(y^{2/5}+1)Q_y\|_{L^{\infty}}+e^{3s}\|(y^{2/5}+1)Z_y\|_{L^{\infty}}\|Z_y\|_{L^{\infty}}\right)
				\\
				& \qquad +C\bigg(e^{3s/2}\|Z_y\|_{L^{\infty}}+\bigg\|\frac{e^{3s/2}(\kappa-\dot{\xi})+\frac{1}{3}e^{3s/2}Z}{y}\bigg\|_{L^{\infty}( \Omega^c)}+\left\|\frac{\dot{\tau}W}{y}\right\|_{L^{\infty}( \Omega^c)}\bigg)\|\mu(\cdot,s)\|_{L^{\infty}}
				\\
				& \quad \leq 	Ce^{-s}.
			\end{split}
		\end{equation}
	Then, applying Lemma~\ref{rmk2} along with \eqref{UW_far}, \eqref{Dmu}, and \eqref{Fmu}, we conclude that
		\begin{equation*}
			\limsup_{|y|\rightarrow \infty}|\mu(y,s)|\leq \limsup_{|y|\rightarrow \infty}|\mu(y,s_0)|+C\veps \le \frac{\theta}{2}+C\veps < \theta
		\end{equation*}
	for sufficiently small $\ve>0$. 
	Here, we used \eqref{init_wx_dec}, that is equivalent to 
	\begin{equation*}
		\limsup_{|y|\rightarrow \infty}|\mu(y,s_0)|\leq \frac{\theta}{2}.
	\end{equation*}
\end{proof}

\begin{lemma}\label{mainprop_1-p}
	Under the same assumptions in Proposition~\ref{Boot}, it holds that  
	\begin{equation*}
		|(y^{2/5}+1) ( W_y (y, s) - \UU'(y) ) | \le \frac{6}{13}-\theta
	\end{equation*}
	for all $y\in\mathbb{R}$ and $s\in[s_0,\sigma_1]$, where $\theta$ is defined in \eqref{init_wx_dec}. 
\end{lemma}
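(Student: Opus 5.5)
We argue by a maximum-principle/contradiction argument for the weighted quantity $\Phi(y,s):=(y^{2/5}+1)\wt W_y(y,s)$, $\wt W:=W-\overline W$, following the template of Lemma~\ref{lem_Zyweight} and Lemma~\ref{max_2}. Differentiating \eqref{Wytilde} after multiplying by the weight, $\Phi$ satisfies a transport equation
\begin{equation*}
\partial_s\Phi+D^{\Phi}\Phi+U^W\Phi_y=F^{\Phi}+\int_{\mathbb{R}}\Phi(y',s)K^{\Phi}(y,s;y')\,dy',
\end{equation*}
where $D^\Phi=1+\frac{\wt W_y+2\overline W'}{2(1-\dot\tau)}-\frac{4e^{3s/2}Z_y}{3(1-\dot\tau)}-\frac{2y^{2/5}}{5(1+y^{2/5})}\frac{U^W}{y}$. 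The nonlocal kernel $K^\Phi(y,s;y')=-(y^{2/5}+1)\overline W''(y)\mathbb{I}_{[0,y]}(y')(1+y'^{2/5})^{-1}$ arises from writing $\wt W=\int_0^y\wt W_y$ in the $\wt W\,\overline W''$ term, and $F^\Phi$ collects the $\dot\tau$, $Z$, $Q$, and modulation terms exactly as in $F^V$.

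\textbf{Step 1 (near region).} On a ball $|y|\le R$ with $R$ fixed, \eqref{Utildey_close} gives $|\wt W_y|\le \frac{y^2}{1500(1+y^2)}\le \frac1{1500}$, hence $|\Phi|\le \frac{1+R^{2/5}}{1500}$. Choosing $R$ moderately large but with $(1+R^{2/5})/1500<\frac6{13}-\theta$ (possible since $\theta<\frac2{13}$, so the right side exceeds $0.3$; e.g. $R\le 10^{6}$ is fine) makes $\Phi$ strictly below the threshold there.

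\textbf{Step 2 (far field).} Lemma~\ref{dec-lem} gives $\limsup_{|y|\to\infty}|\Phi|<\frac6{13}-2\theta$, so a supremum exceeding $\frac6{13}-\frac{3}{2}\theta$ is attained at a finite point $y_*(s)$ with $|y_*|\ge R$.

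\textbf{Step 3 (forcing and damping).} On $|y|\ge R$, I would bound $F^\Phi$ by $Ce^{-s}$ using \eqref{tau_decay}, \eqref{Zy_dec}, \eqref{Z}, the global bound $\|(y^{2/5}+1)Q_y\|_{L^\infty}\le Ce^{-3s/2}$ from the proof of Lemma~\ref{dec-lem}, and $|y^{2/5}\overline W'|,|y^{7/5}\overline W''|\le C$ from \eqref{asymp-y-infty}. The key point is that
\begin{equation*}
D^\Phi-\int|K^\Phi|\,dy'\ge -C\varepsilon\quad\text{on }|y|\ge R.
\end{equation*}
This is the main obstacle: it requires the numerical inequalities for $\overline W$ from the appendix (the analogues of \eqref{num_1-lem}, \eqref{num_6}), together with the bootstrap bound \eqref{Wy_dec} to control $\wt W/y$ and $\wt W_y$ inside $D^\Phi$. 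One sees that $1+\overline W'/2-\frac{2y^{2/5}}{5(1+y^{2/5})}(\frac52+\overline W/y)$ behaves like $\frac{1}{1+y^{2/5}}$ at infinity, and the term $\frac{3}{13}$-type losses from $\wt W_y$ are exactly what fixes the constant $\frac6{13}$. I would try first to show the nonnegativity of this combination minus the kernel mass, which is where \eqref{Dmu} was derived for $|y|\ge 2\cdot10^6$. This explains the choice of $\Omega$ there and suggests taking $R=2\cdot 10^6$ in Step~1; the arithmetic $(1+R^{2/5})/1500<\frac6{13}-\theta$ must then be checked.

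\textbf{Step 4 (closing).} With damping minus kernel mass $\ge -C\varepsilon$ and forcing $Ce^{-s}$, the Rademacher/Lipschitz argument of Lemma~\ref{lem_Zyweight} gives $\frac{d}{ds}\|\Phi\|_{L^\infty}\le C e^{-s}(1+\|\Phi\|_{L^\infty})$ at times when the supremum is large. Starting from \eqref{Utildey_init}, i.e. $\|\Phi(\cdot,s_0)\|\le\Theta=\frac6{13}-3\theta$, integration yields $\|\Phi(\cdot,s)\|_{L^\infty}\le \frac6{13}-3\theta+C\varepsilon\le\frac6{13}-\theta$ for $\varepsilon$ small, which proves the lemma.
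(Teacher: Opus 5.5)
Your proposal is correct and is essentially the paper's proof. It uses the same weighted variable $(y^{2/5}+1)\wt W_y$ and kernel, and the same near-field bound from Lemma~\ref{lem:Wtildey} on $|y|<2\cdot 10^6$, where $(1+(2\cdot 10^6)^{2/5})/1500\approx 0.22$. It takes the far field from Lemma~\ref{dec-lem}, and on $|y|\ge 2\cdot 10^6$ it needs damping to dominate the kernel mass; that follows directly from \eqref{Wy_dec} together with \eqref{num_1-lem}, which is exactly \eqref{nu_D-p}--\eqref{nu_K-p}, so your ``main obstacle'' is already supplied by the appendix. With $O(e^{-s})$ forcing, it closes by the maximum-principle contradiction of \cite[Lemma~4.8]{KKY}.

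Two small corrections. In Step~3 the damping deficit must be $-Ce^{-s}$, not merely $-C\varepsilon$, for the bound to be uniform in $\sigma_1$; your cited estimates \eqref{tau_decay}, \eqref{lem_Zy}, \eqref{Z} do give $e^{-s}$, and the paper instead moves these terms into $F_2^\nu$ using $|\nu|\le\frac{6}{13}$. In Step~4 you should integrate from the last crossing of the threshold $\frac{6}{13}-\frac32\theta$ rather than from $\Theta$, which gives $\frac{6}{13}-\frac32\theta+C\varepsilon<\frac{6}{13}-\theta$.
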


\begin{proof}
The structure of the proof is identical to that of \cite[Lemma~4.8]{KKY}. For completeness, we present the key estimates adapted to the setting considered here.

	Let $\nu (y,s ) : = (y^{2/5}+1)\wt{W}_y(y,s)$, where $\wt{W} (y, s) := W(y,s) - \overline{W}(y)$. 
	From \eqref{Wy1} and \eqref{Weq}, $\nu$ satisfies 
	\begin{equation}\label{nu_eq1-p}
		\partial_s \nu + D^\nu \nu  + U^W \nu_y = F^\nu_1 +F^\nu_2 +\int_{\mathbb{R}}{\nu(y',s) K^\nu(y, s; y') \,dy'},
	\end{equation}
	where $U^W$ is defined in \eqref{UW} and
	\begin{subequations}
		\begin{align*}
			D^\nu(y,s) &:= 1+\frac{\wt{W}_y+2\overline{W}'}{2}-\frac{2y^{2/5}}{5(1+y^{2/5})}\left(\frac{5}{2}+\frac{\wt{W}+\overline{W}}{y}\right),
			\\
			F_1^\nu(y,s) &:=(y^{2/5}+1)\left(-\frac{8e^{s/2}Q_y}{3(1-\dot{\tau})}+\frac{e^{3s}Z_y^2}{2(1-\dot{\tau})}\right)
			\\
			&\quad-\left(\frac{\dot{\tau}W_y}{2}-\frac{4e^{3s/2}Z_y}{3}\right)\frac{(y^{2/5}+1)\overline{W}'}{1-\dot{\tau}}-\left(\frac{e^{3s/2}(\kappa-\dot{\xi})+\frac{1}{3}e^{3s/2}Z}{y}+\frac{\dot{\tau}W}{y}\right)\frac{y(y^{2/5}+1)\overline{W}''}{1-\dot{\tau}},
			\\
			F^\nu_2(y,s) &:= \left(-\frac{\dot{\tau}W_y}{2}+\frac{4e^{3s/2}Z_y}{3}+\frac{2y^{2/5}}{5(1+y^{2/5})}\left(\frac{e^{3s/2}(\kappa-\dot{\xi})+\frac{1}{3}e^{3s/2}Z}{y}+\frac{\dot{\tau}W}{y}\right)\right)\frac{\nu}{1-\dot{\tau}},
			\\
			K^\nu(y,s;y') &:= -(y^{2/5}+1)\overline{W}''(y)\mathbb{I}_{[0,y]}(y')\frac{1}{1+ y'^{2/5}}.
		\end{align*}
	\end{subequations}
	Let us set $\Omega:=\{y\in\mathbb{R} : |y|< 2\cdot 10^6\}$. We first show that  
	\begin{equation}\label{D-K-p}
		D^\nu(y,s)\geq \int_{\mathbb{R}}|K^\nu(y,s;y')|\,dy', \qquad y\in \Omega^c.
	\end{equation}
	Under the assumption \eqref{Wy_dec}, we have the bounds
	\begin{equation}\label{nu_D-p}
		\begin{split}
			D^\nu(y,s)&\geq 1-\frac{3}{13(1+y^{2/5})}+\overline{W}'-\frac{2y^{2/5}}{5(1+y^{2/5})}\left(\frac{5}{2}+\frac{\overline W}{y}+\frac{6}{13y}\int_{0}^{y}{\frac{dy'}{1+{y'}^{2/5}}}\right) 
			\\
			&=\frac{10}{13(1+y^{2/5})}+\overline{W}'-\frac{2y^{2/5}}{5(1+y^{2/5})}\left(\frac{\overline W}{y}+\frac{6}{13y}\int_{0}^{y}{\frac{dy'}{1+{y'}^{2/5}}}\right) 
			=: D^\nu_{-}(y)
		\end{split}
	\end{equation}
	and
	\begin{equation}\label{nu_K-p}
		\begin{split}
			\int_{\mathbb{R}}{|K^\nu(y, s; y')|\,dy'} &\leq (y^{2/5}+1)|\overline{W}''(y)|\int^{|y|}_{0}{\frac{dy'}{1+{y'}^{2/5}}} =: K^\nu_{+}(y).
		\end{split}
	\end{equation}
	Applying \eqref{num_1-lem} with $m_0=2\cdot 10^6$, we directly obtain \eqref{D-K-p}. Indeed, one has
	\begin{equation}
		\int_{\mathbb{R}}{|K^\nu(y, s; y')|\,dy'} \leq K^\nu_+(y) \leq D^\nu_-(y) \leq D^\nu(y,s), \quad y\in\Omega^c,
	\end{equation}

	We proceed to show that
		\begin{equation}\label{F-nu-p}
			\|F^\nu_1(\cdot,s)\|_{L^{\infty}(\Omega^c)} + \|F^\nu_2(\cdot,s)\|_{L^{\infty}(\Omega^c)} \leq Ce^{-s}.
		\end{equation}
	We first estimate $F^\nu_1$. Using \eqref{GQ_bound}, \eqref{Qy_weight}, \eqref{lem_Zy}, and \eqref{Zy_dec}, we obtain
	\begin{equation*}
		e^{s/2}\|(y^{2/5}+1)Q_y\|_{L^{\infty}}+e^{3s}\|(y^{2/5}+1)Z_y\|_{L^{\infty}}\|Z_y\|_{L^{\infty}} \leq Ce^{-s}.
	\end{equation*}
	Also, from \eqref{tau_decay}, \eqref{Uy1}, \eqref{lem_Zy}, and the boundedness of $\|(y^{2/5}+1)\overline{W}'\|_{L^{\infty}}$ from \eqref{4.0} and \eqref{4.0'}, it follows that
	\begin{equation*}
		\left(|\dot{\tau}|\|W_y\|_{L^{\infty}}+e^{3s/2}\|Z_y\|_{L^{\infty}}\right)\|(y^{2/5}+1)\overline{W}'\|_{L^{\infty}}\leq Ce^{-s}.
	\end{equation*}
	Thanks to \eqref{Z}, \eqref{tau_decay}, \eqref{Uy1}, and \eqref{4.0''}, we get
	\begin{equation*}
		\bigg( \bigg\|\frac{e^{3s/2}(\kappa-\dot{\xi})+\frac{1}{3}e^{3s/2}Z}{y} \bigg\|_{L^{\infty}(\Omega^c)}+|\dot{\tau}|\bigg\|\frac{W}{y}\bigg\|_{L^{\infty}(\Omega^c)} \bigg) \big\|y(y^{2/5}+1)\overline{W}'' \big\|_{L^{\infty}} \leq Ce^{-s}.
	\end{equation*}
	Combining these estimates and using \eqref{dottau1}, we obtain
	\begin{equation*}
		\begin{split}
			\|F^\nu_1\|_{L^{\infty}(\Omega^c)} \leq Ce^{-s}.
		\end{split}
	\end{equation*}
	Next, we use \eqref{dottau1} and \eqref{Wy_dec} to estimate $F^\nu_2$ as
	\begin{equation*}
		\begin{split}
			\|F^\nu_2\|_{L^{\infty}(\Omega^c)} &\leq C|\dot{\tau}|\|W_y\|_{L^{\infty}}+Ce^{3s/2}\|Z_y\|_{L^{\infty}} + C \bigg\|\frac{e^{3s/2}(\kappa-\dot{\xi})+\frac{1}{3}e^{3s/2}Z}{y}\bigg\|_{L^{\infty}(\Omega^c)} + |\dot{\tau}|\bigg\|\frac{W}{y}\bigg\|_{L^{\infty}(\Omega^c)}.
		\end{split}
	\end{equation*}
	Then, by applying \eqref{tau_decay}, \eqref{Uy1}, \eqref{lem_Zy}, and \eqref{Z}, we obtain
	\begin{equation*}
		\|F^\nu_2\|_{L^{\infty}(\Omega^c)}\leq Ce^{-s}.
	\end{equation*}
	The desired estimate \eqref{F-nu-p} follows from the bounds on $F_1^{\nu}$ and $F_2^{\nu}$.

At this point, the necessary estimates have been established: \eqref{D-K-p}--\eqref{F-nu-p} and the far-field estimate \eqref{Ut_dec_p} from Lemma~\ref{dec-lem}. The remainder of the proof, which involves a contradiction argument using the evolution equation \eqref{nu_eq1-p} and these bounds, proceeds as in the latter part of the proof of \cite[Lemma~4.8]{KKY}. We omit the details.
\end{proof}

\section{Outline of the proof of Theorem~\ref{mainthm2}}\label{sec4}

In this section, we outline the proof of Theorem~\ref{mainthm2}. The overall strategy closely follows that for the rSV system. We first recall a local existence result and then introduce a bootstrap framework in self-similar variables, analogous to that of Section~\ref{sec:boot}. The bootstrap assumptions are then closed by adapting the analysis of Section~\ref{sec:3.3} to the present scalar setting. Indeed, compared with the rSV case, the transport structure and damping terms are simpler, and the only additional issue is the treatment of the nonlocal terms associated with $p$. These terms, however, are easier to control than the $Q$-terms in \eqref{Wy1}--\eqref{Wy4}, while the basic ingredients needed for the argument, such as conservation of energy and uniform bounds on the solution, remain available. Once the bootstrap is closed, the proof of Theorem~\ref{mainthm2} proceeds as in the proof of Theorem~\ref{mainthm} in Section~\ref{C13_subsec-p}. We now present the main steps and key estimates needed to close the bootstrap.

For a class of equations that includes the regularized Burgers equation as a special case, local well-posedness was established in \cite{Yi, Yi1}:

\begin{lemma}[\cite{Yi, Yi1}]
For the initial data \eqref{rBic}, there exists a maximal existence time $T_* > -\varepsilon$ such that the initial value problem for \eqref{rB} admits a unique solution $v \in C([-\varepsilon,T_*);H^5(\mathbb{R})) \cap C^1([-\varepsilon,T_*);H^4(\mathbb{R}))$ satisfying
\begin{equation}
T_* < +\infty \quad \Longrightarrow \quad \lim_{t \nearrow T_*}{\inf_{x \in \mathbb{R}}{v_x(x,t)}} = -\infty.
\end{equation}
\end{lemma}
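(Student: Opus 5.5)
The plan is the classical Kato/Constantin--Escher route for Camassa--Holm-type equations, exploiting the nonlocal form \eqref{rB}.

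\emph{Local existence and uniqueness.} Write \eqref{rB} as the transport equation
\begin{equation*}
v_t+vv_x=-\partial_x(1-\partial_x^2)^{-1}\Big(\tfrac12 v_x^2\Big).
\end{equation*}
The operator $\partial_x(1-\partial_x^2)^{-1}$ gains one derivative, and $H^{s-1}$ is an algebra for $s=5$. Hence the right-hand side is locally Lipschitz from $H^5$ into $H^5$. Existence in $C([-\varepsilon,T];H^5)\cap C^1([-\varepsilon,T];H^4)$ then follows from Kato's quasilinear theory, or from a standard mollified iteration combined with the energy estimate below. Uniqueness follows from an $L^2$/$H^1$ difference estimate.

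\emph{Continuation criterion.} Apply $\Lambda^5=(1-\partial_x^2)^{5/2}$ and use the Kato--Ponce commutator estimate. For the transport term, integrate by parts. For the nonlocal term, use $\|\partial_x(1-\partial_x^2)^{-1}(v_x^2)\|_{H^5}\lesssim \|v_x\|_{L^\infty}\|v\|_{H^5}$. This gives
\begin{equation*}
\frac{d}{dt}\|v\|_{H^5}^2\le C\|v_x\|_{L^\infty}\|v\|_{H^5}^2 .
\end{equation*}
By Gronwall, the solution continues as long as $\int\|v_x\|_{L^\infty}\,dt<\infty$.

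\emph{One-sided control of $v_x$.} The equation conserves the $H^1$ energy $\int(v^2+v_x^2)\,dx$; this is checked by multiplying \eqref{rB'} by $v$ and integrating. Since the kernel $\tfrac12 e^{-|x|}$ of $(1-\partial_x^2)^{-1}$ is positive and bounded, we get
\begin{equation*}
0\le p\le \tfrac14\|v_x\|_{L^2}^2\le C_0 .
\end{equation*}
Differentiating the first equation in \eqref{rB} and using $p_{xx}=p-\tfrac12v_x^2$ gives
\begin{equation*}
v_{xt}+vv_{xx}=-\tfrac12 v_x^2-p .
\end{equation*}
Along characteristics $\dot X=v(X,t)$, the quantity $m=v_x(X,t)$ therefore satisfies $-\tfrac12m^2-C_0\le \dot m\le -\tfrac12 m^2\le 0$. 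So $\sup v_x$ is bounded by $\sup \partial_xv_0$, and $\|v_x\|_{L^\infty}$ can only blow up through $\inf v_x\to-\infty$. Combined with the previous step, $T_*<\infty$ forces $\liminf_{t\nearrow T_*}\inf_x v_x=-\infty$.

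\emph{Upgrading $\liminf$ to $\lim$.} I expect this to be the only delicate point. Set $\mathfrak m(t)=\inf_x v_x(x,t)$. The minimum is attained, because $v_x\in H^4$ decays at infinity, and it is Lipschitz in $t$. A Constantin--Escher type argument at a minimizing point, where $v_{xx}=0$, gives for a.e.\ $t$
\begin{equation*}
\mathfrak m'(t)=-\tfrac12\mathfrak m^2-p(x_t,t)\le -\tfrac12\mathfrak m^2 .
\end{equation*}
Hence $\mathfrak m$ is nonincreasing. A nonincreasing function with $\liminf=-\infty$ has $\lim=-\infty$, which completes the criterion.
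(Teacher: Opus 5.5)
Your proof is correct. The paper gives no argument for this lemma and only cites \cite{Yi, Yi1}, which cover a class of Camassa--Holm/rod-type equations containing \eqref{rB}. For instance, $u=v/3$ turns \eqref{rB'} into the hyperelastic rod equation $u_t-u_{txx}+3uu_x=\gamma(2u_xu_{xx}+uu_{xxx})$ with $\gamma=3$.

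Your route is the standard one for results of this kind:
\begin{itemize}
\item Kato-type local theory for the nonlocal transport form;
\item the $H^5$ estimate $\frac{d}{dt}\|v\|_{H^5}^2\lesssim\|v_x\|_{L^\infty}\|v\|_{H^5}^2$, giving the $\int\|v_x\|_{L^\infty}\,dt$ continuation criterion;
\item the Riccati inequality $\frac{d}{dt}v_x(X(t),t)\le-\tfrac12 v_x^2$ along characteristics, which comes from $p\ge 0$.
\end{itemize}
What it buys you is a self-contained proof in place of a citation.

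One simplification: the step you flag as delicate already follows from your third step. Since $v_x$ is nonincreasing along every characteristic and $x\mapsto X(t)$ is a bijection of $\mathbb{R}$, you get $\inf_x v_x(\cdot,t)\le\inf_x v_x(\cdot,s)$ for $s<t$ directly. So you need neither the Constantin--Escher lemma nor the attainment of the minimum.
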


We next provide some preliminary estimates that play a role analogous to certain estimates used in the analysis of the rSV system. Define an energy $E^v(t)$ associated with the equation \eqref{rB} by
\begin{equation*}
E^v(t) := \int_\mathbb{R} (v^2 + v_x^2) \, dx.
\end{equation*}
It is straightforward to check that the energy is conserved:
\begin{equation*}
E^v(t) = E^v(-\varepsilon), \quad t \geq -\varepsilon.
\end{equation*}
Then it follows from the Sobolev inequality that
\begin{equation*}
|v(x,t)| \leq C \lVert v(\cdot,t) \rVert_{H^1} < \infty.
\end{equation*}
Using the Green function $\textstyle K_0(x) = \frac{1}{2}e^{-|x|}$ for the operator $(1-\partial_x^2)^{-1}$, we obtain
\begin{equation} \label{p_bound}
| p(x,t) | = \left| \left( K_0 * \frac{v_x^2}{2} \right) (x,t) \right|  \leq \frac{1}{4} \int_\mathbb{R} v_x^2(z,t) \, dz \leq \frac{1}{4} E^v(-\varepsilon) < \infty
\end{equation}
and, differentiating once,
\begin{equation} \label{px_bound}
|p_x(x,t)| = \left| \frac{1}{4} \int_\mathbb{R} e^{-|x-z|} \frac{x-z}{|x-z|} v_x^2(z,t) \, dz \right| \leq \frac{1}{4} E^v(-\varepsilon) < \infty.
\end{equation}

Similarly to \eqref{modul_xt}, we define dynamic modulation functions $\tau,\kappa,\xi : [-\varepsilon, \infty) \to \mathbb{R}$ satisfying
\begin{equation} \label{rBmodul}
\begin{split}
\dot{\tau}(t) & = - \frac{(\tau(t)-t)^2}{2} p(\xi(t),t),\\
\dot{\kappa}(t) & = - \frac{2 (\tau(t)-t)^{-1} p_{x}(\xi(t),t)}{ \partial_{x}^3 v(\xi(t),t)} - p_{x}(\xi(t),t),\\
\dot{\xi}(t) & = \frac{p_{x}(\xi(t),t)}{ \partial_{x}^3 v(\xi(t),t)} + \kappa(t)
\end{split}
\end{equation}
with the initial data
\begin{equation*}
\tau(-\varepsilon) =0, \quad \kappa(-\varepsilon) = \kappa_0, \quad \xi(-\varepsilon) = 0,
\end{equation*}
where $\kappa_0 := v_0(0)$. Define the self-similar variables
\begin{equation*}
y(x,t) = \frac{x-\xi(t)}{(\tau(t) - t)^{5/2}}, \quad s(t) = -\log{(\tau(t) - t)},
\end{equation*}
and new functions
\begin{equation*}
v(x,t) - \kappa(t) = e^{-3s/2}V(y,s), \quad p(x,t) = P(y,s).
\end{equation*}
Then, we obtain the equations for $V$ and $P$ as
\begin{subequations}
\begin{align}
& \label{Veq} \partial_s V - \frac{3}{2}V + U^V V_y = - \frac{e^{s/2}\dot{\kappa}}{1-\dot{\tau}} - \frac{e^{3s}P_y}{1-\dot{\tau}}, \\
& \label{Peq} \left(  e^{-5s} - \partial_y^2 \right) P = \frac{e^{-3s}V_y^2}{2},
\end{align}
\end{subequations}
where
\begin{equation*}
U^V(y,s) := \frac{5}{2}y + \frac{V}{1-\dot{\tau}} + \frac{e^{3s/2}(\kappa - \dot{\xi})}{1-\dot{\tau}}.
\end{equation*}
Applying $\partial_y^n$, $n= 1,2,3,4$, we have
\begin{equation} \label{srBderiv}
\begin{split}
& \left( \partial_s + 1 + \frac{V_y}{2(1-\dot{\tau})} \right) V_y + U^V V_{yy} = - \frac{e^{-2s}P}{1-\dot{\tau}}, \\
& \left( \partial_s + \frac{7}{2} + \frac{2V_y}{1-\dot{\tau}} \right) V_{yy} + U^V \partial_y^3 V = - \frac{e^{-2s}P_y}{1-\dot{\tau}}, \\
& \left( \partial_s + 6 + \frac{3V_y}{1-\dot{\tau}} \right) \partial_y^3 V + U^V \partial_y^4 V = - \frac{e^{-2s}P_{yy}}{1-\dot{\tau}} - \frac{2 V_{yy}^2}{1-\dot{\tau}}, \\
& \left( \partial_s + \frac{17}{2} + \frac{4V_y}{1-\dot{\tau}} \right) \partial_y^4 V + U^V \partial_y^5 V = - \frac{e^{-2s}\partial_y^3 P}{1-\dot{\tau}} - \frac{7 V_{yy} \partial_y^3 V}{1-\dot{\tau}}.
\end{split}
\end{equation}
The modulation equations \eqref{rBmodul} are chosen so that the normalization
\begin{equation*}
V(0,s)=0, \quad V_y(0,s)=-2, \quad V_{yy}(0,s)=0
\end{equation*}
is preserved.

With this setup, we proceed using a bootstrap argument similar to that employed in the rSV case. Specifically, we impose the bootstrap assumptions:
\begin{subequations}
\begin{align}
& \label{rBass1} |V_y(y,s) - \overline{W}'(y)| \leq \frac{y^2}{1000(1+y^2)}, \\
& \label{rBass2} |V_y(y,s) - \overline{W}'(y)| \leq \frac{6}{13(1+y^{2/5})}, \\
& \label{rBass3} |V_{yy}(y,s)| \leq \frac{M^{1/8}|y|}{(1+y^2)^{1/2}}, \\
& \label{rBass4} |\partial_y^3 V(0,s) - 256| \leq 1, \\
& \label{rBass5} \lVert \partial_y^3 V(\cdot,s) \rVert_{L^\infty} \leq M^{3/4}, \\
& \label{rBass6} \lVert \partial_y^4 V(\cdot,s) \rVert_{L^\infty} \leq M
\end{align}
\end{subequations}
on some self-similar time interval $[s_0,\sigma_1]$ and for all $y\in\mathbb{R}$, where $M>0$ is a sufficiently large constant and $s_0:=s(-\varepsilon)$. These are exactly the assumptions \eqref{Wy_bound}--\eqref{Wy4_bound} for the rSV system, with $W$ replaced by $V$. As in Remark~\ref{init_rmk}, one checks from \eqref{rBIC} that the corresponding initial profile $V(\cdot,s_0)$ satisfies \eqref{rBass1}--\eqref{rBass6} at $s=s_0$. Hence, by local well-posedness and a continuity argument, there exists $\sigma_1>s_0$ such that the bootstrap assumptions \eqref{rBass1}--\eqref{rBass6} hold on $[s_0,\sigma_1]$. 

The quantities corresponding to \eqref{tau_decay}, \eqref{dottau1}, and \eqref{kxiZ} are directly controlled in the scalar setting. Indeed, since \eqref{rBass4} implies $|\partial_y^3V(0,s)|\ge 255$, the modulation equations yield
\begin{equation*}
|\dot{\tau}(t)|\le Ce^{-2s}, \quad \left|\frac{1}{1-\dot{\tau}(t)}-1\right|\le Ce^{-2s}, \quad |e^{3s/2}(\kappa-\dot{\xi})|\le Ce^{-2s}|P_y(0,s)|\le Ce^{-9s/2}.
\end{equation*}
Moreover, by \eqref{p_bound}--\eqref{px_bound} and the identities
\begin{equation*}
P_{yy}=e^{-5s}P-\frac12 e^{-3s}V_y^2, \quad \partial_y^3P=e^{-5s}P_y-e^{-3s}V_yV_{yy},
\end{equation*}
which follow from \eqref{Peq}, we obtain
\begin{equation*}
|P(y,s)|\le C, \quad |P_y(y,s)|\le Ce^{-5s/2}, \quad |P_{yy}(y,s)|+|\partial_y^3P(y,s)|\le Ce^{-3s}.
\end{equation*}
We also have the weighted bound
\begin{equation} \label{weight_p}
e^{-2s}\lVert (1+y^{2/5})P(\cdot,s)\rVert_{L^\infty}\le Ce^{-s}.
\end{equation}
Writing \eqref{Peq} in convolution form,
\begin{equation*}
P(y,s)=\frac{1}{4} e^{-s/2}\int_{\mathbb R} e^{-e^{-5s/2}|y-y'|}V_y(y',s)^2\,dy',
\end{equation*}
and using \eqref{rBass2}, \eqref{4.0'}, together with the decomposition of the integral into the regions $\textstyle |y'| \geq \frac{|y|}{2}$ and $\textstyle |y'|<\frac{|y|}{2}$, one finds that
\begin{equation*}
|P(y,s)|\le Ce^{2s}|y|^{-4/5}\qquad \text{for } |y|\ge e^{5s/2}.
\end{equation*}
Combined with the uniform bound $|P(y,s)|\le C$, this yields the weighted estimate \eqref{weight_p}.

These bounds replace the estimates on $\dot{\tau}$, $Z$, and $Q$ used in Section~\ref{sec:3.3}. With them, the analogues of Lemmas~\ref{lem:Wy30}--\ref{mainprop_1-p} follow by similar arguments as in Section~\ref{sec:3.3}. Consequently, one obtains the strict improvements
\begin{align*}
&|V_y(y,s)-\overline{W}'(y)|  \le \frac{y^2}{1500(1+y^2)}, &  &|V_y(y,s)-\overline{W}'(y)| \le \frac{\frac{6}{13}-\theta}{1+y^{2/5}}, \\
&|V_{yy}(y,s)| \le \frac{M^{1/8}|y|}{2(1+y^2)^{1/2}}, & &|\partial_y^3V(0,s)-256| \le C\varepsilon, \\
&\|\partial_y^3V(\cdot,s)\|_{L^\infty} \le \frac{M^{3/4}}{2}, & &\|\partial_y^4V(\cdot,s)\|_{L^\infty} \le \frac{M}{2}
\end{align*}
for all $s\in[s_0,\sigma_1]$ and $y\in\mathbb{R}$. This closes the bootstrap assumptions \eqref{rBass1}--\eqref{rBass6}.

\appendix

\section{\texorpdfstring{Inequalities related to $\overline{W}$ and its derivatives}{Inequalities related to Wbar and its derivatives}}

We present some inequalities for $\overline{W}(y):=\overline{W}_\beta(y)$ with $\beta=1$, where $\overline{W}$ is the solution of the ODE problem \eqref{Weq}--\eqref{decay-infty}, that are used in the proofs in Section~\ref{sec2}--\ref{sec3}.

\begin{lemma}[\cite{KKY}, Lemma~2.2] \label{Wbar_ineq_lem}
	It holds that
	\begin{subequations}
		\begin{align}
			&2 + \overline{W}'(y) - \frac{6y^2}{5(1+y^2)} \geq 0, \quad y\in\mathbb{R}, \label{4.1}
			\\
			&1 + \overline{W}' + \frac{2}{1+y^2} \left( \frac{5}{2} + \frac{\overline{W}}{y} \right) \geq \frac{y^2}{5(1+y^2)}, \quad y\in\mathbb{R}, \label{num_2} 
			\\
			&\frac{7}{2} + 2\overline{W}' + \frac{1}{1+y^2} \left( \frac{5}{2} + \frac{\overline{W}}{y} \right) \geq \frac{19y^2}{10(1+y^2)}, \quad y\in\mathbb{R}.\label{num_3}
		\end{align}
	\end{subequations}
Furthermore, for some $\delta \in (0,1)$, it holds that
\begin{equation}\label{num_6}
	\lvert \overline{W}''(y) \rvert \frac{y^2 + 1}{y^2} \int_0^{\lvert y \rvert} \frac{y'^2}{y'^2 + 1} \, dy' \leq \delta \left( 1 + \overline{W}'(y) + \frac{2}{y^2 + 1} \left( \frac{5}{2} + \frac{\overline{W}(y)}{y} \right) - \frac{y^2}{500(1+y^2)} \right), \quad y\in \mathbb{R},
\end{equation}
and for some $m_0>0$, that
\begin{equation} \label{num_1-lem}
	\begin{split}
		& (y^{2/5} + 1) \lvert \overline{W}''(y) \rvert \int_0^{\lvert y \rvert} \frac{1}{1+y'^{2/5}} \, dy' \\
		& \quad \leq \frac{10}{13(1+y^{2/5})} + \overline{W}' - \frac{2y^{2/5}}{5(1+y^{2/5})} \left( \frac{\overline{W}}{y} + \frac{6}{13y} \int_0^y \frac{1}{1+y'^{2/5}} dy' \right), \quad \lvert y \rvert \geq m_0.
	\end{split}
\end{equation}
One can verify that \eqref{num_1-lem} holds for $m_0=2 \cdot 10^6$.
\end{lemma}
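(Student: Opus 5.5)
Since $\overline{W}$ is odd, $\overline{W}'$ is even, and every expression in \eqref{4.1}--\eqref{num_1-lem} is even in $y$, I would work on $y\ge 0$ only. The plan has two parts.

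\emph{Part 1: remove $\overline{W}''$.} On $y>0$, \eqref{Weq} gives
\[
\overline{W}''=-\frac{(1+\tfrac12\overline{W}')\,\overline{W}'}{\overline{W}+\tfrac52 y}.
\]
This is legitimate because $\overline{W}+\tfrac52y>0$ for $y>0$. Indeed, $\overline{W}(y)=-2y+O(y^3)$ near $0$, and $|\overline{W}(y)|\lesssim y^{3/5}$ at infinity by \eqref{W_taylor_far}. In between, a first zero of $\overline{W}+\tfrac52y$ would make $\overline{W}''$ singular. This contradicts smoothness unless $(1+\tfrac12\overline{W}')\overline{W}'$ vanishes there too, and that is excluded because $-2<\overline{W}'<0$.

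To get $-2<\overline{W}'<0$, I would use $P=\overline{W}'$. From the formula above, $P'$ has the sign of $-(1+P/2)P$. Hence the strip $-2<P<0$ is invariant, and $P$ increases monotonically from $-2$ at $y=0$ toward $0$. With this substitution, each inequality becomes a statement about $(\overline{W},\overline{W}')$ alone.

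\emph{Part 2: split $[0,\infty)$ into three regions.}

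Near $y=0$, I would insert \eqref{W_taylor_small}: $\overline{W}'=-2+128y^2+O(y^4)$, $\overline{W}/y=-2+\tfrac{128}{3}y^2+O(y^4)$, and $\overline{W}''=256y+O(y^3)$.
\begin{itemize}
\item For \eqref{4.1}, the left side is $(128-\tfrac65)y^2+O(y^4)$, which is positive.
\item For \eqref{num_2} and \eqref{num_3}, both sides are $O(y^2)$. I would compare the $y^2$ coefficients; the left sides win with a large margin.
\item For \eqref{num_6}, the left side is $\approx 256y\cdot y/3\approx 85y^2$. The right side is $\delta$ times a quantity whose $y^2$ coefficient is of order $128+\tfrac{2\cdot128}{3}$, so this fixes an admissible $\delta<1$.
\end{itemize}

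For $y\gg1$, I would use \eqref{asymp-y-infty}. There $\overline{W}'\approx-(50)^{-1/5}y^{-2/5}$, $\overline{W}/y\approx-\tfrac53(50)^{-1/5}y^{-2/5}$, and $y\overline{W}''\approx\tfrac25(50)^{-1/5}y^{-2/5}$.
\begin{itemize}
\item In \eqref{4.1}--\eqref{num_6}, the left sides tend to constants that beat the right sides: $1$ against $\tfrac15$, $\tfrac72$ against $\tfrac{19}{10}$, and so on. The correction terms are $O(y^{-2/5})$.
\item \eqref{num_1-lem} is the genuinely delicate one, because both sides are of order $y^{-2/5}$. With $\int_0^y(1+y'^{2/5})^{-1}\,dy'\approx\tfrac53y^{3/5}$, the left side is $\approx\tfrac25(50)^{-1/5}\cdot\tfrac53\,y^{-2/5}=\tfrac23(50)^{-1/5}y^{-2/5}$.
\item The right side is $\approx\bigl[\tfrac{10}{13}-(50)^{-1/5}+\tfrac25\bigl(\tfrac53(50)^{-1/5}-\tfrac{6}{13}\cdot\tfrac53\bigr)\bigr]y^{-2/5}=\bigl[\tfrac{6}{13}-\tfrac13(50)^{-1/5}\bigr]y^{-2/5}$.
\item So one needs $(50)^{-1/5}<\tfrac{6}{13}$, which holds since $(50)^{-1/5}\approx0.457<0.4615$.
\end{itemize}
The margin is small, so the explicit threshold $m_0=2\cdot10^6$ requires quantitative control of the $o(\cdot)$ remainders in \eqref{asymp-y-infty}.

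On the intermediate compact range, I would use the monotone bounds $-2<\overline{W}'<0$ and $\overline{W}/y\in(\overline{W}'(y),-2\cdot0)$ wherever they suffice. This convexity-type bound, $\overline{W}/y\le$ the average of $\overline{W}'$ over $[0,y]$, which lies between $-2$ and $\overline{W}'(y)$, handles \eqref{4.1} and \eqref{num_3} almost directly. For \eqref{num_2} and \eqref{num_6}, a computer-assisted verification (interval arithmetic on the ODE) would close the remaining window.

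\emph{Main obstacle.} I expect \eqref{num_1-lem} with explicit $m_0$ to be the hardest step. The plan there is to derive next-order asymptotics, i.e.\ the $O(y^{-4/5})$ correction to $\overline{W}'$, from the ODE. I would do this by the substitution $\overline{W}=-\tfrac53c\,y^{3/5}(1+\eta)$ with $c=(50)^{-1/5}$, and bound $\eta$ by an integral inequality. This makes the margin $\tfrac{6}{13}-(50)^{-1/5}$ beat the error for $y\ge m_0$.
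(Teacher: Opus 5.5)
The paper gives no proof of this lemma; it imports it from \cite{KKY} (Lemmas~6.1--6.4 there). Read as a self-contained argument, your plan breaks at \eqref{4.1}.

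The qualitative facts $-2<\overline{W}'<0$ and $-2<\overline{W}(y)/y<\overline{W}'(y)$ for $y>0$ cannot yield \eqref{4.1}. (That is the correct ordering: $\overline{W}(y)/y$ is the average of the increasing function $\overline{W}'$ over $[0,y]$.) Inequality \eqref{4.1} does not contain $\overline{W}/y$ at all. It is a quantitative statement about how fast $\overline{W}'$ leaves $-2$; at $y=1$ it demands $\overline{W}'(1)\ge -1.4$.

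Your triage is also off. Insert $\frac52+\frac{\overline{W}}{y}\ge\frac12$ into \eqref{num_2} and \eqref{num_3} and multiply by $1+y^2$. Each then reduces to exactly $2+\overline{W}'\ge\frac{6y^2}{5(1+y^2)}$. So \eqref{num_2} needs no computer, while \eqref{4.1}, which carries all three, is left without an argument.

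Everything else is deferred to three things:
\begin{itemize}
\item rigorous numerics for an ODE that is singular at $y=0$, where $\overline{W}+\frac52y=0$ and the solution is singled out only by $\overline{W}'''(0)=256$;
\item unquantified $O(y^4)$ and $o(y^{-2/5})$ remainders;
\item an unexecuted analysis of $\eta$ for \eqref{num_1-lem}, whose leading-order margin $\frac{6}{13}-50^{-1/5}\approx 0.004$ you did compute correctly.
\end{itemize}
As written this is a programme rather than a proof. Separately, your arguments for $\overline{W}+\frac52y>0$ and for $-2<\overline{W}'<0$ each presuppose the other. They close by continuity in $y$, using $(\overline{W}+\frac52y)'=\overline{W}'+\frac52$ and the fact that $\overline{W}'\equiv 0$ solves the system wherever it is regular.

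The missing idea is that \eqref{Weq} is explicitly integrable. Set $P=\overline{W}'$ and $A=\overline{W}+\frac52y$; then $AP'=-(1+\frac P2)P$ and $A'=P+\frac52$. Eliminating $A$ via $P'=g(P)$ gives $\frac{d}{dP}\log g=\frac{7}{2P}+\frac{1}{2(P+2)}$. This is the first integral \eqref{U''_TEMP}, which the paper already quotes in its appendix. Now integrate $dy=dP/g(P)$ with $t:=\sqrt{(2+\overline{W}')/(-\overline{W}')}$. For $y\ge 0$ (use oddness for $y<0$) this gives
\[
y=\frac18\Bigl(t+\frac23t^3+\frac15t^5\Bigr),\qquad \overline{W}=-\frac t4-\frac{t^3}{12},\qquad \overline{W}'=-\frac{2}{1+t^2},\qquad \overline{W}''=\frac{32\,t}{(1+t^2)^4},\qquad \overline{W}+\frac52y=\frac{t(1+t^2)^2}{16}.
\]

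In these variables \eqref{4.1} reads $\frac{2t^2}{1+t^2}\ge\frac{6y^2}{5(1+y^2)}$, which is immediate. If $t^2\ge\frac32$, the left side is at least $\frac65$. If $t^2<\frac32$, then $y<t$. Inequalities \eqref{num_2}--\eqref{num_3} then follow by the reduction above.

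For the other two, the integrals are elementary: $\int_0^y\frac{y'^2}{1+y'^2}\,dy'=y-\arctan y$, and $\int_0^y\frac{dy'}{1+y'^{2/5}}=5\bigl(\frac{s^3}{3}-s+\arctan s\bigr)$ with $s=y^{1/5}$. So \eqref{num_6} and \eqref{num_1-lem} become explicit one-variable inequalities in $t$. Near $y=0$ you expand in $t$; for large $y$ you expand in $t^{-2}$ with explicit remainders. This is what makes a uniform $\delta<1$ and the threshold $m_0=2\cdot 10^6$ genuinely checkable, with no ODE integration at all.
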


\begin{proof}
These estimates are established in \cite{KKY}. For their proofs, we refer to Lemmas~6.1--6.4 therein.
\end{proof}

\begin{remark}
	The inequality \eqref{4.1}, together with the monotonicity of $\overline{W}$ from Proposition~\ref{Profile-construct}, implies
	\begin{equation}\label{4.0}			
		-2\leq \overline{W}'(y) \leq 0, \quad y\in\mathbb{R}
	\end{equation}
	and, using also that $\overline{W}(0)=0$, we obtain
	\begin{equation} \label{4.00}
	|\overline{W}(y)| \leq 2 |y|, \quad y\in\mathbb{R}.
	\end{equation}
\end{remark}

	\begin{lemma}
		There exists a constant $C>0$ such that
		\begin{subequations}
			\begin{align}
				&y^{2/5}|\overline{W}'(y)| \leq C,  \quad y\in\mathbb{R}, \label{4.0'}
				\\
				&|y|(y^{2/5}+1)|\overline{W}''(y)|\leq C,  \quad y\in\mathbb{R}.\label{4.0''}
			\end{align}
		\end{subequations}
	\end{lemma}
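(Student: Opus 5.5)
The plan is to split each bound into a bounded region, handled by smoothness and the Taylor data at the origin, and a far region, handled by the asymptotics \eqref{asymp-y-infty} from Proposition~\ref{Profile-construct}. Both $\overline{W}$ and its derivatives are smooth on $\mathbb{R}$, so every quantity in question is continuous in $y$. The content of the lemma is therefore uniformity as $|y|\to\infty$, plus the behaviour near $y=0$ where the weights degenerate.

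\emph{Bound \eqref{4.0'}.} On $|y|\le 1$, I would use the estimate $y^{2/5}|\overline{W}'(y)|\le |\overline{W}'(y)|\le 2$, which follows from \eqref{4.0}. On $|y|\ge 1$, the function $|y|^{2/5}|\overline{W}'(y)|$ is continuous and, by \eqref{asymp-y-infty}, converges to $(50)^{-1/5}$ as $|y|\to\infty$. I would fix $R\ge 1$ such that this quantity is at most $2\cdot 50^{-1/5}$ for $|y|\ge R$. On the compact set $1\le|y|\le R$ it is bounded by continuity. The oddness of $\overline{W}$ means $\overline{W}'$ is even, so it suffices to treat $y>0$.

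\emph{Bound \eqref{4.0''}.} I write $|y|(y^{2/5}+1)|\overline{W}''(y)| = |y|^{7/5}|\overline{W}''| + |y|\,|\overline{W}''|$. For $|y|\ge 1$, both terms are controlled by $2|y|^{7/5}|\overline{W}''(y)|$. By \eqref{asymp-y-infty}, this tends to $\tfrac{4}{5}(50)^{-1/5}$, so it is bounded for $|y|\ge R$, and bounded on $1\le|y|\le R$ by continuity. For $|y|\le 1$, I would use the local estimate $|y|(y^{2/5}+1)|\overline{W}''|\le 2\sup_{|y|\le1}|\overline{W}''|<\infty$, which holds by smoothness. Alternatively, the expansion \eqref{W_taylor_small} gives $\overline{W}''(y)=256y+O(y^3)$ near $0$, which is even better.

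The only step with real content is the far-field behaviour, and it is supplied directly by \eqref{asymp-y-infty}. The remaining issue is merely that $|y|^{2/5}$ is not smooth at $0$, and this is harmless because it is bounded there. So I expect no genuine obstacle; the proof reduces to a compactness plus asymptotics argument.
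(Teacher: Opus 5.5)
Your proof is correct, but it takes a different route from the paper's.

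\textbf{Your route.} You use only the \emph{statement} of Proposition~\ref{Profile-construct}: smoothness of $\overline{W}$ on compact sets together with the limits \eqref{asymp-y-infty} at infinity. You then join the near and far regions by a continuity and compactness argument.

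\textbf{The paper's route.} The paper goes back to the first-order reduction of the profile equation from the proof in \cite{KKY}, namely $\overline{W}''=2(2+\overline{W}')^{1/2}(-\overline{W}')^{7/2}$ (for $y\ge 0$, extended by symmetry). It integrates this exactly by separation of variables, starting from $\overline{W}'(0)=-2$. This gives the global relation $|y|^{2/5}|\overline{W}'(y)|\le\bigl((2+\overline{W}')^{1/2}(2\overline{W}'^2-2\overline{W}'+3)/30\bigr)^{2/5}$. The right-hand side is bounded because $-2\le\overline{W}'\le 0$. The bound \eqref{4.0''} then follows algebraically from the same ODE, via $|y|^{7/5}|\overline{W}''|=2(2+\overline{W}')^{1/2}\bigl(|y|^{2/5}|\overline{W}'|\bigr)^{7/2}$ and $|y|\,|\overline{W}''|=2(2+\overline{W}')^{1/2}|\overline{W}'|\bigl(|y|^{2/5}|\overline{W}'|\bigr)^{5/2}$. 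No near/far splitting is needed.

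\textbf{Comparison.} The paper's argument gives explicit numerical constants and re-derives the far-field decay instead of taking it from \eqref{asymp-y-infty}. Yours is softer and non-quantitative, but applies to any smooth profile with the stated asymptotics. The lemma only asserts the existence of some $C$, and \eqref{asymp-y-infty} is part of the cited proposition, so your version suffices. Your appeal to \eqref{4.0} on $|y|\le 1$ is harmless but unnecessary, since smoothness already bounds $\overline{W}'$ there.
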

	\begin{proof}
		It is known from the proof of Proposition~2.1 in \cite{KKY} that $\overline{W}$ satisfies the second-order ODE
		\begin{equation}\label{U''_TEMP}
			\overline{W}'' = 2 (2 + \overline{W}')^{1/2}(-\overline{W}')^{7/2}.
		\end{equation}
		Since \eqref{4.0} ensures that \( -2\leq \overline{W}' \leq 0 \), the right-hand side of \eqref{U''_TEMP} is well-defined. Applying the method of separation of variables to \eqref{U''_TEMP} yields the estimate
		\begin{equation*}
			y^{2/5}|\overline{W}'(y)| \leq \left( \frac{(2 + \overline{W}')^{1/2}(2\overline{W}'^2 - 2\overline{W}' + 3)}{30} \right)^{2/5}.
		\end{equation*}
		Using \eqref{4.0}, we see that the right-hand side is uniformly bounded, approximately by $(3/10)^{2/5}$. This proves \eqref{4.0'}.
		
		To prove \eqref{4.0''}, we return to \eqref{U''_TEMP} and use both \eqref{4.0} and \eqref{4.0'} to derive
		\begin{equation*}
			|y|^{7/5}|\overline{W}''| 
			= 2(2 + \overline{W}')^{1/2}(-y^{2/5}\overline{W}')^{7/2} \leq C,
		\end{equation*}
		and, similarly,
		\begin{equation*}
			|y||\overline{W}''| 
			= 2(2 + \overline{W}')^{1/2}(-\overline{W}')(-y^{2/5}\overline{W}')^{5/2} \leq C.
		\end{equation*}
		Adding these two estimates, we obtain the desired bound:
		\begin{equation*}
			|y|(y^{2/5} + 1)|\overline{W}''| \leq C.
		\end{equation*}
	\end{proof}

\section{Maximum principles} \label{Maximum}

We present a maximum principle, a modified form of that developed in \cite{BSV}, for use in our analysis. We consider the initial value problem:
\begin{equation} \label{ivp}
\begin{split}
& \partial_s f(y,s) + D(y,s) f(y,s) + U(y,s) \partial_y f(y,s) = F(y,s) + \int_\mathbb{R} f(y',s) K(y,s;y') \, dy', \quad s \geq s_0, \, y \in \mathbb{R}, \\
& f (y,s_0) = f_0(y).
\end{split}
\end{equation}

\begin{lemma} [\cite{BKK2}] \label{max_2}
Let $f$ be a classical solution to IVP \eqref{ivp}. Let $\Omega \subseteq \mathbb{R}$ be any compact set. Suppose that
\begin{subequations}
\begin{align}
& \lVert f (\cdot,s) \rVert_{L^\infty(\Omega)} \leq d_0, \label{max_2_1}\\
& \lVert f (\cdot,s_0) \rVert_{L^\infty(\mathbb{R})} \leq d_0, \label{max_2_1'}\\
& \int_\mathbb{R} \lvert K (y,s;y' ) \rvert \, dy' \leq \delta D(y,s) \quad \text{for } (y,s ) \in \Omega^c \times [s_0,\infty), \label{max_2_2}\\
& \inf_{(y,s)\in \Omega^c \times [s_0,\infty)}{D(y,s)} \geq \lambda_D >0, \label{max_2_3}\\
& \lVert F (\cdot,s) \rVert_{L^\infty(\Omega^c)} \leq F_0, \label{max_2_4}\\
& \limsup_{\lvert y \rvert \to \infty}{\lvert f(y,s) \rvert} < 2 d_0 \label{max_2_5}
\end{align}
\end{subequations}
for some $d_0,F_0,\lambda_D>0$ and $\delta \in (0,1)$ satisfying
\begin{equation} \label{D-cond} 
d_0 \lambda_D > \frac{F_0}{2(1-\delta)}.
\end{equation}
Then, it holds that $\lVert f(\cdot,s) \rVert_{L^\infty(\mathbb{R})} \leq 2d_0$ for all $s \geq s_0$.
\end{lemma}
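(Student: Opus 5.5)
This maximum principle (Lemma~\ref{max_2}) will be proved by contradiction, tracking the spatial supremum of $|f|$ exactly as in the proof of Lemma~\ref{lem_Zyweight}. Suppose the conclusion fails. By \eqref{max_2_1'}, $\|f(\cdot,s_0)\|_{L^\infty}\le d_0<2d_0$. Since $f$ is classical, $s\mapsto \|f(\cdot,s)\|_{L^\infty}$ is continuous, so there is a first time $s_2>s_0$ with $\|f(\cdot,s_2)\|_{L^\infty}=2d_0$ and $\|f(\cdot,s)\|_{L^\infty}<2d_0$ for $s<s_2$. Choose $s_1<s_2$ such that $\|f(\cdot,s)\|_{L^\infty}>d_0+\eta$ on $(s_1,s_2]$ for a small $\eta>0$.

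For $s\in(s_1,s_2]$, condition \eqref{max_2_5} (limsup strictly below $2d_0$) together with continuity lets us pick a point $y_*(s)$ where $|f|$ attains its maximum, at least near $s_2$. Since the supremum exceeds $d_0$, condition \eqref{max_2_1} forces $y_*(s)\in\Omega^c$. At this point $\partial_y f=0$.

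Take the case $f(y_*,s)>0$; the negative case is symmetric. Evaluating \eqref{ivp} at $y_*$ and using \eqref{max_2_2}, \eqref{max_2_3}, \eqref{max_2_4} gives
\begin{equation*}
\partial_s f(y_*,s)\le -D f+F_0+\|f(\cdot,s)\|_{L^\infty}\int|K|\,dy'\le -(1-\delta)D\,\|f(\cdot,s)\|_{L^\infty}+F_0 .
\end{equation*}
Next apply the Lipschitz/Rademacher argument of \eqref{AP_R1}--\eqref{L-thm-p}: $m(s):=\|f(\cdot,s)\|_{L^\infty}$ is differentiable almost everywhere, with $m'(s)\le \partial_s f(y_*(s),s)$. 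Near $s_2$ we have $m\ge d_0$, so $m'\le -(1-\delta)\lambda_D d_0+F_0<F_0/2-\text{(positive)}$. More precisely, \eqref{D-cond} gives $(1-\delta)\lambda_D\,m\ge (1-\delta)\lambda_D d_0>F_0/2$; hence whenever $m\ge 2d_0\cdot\tfrac12$ we get $m'<F_0-F_0/2\cdot$\dots. To make this clean, use $m\ge 2d_0-\eta'$ near $s_2$. This yields $m'\le -(1-\delta)\lambda_D(2d_0-\eta')+F_0<0$ by \eqref{D-cond}, since $2(1-\delta)\lambda_D d_0>F_0$. So $m$ is strictly decreasing on a left neighborhood of $s_2$, contradicting that $m$ first reaches $2d_0$ at $s_2$.

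The main obstacle is justifying the a.e. derivative inequality for $m(s)$. This requires existence of the maximizer $y_*(s)$ (from \eqref{max_2_5}) and Lipschitz continuity of $m$. Both follow from classical regularity on compact sets plus the far-field bound, as in Remark~3.12 of \cite{BKK1}. One must also handle the nonlocal term via $\int|K|\le\delta D$, which is valid only because $y_*\in\Omega^c$.
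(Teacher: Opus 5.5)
Your argument is correct and is essentially the first-hitting-time contradiction argument the paper defers to \cite{BKK2}, which is the same mechanism the paper runs explicitly in Lemma~\ref{lem_Zyweight}. As you have it, \eqref{max_2_1} forces the maximizer into $\Omega^c$, and there \eqref{max_2_2}--\eqref{max_2_4} give $\partial_s f\le -(1-\delta)\lambda_D m+F_0$, which is negative at level $m\approx 2d_0$ precisely because of \eqref{D-cond}.

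The Rademacher/Lipschitz step, and the aborted estimate at level $m\ge d_0$, can be dropped. At $s_2$ the supremum $2d_0$ is attained at some $y_*$ by \eqref{max_2_5}, say with $f(y_*,s_2)=2d_0$. Then $f(y_*,s)\le m(s)<2d_0$ for $s<s_2$ already forces $\partial_s f(y_*,s_2)\ge 0$. This contradicts $\partial_s f(y_*,s_2)\le -2(1-\delta)\lambda_D d_0+F_0<0$, and it avoids needing maximizers for $s<s_2$ at all.
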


\begin{proof}
For the proof, we refer to that of Lemma~6.6 in the Appendix of \cite{BKK2}.
\end{proof}

Next, we consider the initial value problem:
\begin{equation} \label{ivp2}
\begin{split}
& \partial_s f(y,s) + D(y,s) f(y,s) + U(y,s) \partial_y f(y,s) = F(y,s), \quad s \geq s_0, \, y \in \mathbb{R}, \\
& f(y,s_0) = f_0(y).
\end{split}
\end{equation}
The following lemma provides decay estimates for solutions to the transport-type equation \eqref{ivp2} under suitable assumptions:

\begin{lemma}[\cite{BKK2}] \label{rmk2}
Let $f$ be a classical solution to IVP \eqref{ivp2}. Assume that $U$, $D$ and $F$ are smooth functions satisfying
\begin{subequations}
\begin{align}
& \inf_{ \lvert y \rvert \geq N, s \in [s_0,\infty) }{U(y,s)\frac{y}{\lvert y \rvert}} >0, \label{rmk2_ass0} \\
& \inf_{\lvert y \rvert \geq N, s \in [s_0,\infty)}{D(y,s)} \geq \lambda_D, \label{rmk2_ass1} \\
& \lVert F(\cdot,s) \rVert_{L^\infty(\lvert y \rvert \geq N)} \leq F_0 e^{-s \lambda_F} \label{rmk2_ass2}
\end{align}
\end{subequations}
for some $\lambda_D,\lambda_F,N,F_0 \geq 0$. Then, it holds that
\begin{equation*}
	\begin{array}{l l}
		\limsup_{| y |\rightarrow \infty}|f(y,s)|\leq \limsup_{|y|\rightarrow \infty}{|f(y,s_0)|}e^{-\lambda_D(s-s_0)}+\frac{F_0}{\lambda_D-\lambda_F}e^{-s\lambda_F} & \quad \text{if } \lambda_D>\lambda_F,  \\
		\limsup_{| y |\rightarrow \infty}|f( y ,s)|\leq \limsup_{|y|\rightarrow \infty}{|f(y,s_0)|}e^{-\lambda_D(s-s_0)}+\frac{F_0e^{-s_0\lambda_F}}{\lambda_F-\lambda_D}e^{-\lambda_D(s-s_0)} & \quad \text{if } \lambda_F>\lambda_D.
	\end{array}
\end{equation*}
\end{lemma}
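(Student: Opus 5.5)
The statement to prove is Lemma~\ref{rmk2}, the far-field decay estimate for the transport equation \eqref{ivp2}. I would argue along characteristics. Let $\Phi(y,s)$ solve $\partial_s\Phi = U(\Phi,s)$ with $\Phi(y,s_0)=y$. Along each such curve, \eqref{ivp2} becomes the linear ODE
\[
\frac{d}{ds} f(\Phi,s) + D(\Phi,s) f(\Phi,s) = F(\Phi,s),
\]
which Duhamel's formula solves as
\[
f(\Phi(y,s),s) = f_0(y) e^{-\int_{s_0}^s D\circ\Phi} + \int_{s_0}^s e^{-\int_{s'}^s D\circ\Phi} F(\Phi(y,s'),s')\,ds'.
\]

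The first step is to show that characteristics starting far out stay far out. By \eqref{rmk2_ass0}, if $|y|\ge N$ then $\frac{d}{ds}|\Phi|=U\,\mathrm{sgn}(\Phi)>0$ as long as $|\Phi|\ge N$. Hence $|\Phi(y,s')|\ge |y|$ for all $s'\in[s_0,s]$, and along such a curve we may use both \eqref{rmk2_ass1} and \eqref{rmk2_ass2}. This gives
\[
|f(\Phi(y,s),s)|\le |f_0(y)|e^{-\lambda_D(s-s_0)} + F_0\int_{s_0}^s e^{-\lambda_D(s-s')}e^{-\lambda_F s'}\,ds'.
\]
Evaluating the integral produces exactly the two stated bounds: when $\lambda_D>\lambda_F$ it is at most $\frac{F_0}{\lambda_D-\lambda_F}e^{-\lambda_F s}$, and when $\lambda_F>\lambda_D$ it is at most $\frac{F_0 e^{-\lambda_F s_0}}{\lambda_F-\lambda_D}e^{-\lambda_D(s-s_0)}$.

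The second step transfers this to $\limsup_{|x|\to\infty}|f(x,s)|$ at a fixed time $s$. I need every sufficiently large $x$ to equal $\Phi(y,s)$ for some $y$ with $|y|$ large, and I need $|y|\to\infty$ as $|x|\to\infty$. For each fixed $s$ the map $y\mapsto\Phi(y,s)$ is a diffeomorphism of $\mathbb{R}$, provided $U$ is smooth with at most linear growth, which is the case in all applications since $U\sim\frac52 y$. It is increasing, and it maps $\{|y|\ge N\}$ into $\{|x|\ge N\}$. Since $\Phi(y,s)$ is continuous and strictly monotone in $y$ and has finite values at $\pm N$, its inverse sends $x\to\pm\infty$ to $y\to\pm\infty$. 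Taking $\limsup_{|x|\to\infty}$ of the bound from the first step then yields the claim, with $\limsup_{|y|\to\infty}|f_0(y)|$ on the right-hand side.

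The main obstacle is this surjectivity and properness of the far-field flow, so that no characteristic starting in $|y|<N$ ends up at arbitrarily large $|x|$. I would first handle it through global well-posedness of the characteristic ODE, which follows from smoothness of $U$ and linear growth. Backward flow from $x$ cannot enter $|\Phi|<N$ in finite time without passing $|\Phi|=N$, and it moves monotonically inward while outside. Consequently the backward characteristic from a large $|x|$ reaches some $|y|\ge N$, and by continuous dependence on $x$ this $|y|$ tends to infinity as $|x|$ does.
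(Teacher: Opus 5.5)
Your proof is correct and follows the standard characteristic argument behind the cited \cite[Lemma~6.7]{BKK2}; the paper itself gives no proof and only cites that reference. The argument consists of Duhamel along $\Phi$, forward invariance of $\{|y|\ge N\}$ from \eqref{rmk2_ass0}, the two elementary time integrals, and transfer of the bound through the increasing homeomorphism $y\mapsto\Phi(y,s)$. The at-most-linear growth of $U$ that you add is genuinely necessary, and it, rather than inward monotonicity alone, is what justifies the ``consequently'' in your last paragraph: take $U=y^3$, $D=F=0$, $N=1$, so that $f(x,s)=f_0\bigl(x(1+2x^2(s-s_0))^{-1/2}\bigr)$ and a compactly supported $f_0$ with $f_0(\tfrac14)\neq0$ violates the conclusion at $s=s_0+8$, whereas with linear growth Gronwall gives $1+|y|\ge e^{-C(s-s_0)}(1+|x|)$ for the foot $y$ of the backward characteristic from $(x,s)$.
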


\begin{proof}
For the proof, we refer to that of Lemma~6.7 in the Appendix of \cite{BKK2}.
\end{proof}

\section{Completion of the proof of Lemma~\ref{Lemma:Z_high}} \label{App:Z}

We complete the proof of Lemma~\ref{Lemma:Z_high}, based on the intermediate results obtained earlier. The notation remains unchanged from the earlier part of the proof.

In what follows, we estimate \eqref{eq:Z_high} for $n=3,4$, where the damping term $D_n^Z$ and forcing term $F_n^Z$ are given in \eqref{D^Z} and \eqref{F^Z}, respectively. Applying \eqref{lem_Zy}, \eqref{dottau1}, and \eqref{Uy1}, we obtain
\begin{equation*}
D_3^Z \geq \frac{15}{2} - \left( 3 e^{3s/2} \lvert Z_y \rvert + \frac{2\lvert W_y \rvert}{3} \right) (1+\varepsilon^{1/2}) \geq 6
\end{equation*}
for sufficiently small $\varepsilon>0$. Additionally, using \eqref{Wyy_bound}, \eqref{Wy3_bound}, \eqref{Q_high}, and \eqref{Zyy_proof}, we estimate
\begin{equation*}
\begin{split}
\lvert F_3^Z \rvert & \leq C \left( e^{-3s/2} W_{yy}^2 + e^{-3s/2}|W_y||\partial_y^3W| + e^{-5s} + e^{-5s/2} + e^{-7s/2} \right) \\
& \leq C e^{-5s/2} + C e^{-3s/2} \left( W_{yy}^2 + W_y^2 + (\partial_y^3W)^2 \right),
\end{split}
\end{equation*}
where in the second inequality we used Young's inequality. Integrating the equation for $\partial_y^3 Z$ along the curve $\psi_Z$ defined in \eqref{curves}, we obtain
\begin{equation} \label{pr_Zy3'}
\begin{split}
\lVert \partial_y^3 Z(\cdot,s) \rVert_{L^\infty} & \leq \lVert \partial_y^3 Z (\cdot,s_0) \rVert_{L^\infty} e^{-6(s-s_0)} + C \int_{s_0}^s e^{-6(s-s')} e^{-5s'/2} \, ds' \\
& \quad + C \int_{s_0}^s e^{-6(s-s')} e^{-3s'/2} \left( W_{yy}^2 + W_y^2 + (\partial_y^3W)^2 \right)(\psi_Z(y,s'),s') \, ds' \\
& \leq C e^{-5s/2} + C e^{-5s/2} \int_{s_0}^s e^{s'} \left( W_{yy}^2 + W_y^2 + (\partial_y^3W)^2 \right)(\psi_Z(y,s'),s') \, ds',
\end{split}
\end{equation}
where we used
\begin{equation*}
e^{-6s }\int_{s_0}^s e^{9s'/2} \left( W_{yy}^2 + W_y^2 + (\partial_y^3W)^2 \right) \, ds' \leq e^{-5s/2}\int_{s_0}^s e^{s'} \left( W_{yy}^2 + W_y^2 + (\partial_y^3W)^2 \right) \, ds'.
\end{equation*}
Analogously to \eqref{Wy2bd}, we use \eqref{psiW'bd} and \eqref{init_wbound} to obtain
\begin{equation*}
\begin{split}
& \int_{s_0}^s e^{s'} (\partial_y^3 W)^2 (\psi_Z(y_z,s'),s') \, ds' \leq \frac{3}{4 \sqrt{h_{\mathrm{min}}}} \left( \varepsilon^{10} \lVert \partial_x^3w_0 \rVert_{L^2}^2 + J_3 \right) \leq \frac{3}{4 \sqrt{h_{\mathrm{min}}}} \left( C\varepsilon^{1/2} + J_3 \right),
\end{split}
\end{equation*}
where
\begin{equation*}
J_3 := \int_{y_w(s)}^{y_z} \int_{s_0}^{\overline{s}(\xi)} \left| \frac{d}{ds'}\left( e^{-s'/2} (\partial_y^3 W \circ \psi_W)^2 \frac{\partial \psi_W}{\partial \xi} \right) (\xi,s') \right| \, ds' d\xi.
\end{equation*}
By \eqref{Wy3}, \eqref{lem_Zy}, \eqref{Wyy_bound}, \eqref{Wy3_bound}, \eqref{dottau1}, \eqref{Uy1}, \eqref{curves}, \eqref{Z_high}, and \eqref{Q_high}, we find that the integrand of $J_3$ satisfies
\begin{equation*}
\begin{split}
& \left| \frac{d}{ds'} \left( e^{-s'/2} (\partial_y^3 W)^2 \frac{\partial \psi_W}{\partial \xi} \right) \right| \\
& \quad  = \left| -\frac{1}{2} e^{-s'/2} (\partial_y^3 W)^2 \frac{\partial \psi_W}{\partial \xi} + 2 e^{-s'/2} (\partial_y^3 W) \frac{d (\partial_y^3 W)}{ds'} \frac{\partial \psi_W}{\partial \xi} + e^{-s'/2} (\partial_y^3 W)^2 \frac{\partial^2 \psi_W}{\partial s' \partial \xi} \right| \\
& \quad  = \bigg| e^{-s'/2} \frac{\partial \psi_W}{\partial \xi} (\partial_y^3 W) \bigg( - 10 \partial_y^3 W - \frac{5W_y\partial_y^3W}{1-\dot{\tau}} + \frac{5 e^{3s'/2}Z_y \partial_y^3 W}{3(1-\dot{\tau})} - \frac{16 e^{s'/2}\partial_y^3Q}{3(1-\dot{\tau})} - \frac{4W_{yy}^2}{1-\dot{\tau}} \\
& \qquad \qquad \qquad \qquad \qquad \quad + \frac{14 e^{3s'/2}W_{yy}Z_{yy}}{3(1-\dot{\tau})} + \frac{8e^{3s'/2}W_y \partial_y^3 Z}{3(1-\dot{\tau})} + \frac{2e^{3s'}Z_{yy}^2}{1-\dot{\tau}} + \frac{2e^{3s'}Z_y \partial_y^3 Z}{1-\dot{\tau}} \bigg) \bigg| \\
& \quad \leq C e^{-s'/2} \left| \frac{\partial \psi_W}{\partial \xi} \partial_y^3 W \right|.
\end{split}
\end{equation*}
By a similar argument as in the estimate of $J_2$, with the bound \eqref{Wyy_bound}, we obtain by applying Fubini's theorem that
\begin{equation*}
\begin{split}
\int_{y_w(s)}^{y_z} \int_{s_0}^{\overline{s}(\xi)} \left| e^{-s'/2} \frac{\partial \psi_W}{\partial \xi} \partial_y^3 W \right| \, ds' d\xi & \leq \sum_{i} \int_{\RNum{1}_{3i}} e^{-s'/2} \left| W_{yy}(\psi_W(y_z,s'),s') - W_{yy}(\psi_W(y_w(s'),s'),s') \right| \, ds' \\
& \leq C \int_{s_0}^{\infty} e^{-s'/2} \, ds' \leq C \varepsilon^{1/2},
\end{split}
\end{equation*}
where $\{ \RNum{1}_{3i} \}_{i \in \mathbb{N}}$ is a decomposition of $[s_0, s]$ into subintervals on which $\textstyle \frac{\partial \psi_W}{\partial \xi} \partial_y^3 W$ has constant sign. From the above estimates, we conclude $J_3 \leq C \varepsilon^{1/2}$ and so,
\begin{equation} \label{int_Wy32}
\int_{s_0}^s e^{s'} \left( \partial_y^3 W \circ \psi_Z \right)^2 \, ds' \leq C \varepsilon^{1/2}.
\end{equation}
Combining \eqref{pr_Zy3'} with the bounds \eqref{int_Wy2}, \eqref{int_Wyy2}, and \eqref{int_Wy32}, we obtain
\begin{equation} \label{Zy3_proof}
\lVert \partial_y^3 Z (\cdot,s) \rVert_{L^\infty} \leq C e^{-5s/2}.
\end{equation}

Next, we consider the case $n=4$. By \eqref{lem_Zy}, \eqref{dottau1}, and \eqref{Uy1}, $D_4^Z$ satisfies
\begin{equation*}
D_4^Z \geq 10 - \left( 4 e^{3s/2} \lvert Z_y \rvert + \frac{\lvert W_y \rvert}{3} \right) (1+\varepsilon^{1/2}) \geq 9
\end{equation*}
for sufficiently small $\varepsilon>0$. Also, by \eqref{lem_Zy}, \eqref{Wyy_bound}, \eqref{Wy3_bound}, \eqref{dottau1}, \eqref{Zyy_proof}, \eqref{Zy3_proof}, \eqref{Q_high}, and Young's inequality, it holds that
\begin{equation*}
| F_4^Z| \leq C e^{-5s/2} + C e^{-3s/2} \left( |W_y|^2 + |W_{yy}|^2 + |\partial_y^3W|^2 + |\partial_y^4W|^2 \right).
\end{equation*}
Hence, integrating the equation for $\partial_y^4 Z$ along the curve $\psi_Z$, we obtain
\begin{equation*}
\begin{split}
\lVert \partial_y^4 Z(\cdot,s) \rVert_{L^\infty} & \leq \lVert \partial_y^4 Z (\cdot,s_0) \rVert_{L^\infty} e^{-9(s-s_0)} + C \int_{s_0}^s e^{-9(s-s')} e^{-5s'/2} \, ds' \\
& \quad + C \int_{s_0}^s e^{-9(s-s')} e^{-3s'/2} \left( W_y^2 + W_{yy}^2 + (\partial_y^3 W)^2 + (\partial_y^4 W)^2 \right) (\psi_Z(y,s'),s') \, ds' \\
& \leq C e^{-5s/2} + C e^{-5s/2} \int_{s_0}^s e^{s'} \left( W_y^2 + W_{yy}^2 + (\partial_y^3 W)^2 + (\partial_y^4 W)^2 \right) (\psi_Z(y,s'),s') \, ds'.
\end{split}
\end{equation*}
As in \eqref{Wy2bd}, we use \eqref{psiW'bd} and \eqref{init_wbound} to obtain
\begin{equation*}
\begin{split}
\int_{s_0}^s e^{s'} (\partial_y^4 W)^2 (\psi_Z(y_z,s'),s') \, ds' \leq \frac{3}{4 \sqrt{h_{\mathrm{min}}}} \left( \varepsilon^{15} \lVert \partial_x^4 w_0 \rVert_{L^2}^2 + J_4 \right) \leq \frac{3}{4 \sqrt{h_{\mathrm{min}}}} \left( C \varepsilon^{1/2}  + J_4 \right),
\end{split}
\end{equation*}
where
\begin{equation*}
J_4 := \int_{y_w(s)}^{y_z} \int_{s_0}^{\overline{s}(\xi)} \left| \frac{d}{ds'}\left( e^{-s'/2} (\partial_y^4 W \circ \psi_W)^2 \frac{\partial \psi_W}{\partial \xi} \right) (\xi,s') \right| \, ds' d\xi.
\end{equation*}
By \eqref{Wy4}, \eqref{lem_Zy}, \eqref{Wyy_bound}, \eqref{Wy3_bound}, \eqref{Wy4_bound}, \eqref{dottau1}, \eqref{Uy1}, \eqref{curves}, \eqref{Z_high}, and \eqref{Q_high}, we find that the integrand of $J_4$ satisfies
\begin{equation*}
\begin{split}
& \left| \frac{d}{ds'} \left( e^{-s'/2} (\partial_y^4 W)^2 \frac{\partial \psi_W}{\partial \xi} \right) \right| \\
& \quad  = \left| -\frac{1}{2} e^{-s'/2} (\partial_y^4 W)^2 \frac{\partial \psi_W}{\partial \xi} + 2 e^{-s'/2} (\partial_y^4 W) \frac{d (\partial_y^4 W)}{ds'} \frac{\partial \psi_W}{\partial \xi} + e^{-s'/2} (\partial_y^4 W)^2 \frac{\partial^2 \psi_W}{\partial s' \partial \xi} \right| \\
& \quad  = \bigg| e^{-s'/2} \frac{\partial \psi_W}{\partial \xi} (\partial_y^4 W) \bigg( - 15 \partial_y^4 W - \frac{7 W_y \partial_y^4 W}{1-\dot{\tau}} + \frac{e^{3s'/2}Z_y \partial_y^4 W}{1-\dot{\tau}} - \frac{16 e^{s'/2} \partial_y^4 Q}{3(1-\dot{\tau})} \\
& \qquad \qquad \qquad \qquad \qquad \quad - \frac{14 W_{yy} \partial_y^3 W}{1-\dot{\tau}} + \frac{6e^{3s'/2}  \partial_y^3 W Z_{yy}}{1-\dot{\tau}} + \frac{22e^{3s'/2} W_{yy} \partial_y^3 Z}{3(1-\dot{\tau})} \\
& \qquad \qquad \qquad \qquad \qquad \quad + \frac{8e^{3s'/2} W_y \partial_y^4 Z}{3(1-\dot{\tau})} + \frac{6e^{3s'} Z_{yy} \partial_y^3 Z}{1-\dot{\tau}} + \frac{2e^{3s'} Z_y \partial_y^4 Z}{1-\dot{\tau}} \bigg) \bigg| \\
& \quad \leq C e^{-s'/2} \left| \frac{\partial \psi_W}{\partial \xi} \partial_y^4 W \right|.
\end{split}
\end{equation*}
As in the estimates of $J_2$ and $J_3$, one can obtain by applying Fubini's theorem and the bound \eqref{Wy3_bound} that
\begin{equation*}
\begin{split}
\int_{y_w(s)}^{y_z} \int_{s_0}^{\overline{s}(\xi)} \left| e^{-s'/2} \frac{\partial \psi_W}{\partial \xi} \partial_y^4 W \right| \, ds' d\xi & \leq \sum_{i} \int_{\RNum{1}_{4i}} e^{-s'/2} \left| \partial_y^3 W(\psi_W(y_z,s'),s') - \partial_y^3W(\psi_W(y_w(s'),s'),s') \right| \, ds' \\
& \leq C \int_{s_0}^{\infty} e^{-s'/2} \, ds' \leq C \varepsilon^{1/2}.
\end{split}
\end{equation*}
From the above estimates, we conclude $J_4 \leq C \varepsilon^{1/2}$, which yields
\begin{equation} \label{int_Wy42}
\int_{s_0}^s e^{s'} \left( \partial_y^4 W \circ \psi_Z \right)^2 \, ds' \leq C \varepsilon^{1/2}.
\end{equation}
Combining \eqref{pr_Zy3'} with the bounds \eqref{int_Wy2}, \eqref{int_Wyy2}, \eqref{int_Wy32}, and \eqref{int_Wy42}, we obtain the desired estimate:
\begin{equation}
\lVert \partial_y^4 Z (\cdot,s) \rVert_{L^\infty} \leq C e^{-5s/2}.
\end{equation}
This completes the proof of Lemma~\ref{Lemma:Z_high}.

\section{\texorpdfstring{Bounds on the Green function $K(x,z)$}{Bounds on the Green function K(x,z)}} \label{App:K}

In this appendix, we establish exponential decay bounds on the Green function $K(x,z)$ associated with the operator
\begin{equation*}
1 - \mathcal{L}:= 1- \partial_x \circ h^{-1} \partial_x \circ h^3,
\end{equation*}
which satisfies
\begin{equation*}
(1 - \mathcal{L})K = \delta(x-z).
\end{equation*}
Throughout this section, the time variable $t$ is fixed, and we regard $h = h(x,t)$ simply as a function of $x$. For notational simplicity, we write $h(x)$.

To facilitate the analysis, we introduce a change of variables
\begin{equation} \label{zvariable}
	\tilde{x} = \int_0^x h (x') \, dx',
\end{equation}
under which $\tilde{x}$ becomes a strictly increasing bijection from $\mathbb{R}$ to $\mathbb{R}$, owing to the fact that
\begin{equation*}
h(x)>0, \quad \lim_{x \to \pm \infty}{h(x)} = h_*.
\end{equation*}
Letting $\tilde{f}(\tilde{x}) := (h^3 f)(x)$, 
one can define an operator $L$ by 
\begin{equation*}
	\begin{split}
		(1-\mathcal{L})f(x)&=\frac{\tilde{f}(\tilde{x})}{h^3(x)}- h(x)\partial_{\tilde{x}}^2\tilde{f}(\tilde{x})
		\\
		&=h(x)\left(\frac{1}{h^4(x)}- \partial_{\tilde{x}}^2\right)\tilde{f}(\tilde{x})
		\\
		&=:h(x)L\tilde{f}(\tilde{x}) \quad \text{for any }f.
	\end{split}
\end{equation*}
From the relation between the operators $(1-\mathcal{L})$ and $L$, we have $L\tilde{f}(\tilde{x}) = h^{-1}(x) g(x) =: \tilde{g}(\tilde{x})$ for any $f$ and $g$ satisfying $(1-\mathcal{L})f(x) = g(x)$. We also note that the solution $f$ can be written as $\textstyle f(x) = \int_\mathbb{R} K(x,z) g(z) \, dz$ using the Green function $K(x,z)$. Hence, we have the representation of $\tilde{f}$:
\begin{equation}\label{tildef}
	\begin{split}
		\tilde{f}(\tilde{x})&=h^3(x)f(x)=\int_{\mathbb{R}}h^3(x)K(x,z)g(z)\,dz
		\\
		&=\int_{\mathbb{R}}h^3(x)K(x,z)\tilde{g}(\tilde{z})\,d\tilde{z},
	\end{split}
\end{equation} 
where in the last equality, we used the relation $h(z)dz = d\tilde{z}$. Define the Green function $\tilde{K}(\tilde{x},\tilde{z})$ for $L$ as a solution to
\begin{equation} \label{defKtilde}
L \tilde{K} = \delta(\tilde{x}-\tilde{z}).
\end{equation}
Comparing \eqref{tildef} with the representation $\textstyle \tilde{f}(\tilde{x})=\int_{\mathbb{R}}\tilde{K}(\tilde{x},\tilde{z})\tilde{g}(\tilde{z})\,d\tilde{z}$, we obtain the following relation between $K(x,z)$ and $\tilde{K}(\tilde{x},\tilde{z})$:
\begin{equation}\label{K-Ktilde}
	\tilde{K}(\tilde{x},\tilde{z})=h^3(x)K(x,z).
\end{equation}

To construct $\tilde{K}(\tilde{x},\tilde{z})$, we use the classical matching method, see \cite[pp.371--376]{CH}. First we rewrite $Ly=0$ as the first-order ODE system
\begin{equation} \label{1stODE}
	Y' = \mathbb{A}(\tilde{x})Y,
\end{equation}
where $'$ denotes $d/d\tilde{x}$, and $Y = (y,y')^{\mathrm{tr}}$. The coefficient matrix $\mathbb{A}(\tilde{x})$ is given by
\begin{equation*}
\mathbb{A}(\tilde{x}) := \begin{pmatrix}
	0 & 1 \\
	h^{-4}(x) & 0
\end{pmatrix}.
\end{equation*}
It follows from $\lim_{\lvert x \rvert \to \infty}{h(x)} = h_*$ that
\begin{equation*}
\lim_{|\tilde{x}| \to \infty}{\mathbb{A}(\tilde{x})} = \begin{pmatrix}
0 & 1 \\
h_*^{-4} & 0 
\end{pmatrix} =: \mathbb{A}_\infty.
\end{equation*}
Moreover, as $h(x)-h_* \in H^5(\mathbb{R})$, we obtain $\mathbb{A}(\tilde{x}) \in C^4 \cap L^\infty$ and
\begin{equation} \label{A_L2}
\begin{split}
\int_\mathbb{R} \lVert \mathbb{A}(\tilde{x}) - \mathbb{A}_\infty \rVert_F^2 \, d\tilde{x} & =  \int_\mathbb{R} \left( \frac{1}{h^4} - \frac{1}{h_*^4} \right)^2 h \, dx =  \int_\mathbb{R} \frac{|h-h_*|^2 |h+h_*|^2 |h^2+h_*^2|^2}{h^7 h_*^8} \, dx \\
& \leq \frac{|h_{\mathrm{max}}+h_*|^2 |h_{\mathrm{max}}^2+h_*^2|^2}{ h_{\mathrm{min}}^7 h_*^8} \int_\mathbb{R} | h-h_*|^2 \, dx < \infty,
\end{split}
\end{equation}
where $\|\cdot\|_{F}$ is the Frobenius matrix norm defined as $\textstyle \|A\|_F=\sqrt{\sum_{i,j}|A_{ij}|^2}$. Theorem~1 in \cite{Pi} (or, Theorems~6' and~8 in \cite[Chap.~IV]{Co}), together with \eqref{A_L2}, ensures that the ODE system \eqref{1stODE} admits solutions $Y^\pm(\tilde{x})$ satisfying
\begin{equation*}
\begin{split}
& Y^+(\tilde{x}) = \left( v_+ + o(1) \right) e^{\lambda_+ \tilde{x} + \Theta_+(\tilde{x})} \quad \text{as } \tilde{x} \to +\infty, \\
& Y^-(\tilde{x}) = \left( v_- + o(1) \right) e^{\lambda_- \tilde{x} + \Theta_-(\tilde{x})} \quad \text{as } \tilde{x} \to -\infty,
\end{split}
\end{equation*}
where the eigenvalue $\lambda_\pm$ of $\mathbb{A}_\infty$ and the corresponding eigenvector $v_\pm$ are given by
\begin{equation*}
\lambda_\pm = \mp \frac{1}{h_*^2} \lessgtr 0 \quad \text{and} \quad v_\pm = \begin{pmatrix} 1 \\ \lambda_\pm \end{pmatrix},
\end{equation*}
respectively. The asymptotic corrections $\Theta_\pm(\tilde{x})$ take the form
\begin{equation*}
\Theta_\pm(\tilde{x}) = \mp \frac{h_*^2}{2} \int_{0}^{\tilde{x}}   \left( \frac{1}{h^4} - \frac{1}{h_*^4}  \right) \, d\tilde{x}.
\end{equation*}
Applying the H\"older inequality, together with a similar computation with \eqref{A_L2}, we find that
\begin{equation*}
| \Theta_\pm(\tilde{x}) | \leq C \left| \int_0^{\tilde{x}} \left( \frac{1}{h^4} - \frac{1}{h_*^4} \right) \, d\tilde{x}  \right| \leq C \left| \int_0^{\tilde{x}} 1 \, d\tilde{x} \right|^{1/2} \left( \int_0^{\tilde{x}} \left( \frac{1}{h^4} - \frac{1}{h_*^4} \right)^2 \, d\tilde{x} \right)^{1/2} \leq C |\tilde{x}|^{1/2}.
\end{equation*}
Thus, the solutions $Y^\pm(\tilde{x})$ decay exponentially as $ \tilde{x} \to \pm \infty$, respectively. Since 
\begin{equation*}
\lim_{\tilde{x} \to + \infty}{\mathbb{A}(\tilde{x})} = \lim_{\tilde{x} \to -\infty}{\mathbb{A}(\tilde{x})} = \mathbb{A}_\infty,
\end{equation*}
the exponential rates of growth and decay coincide at both ends. This leads to the following global bounds:
\begin{equation} \label{Ybd}
|Y^\pm (\tilde{x})| \leq C e^{\lambda_\pm \tilde{x} + \Theta_\pm(\tilde{x})}, \quad \tilde{x} \in \mathbb{R}.
\end{equation}
Note that there exist constants $a,b$ such that $Y^+(\tilde{x}) = a Y^{(+)}(\tilde{x}) + b Y^{(-)}(\tilde{x})$ for $\tilde{x} \leq -R$ with sufficiently large $R>0$, where $Y^{(\pm)}$ are decaying/growing modes as $\tilde{x} \to -\infty$ corresponding to $\lambda_\pm$. Hence,
\begin{equation*}
|Y^+(\tilde x)| \le |a|\,e^{\lambda_+\tilde x+\Theta_+(\tilde x)}
   + |b|\,e^{\lambda_-\tilde x+\Theta_-(\tilde x)}
 = e^{\lambda_+\tilde x+\Theta_+(\tilde x)}
   \Bigl(|a| + |b|\,e^{(\lambda_--\lambda_+)\tilde x+(\Theta_--\Theta_+)(\tilde x)}\Bigr).
\end{equation*}
Here $\lambda_--\lambda_+=2h_*^{-2}>0$ and $\Theta_- - \Theta_+ = -2 \Theta_+ = O(|\tilde x|^{1/2})$, so we have $|Y^+(\tilde x)| \le C\,e^{\lambda_+\tilde x+\Theta_+(\tilde x)}$. The same bound holds for $\tilde x\ge R$ by the asymptotics of $Y^+$ at $+\infty$, and on the compact interval $[-R,R]$ it follows by continuity after increasing $C$ if necessary. This proves the global estimate \eqref{Ybd} for $Y^+$, and the case of $Y^-$ is symmetric.

Next we derive the jump condition of $\tilde{K}(\tilde{x},\tilde{z})$ at $\tilde{x}=\tilde{z}$. By continuity, we have
\begin{equation} \label{Jc'}
[\tilde{K}]_{\tilde{x}=\tilde{z}} = 0,
\end{equation}
where $[\cdot]_{\tilde{x}=\tilde{z}}$ denotes the jump condition of a function at $\tilde{x}=\tilde{z}$. Integrating \eqref{defKtilde} from $\tilde{x}=\tilde{z}-\eta$ to $\tilde{x}=\tilde{z}+\eta$ and taking the limit $\eta \to 0$, we obtain
\begin{equation} \label{Jc}
[\tilde{K}_{\tilde{x}}]_{\tilde{x}=\tilde{z}} = -1.
\end{equation}

Now we are ready to construct the Green function $\tilde{K}(\tilde{x},\tilde{z})$. We set
\begin{equation*}
\tilde{K}(\tilde{x},\tilde{z}) = \begin{cases}
y_+(\tilde{x}) N^+(\tilde{z}), & \tilde{x} > \tilde{z}, \\
y_-(\tilde{x}) N^-(\tilde{z}), & \tilde{x} < \tilde{z},
\end{cases}
\end{equation*}
where $y_\pm$ is the first component of $Y^\pm$. Then, by the jump condition \eqref{Jc}--\eqref{Jc'}, we have
\begin{equation*}
\begin{split}
& y_+(\tilde{z}) N^+(\tilde{z}) = y_-(\tilde{z}) N^-(\tilde{z}), \\
& y_+'(\tilde{z}) N^+(\tilde{z}) - y_-'(\tilde{z}) N^-(\tilde{z}) = -1,
\end{split}
\end{equation*}
yielding
\begin{equation*}
N^+(\tilde{z}) = - \frac{y_-}{y_+' y_- - y_+ y_-' }, \quad N^-(\tilde{z}) = - \frac{y_+}{y_+' y_- - y_+ y_-' }.
\end{equation*}
Thus, the Green function $\tilde{K}$ is represented as
\begin{equation*}
\tilde{K}(\tilde{x},\tilde{z}) = \begin{cases}
- \frac{y_+(\tilde{x}) y_-(\tilde{z})}{ (y_+' y_- - y_+ y_-')(\tilde{z})}, & \tilde{x} > \tilde{z}, \\
- \frac{y_-(\tilde{x}) y_+(\tilde{z})}{ (y_+' y_- - y_+ y_-')(\tilde{z})}, & \tilde{x} < \tilde{z}.
\end{cases}
\end{equation*}
Since the solutions $Y^+$ and $Y^-$ are linearly independent, the Wronskian $\left(y_+' y_- - y_+ y_-' \right)$ is non-zero. Moreover, we can check that the Wronskian is a constant by the following calculation:
\begin{equation*}
\left(y_+' y_- - y_+ y_-' \right)' = y_+'' y_- - y_+y_-'' = \left( h^{-4} y_+ \right) y_- - y_+ \left( h^{-4} y_- \right) = 0,
\end{equation*}
where we used the fact that $y_\pm$ solve $Ly =0$. Using the bound \eqref{Ybd} on $Y^\pm = (y_\pm,y_\pm')^{\mathrm{tr}}$, together with $\lambda_+ = -\lambda_-$ and $\Theta_+ = - \Theta_-$, we obtain for $\tilde{x}>\tilde{z}$
\begin{equation*}
\begin{split}
|\tilde{K}(\tilde{x},\tilde{z})| & \leq C |y_+(\tilde{x})| |y_-(\tilde{z})| \\
& \leq C e^{\lambda_+ (\tilde{x}-\tilde{z})} e^{\Theta_+(\tilde{x})-\Theta_+(\tilde{z})} \\
& \leq C \exp{ \left[ -\frac{1}{2h_*^2}\left( 1 + \frac{h_*^4}{h_{\mathrm{max}}^4} \right) (\tilde{x}-\tilde{z}) \right] }  =: C e^{-\theta |\tilde{x}-\tilde{z}|}, \quad \theta>0,
\end{split}
\end{equation*}
where in the third inequality we used
\begin{equation*}
\begin{split}
\Theta_+(\tilde{x}) - \Theta_+(\tilde{z}) & = - \frac{h_*^2}{2} \int_{\tilde{z}}^{\tilde{x}} \left( \frac{1}{h^4} - \frac{1}{h_*^4} \right) \, d\tilde{x}' \\
& \leq - \frac{h_*^2}{2} \left( \frac{1}{h_{\mathrm{max}}^4} - \frac{1}{h_*^4} \right) (\tilde{x}-\tilde{z}). 
\end{split}
\end{equation*}
A similar estimate holds for $\tilde{x} < \tilde{z}$, and hence, for all $\tilde{x},\tilde{z}\in\mathbb{R}$, there exists a constant  $\tilde{\theta}>0$ such that  
\begin{equation*}
|\tilde{K}(\tilde{x},\tilde{z})| \leq C e^{-\tilde{\theta}|\tilde{x}-\tilde{z}|}.
\end{equation*}
Using the relation between $K(x,z)$ and $\tilde{K}(\tilde{x},\tilde{z})$ given in \eqref{K-Ktilde}, we obtain
\begin{equation} \label{Kbound}
|K(x,z)| \leq C |\tilde{K}(\tilde{x},\tilde{z})| \leq C e^{-\tilde{\theta}|\tilde{x}-\tilde{z}|} \leq C e^{-\theta_0 |x-z|},
\end{equation}
where $\theta_0 := h_{\mathrm{min}} \tilde{\theta} >0$.

\end{document}